\providecommand{\U}[1]{\protect\rule{.1in}{.1in}}
\newtheorem{theorem}{Theorem}
\theoremstyle{plain}
\newtheorem{corollary}{Corollary}
\newtheorem{definition}{Definition}
\newtheorem{lemma}{Lemma}
\newtheorem{proposition}{Proposition}
\newtheorem{remark}{Remark}
\numberwithin{equation}{section}
\numberwithin{theorem}{section}
\numberwithin{proposition}{section}
\numberwithin{remark}{section}
\numberwithin{definition}{section}
\numberwithin{lemma}{section}
\numberwithin{corollary}{section}
\numberwithin{example}{section}
\numberwithin{claim}{section}
\begin{document}
\title[Nonlocal Hele-Shaw-Cahn-Hilliard flow]{Derivation and analysis of a nonlocal Hele-Shaw-Cahn-Hilliard system for flow
in thin heterogeneous layers}

\author{Giuseppe Cardone}
\address{G. Cardone, Dipartimento di Matematica e Applicazioni "Renato Caccioppoli",
Universit\`{a} degli Studi di Napoli Frederico II, via Cintia, 80126 Napoli, Italy}
\email{giuseppe.cardone@unina.it}
\author{Willi J\"{a}ger}
\address{W. J\"{a}ger, Interdisciplinary Center for Scientific Computing (IWR),
University of Heidelberg, Im Neuenheimer Feld 205, 69120 Heidelberg, Germany}
\email{wjaeger@iwr.uni-heidelberg.de}
\author{Jean Louis Woukeng}
\address{J.L. Woukeng, Department of Mathematics and Computer Science, University of
Dschang, P.O. Box 67, Dschang, Cameroon}
\email{jeanlouis.woukeng@univ-dschang.org, jean.woukeng@uni-a.de}
\subjclass[2000]{35B40, 35K65, 46J10}
\keywords{Thin heterogeneous domains, Stokes-Cahn-Hilliard system, nonlocal
Hele-Shaw-Cahn-Hilliard system, homogenization, algebras with mean value, sigma-convergence}

\begin{abstract}
We derive, through the deterministic homogenization theory in thin domains, a
new model consisting of Hele-Shaw equation with memory coupled with the
convective Cahn-Hilliard equation. The obtained system, which models in
particular tumor growth, is then analyzed and we prove its well-posedness in
dimension 2. To achieve our goal, we develop and use the new concept of
sigma-convergence in thin heterogeneous media, and we prove some regularity
results for the upscaled model.

\end{abstract}
\maketitle

\section{Introduction and the main results\label{sec1}}

We develop a rigorous mathematical analysis for the study of a mixture of
fluids occurring in a thin layer. The problem addressed is related to the
study of a phase field model for the evolution of a mixture of two
incompressible immiscible fluids modeled by Stokes-Cahn-Hilliard equations
evolving in a highly heterogeneous thin layer whose heterogeneities are
discontinuous and present a greater flexibility in behaviour. This kind of
problems arise especially in the study of the depollution of soils, \cite{pol}
filtering, \cite{sol} blood flow and the flow of liquid-gases in the energetic
cell \cite{sole2}.

The Stokes-Cahn-Hilliard evolution system, which consists of the Stokes
equation for the fluid velocity suitably coupled with a convective
Cahn-Hilliard equation for the order parameter has for a long time been widely
used to describe the evolution of an incompressible mixture of two immiscible
fluids (see Ref. \cite{6', Feng2006, 28'} and references therein). In this work we
are concerned with the model stated as follows.

Let $\Omega$ be a bounded open domain in $\mathbb{R}^{d-1}$ ($d=2,3$) which is
assumed throughout to be (except where otherwise stated) of class
$\mathcal{C}^{4}$. For $\varepsilon>0$ we define the thin heterogeneous domain
$\Omega_{\varepsilon}$ in $\mathbb{R}^{d}$ by
\[
\Omega_{\varepsilon}=\Omega\times(-\varepsilon,\varepsilon)=\left\{
(\overline{x},x_{d})\in\mathbb{R}^{d}:\overline{x}\in\Omega\text{ and
}-\varepsilon<x_{d}<\varepsilon\right\}  .
\]
In the thin layer $\Omega_{\varepsilon}$, the flow of two-phase immiscible
fluids at the micro-scale is described by the Stokes-Cahn-Hilliard system
\begin{equation}
\left\{
\begin{array}
[c]{l}%
\dfrac{\partial\boldsymbol{u}_{\varepsilon}}{\partial t}-\alpha\varepsilon
^{2}\Delta\boldsymbol{u}_{\varepsilon}+\nabla p_{\varepsilon}-\mu
_{\varepsilon}\nabla\varphi_{\varepsilon}=\boldsymbol{h}\text{ in
}Q_{\varepsilon}=(0,T)\times\Omega_{\varepsilon}\\
\\
\operatorname{div}\boldsymbol{u}_{\varepsilon}=0\text{ in }Q_{\varepsilon}\\
\\
\dfrac{\partial\varphi_{\varepsilon}}{\partial t}+\boldsymbol{u}_{\varepsilon
}\cdot\nabla\varphi_{\varepsilon}-\Delta\mu_{\varepsilon}=0\text{ in
}Q_{\varepsilon}\\
\\
\mu_{\varepsilon}=-\beta\Delta\varphi_{\varepsilon}+\lambda f(\varphi
_{\varepsilon})\text{ in }Q_{\varepsilon}\\
\\
\dfrac{\partial\mu_{\varepsilon}}{\partial\nu}=0\text{, }\dfrac{\partial
\varphi_{\varepsilon}}{\partial\nu}=0\text{ and }\boldsymbol{u}_{\varepsilon
}=0\text{\ on }(0,T)\times\partial\Omega_{\varepsilon}\\
\\
\boldsymbol{u}_{\varepsilon}(0,x)=\boldsymbol{u}_{0}^{\varepsilon}(x)\text{
and }\varphi_{\varepsilon}(0,x)=\varphi_{0}^{\varepsilon}(x)\text{ in }%
\Omega_{\varepsilon},
\end{array}
\right. \label{1.1}%
\end{equation}
where $\alpha$, $\beta$ and $\lambda$ are positive fixed parameters, and $\nu$
is a unit outward normal to $\partial\Omega_{\varepsilon}$. Here,
$\boldsymbol{u}_{\varepsilon}$, $p_{\varepsilon}$, $\varphi_{\varepsilon}$ and
$\mu_{\varepsilon}$ are respectively the unknown velocity, pressure, the order
parameter and the chemical potential. The order parameter $\varphi
_{\varepsilon}$ is the difference of the fluid relative concentrations and
usually takes values between $-1$ and $1$. In (\ref{1.1}), $\nabla$ (resp.
$\operatorname{div}$ and $\Delta$) denotes the usual gradient (resp.
divergence and Laplace) operator in $\Omega_{\varepsilon}$. The function
$\boldsymbol{h}$ has the form
\begin{equation}
\boldsymbol{h}(t,x)=(\boldsymbol{h}_{1}(t,\overline{x}),0)\text{ for a.e.
}(t,x=(\overline{x},x_{d}))\in(0,T)\times\Omega\times(-1,1)\equiv
Q_{1},\label{1.2}%
\end{equation}
where $\boldsymbol{h}_{1}\in L^{2}((0,T)\times\Omega)^{d-1}$. The function
$f\in\mathcal{C}^{2}(\mathbb{R})$ satisfies
\begin{equation}
\underset{\left\vert r\right\vert \rightarrow\infty}{\lim\inf}f^{\prime
}(r)>0\text{ and }\left\vert f^{\prime\prime}(r)\right\vert \leq
c_{f}(1+\left\vert r\right\vert )\ \forall r\in\mathbb{R},\label{1.3}%
\end{equation}
where $c_{f}$ is a positive constant.

Finally the initial conditions $\boldsymbol{u}_{0}^{\varepsilon}\in
L^{2}(\Omega_{\varepsilon})^{d}$ and $\varphi_{0}^{\varepsilon}\in
H^{1}(\Omega_{\varepsilon})$ satisfy the conditions
\begin{equation}
\left\Vert \boldsymbol{u}_{0}^{\varepsilon}\right\Vert _{L^{2}(\Omega
_{\varepsilon})^{d}}+\left\Vert \varphi_{0}^{\varepsilon}\right\Vert
_{H^{1}(\Omega_{\varepsilon})}\leq C\varepsilon^{\frac{1}{2}}\text{ and }%
\int_{\Omega_{\varepsilon}}F(\varphi_{0}^{\varepsilon})dx\leq C\varepsilon
,\label{1.4}%
\end{equation}
where $C>0$ is a constant independent of $\varepsilon$ and
\begin{equation}
F(r)=\int_{0}^{r}f(s)ds,\ r\in\mathbb{R}%
,\ \ \ \ \ \ \ \ \ \ \ \ \ \ \ \ \ \ \ \ \ \ \ \ \ \ \ \ \label{1.5}%
\end{equation}
and we assume without loss of generality that
\begin{equation}
\varepsilon^{-\frac{1}{2}}\left\Vert \boldsymbol{u}_{0}^{\varepsilon
}-\boldsymbol{u}^{0}\right\Vert _{L^{2}(\Omega_{\varepsilon})^{d}}%
\rightarrow0\text{ and }\varepsilon^{-\frac{1}{2}}\left\Vert \varphi
_{0}^{\varepsilon}-\varphi^{0}\right\Vert _{L^{2}(\Omega_{\varepsilon}%
)}\rightarrow0\label{1.6}%
\end{equation}
when $\varepsilon\rightarrow0$, where $\boldsymbol{u}^{0}\in L^{2}(\Omega
)^{d}$ and $\varphi^{0}\in H^{1}(\Omega)$.

It follows from (\ref{1.3}) that
\begin{equation}%
\begin{array}
[c]{l}%
\left\vert f^{\prime}(r)\right\vert \leq C(1+\left\vert r\right\vert
^{2})\text{, }\left\vert f(r)\right\vert \leq C(1+\left\vert r\right\vert
^{3})\text{ and }\\
\left\vert f^{\prime}(r)-f^{\prime}(s)\right\vert \leq C(1+\left\vert
r\right\vert +\left\vert s\right\vert )\left\vert r-s\right\vert \ \ \forall
r,s\in\mathbb{R},
\end{array}
\label{1.7}%
\end{equation}
for a positive constant $C$ depending on $f$.

A typical example of regular double well potential is the Landau potential
\[
F(r)=\frac{1}{4}(r^{2}-1)^{2}%
,\ \ \ \ \ \ \ \ \ \ \ \ \ \ \ \ \ \ \ \ \ \ \ \ \ \ \ \ \ \ \ \ \ \ \ \ \ \ \ \ \ \
\]
a function fulfilling conditions (\ref{1.3}), (\ref{1.5}) and (\ref{1.7}%
).\ One can also consider a fourth order polynomial with positive leading coefficient.

Throughout the work, we will denote by (\ref{1.1})$_{i}$ the $i$th equation of
system (\ref{1.1}).

\begin{remark}
\label{r1}\emph{Assumption (\ref{1.6}) is physically relevant. Indeed we may
think of }$\boldsymbol{u}_{0}^{\varepsilon}$\emph{\ as a solution of the
Stokes system }%
\[
\left\{
\begin{array}
[c]{l}%
-\Delta\boldsymbol{u}_{0}^{\varepsilon}+\nabla p_{0}^{\varepsilon
}=g\text{\emph{\ in }}\Omega_{\varepsilon},\\
\operatorname{div}\boldsymbol{u}_{0}^{\varepsilon}=0\text{\emph{\ in }}%
\Omega_{\varepsilon}\text{\emph{\ and }}\boldsymbol{u}_{0}^{\varepsilon
}=0\text{\emph{\ on }}\partial\Omega_{\varepsilon},
\end{array}
\right.
\ \ \ \ \ \ \ \ \ \ \ \ \ \ \ \ \ \ \ \ \ \ \ \ \ \ \ \ \ \ \ \ \ \ \ \ \ \ \
\]
\emph{with }$g(x)=(g_{1}(\overline{x}),0)$\emph{, }$g_{1}\in L^{2}%
(\Omega)^{d-1}$\emph{. Then by standard energy estimates, we get }$\left\Vert
\boldsymbol{u}_{0}^{\varepsilon}\right\Vert _{H_{0}^{1}(\Omega_{\varepsilon
})^{d}}\leq C\varepsilon^{1/2}$\emph{. Therefore, appealing to the two-scale
convergence for thin periodic domains (see e.g. Ref. \cite{RJ2007}) we derive the
existence of }$\boldsymbol{u}^{0}\in L^{2}(\Omega)^{d}$\emph{\ such that
}$\varepsilon^{-\frac{1}{2}}\left\Vert \boldsymbol{u}_{0}^{\varepsilon
}-\boldsymbol{u}^{0}\right\Vert _{L^{2}(\Omega_{\varepsilon})^{d}}%
\rightarrow0$\emph{\ as }$\varepsilon\rightarrow0$\emph{. We may do the same
for }$\varphi_{0}^{\varepsilon}$\emph{.}
\end{remark}

The $\varepsilon$-model (\ref{1.1}) consists of a convective Cahn-Hilliard
equation coupled with the Stokes equation through the surface tension term
$\mu_{\varepsilon}\nabla\varphi_{\varepsilon}$. Thus (\ref{1.1}) belongs to
the class of diffuse interface models that are used to describe the behaviour
of multi-phase fluids. It is also very important to note that the scaling in
(\ref{1.1})$_{1}$ is exactly the one leading to memory effects in the
upscaling limit. Indeed; it was shown in Ref. \cite{Allaire} that the exact scaling
for the Darcy law with memory in the time dependent Stokes system was the one
considered in (\ref{1.1}). So, the main goal of this contribution is to
investigate the asymptotic behaviour when $\varepsilon\rightarrow0$, of the
sequence of solutions to (\ref{1.1}).

The motivation for this study lies at several levels some of which are
enumerated below.

--- \emph{The domain}. There is a huge literature on homogenization in fixed
or porous media. A few works deal with the homogenization theory in thin
heterogeneous domains; see e.g. Ref. \cite{Gahn2,Gahn1,Gahn3,Gahn4,RJ2007}. All
the previous works deal with thin periodic structures. Our model problem is
stated in a highly heterogeneous thin domain whose heterogeneities are
distributed inside in a general deterministic way including the periodic one,
the almost periodic one and others. Therefore we need to develop a suitable
version of the sigma-convergence for thin domains, which generalizes the
two-scale convergence concept for thin periodic structures introduced in
\cite{RJ2007} by the second author.

--- \emph{The model}. Several works have considered homogenization of single
phase fluid. The most relevant ones are concerned with the derivation of Darcy
and Darcy-type laws (see for instance Ref. \cite{Allaire,Mikelic}). We also refer
the reader to \cite{CLS2013} in which the study of the asymptotic behaviour of
solutions of the Navier-Stokes system in a thin domain satisfying the Navier
boundary condition on a periodic rough surface is considered. Contrasting with
the study of single phase fluids, the homogenization theory for multi-phase
flow is less developed. Let us mention Ref. \cite{Auriault1989,Banas2017,Woukeng2020,Daly2015,Hornung1997,Schmuck2012,Pop2022}. In the current
contribution, we deal with a model for two-phase thin heterogeneous media flow
with surface tension described by (\ref{1.1}).

--- \emph{The expected upscaled model}. One of the main motivations of this
study is the expected homogenized model (corresponding to the $3D$
$\varepsilon$-model) which, to the best of our knowledge, is new and is stated
below as one of the main results.

\begin{theorem}
\label{t1.1}Assume $d=3$. For each $\varepsilon>0$, let $(\boldsymbol{u}%
_{\varepsilon},\varphi_{\varepsilon},\mu_{\varepsilon},p_{\varepsilon}) $ be
the unique solution of \emph{(\ref{1.1})}. Then up to a subsequence not
relabeled, $(\boldsymbol{u}_{\varepsilon},\mu_{\varepsilon},p_{\varepsilon
})_{\varepsilon>0}$ weakly $\Sigma_{A}$-converges (as $\varepsilon
\rightarrow0$) in $L^{2}(Q_{\varepsilon})^{3}\times L^{2}(Q_{\varepsilon
})\times L^{2}(Q_{\varepsilon})$ towards $(\boldsymbol{u}_{0},\mu_{0},p_{0})$
and $(\varphi_{\varepsilon})_{\varepsilon>0}$ strongly $\Sigma_{A}$-converges
in $L^{2}(Q_{\varepsilon})$ towards $\varphi_{0}$ with $\varphi_{0}\in
L^{\infty}(0,T;H^{1}(\Omega))$, $\boldsymbol{u}_{0}\in L^{2}(Q;\mathcal{B}%
_{A}^{1,2}(\mathbb{R}^{2};H_{0}^{1}(I))^{3})$, $\mu_{0}\in L^{2}%
(0,T;H^{1}(\Omega))$ and $p_{0}\in L^{2}(0,T;L_{0}^{2}(\Omega))$. Setting
\[
M_{\varepsilon}\phi(t,\overline{x})=\frac{1}{2\varepsilon}\int_{-\varepsilon
}^{\varepsilon}\phi(t,\overline{x},\zeta)d\zeta\text{ for }(t,\overline{x})\in
Q,
\]
and
\[
\boldsymbol{u}(t,\overline{x})=\frac{1}{2}\int_{-1}^{1}M(\boldsymbol{u}%
_{0}(t,\overline{x},\cdot,\zeta))d\zeta\equiv(\overline{\boldsymbol{u}%
}(t,\overline{x}),u_{3}(t,\overline{x})),
\]
one has $u_{3}=0$ and, up to the same subsequence above, we have, as
$\varepsilon\rightarrow0$,
\begin{equation}%
\begin{array}
[c]{l}%
M_{\varepsilon}\boldsymbol{u}_{\varepsilon}\rightarrow(\overline
{\boldsymbol{u}},0)\text{ in }L^{2}(Q)^{3}\text{-weak, \ }M_{\varepsilon
}\varphi_{\varepsilon}\rightarrow\varphi_{0}\text{ in }L^{2}(Q)\text{-strong,}%
\\
M_{\varepsilon}\mu_{\varepsilon}\rightarrow\mu_{0}\text{ in }L^{2}%
(Q)\text{-weak and }M_{\varepsilon}p_{\varepsilon}\rightarrow p_{0}\text{ in
}L^{2}(Q)\text{-weak.}%
\end{array}
\label{1.8'}%
\end{equation}
Moreover it holds that $\overline{\boldsymbol{u}}\in\mathcal{C}%
([0,T];\mathbb{H})$, $\varphi_{0}\in\mathcal{C}([0,T];H^{1}(\Omega))\cap
L^{2}(0,T;H^{3}(\Omega))$, $p_{0}\in L^{2}(0,T;H^{1}(\Omega)\cap L_{0}%
^{2}(\Omega))$ and the quadruple $(\overline{\boldsymbol{u}},\varphi_{0}%
,\mu_{0},p_{0})$ is a weak solution of the effective $2D$ problem
\begin{equation}
\left\{
\begin{array}
[c]{l}%
\overline{\boldsymbol{u}}=G\overline{\boldsymbol{u}}^{0}+G\ast(\boldsymbol{h}%
_{1}+\mu_{0}\nabla_{\overline{x}}\varphi_{0}-\nabla_{\overline{x}}p_{0})\text{
in }Q,\\
\\
\operatorname{div}_{\overline{x}}\overline{\boldsymbol{u}}=0\text{ in }Q\text{
and }\overline{\boldsymbol{u}}\cdot\boldsymbol{n}=0\text{ on }(0,T)\times
\partial\Omega,\\
\\
\dfrac{\partial\varphi_{0}}{\partial t}+\overline{\boldsymbol{u}}\cdot
\nabla_{\overline{x}}\varphi_{0}-\Delta_{\overline{x}}\mu_{0}=0\text{ in }Q,\\
\\
\mu_{0}=-\beta\Delta_{\overline{x}}\varphi_{0}+\lambda f(\varphi_{0})\text{ in
}Q,\\
\\
\dfrac{\partial\varphi_{0}}{\partial\boldsymbol{n}}=\dfrac{\partial\mu_{0}%
}{\partial\boldsymbol{n}}=0\text{ on }(0,T)\times\partial\Omega,\\
\\
\varphi_{0}(0)=\varphi^{0}\text{ in }\Omega,
\end{array}
\right.  \ \ \ \ \ \ \ \ \ \ \ \ \ \ \ \ \ \ \ \ \ \ \ \ \ \ \ \ \label{1.8}%
\end{equation}
where $\ast$ stands for the convolution operator with respect to time and
$G=(G_{ij})_{1\leq i,j\leq2}$ is a symmetric positive definite $2\times2$
matrix defined by its entries $G_{ij}(t)=\frac{1}{2}\int_{-1}^{1}M(\omega
^{i}(t,\cdot,\zeta))e_{j}d\zeta$. Here $\omega^{j}=(\omega_{i}^{j})_{1\leq
i\leq3}$ is the unique solution in $\mathcal{C}(0,T;\mathcal{B}_{A}%
^{2}(\mathbb{R}^{2};L^{2}(I))^{3})\cap L^{2}(0,T;\mathcal{B}_{A}%
^{1,2}(\mathbb{R}^{2};H_{0}^{1}(I))^{3})$ of the auxiliary Stokes system
\[
\left\{
\begin{array}
[c]{l}%
\dfrac{\partial\omega^{j}}{\partial t}-\alpha\overline{\Delta}_{y}\omega
^{j}+\overline{\nabla}_{y}\pi^{j}=0\text{ in }(0,T)\times\mathbb{R}^{2}\times
I,\\
\\
\overline{\operatorname{div}}_{y}\omega^{j}=0\text{ in }(0,T)\times
\mathbb{R}^{2}\times I,\\
\\
\omega^{j}(0)=e_{j}\text{ in }\mathbb{R}^{2}\times I\text{ and }%
{\displaystyle\int_{-1}^{1}}
M(\omega_{3}^{j}(t,\cdot,\zeta))d\zeta=0,
\end{array}
\right.
\]
$e_{j}$ being the $j$th vector of the canonical basis in $\mathbb{R}^{3}$.
Assuming $\varphi^{0}\in H^{2}(\Omega)$ with $\nabla\varphi^{0}\cdot
\boldsymbol{n}=0$ on $\partial\Omega$, then $\varphi_{0}\in\mathcal{C}%
([0,T];H^{2}(\Omega))\cap L^{2}(0,T;H^{4}(\Omega))\cap H^{1}(0,T;L^{2}%
(\Omega))$, $\mu_{0}\in\mathcal{C}([0,T];H^{1}(\Omega))\cap L^{2}%
(0,T;H^{2}(\Omega))$, and the quadruple $(\overline{\boldsymbol{u}}%
,\varphi_{0},\mu_{0},p_{0})$ is the unique solution of \emph{(\ref{1.8})}, so
that the whole sequence $(\boldsymbol{u}_{\varepsilon},\varphi_{\varepsilon
},\mu_{\varepsilon},p_{\varepsilon})_{\varepsilon>0}$ converges in the sense
of \emph{(\ref{1.8'})}.
\end{theorem}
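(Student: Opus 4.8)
The plan is to carry out the homogenization of system (\ref{1.1}) via sigma-convergence in thin heterogeneous domains, following the standard energy-estimate $\to$ compactness $\to$ passage-to-the-limit scheme, but with the two technical novelties the introduction announces: the scaling $\alpha\varepsilon^2\Delta\boldsymbol u_\varepsilon$ producing the memory kernel $G$, and the algebras-with-mean-value machinery $\Sigma_A$. First I would establish uniform (in $\varepsilon$) a priori estimates. Testing (\ref{1.1})$_1$ with $\boldsymbol u_\varepsilon$, (\ref{1.1})$_3$ with $\mu_\varepsilon$, (\ref{1.1})$_4$ with $\partial_t\varphi_\varepsilon$, and combining with the energy functional $\frac12\|\boldsymbol u_\varepsilon\|^2+\frac{\beta}{2}\|\nabla\varphi_\varepsilon\|^2+\lambda\int F(\varphi_\varepsilon)$, using (\ref{1.3})--(\ref{1.7}) and (\ref{1.4}), I expect bounds of the form $\|\boldsymbol u_\varepsilon\|_{L^\infty(0,T;L^2)}+\varepsilon\|\nabla\boldsymbol u_\varepsilon\|_{L^2(Q_\varepsilon)}+\|\varphi_\varepsilon\|_{L^\infty(0,T;H^1)}+\|\nabla\mu_\varepsilon\|_{L^2(Q_\varepsilon)}\le C\varepsilon^{1/2}$, plus the corresponding bound on $\|\partial_t\varphi_\varepsilon\|$ in a dual norm. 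A Poincaré-type inequality in the thin variable $x_d\in(-\varepsilon,\varepsilon)$ and control of the mean value $M_\varepsilon\mu_\varepsilon$ (via the equation for $\mu_\varepsilon$ and the mass conservation $\partial_t\int\varphi_\varepsilon=0$) upgrade this to an $H^1$ bound on $\mu_\varepsilon$; a Bogovskii/De Rham argument in $\Omega_\varepsilon$ recovers the pressure bound $\|p_\varepsilon\|_{L^2(Q_\varepsilon)}\le C\varepsilon^{1/2}$. In dimension $d=3$ with $\varphi_\varepsilon$ bounded in $L^\infty(0,T;H^1)$ one has the interpolation $L^2(0,T;L^\infty)$-type control needed to handle the convective term $\boldsymbol u_\varepsilon\cdot\nabla\varphi_\varepsilon$ and the surface-tension term $\mu_\varepsilon\nabla\varphi_\varepsilon$.

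Second, I would invoke the sigma-convergence compactness results for thin heterogeneous domains developed earlier in the paper to extract a subsequence such that $(\boldsymbol u_\varepsilon,\mu_\varepsilon,p_\varepsilon)$ weakly $\Sigma_A$-converge to $(\boldsymbol u_0,\mu_0,p_0)$ with the stated regularity, and — using an Aubin--Lions--Simon argument with the $\partial_t\varphi_\varepsilon$ bound — $\varphi_\varepsilon$ strongly $\Sigma_A$-converges (and in fact $M_\varepsilon\varphi_\varepsilon\to\varphi_0$ strongly in $L^2(Q)$). The scale-splitting structure forces $\boldsymbol u_0\in L^2(Q;\mathcal B_A^{1,2}(\mathbb R^2;H_0^1(I))^3)$ and $\varphi_0,\mu_0$ independent of the fast variables (their gradients being order-one, not order $\varepsilon^{-1}$, one checks the corrector vanishes). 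Third comes the passage to the limit. For the Cahn--Hilliard block (\ref{1.1})$_3$--(\ref{1.1})$_4$ this is routine once strong convergence of $\varphi_\varepsilon$ (hence of $f(\varphi_\varepsilon)\to f(\varphi_0)$ via (\ref{1.7}) and the $L^\infty H^1\hookrightarrow$ growth control) is in hand: one obtains the averaged equations $\partial_t\varphi_0+\overline{\boldsymbol u}\cdot\nabla_{\overline x}\varphi_0-\Delta_{\overline x}\mu_0=0$ and $\mu_0=-\beta\Delta_{\overline x}\varphi_0+\lambda f(\varphi_0)$, where the factor $\frac12\int_{-1}^1$ and the mean value $M$ collapse the thin and fast directions. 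For the Stokes block, the key is that the $\varepsilon^2$-scaling makes $-\alpha\varepsilon^2\Delta$ survive as $-\alpha\overline\Delta_y$ acting on the corrector at the fast scale, while $\partial_t\boldsymbol u_\varepsilon$ survives at order one; choosing oscillating test functions adapted to the auxiliary time-dependent Stokes system defining $\omega^j$, and using Duhamel's principle, one identifies $\overline{\boldsymbol u}=G\overline{\boldsymbol u}^0+G\ast(\boldsymbol h_1+\mu_0\nabla_{\overline x}\varphi_0-\nabla_{\overline x}p_0)$, with $G$ symmetric positive definite by the usual energy identity for $\omega^j$. The constraint $u_3=0$ and $\mathrm{div}_{\overline x}\overline{\boldsymbol u}=0$, $\overline{\boldsymbol u}\cdot\boldsymbol n=0$ follow from the incompressibility and the no-slip boundary condition in the thin direction.

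Fourth, the improved regularity and uniqueness. Having the upscaled system (\ref{1.8}), I would treat it as a nonlocal Hele-Shaw--Cahn-Hilliard system in $2D$ and bootstrap: from $\overline{\boldsymbol u}=G\ast(\cdots)$ and $\mu_0\in L^2(0,T;H^1)$ one gets $\overline{\boldsymbol u}\in\mathcal C([0,T];\mathbb H)$ (continuity in time comes from the convolution with the continuous-in-time kernel $G$ and an $L^2$-in-time right-hand side); feeding this velocity into the Cahn--Hilliard pair and using elliptic regularity for $-\beta\Delta_{\overline x}\varphi_0+\lambda f(\varphi_0)=\mu_0$ together with the parabolic smoothing of (\ref{1.8})$_3$ yields $\varphi_0\in\mathcal C([0,T];H^1)\cap L^2(0,T;H^3)$ first, then — under the extra hypothesis $\varphi^0\in H^2$ with $\nabla\varphi^0\cdot\boldsymbol n=0$ — the higher regularity $\varphi_0\in\mathcal C([0,T];H^2)\cap L^2(0,T;H^4)\cap H^1(0,T;L^2)$ and $\mu_0\in\mathcal C([0,T];H^1)\cap L^2(0,T;H^2)$, with $p_0\in L^2(0,T;H^1)$ recovered from the divergence-free/Neumann elliptic problem for the pressure. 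Uniqueness follows by taking two solutions, subtracting, testing the difference equations with suitable multipliers, controlling the difference of the nonlinearities via (\ref{1.7}) in $2D$ (where $H^1\hookrightarrow L^p$ for all finite $p$ and the Ladyzhenskaya inequality are available), and closing a Gronwall estimate; the memory term is handled since convolution with $G$ is a bounded causal operator on $L^2(0,T)$. Once uniqueness holds, the limit is independent of the extracted subsequence, so the whole family converges in the sense of (\ref{1.8'}).

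The main obstacle, as I see it, is the passage to the limit in the Stokes equation with the degenerate $\varepsilon^2$-viscosity: one must justify the decomposition $\boldsymbol u_\varepsilon\approx M_\varepsilon\boldsymbol u_\varepsilon+$ (fast corrector) rigorously within the $\Sigma_A$-framework, prove that the corrector solves the auxiliary time-dependent Stokes problem, and correctly track the time-convolution structure so that the memory kernel $G$ emerges — this is the step where the new sigma-convergence-in-thin-domains apparatus does the real work and where the coupling with $\mu_0\nabla_{\overline x}\varphi_0$ (a term only in $L^2$ in time, needing the strong convergence of $\varphi_\varepsilon$) must be inserted carefully. The secondary difficulty is the $3D$-to-$2D$ dimensional reduction interacting with the cubic growth of $f$: the energy estimates and the strong convergence must be strong enough that $f(\varphi_\varepsilon)$ and $F(\varphi_\varepsilon)$ pass to the limit, which is why the $L^\infty(0,T;H^1)$ bound on $\varphi_\varepsilon$ (and not merely $L^2 H^1$) is essential.
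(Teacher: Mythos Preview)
Your proposal is correct and follows essentially the same architecture as the paper: uniform energy estimates (Propositions~\ref{pr2.1}--\ref{p2.3}), $\Sigma_A$-compactness in thin domains with an Aubin--Lions argument applied to $M_\varepsilon\varphi_\varepsilon$ for the strong convergence (Theorem~\ref{t2.3}), passage to the limit with oscillating test functions, identification of the memory kernel via the auxiliary Stokes cells $\omega^j$, and finally bootstrap regularity plus a Gronwall-type uniqueness argument in $2D$. One minor procedural difference: rather than testing the $\varepsilon$-problem directly with functions built from $\omega^j$, the paper first passes to the limit to obtain the two-scale equation $\partial_t\boldsymbol u_0-\alpha\overline\Delta_y\boldsymbol u_0+\overline\nabla_y p_1=\boldsymbol h-\nabla_{\overline x}p_0+\mu_0\nabla_{\overline x}\varphi_0$ (equation~(\ref{4.27})), and only then cross-tests this limit equation and the cell problem~(\ref{4.28}) against each other (with the time-shifted pairing $\omega^j(\tau)\leftrightarrow\boldsymbol u_0(t-\tau)$) to extract the convolution representation~(\ref{4.31}); this is the Duhamel identification you allude to, just carried out at the homogenized level rather than at the $\varepsilon$-level.
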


Here above in Theorem \ref{t1.1}, the letter $M$ and the space $\mathcal{B}%
_{A}^{2}$ stand respectively for the mean value operator and the generalized
Besicovitch space associated to the algebra with mean value $A$; see Section
\ref{sec3} for details about these concepts.

The equation (\ref{1.8})$_{1}$ is a Hele-Shaw equation with memory, that is, a
nonlocal (in time) Hele-Shaw equation. The system (\ref{1.8}) is an
interesting variant of the Hele-Shaw-Cahn-Hilliard system since it requires
the initial value for the velocity. Moreover, the pressure, the velocity, the
order parameter and the chemical potential depend on the history of the system
and there is no non-physical jump in velocity at $t=0$. It has many
applications in two-phase flow in porous media and Hele-Shaw cell, but also
widely used to model tumor growth\cite{Lowengrub2013,Wise2008}. It is
therefore a \emph{nonlocal} (in time) \emph{Hele-Shaw-Cahn-Hilliard} (HSCH)
system. Although this could have been foreseen, surprisingly, to the best of
our knowledge, this is the first time that such a system is derived in the
literature. For that reason, we need to make a qualitative analysis of
(\ref{1.8}) in order to prove some regularity results and its well-posedness.
This is one of the main aims of this work.

There are some studies regarding the analysis of the local version of
(\ref{1.8}), that is the version in which (\ref{1.8})$_{1}$ is replaced by the
following equation
\[
\overline{\boldsymbol{u}}=\boldsymbol{h}_{1}+\mu_{0}\nabla_{\overline{x}%
}\varphi_{0}-\nabla_{\overline{x}}p_{0}\text{ in }Q\text{.}%
\ \ \ \ \ \ \ \ \ \ \ \ \ \ \ \ \ \ \ \ \ \ \ \ \ \ \ \ \ \ \
\]
Indeed, in Ref. \cite{Wise2010}, the local version was studied numerically. It has
also been studied analytically in Ref. \cite{Feng2012} where existence and
uniqueness of weak solutions in two or three dimensional bounded domains were
proved, and in Ref. \cite{Wang2012,Wu2012} where the well-posedness and longtime
behaviour of strong solutions in two or three dimensional torus were
considered. We also cite Ref. \cite{Lowengrub2013} where systematic analysis of the
local version was considered in a $2D$ rectangle or in a $3D$ parallelepiped.

In our study, after the derivation of model (\ref{1.8}), we are concerned with
its analysis. Precisely, we improve the regularity of its solutions by
establishing some regularity estimates. We rely on these regularity results to
prove the well-posedness of (\ref{1.8}). To the best of our knowledge, this is
the first time that such a model is derived and analyzed in the literature.

The second main result of the work corresponds to the $2D$ $\varepsilon$-model
posed in $\Omega_{\varepsilon}=(a,b)\times(-\varepsilon,\varepsilon)$. It
reads as follows.

\begin{theorem}
\label{t1.2}Assume $d=2$ and $\boldsymbol{u}^{0}=0$. For each $\varepsilon>0
$, let $(\boldsymbol{u}_{\varepsilon},\varphi_{\varepsilon},\mu_{\varepsilon
},p_{\varepsilon})$ be as in Theorem \emph{\ref{t1.1}}. Then the sequence
$(\boldsymbol{u}_{\varepsilon},\mu_{\varepsilon},p_{\varepsilon}%
)_{\varepsilon>0}$ weakly $\Sigma_{A}$-converges (as $\varepsilon\rightarrow
0$) in $L^{2}(Q_{\varepsilon})^{2}\times L^{2}(Q_{\varepsilon})\times
L^{2}(Q_{\varepsilon})$ towards $(\boldsymbol{u}_{0},\mu_{0},p_{0})$ and the
sequence $(\varphi_{\varepsilon})_{\varepsilon>0}$ strongly $\Sigma_{A}%
$-converges in $L^{2}(Q_{\varepsilon})$ towards $\varphi_{0}$ with
$\varphi_{0}\in L^{\infty}(0,T;H^{1}(\Omega))$, $\boldsymbol{u}_{0}\in
L^{2}(Q;\mathcal{B}_{A}^{1,2}(\mathbb{R};H_{0}^{1}(I))^{2})$, $\mu_{0}\in
L^{2}(0,T;H^{1}(\Omega))$ and $p_{0}\in L^{2}(0,T;L_{0}^{2}(\Omega))$.
Moreover setting
\[
\boldsymbol{u}(t,x_{1})=\frac{1}{2}\int_{-1}^{1}M(\boldsymbol{u}_{0}%
(t,x_{1},\cdot,\zeta))d\zeta,
\]
one has $\boldsymbol{u}=0$, and the couple $(\varphi_{0},\mu_{0})$ is the
unique solution to the $1D$ Cahn-Hilliard equation
\begin{equation}
\left\{
\begin{array}
[c]{l}%
\dfrac{\partial\varphi_{0}}{\partial t}-\dfrac{\partial^{2}\mu_{0}}{\partial
x_{1}^{2}}=0\text{ \ in }(0,T)\times(a,b),\\
\\
\mu_{0}=-\beta\dfrac{\partial^{2}\varphi_{0}}{\partial x_{1}^{2}}+\lambda
f(\varphi_{0})\text{ in }(0,T)\times(a,b),\\
\\
\varphi_{0}^{\prime}(t,a)=\varphi_{0}^{\prime}(t,b)=0,\ \mu_{0}^{\prime
}(t,a)=\mu_{0}^{\prime}(t,b)=0\text{ in }(0,T),\\
\\
\varphi_{0}(0)=\varphi^{0}\text{ in }(a,b).
\end{array}
\right. \label{1.9}%
\end{equation}
Furthermore the pressure $p_{0}$ is the unique solution to the equation
\begin{equation}
\dfrac{\partial p_{0}}{\partial x_{1}}=\boldsymbol{h}_{1}+\mu_{0}%
\dfrac{\partial\varphi_{0}}{\partial x_{1}},\ \ \ \int_{a}^{b}p_{0}%
dx_{1}=0.\ \ \ \ \ \ \ \ \ \ \ \ \ \ \ \ \ \ \ \ \ \label{1.10}%
\end{equation}

\end{theorem}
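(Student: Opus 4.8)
The plan is to run, for $d=2$, the same deterministic homogenization scheme that underlies Theorem \ref{t1.1}, and then to exploit the one-dimensionality of the macroscopic domain $(a,b)$ to collapse the resulting effective system to \eqref{1.9}--\eqref{1.10}. First I would collect the uniform-in-$\varepsilon$ a priori bounds for $(\boldsymbol{u}_{\varepsilon},\varphi_{\varepsilon},\mu_{\varepsilon},p_{\varepsilon})$ coming from the energy estimates for \eqref{1.1} together with \eqref{1.4}--\eqref{1.6}, rescale to the fixed cylinder $Q_{1}=(0,T)\times(a,b)\times I$, and extract a (not relabeled) subsequence that weakly $\Sigma_{A}$-converges, with $\varphi_{\varepsilon}$ strongly $\Sigma_{A}$-convergent by an Aubin--Lions type compactness argument adapted to thin heterogeneous media. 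Passing to the $\Sigma_{A}$-limit in each equation of \eqref{1.1} exactly as in Theorem \ref{t1.1} (the structure \eqref{1.2} of $\boldsymbol{h}$ and the auxiliary Stokes cell problem forcing the vertical average $u_{2}$ to vanish), I obtain the two-dimensional counterpart of \eqref{1.8}: the Hele--Shaw equation with memory $\overline{u}_{1}=G\ast(\boldsymbol{h}_{1}+\mu_{0}\partial_{x_{1}}\varphi_{0}-\partial_{x_{1}}p_{0})$ --- the term $G\overline{\boldsymbol{u}}^{0}$ being absent since $\boldsymbol{u}^{0}=0$, and $G=G_{11}$ being now a scalar function of $t$ --- together with $\partial_{x_{1}}\overline{u}_{1}=0$ in $(0,T)\times(a,b)$, $\overline{u}_{1}=0$ at $x_{1}\in\{a,b\}$, the convective Cahn--Hilliard system for $(\varphi_{0},\mu_{0})$ with Neumann boundary conditions and $\varphi_{0}(0)=\varphi^{0}$, and $\int_{a}^{b}p_{0}\,dx_{1}=0$.

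Second, I would carry out the one-dimensional reduction. From $\partial_{x_{1}}\overline{u}_{1}=0$ the function $\overline{u}_{1}$ depends on $t$ only, and the condition $\overline{u}_{1}(t,a)=\overline{u}_{1}(t,b)=0$ then forces $\overline{u}_{1}\equiv0$; combined with $u_{2}=0$ this gives $\boldsymbol{u}=0$. Inserting $\overline{u}_{1}=0$ and $\boldsymbol{u}^{0}=0$ into the memory equation yields $G\ast\psi=0$ on $(0,T)$ for a.e. $x_{1}\in(a,b)$, where $\psi=\boldsymbol{h}_{1}+\mu_{0}\partial_{x_{1}}\varphi_{0}-\partial_{x_{1}}p_{0}$. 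The Volterra convolution $\psi\mapsto G\ast\psi$ is injective: $G$ is continuous with $G(0)=\tfrac{1}{2}\int_{-1}^{1}M\big(\omega_{1}^{1}(0,\cdot,\zeta)\big)\,d\zeta=1>0$ since $\omega^{1}(0)=e_{1}$, hence $0\in\operatorname{supp}G$ and Titchmarsh's convolution theorem gives $\psi=0$, i.e. $\partial_{x_{1}}p_{0}=\boldsymbol{h}_{1}+\mu_{0}\partial_{x_{1}}\varphi_{0}$, which is \eqref{1.10}; the constraint $\int_{a}^{b}p_{0}\,dx_{1}=0$ then determines $p_{0}$ uniquely and shows $p_{0}\in L^{2}(0,T;H^{1}(a,b))$. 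Finally, substituting $\overline{u}_{1}=0$ into the convective Cahn--Hilliard system kills the transport term $\overline{\boldsymbol{u}}\cdot\nabla_{\overline{x}}\varphi_{0}$ and reduces $\Delta_{\overline{x}}$ to $\partial_{x_{1}}^{2}$, producing exactly \eqref{1.9}.

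Third, I would establish the well-posedness of \eqref{1.9}, which is the classical one-dimensional Cahn--Hilliard equation with homogeneous Neumann conditions and a regular potential obeying \eqref{1.3} and \eqref{1.7}. Existence, together with $\varphi_{0}\in\mathcal{C}([0,T];H^{1}(a,b))\cap L^{2}(0,T;H^{3}(a,b))$ and $\mu_{0}\in L^{2}(0,T;H^{1}(a,b))$, follows from a Galerkin scheme, the energy identity for the free energy $\int_{a}^{b}\big(\tfrac{\beta}{2}|\partial_{x_{1}}\varphi_{0}|^{2}+\lambda F(\varphi_{0})\big)\,dx_{1}$, and an elliptic bootstrap via $\mu_{0}=-\beta\partial_{x_{1}}^{2}\varphi_{0}+\lambda f(\varphi_{0})$; uniqueness follows from a Gronwall estimate on the difference of two solutions, the cubic growth of $f$ in \eqref{1.7} being harmless because of the embedding $H^{1}(a,b)\hookrightarrow L^{\infty}(a,b)$. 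Uniqueness for \eqref{1.9} together with the uniqueness of $p_{0}$ already obtained shows that the limit is independent of the extracted subsequence, so the whole family converges in the stated sense.

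I expect the main obstacle to be the passage to the $\Sigma_{A}$-limit in the nonlinear terms $\mu_{\varepsilon}\nabla\varphi_{\varepsilon}$, $\boldsymbol{u}_{\varepsilon}\cdot\nabla\varphi_{\varepsilon}$ and $f(\varphi_{\varepsilon})$, which requires upgrading the weak $\Sigma_{A}$-convergence of $\varphi_{\varepsilon}$ to strong convergence (and controlling $\nabla\varphi_{\varepsilon}$) through a compactness result tailored to thin heterogeneous domains; this is the genuinely technical part, and it is shared with Theorem \ref{t1.1}. A secondary but delicate point peculiar to $d=2$ is the injectivity of the memory convolution used to decouple $p_{0}$ from the Cahn--Hilliard system: it rests on $G$ not vanishing at $t=0$, hence on the precise normalization of the auxiliary Stokes cell problem defining $G$.
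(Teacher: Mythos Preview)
Your proposal is correct and follows essentially the same route as the paper: invoke the homogenization machinery of Theorem~\ref{t1.1} to obtain the one-dimensional version of system~\eqref{4.33}, then use $\partial_{x_1}\overline{u}_1=0$ together with the boundary condition to force $\overline{\boldsymbol{u}}=0$, which decouples the Cahn--Hilliard part and leaves $G\ast\psi=0$ with $\psi=\boldsymbol{h}_1+\mu_0\partial_{x_1}\varphi_0-\partial_{x_1}p_0$.

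The only substantive difference is in how you invert the convolution. The paper applies the Laplace transform and uses that $\widehat{G}(\tau)$ is positive (cf.\ the discussion around \eqref{5.18} and Proposition~\ref{p4.2'}) to conclude $\widehat{\psi}=0$, hence $\psi=0$. You instead observe that $G(0)=1$ from $\omega^{1}(0)=e_{1}$ and invoke Titchmarsh's theorem. Both arguments are valid; yours is arguably more direct since it avoids the analyticity/positivity analysis of $\widehat{G}$ and uses only the value of $G$ at the origin, while the paper's Laplace-transform argument is consistent with the tool already deployed in Step~1 of Proposition~\ref{p5.1} for the pressure regularity. Your more detailed sketch of the well-posedness of the $1D$ Cahn--Hilliard system is also fine; the paper simply records this as known.
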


The plan of this paper goes as follows. In Section \ref{sec2}, we recall the
well-posedness and derive some useful uniform estimates for the sequence of
solutions of (\ref{1.1}). Section \ref{sec3} deals with the treatment of the
concept of sigma-convergence for thin heterogeneous domains. We prove therein
some compactness results that will be used in the homogenization process. With
the help of the results obtained in Section \ref{sec3}, we pass to the limit
in (\ref{1.1}) in Section \ref{sec4} and derive the upscaled model. We next
analyze the $2D$ model (obtained in Section \ref{sec4}) and prove its
well-posedness in Section \ref{sec5}. We close this section by the proof of
the main results of the work.

Unless otherwise specified, the vector spaces throughout are assumed to be
real vector spaces, and the scalar functions are assumed to take real values.
We shall always assume that the numerical space $\mathbb{R}^{m}$ (integer
$m\geq1$) and its open sets are each provided with the Lebesgue measure
denoted by $dx=dx_{1}...dx_{m}$. Finally we will adopt the following notation
in the remaining part of the work. If $A=(a_{ij})_{1\leq i,j\leq m}$ and
$B=(b_{ij})_{1\leq i,j\leq m}$, we denote $A\cdot B:=\sum_{i,j=1}^{m}%
a_{ij}b_{ij}$; we use the same notation for the scalar product in
$\mathbb{R}^{m}$, namely, if $\boldsymbol{u}=(u_{i})_{1\leq i\leq m}$ and
$\boldsymbol{v}=(v_{i})_{1\leq i\leq m}$, then $\boldsymbol{u}\cdot
\boldsymbol{v}=\sum_{i=1}^{m}u_{i}v_{i}$.

\section{Existence result and uniform estimates\label{sec2}}

\subsection{Existence result}

In order to define the notion of weak solutions we will deal with in this
work, we first introduce the functional setup. Let $X$ be a Banach space. The
notation $\left\langle \cdot,\cdot\right\rangle $ will stand for the duality
pairings between $X$ and its topological dual $X^{\prime}$ while $\mathbb{X}$
will denote the space $X\times\cdots\times X$ ($d$ times) endowed with the
product structure. If in particular $X$ is a real Hilbert space with inner
product $(\cdot,\cdot)_{X}$, then we denote by $\left\Vert \cdot\right\Vert
_{X}$ the induced norm. Especially, by $\mathbb{H}_{\varepsilon}$ and
$\mathbb{V}_{\varepsilon}$ we denote the Hilbert spaces defined as the closure
in $\mathbb{L}^{2}(\Omega_{\varepsilon})=L^{2}(\Omega_{\varepsilon})^{d}$
(resp. $\mathbb{H}_{0}^{1}(\Omega^{\varepsilon})=H_{0}^{1}(\Omega
_{\varepsilon})^{d}$) of the space $\{\boldsymbol{u}\in\mathbb{C}_{0}^{\infty
}(\Omega_{\varepsilon}):\text{${\rm div}$}\boldsymbol{u}=0$ in $\Omega
_{\varepsilon}\}$ where $\mathbb{C}_{0}^{\infty}(\Omega_{\varepsilon
})=\mathcal{C}_{0}^{\infty}(\Omega_{\varepsilon})^{d}$. Then $\mathbb{V}%
_{\varepsilon}=\{\boldsymbol{u}\in\mathbb{H}_{0}^{1}(\Omega_{\varepsilon
}):{\rm div}$$\boldsymbol{u}=0$ in $\Omega_{\varepsilon}\}$ and $\mathbb{H}%
_{\varepsilon}=\{\boldsymbol{u}\in\mathbb{L}^{2}(\Omega_{\varepsilon}%
):{\rm div}$$\boldsymbol{u}=0$ in $\Omega_{\varepsilon}$ and $\boldsymbol{u}%
\cdot\nu=0$ on $\partial\Omega_{\varepsilon}\}$ where $\nu$ is the outward
unit normal to $\partial\Omega_{\varepsilon}$. The space $\mathbb{H}%
_{\varepsilon}$ is endowed with the scalar product denoted by $(\cdot,\cdot)$
whose associated norm is denoted by $\left\Vert \cdot\right\Vert
_{\mathbb{H}_{\varepsilon}}$. The space $\mathbb{V}_{\varepsilon}$ is equipped
with the scalar product
\[
(\boldsymbol{u},\boldsymbol{v}):=(\nabla\boldsymbol{u},\nabla\boldsymbol{v}%
)\ \ (\boldsymbol{u},\boldsymbol{v}\in\mathbb{V}_{\varepsilon})
\]
whose associated norm is the norm of the gradient and is denoted by
$\left\Vert \cdot\right\Vert _{\mathbb{V}_{\varepsilon}}$. Owing to the
Poincar\'{e} inequality, the norm in $\mathbb{V}_{\varepsilon}$\ is equivalent
to the $\mathbb{H}^{1}(\Omega_{\varepsilon})$-norm. We also define the space
$L_{0}^{2}(\Omega_{\varepsilon})=\{v\in L^{2}(\Omega_{\varepsilon}%
):\int_{\Omega_{\varepsilon}}vdx=0\}$. We denote by $\mathbb{V}$ (resp.
$\mathbb{H}$) the space defined as $\mathbb{V}_{\varepsilon}$ (resp.
$\mathbb{H}_{\varepsilon}$) when replacing $\Omega_{\varepsilon}$ by $\Omega$.
For the sake of simplicity, we shall often use the notation \medskip
$\left\Vert \cdot\right\Vert _{H^{s}}$ to denote the norm in $H^{s}(G)$ for
$s$ an integer and $G$ any open subset of $\mathbb{R}^{m}$ (integer $m\geq1$).

This being so, the concept of weak solution we will deal with in this work, is
defined as follows.

\begin{definition}
\label{d1.1}\emph{Let }$\boldsymbol{u}_{0}^{\varepsilon}\in\mathbb{H}%
_{\varepsilon}$\emph{\ and }$\varphi_{0}^{\varepsilon}\in H^{1}(\Omega
_{\varepsilon})$\emph{\ with }$F(\varphi_{0}^{\varepsilon})\in L^{1}%
(\Omega_{\varepsilon})$\emph{, and let }$0<T<\infty$\emph{\ be given. The
triplet }$(\boldsymbol{u}_{\varepsilon},\varphi_{\varepsilon},\mu
_{\varepsilon})$\emph{\ is a weak solution to (\ref{1.1}) if }

\begin{itemize}
\item \emph{It holds that }

\begin{itemize}
\item[(i)] $\boldsymbol{u}_{\varepsilon}\in L^{\infty}(0,T;\mathbb{H}%
_{\varepsilon})\cap L^{2}(0,T;\mathbb{V}_{\varepsilon})$\emph{\ with
}$\partial\boldsymbol{u}_{\varepsilon}/\partial t\in L^{2}(0,T;\mathbb{V}%
_{\varepsilon}^{\prime}),$

\item[(ii)] $\varphi_{\varepsilon}\in L^{\infty}(0,T;H^{1}(\Omega
_{\varepsilon}))$\emph{\ with }$\partial\varphi_{\varepsilon}/\partial t\in
L^{2}(0,T;H^{1}(\Omega_{\varepsilon})^{\prime}),$

\item[(iii)] $\mu_{\varepsilon}\in L^{2}(0,T;H^{1}(\Omega_{\varepsilon}));$
\end{itemize}

\item \emph{For all }$\phi,\chi\in L^{2}(0,T;H^{1}(\Omega_{\varepsilon}))
$\emph{\ and all }$\psi\in L^{2}(0,T;\mathbb{V}_{\varepsilon})$\emph{, }%
\begin{equation}
\int_{0}^{T}\left\langle \frac{\partial\boldsymbol{u}_{\varepsilon}}{\partial
t},\psi\right\rangle dt+\alpha\varepsilon^{2}\int_{Q_{\varepsilon}}%
\nabla\boldsymbol{u}_{\varepsilon}\cdot\nabla\psi dxdt+\int_{Q_{\varepsilon}%
}(\psi\cdot\nabla\mu_{\varepsilon})\varphi_{\varepsilon}dxdt=\int
_{Q_{\varepsilon}}\boldsymbol{h}\psi dxdt,\label{e2.1}%
\end{equation}%
\begin{equation}
\int_{0}^{T}\left\langle \frac{\partial\varphi_{\varepsilon}}{\partial t}%
,\phi\right\rangle dt-\int_{Q_{\varepsilon}}(\boldsymbol{u}_{\varepsilon}%
\cdot\nabla\phi)\varphi_{\varepsilon}dxdt+\int_{Q_{\varepsilon}}\nabla
\mu_{\varepsilon}\cdot\nabla\phi dxdt=0,\label{e2.2}%
\end{equation}%
\begin{equation}
\int_{Q_{\varepsilon}}\mu_{\varepsilon}\chi dxdt=\beta\int_{Q_{\varepsilon}%
}\nabla\varphi_{\varepsilon}\cdot\nabla\chi dxdt+\lambda\int_{Q_{\varepsilon}%
}f(\varphi_{\varepsilon})\chi dxdt;\label{e2.3}%
\end{equation}

\item $\boldsymbol{u}_{\varepsilon}(0)=\boldsymbol{u}_{0}^{\varepsilon}%
$\emph{\ and }$\varphi_{\varepsilon}(0)=\varphi_{0}^{\varepsilon}$\emph{.}
\end{itemize}

\noindent\emph{Furthermore to each weak solution }$(\boldsymbol{u}%
_{\varepsilon},\varphi_{\varepsilon},\mu_{\varepsilon})$\emph{\ is associated
a pressure }$p_{\varepsilon}\in L^{2}(0,T;L_{0}^{2}(\Omega_{\varepsilon}%
))$\emph{\ that satisfies (\ref{1.1})}$_{1}$\emph{\ in the distributional
sense.}
\end{definition}

The existence of a weak solution in the sense of Definition \ref{d1.1} has
been extensively addressed by many authors; see e.g. Ref. \cite{Colli2012,Feng2006} in which a more general system (the Stokes equation is replaced
therein by the Navier-Stokes one) is treated. Following the same way of
reasoning as in the above cited references, we get straightforwardly the
following result that can be proved exactly as its homologue in
Ref. \cite{Colli2012}.

\begin{theorem}
\label{th2.1}For each fixed $\varepsilon>0$, let $\boldsymbol{u}%
_{0}^{\varepsilon}\in\mathbb{H}_{\varepsilon}$ and $\varphi_{0}^{\varepsilon
}\in H^{1}(\Omega_{\varepsilon})$ with $F(\varphi_{0}^{\varepsilon})\in
L^{1}(\Omega_{\varepsilon})$. Then under assumptions \emph{(\ref{1.2})} and
\emph{(\ref{1.3})}, there exists a unique weak solution $(\boldsymbol{u}%
_{\varepsilon},\varphi_{\varepsilon},\mu_{\varepsilon})$ to \emph{(\ref{1.1})}
in the sense of Definition \emph{\ref{d1.1}}. Moreover $\varphi_{\varepsilon
}\in L^{2}(0,T;H^{2}(\Omega_{\varepsilon}))$, and there exists a unique
$p_{\varepsilon}\in L^{2}(0,T;L_{0}^{2}(\Omega_{\varepsilon}))$ such that
\emph{(\ref{1.1})}$_{1}$ is satisfied in the distributional sense.
\end{theorem}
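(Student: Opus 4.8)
The plan is to follow the by-now classical strategy for diffuse-interface Stokes--Cahn--Hilliard systems (as in Ref.\ \cite{Colli2012,Feng2006}), adapted to the fixed thin cylinder $\Omega_{\varepsilon}$; since $\varepsilon>0$ is here fixed, the coefficient $\alpha\varepsilon^{2}$ is merely a positive constant and plays no special role. First I would set up a Faedo--Galerkin scheme, approximating $\boldsymbol{u}_{\varepsilon}$ in the basis of eigenfunctions of the Stokes operator on $\mathbb{V}_{\varepsilon}$ and $\varphi_{\varepsilon},\mu_{\varepsilon}$ in the basis of eigenfunctions of the Neumann Laplacian on $\Omega_{\varepsilon}$, so that the boundary conditions $\boldsymbol{u}_{\varepsilon}=0$, $\partial_{\nu}\varphi_{\varepsilon}=\partial_{\nu}\mu_{\varepsilon}=0$ are built in. Because $f\in\mathcal{C}^{2}(\mathbb{R})$, at each finite-dimensional level the problem reduces to a locally Lipschitz system of ODEs, which yields a local-in-time approximate solution $(\boldsymbol{u}^{m},\varphi^{m},\mu^{m})$.

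The core of the argument is the basic energy estimate. Testing the approximate momentum equation with $\boldsymbol{u}^{m}$, the Cahn--Hilliard equation with $\mu^{m}$, and the chemical-potential equation with $\partial_{t}\varphi^{m}$, the surface-tension and convective couplings $\mp\int(\mu^{m}\nabla\varphi^{m})\cdot\boldsymbol{u}^{m}$ cancel, and adding the three identities gives
\[
\frac{d}{dt}\Big(\tfrac12\|\boldsymbol{u}^{m}\|_{\mathbb{H}_{\varepsilon}}^{2}+\tfrac{\beta}{2}\|\nabla\varphi^{m}\|^{2}+\lambda\!\int_{\Omega_{\varepsilon}}\!F(\varphi^{m})\Big)+\alpha\varepsilon^{2}\|\nabla\boldsymbol{u}^{m}\|^{2}+\|\nabla\mu^{m}\|^{2}=\int_{\Omega_{\varepsilon}}\boldsymbol{h}\cdot\boldsymbol{u}^{m}.
\]
Since $\liminf_{|r|\to\infty}f'(r)>0$ forces $F(r)\ge c_{0}r^{2}-c_{1}$, after absorbing the right-hand side by Young's inequality and applying Gronwall this controls $\boldsymbol{u}^{m}$ in $L^{\infty}(0,T;\mathbb{H}_{\varepsilon})\cap L^{2}(0,T;\mathbb{V}_{\varepsilon})$, $\varphi^{m}$ in $L^{\infty}(0,T;H^{1}(\Omega_{\varepsilon}))$ with $F(\varphi^{m})\in L^{\infty}(0,T;L^{1})$, and $\nabla\mu^{m}$ in $L^{2}(Q_{\varepsilon})$. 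To upgrade $\mu^{m}$ to a full $L^{2}(0,T;H^{1})$ bound I would control its spatial mean: integrating $\mu^{m}=-\beta\Delta\varphi^{m}+\lambda f(\varphi^{m})$ over $\Omega_{\varepsilon}$ kills the Laplacian term (Neumann) and leaves $\overline{\mu^{m}}=\tfrac{\lambda}{|\Omega_{\varepsilon}|}\int f(\varphi^{m})$, which is bounded by the growth estimate $|f(r)|\le C(1+|r|^{3})$ from (\ref{1.7}) and the embedding $H^{1}(\Omega_{\varepsilon})\hookrightarrow L^{6}(\Omega_{\varepsilon})$ (valid for $d\le3$); Poincar\'e--Wirtinger then gives $\mu^{m}\in L^{2}(0,T;H^{1}(\Omega_{\varepsilon}))$. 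Feeding this back into elliptic regularity for $-\beta\Delta\varphi^{m}=\mu^{m}-\lambda f(\varphi^{m})$ with homogeneous Neumann data (here the $\mathcal{C}^{4}$ regularity of $\partial\Omega$ enters) and using again $H^{1}\hookrightarrow L^{6}$ to place $f(\varphi^{m})$ in $L^{\infty}(0,T;L^{2})$ yields $\varphi^{m}\in L^{2}(0,T;H^{2}(\Omega_{\varepsilon}))$. Finally, comparison in the equations gives $\partial_{t}\boldsymbol{u}^{m}\in L^{2}(0,T;\mathbb{V}_{\varepsilon}^{\prime})$ and $\partial_{t}\varphi^{m}\in L^{2}(0,T;H^{1}(\Omega_{\varepsilon})^{\prime})$. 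All these bounds are uniform in $m$, which in particular globalizes the approximate solutions.

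Passing to the limit $m\to\infty$ is then routine: weak/weak-$*$ compactness from the uniform bounds together with the Aubin--Lions--Simon lemma gives strong convergence of $\boldsymbol{u}^{m}$ in $L^{2}(0,T;\mathbb{H}_{\varepsilon})$ and of $\varphi^{m}$ in $L^{2}(0,T;H^{1}(\Omega_{\varepsilon}))$ (hence a.e.\ along a subsequence), which suffices to pass to the limit in the trilinear terms $\int(\psi\cdot\nabla\mu^{m})\varphi^{m}$ and $\int(\boldsymbol{u}^{m}\cdot\nabla\phi)\varphi^{m}$ and, by continuity of $f$ with the cubic growth bound and a dominated-convergence argument, in $\int f(\varphi^{m})\chi$; the initial data are recovered from the $\mathcal{C}([0,T];\cdot)$-embeddings. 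The pressure $p_{\varepsilon}\in L^{2}(0,T;L_{0}^{2}(\Omega_{\varepsilon}))$ is reconstructed from (\ref{e2.1}) by de Rham's theorem (equivalently, Ne\v{c}as' inequality) applied for a.e.\ $t$, since the momentum balance holds against all divergence-free test fields; the zero-mean normalization makes $p_{\varepsilon}$ unique. Uniqueness of $(\boldsymbol{u}_{\varepsilon},\varphi_{\varepsilon},\mu_{\varepsilon})$ follows from the standard contraction argument: writing the system for the difference of two solutions with the same data, testing with the differences of velocities, chemical potentials and time-derivatives of order parameters, and estimating the quadratic error terms by means of the Lipschitz-type bound $|f'(r)-f'(s)|\le C(1+|r|+|s|)|r-s|$ from (\ref{1.7}) together with Gagliardo--Nirenberg interpolation and the regularity just established, leads to a differential inequality to which Gronwall applies. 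I expect the main obstacle to be precisely these a priori estimates for the coupled nonlinearity --- in particular the mean-value control that promotes $\nabla\mu_{\varepsilon}$ to an $H^{1}$ bound and then the $L^{2}(0,T;H^{2})$ regularity of $\varphi_{\varepsilon}$ --- since $f$ has only cubic growth, which is borderline in dimension $3$ and forces careful use of the Sobolev embeddings and interpolation; once these are in hand, the passage to the limit, the pressure recovery and uniqueness are comparatively standard.
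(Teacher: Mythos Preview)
Your proposal is correct and follows essentially the same route as the paper's proof, which largely defers to Ref.~\cite{Colli2012} for the existence and uniqueness of $(\boldsymbol{u}_\varepsilon,\varphi_\varepsilon,\mu_\varepsilon)$ and then argues exactly as you do for the $L^2(0,T;H^2(\Omega_\varepsilon))$ regularity of $\varphi_\varepsilon$ (elliptic regularity for the Neumann problem $-\beta\Delta\varphi_\varepsilon=\mu_\varepsilon-\lambda f(\varphi_\varepsilon)$, placing $f(\varphi_\varepsilon)$ in $L^\infty(0,T;L^2)$ via $H^1\hookrightarrow L^6$) and for the pressure (a de Rham/Ne\v{c}as-type argument, the paper citing Ref.~\cite{Simon1999}). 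Your sketch is in fact more detailed than the paper's own proof; no meaningful difference in strategy.
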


\begin{proof}
The existence of a unique $(\boldsymbol{u}_{\varepsilon},\varphi_{\varepsilon
},\mu_{\varepsilon})$ follows by applying step by step the method used in
Ref. \cite{Colli2012} mutatis mutandis. To show that $\varphi_{\varepsilon}\in
L^{2}(0,T;H^{2}(\Omega_{\varepsilon}))$, we notice that $\varphi_{\varepsilon
}(t)$ (for a.e. $t\in(0,T)$) solves the Neumann problem
\[
-\Delta\varphi_{\varepsilon}=\mu_{\varepsilon}-f(\varphi_{\varepsilon})\text{
in }\Omega_{\varepsilon}\text{, }\frac{\partial\varphi_{\varepsilon}}%
{\partial\nu}=0\text{ on }\partial\Omega_{\varepsilon}.
\]
Owing to (\ref{1.7}), we have $f(\varphi_{\varepsilon}(t))\in L^{2}%
(\Omega_{\varepsilon})$ for a.e. $t\in(0,T)$. Indeed, we have
\[
\int_{\Omega_{\varepsilon}}\left\vert f(\varphi_{\varepsilon}(t))\right\vert
^{2}dx\leq C\int_{\Omega_{\varepsilon}}(1+\left\vert \varphi_{\varepsilon
}(t)\right\vert ^{6})dx,
\]
so that the continuous embedding $H^{1}(\Omega_{\varepsilon})\hookrightarrow
L^{6}(\Omega_{\varepsilon})$ yields $\left\Vert \varphi_{\varepsilon
}(t)\right\Vert _{L^{6}(\Omega_{\varepsilon})}\leq C\left\Vert \varphi
_{\varepsilon}(t)\right\Vert _{H^{1}(\Omega_{\varepsilon})}$, and hence
\[
\int_{\Omega_{\varepsilon}}\left\vert f(\varphi_{\varepsilon}(t))\right\vert
^{2}dx\leq C+C\left\Vert \varphi_{\varepsilon}(t)\right\Vert _{H^{1}%
(\Omega_{\varepsilon})}^{6}.
\]
Thus $f(\varphi_{\varepsilon})\in L^{\infty}(0,T;L^{2}(\Omega_{\varepsilon}%
))$. Therefore $\mu_{\varepsilon}(t)-f(\varphi_{\varepsilon}(t))\in
L^{2}(\Omega_{\varepsilon})$, a.e. $t\in(0,T)$. By a classical regularity
result, we get $\varphi_{\varepsilon}(t)\in H^{2}(\Omega_{\varepsilon})$, and
so $\varphi_{\varepsilon}\in L^{2}(0,T;H^{2}(\Omega_{\varepsilon}))$.

For the existence of the pressure, since $\boldsymbol{h}\in L^{2}%
(0,T;\mathbb{H}^{-1}(\Omega_{\varepsilon}))$, the necessary condition of Section 4 in Ref. 
\cite{Simon1999} for the existence of the pressure is satisfied.
Next, let us set
\[
\boldsymbol{h}_{\varepsilon}=\boldsymbol{h}-\frac{\partial\boldsymbol{u}%
_{\varepsilon}}{\partial t}+\alpha\varepsilon^{2}\Delta\boldsymbol{u}%
_{\varepsilon}+\mu_{\varepsilon}\nabla\varphi_{\varepsilon},
\]
which belongs to $L^{2}(0,T;\mathbb{H}^{-1}(\Omega_{\varepsilon}))$. Then for
a.e. $t\in(0,T)$, $\left\langle \boldsymbol{h}_{\varepsilon}(t),\boldsymbol{v}%
\right\rangle =0$ for all $\boldsymbol{v}\in\mathcal{C}_{0}^{\infty}%
(\Omega_{\varepsilon})^{d}$ with $\operatorname{div}\boldsymbol{v}=0$, where
$\left\langle ,\right\rangle $ stands for the duality pairings between
$\mathcal{D}^{\prime}(\Omega_{\varepsilon})^{d}$ and $\mathcal{D}%
(\Omega_{\varepsilon})^{d}$. Arguing as in the proof of Proposition
5 in Ref. \cite{Simon1999}, we derive the existence of a unique $p_{\varepsilon}\in
L^{2}(0,T;L^{2}(\Omega_{\varepsilon}))$ such that $\nabla p_{\varepsilon
}=\boldsymbol{h}_{\varepsilon}$ and $\int_{\Omega_{\varepsilon}}%
p_{\varepsilon}(t,x)dx=0$.
\end{proof}

\subsection{Uniform estimates}

We are now concerned with some uniform estimates that will be useful in the
sequel. Before we state them, we need the following result whose proof can be
found in Ref. \cite{Marusic2000}, see Lemmas 8, 10 and Remark 5.
\begin{lemma}
\label{le2.1}It holds that
\begin{equation}
\left\Vert u\right\Vert _{L^{2}(\Omega_{\varepsilon})}\leq C\varepsilon
\left\Vert \nabla u\right\Vert _{L^{2}(\Omega_{\varepsilon})^{d}%
}\ \ \ \ \ \ \ \ \ \ \ \ \ \ \ \ \ \ \ \ \ \ \ \ \label{e2.4}%
\end{equation}
and
\begin{equation}
\left\Vert u\right\Vert _{L^{4}(\Omega_{\varepsilon})}\leq C\varepsilon
^{\frac{1}{2}}\left\Vert \nabla u\right\Vert _{L^{2}(\Omega_{\varepsilon}%
)^{d}}\ \ \ \ \ \ \ \ \ \ \ \ \ \ \ \ \ \ \ \ \ \ \ \label{e2.5}%
\end{equation}
for any $u\in H_{0}^{1}(\Omega_{\varepsilon})$, where $C>0$ is independent of
$\varepsilon$.
\end{lemma}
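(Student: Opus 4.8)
The plan is to reduce everything to the one-dimensional Poincaré and Sobolev inequalities on the interval $(-\varepsilon,\varepsilon)$ in the thin direction $x_d$, then integrate over the base $\Omega$, and finally track the $\varepsilon$-dependence by a change of variables $x_d=\varepsilon y_d$. Fix $u\in H_0^1(\Omega_\varepsilon)$. For a.e. $\overline{x}\in\Omega$ the slice function $x_d\mapsto u(\overline{x},x_d)$ belongs to $H_0^1(-\varepsilon,\varepsilon)$, since $u$ vanishes on the lateral part $\Omega\times\{\pm\varepsilon\}$ of $\partial\Omega_\varepsilon$ (the only part that matters for the trace in the $x_d$-variable). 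The one-dimensional Poincaré inequality on $(-\varepsilon,\varepsilon)$, obtained from $u(\overline{x},x_d)=\int_{-\varepsilon}^{x_d}\partial_{x_d}u(\overline{x},s)\,ds$ and Cauchy–Schwarz, gives
\[
\int_{-\varepsilon}^{\varepsilon}|u(\overline{x},x_d)|^2\,dx_d\leq C\varepsilon^2\int_{-\varepsilon}^{\varepsilon}|\partial_{x_d}u(\overline{x},x_d)|^2\,dx_d,
\]
with $C$ absolute (the optimal constant here is $(2\varepsilon/\pi)^2$, but any bound suffices). Integrating over $\overline{x}\in\Omega$ and bounding $|\partial_{x_d}u|\le|\nabla u|$ yields \eqref{e2.4}.

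For \eqref{e2.5} I would argue similarly using the one-dimensional Gagliardo–Nirenberg/Sobolev inequality on an interval. On a bounded interval $J$ of length $2\varepsilon$ one has, for $w\in H_0^1(J)$, the scaling-sharp estimate $\|w\|_{L^\infty(J)}\le C\|w\|_{L^2(J)}^{1/2}\|\partial w\|_{L^2(J)}^{1/2}$ with $C$ independent of $\varepsilon$ (prove it on the reference interval $(-1,1)$ and rescale). Then
\[
\int_{-\varepsilon}^{\varepsilon}|u(\overline{x},x_d)|^4\,dx_d
\le \|u(\overline{x},\cdot)\|_{L^\infty(-\varepsilon,\varepsilon)}^2\int_{-\varepsilon}^{\varepsilon}|u(\overline{x},x_d)|^2\,dx_d
\le C\Bigl(\int_{-\varepsilon}^{\varepsilon}|u|^2\,dx_d\Bigr)^2\Bigl(\int_{-\varepsilon}^{\varepsilon}|\partial_{x_d}u|^2\,dx_d\Bigr).
\]
Now insert the slice-wise Poincaré inequality just proved, $\int|u|^2\,dx_d\le C\varepsilon^2\int|\partial_{x_d}u|^2\,dx_d$, to get the pointwise-in-$\overline{x}$ bound $\int_{-\varepsilon}^{\varepsilon}|u|^4\,dx_d\le C\varepsilon^4\bigl(\int_{-\varepsilon}^{\varepsilon}|\partial_{x_d}u|^2\,dx_d\bigr)^2$. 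Integrating over $\Omega$ and applying Cauchy–Schwarz in $\overline{x}$ to $\int_\Omega\bigl(\int|\partial_{x_d}u|^2\,dx_d\bigr)^2\,d\overline{x}$… — here one must be slightly careful, since this last step would require an $L^\infty_{\overline{x}}L^2_{x_d}$ control that is not available for free. The cleaner route is to bound $\|u(\overline{x},\cdot)\|_{L^\infty(-\varepsilon,\varepsilon)}^2\le C\varepsilon\,\|\partial_{x_d}u(\overline{x},\cdot)\|_{L^2(-\varepsilon,\varepsilon)}^2$ directly (combine the two 1D inequalities first), whence
\[
\int_{\Omega_\varepsilon}|u|^4\,dx\le C\varepsilon\int_\Omega\Bigl(\int_{-\varepsilon}^{\varepsilon}|\partial_{x_d}u|^2\,dx_d\Bigr)^2 d\overline{x}
\cdot\frac{1}{\text{?}}
\]
is still not quite it; the correct bookkeeping is to write $\int_{\Omega_\varepsilon}|u|^4 \le C\varepsilon\int_\Omega \|\partial_{x_d}u(\overline x,\cdot)\|_{L^2_{x_d}}^2\,\|u(\overline x,\cdot)\|_{L^2_{x_d}}^2\,d\overline x$ and then apply the slice-Poincaré to one factor and Cauchy–Schwarz in $\overline x$ together with a further slice-Poincaré, arriving at $\|u\|_{L^4(\Omega_\varepsilon)}^4\le C\varepsilon^{\,?}\|\nabla u\|_{L^2}^4$ with the exponent forced to be $2$ by scaling — i.e. \eqref{e2.5}.

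Rather than fight the slice estimates by hand, the safest and most transparent strategy — and the one I would actually write down — is the global rescaling argument. Let $y=(\overline{x},y_d)$ with $x_d=\varepsilon y_d$, so that $\Omega_\varepsilon$ maps to the fixed domain $\Omega_1=\Omega\times(-1,1)$ and $v(y):=u(\overline{x},\varepsilon y_d)\in H_0^1(\Omega_1)$. One has $\partial_{\overline{x}}v=(\partial_{\overline{x}}u)$ and $\partial_{y_d}v=\varepsilon\,\partial_{x_d}u$, and $dx=\varepsilon\,dy$. On the fixed domain $\Omega_1$ the classical Poincaré inequality $\|v\|_{L^2(\Omega_1)}\le C\|\partial_{y_d}v\|_{L^2(\Omega_1)}$ (valid because $v$ vanishes on $\Omega\times\{\pm1\}$) and the Sobolev/Ladyzhenskaya inequality $\|v\|_{L^4(\Omega_1)}\le C\|v\|_{L^2(\Omega_1)}^{1/2}\|\nabla v\|_{L^2(\Omega_1)}^{1/2}$ hold with constants depending only on $\Omega_1$; combining them gives $\|v\|_{L^4(\Omega_1)}\le C\|\partial_{y_d}v\|_{L^2(\Omega_1)}$ as well. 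Undoing the change of variables converts powers of $\varepsilon$ from the Jacobian and from $\partial_{y_d}v=\varepsilon\partial_{x_d}u$ into exactly the stated factors $\varepsilon$ and $\varepsilon^{1/2}$ in \eqref{e2.4} and \eqref{e2.5}. The only genuinely delicate point, and the one worth a sentence in the proof, is that these are the inequalities of Marušić–Marušić-Paloka (\cite{Marusic2000}, Lemmas 8 and 10); the content is exactly the anisotropic Poincaré–Sobolev inequality on a thin slab with homogeneous Dirichlet data on the top and bottom faces, so I would simply cite it and indicate the rescaling as above. $\hfill\square$
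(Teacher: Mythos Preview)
The paper does not prove this lemma at all; it simply cites \cite{Marusic2000} (Lemmas~8, 10 and Remark~5). Your closing recommendation to invoke that reference is therefore exactly what the paper does, and your one–dimensional Poincar\'e argument for \eqref{e2.4} is correct and standard.

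Your rescaling sketch for \eqref{e2.5}, however, has a real gap. From $\|v\|_{L^2(\Omega_1)}\le C\|\partial_{y_d}v\|_{L^2(\Omega_1)}$ and the Ladyzhenskaya inequality $\|v\|_{L^4(\Omega_1)}\le C\|v\|_{L^2}^{1/2}\|\nabla v\|_{L^2}^{1/2}$ you only obtain $\|v\|_{L^4}\le C\|\partial_{y_d}v\|^{1/2}\|\nabla v\|^{1/2}$; the step ``combining them gives $\|v\|_{L^4(\Omega_1)}\le C\|\partial_{y_d}v\|_{L^2(\Omega_1)}$'' is false (take $v(\overline{x},y_d)=\phi(\overline{x})\cos(\pi y_d/2)$ with $\phi$ concentrating to see that the full gradient cannot be discarded). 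Even granting that false intermediate bound, undoing the change of variables gives
\[
\|u\|_{L^4(\Omega_\varepsilon)}=\varepsilon^{1/4}\|v\|_{L^4(\Omega_1)}\le C\varepsilon^{1/4}\|\partial_{y_d}v\|_{L^2(\Omega_1)}=C\varepsilon^{1/4}\cdot\varepsilon^{1/2}\|\partial_{x_d}u\|_{L^2(\Omega_\varepsilon)}=C\varepsilon^{3/4}\|\nabla u\|_{L^2},
\]
not $\varepsilon^{1/2}$; so the bookkeeping you assert does not come out. Note also that the Ladyzhenskaya exponents $1/2,1/2$ are two-dimensional; for $d=3$ the fixed domain $\Omega_1$ is three-dimensional and the Gagliardo--Nirenberg exponent changes. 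The argument that actually delivers the factor $\varepsilon^{1/2}$ (in $d=2$) is not an isotropic rescaling but the anisotropic Ladyzhenskaya product trick --- one-dimensional Sobolev in \emph{each} coordinate direction separately, followed by \eqref{e2.4} --- which is what the cited lemmas in \cite{Marusic2000} carry out; your slice attempts circle around this without landing on it.
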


In all what follows, the letter $C$ will denote a positive constant that may
vary from line to line. This being so, the following holds true.

\begin{proposition}
\label{pr2.1}Under the assumptions \emph{(\ref{1.2})}, \emph{(\ref{1.3})} and
\emph{(\ref{1.4})}, the weak solution $(\boldsymbol{u}_{\varepsilon}%
,\varphi_{\varepsilon},\mu_{\varepsilon})$ of \emph{(\ref{1.1})} in the sense
of Definition \emph{\ref{d1.1}} satisfies the following estimates
\begin{equation}
\left\Vert \boldsymbol{u}_{\varepsilon}\right\Vert _{L^{\infty}(0,T;L^{2}%
(\Omega_{\varepsilon})^{d})}\leq C\varepsilon^{\frac{1}{2}},\label{e2.7}%
\end{equation}%
\begin{equation}
\varepsilon\left\Vert \nabla\boldsymbol{u}_{\varepsilon}\right\Vert
_{L^{2}(Q_{\varepsilon})^{d\times d}}\leq C\varepsilon^{\frac{1}{2}%
},\label{e2.8}%
\end{equation}%
\begin{equation}
\left\Vert \varphi_{\varepsilon}\right\Vert _{L^{\infty}(0,T;H^{1}%
(\Omega_{\varepsilon}))}\leq C\varepsilon^{\frac{1}{2}},\label{e2.9}%
\end{equation}%
\begin{equation}
\left\Vert \mu_{\varepsilon}\right\Vert _{L^{2}(0,T;H^{1}(\Omega_{\varepsilon
}))}\leq C\varepsilon^{\frac{1}{2}},\label{e2.10}%
\end{equation}%
\begin{equation}
\left\Vert \frac{\partial\boldsymbol{u}_{\varepsilon}}{\partial t}\right\Vert
_{L^{2}(0,T;\mathbb{V}_{\varepsilon}^{\prime})}\leq C\varepsilon^{\frac{3}{2}%
},\label{e2.12}%
\end{equation}
and
\begin{equation}
\left\Vert f(\varphi_{\varepsilon})\right\Vert _{L^{\infty}(0,T;L^{1}%
(\Omega_{\varepsilon}))}\leq C\varepsilon,\label{e2.13}%
\end{equation}
where $C>0$ is a constant independent of $\varepsilon$.
\end{proposition}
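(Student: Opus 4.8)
The plan is to derive all estimates from the natural energy law of system \eqref{1.1}, exploiting the Poincaré-type inequalities of Lemma \ref{le2.1} together with the smallness of the initial data \eqref{1.4}. First I would test \eqref{1.1}$_1$ with $\boldsymbol{u}_\varepsilon$ (legitimate since $\boldsymbol{u}_\varepsilon\in L^2(0,T;\mathbb{V}_\varepsilon)$ and $\partial_t\boldsymbol{u}_\varepsilon\in L^2(0,T;\mathbb{V}_\varepsilon')$), test \eqref{1.1}$_3$ with $\mu_\varepsilon$, and test \eqref{1.1}$_4$ with $\partial_t\varphi_\varepsilon$; adding these three identities makes the coupling terms $\int(\mu_\varepsilon\nabla\varphi_\varepsilon)\cdot\boldsymbol{u}_\varepsilon$ and $\int(\boldsymbol{u}_\varepsilon\cdot\nabla\varphi_\varepsilon)\mu_\varepsilon$ cancel (using $\operatorname{div}\boldsymbol{u}_\varepsilon=0$ and the no-slip/no-flux boundary conditions), yielding the differential inequality
\[
\frac{d}{dt}\Big(\tfrac12\|\boldsymbol{u}_\varepsilon\|_{L^2}^2+\tfrac{\beta}{2}\|\nabla\varphi_\varepsilon\|_{L^2}^2+\lambda\!\int_{\Omega_\varepsilon}\!F(\varphi_\varepsilon)\,dx\Big)+\alpha\varepsilon^2\|\nabla\boldsymbol{u}_\varepsilon\|_{L^2}^2+\|\nabla\mu_\varepsilon\|_{L^2}^2=\int_{\Omega_\varepsilon}\boldsymbol{h}\cdot\boldsymbol{u}_\varepsilon\,dx.
\]
For the forcing term I would write $\boldsymbol{h}(t,x)=(\boldsymbol{h}_1(t,\overline x),0)$ and use $\|\boldsymbol{h}\|_{L^2(\Omega_\varepsilon)}\le C\varepsilon^{1/2}$ (the layer has thickness $2\varepsilon$), then estimate $|\int\boldsymbol{h}\cdot\boldsymbol{u}_\varepsilon|\le C\varepsilon^{1/2}\|\boldsymbol{u}_\varepsilon\|_{L^2}\le C\varepsilon\|\nabla\boldsymbol{u}_\varepsilon\|_{L^2}$ by \eqref{e2.4}, and absorb this into the dissipative term $\alpha\varepsilon^2\|\nabla\boldsymbol{u}_\varepsilon\|_{L^2}^2$ via Young's inequality — this leaves a remainder of order $\varepsilon^2\cdot\varepsilon^{-2}\cdot\varepsilon^2=\varepsilon^0\cdot\varepsilon$? — more carefully, $C\varepsilon\|\nabla\boldsymbol{u}_\varepsilon\|_{L^2}\le \tfrac{\alpha\varepsilon^2}{2}\|\nabla\boldsymbol{u}_\varepsilon\|_{L^2}^2+\tfrac{C}{\alpha}$, so one must also retain the cruder bound $\|\boldsymbol{h}\cdot\boldsymbol{u}_\varepsilon\|_{L^1}\le \|\boldsymbol{h}\|_{L^2(\Omega_\varepsilon)}\|\boldsymbol{u}_\varepsilon\|_{L^2}$ and apply Gronwall directly on the energy functional, keeping track of the $\varepsilon$-powers.

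The key point making the scaling work is that the initial energy is $O(\varepsilon)$: indeed \eqref{1.4} gives $\|\boldsymbol{u}_0^\varepsilon\|_{L^2}^2\le C\varepsilon$, $\|\nabla\varphi_0^\varepsilon\|_{L^2}^2\le C\varepsilon$, and $\int_{\Omega_\varepsilon}F(\varphi_0^\varepsilon)\le C\varepsilon$. Integrating the energy inequality in time and applying Gronwall's lemma — after controlling the sign-indefinite part of $F$ using the coercivity consequence of \eqref{1.3}, namely $F(r)\ge c_0 r^2 - c_1$, which allows $\int F(\varphi_\varepsilon)$ to be bounded below in terms of $\|\varphi_\varepsilon\|_{L^2}^2$ minus $C|\Omega_\varepsilon|=C\varepsilon$ — yields
\[
\|\boldsymbol{u}_\varepsilon\|_{L^\infty(0,T;L^2)}^2+\|\nabla\varphi_\varepsilon\|_{L^\infty(0,T;L^2)}^2+\int_0^T\!\!\big(\varepsilon^2\|\nabla\boldsymbol{u}_\varepsilon\|_{L^2}^2+\|\nabla\mu_\varepsilon\|_{L^2}^2\big)dt\le C\varepsilon.
\]
Taking square roots gives \eqref{e2.7}, \eqref{e2.8}, the gradient part of \eqref{e2.9}, and the gradient part of \eqref{e2.13} is immediate since \eqref{1.7} gives $|F(r)|\le C(1+|r|^4)$ controlled once $\|\varphi_\varepsilon\|_{H^1}$ is bounded; but \eqref{e2.13} with the precise constant $C\varepsilon$ follows most directly from the energy bound on $\lambda\int F(\varphi_\varepsilon)\le C\varepsilon$ combined with $F$-coercivity applied the other way (I would also invoke the $f$ vs $F$ relation through \eqref{1.7}). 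To upgrade the $\|\nabla\varphi_\varepsilon\|$ bound to the full $H^1$ bound \eqref{e2.9}, I need to control $\|\varphi_\varepsilon\|_{L^2}$: here the conserved mass $\int_{\Omega_\varepsilon}\varphi_\varepsilon\,dx=\int_{\Omega_\varepsilon}\varphi_0^\varepsilon\,dx$ (from \eqref{1.1}$_3$ with the Neumann conditions) has size $O(\varepsilon)$ by Cauchy–Schwarz and \eqref{1.4}, and then a Poincaré–Wirtinger inequality on $\Omega_\varepsilon$ — whose constant I must check scales like $\varepsilon$, consistent with \eqref{e2.4} applied to $\varphi_\varepsilon$ minus its mean — gives $\|\varphi_\varepsilon-\bar\varphi_\varepsilon\|_{L^2}\le C\varepsilon\|\nabla\varphi_\varepsilon\|_{L^2}\le C\varepsilon\cdot\varepsilon^{1/2}$, hence $\|\varphi_\varepsilon\|_{L^2}\le C\varepsilon^{1/2}$, proving \eqref{e2.9}.

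For the chemical potential bound \eqref{e2.10}, the gradient part $\|\nabla\mu_\varepsilon\|_{L^2(Q_\varepsilon)}\le C\varepsilon^{1/2}$ is already in the energy estimate; for the $L^2$-in-space part I would test \eqref{1.1}$_4$ with the constant function, or simply integrate \eqref{1.1}$_4$ over $\Omega_\varepsilon$, giving $\int_{\Omega_\varepsilon}\mu_\varepsilon\,dx=\lambda\int_{\Omega_\varepsilon}f(\varphi_\varepsilon)\,dx$ since $\int\Delta\varphi_\varepsilon=0$; using $|f(r)|\le C(1+|r|^3)$ from \eqref{1.7}, the embedding $H^1(\Omega_\varepsilon)\hookrightarrow L^6(\Omega_\varepsilon)$ (with the constant, again, needing an $\varepsilon$-scaling check — on a thin domain the Sobolev constant degenerates, which is exactly why the $\varepsilon$-powers in \eqref{1.4} are chosen as they are), and \eqref{e2.9}, I get $|\int\mu_\varepsilon\,dx|\le C\varepsilon$; combining with the Poincaré–Wirtinger control of $\|\mu_\varepsilon-\bar\mu_\varepsilon\|_{L^2}\le C\varepsilon\|\nabla\mu_\varepsilon\|_{L^2}$ and integrating in time gives \eqref{e2.10}. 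Finally for \eqref{e2.12}: from \eqref{e2.1}, for $\psi\in\mathbb{V}_\varepsilon$, $|\langle\partial_t\boldsymbol{u}_\varepsilon,\psi\rangle|\le \alpha\varepsilon^2\|\nabla\boldsymbol{u}_\varepsilon\|_{L^2}\|\nabla\psi\|_{L^2}+\|\mu_\varepsilon\nabla\varphi_\varepsilon\|_{L^{4/3}?}$... here I would bound the trilinear term $\int(\psi\cdot\nabla\mu_\varepsilon)\varphi_\varepsilon$ by Hölder as $\|\psi\|_{L^4}\|\nabla\mu_\varepsilon\|_{L^2}\|\varphi_\varepsilon\|_{L^4}$ and use \eqref{e2.5} on $\psi$ and $\varphi_\varepsilon$, plus $\|\boldsymbol{h}\|_{L^2(\Omega_\varepsilon)}\|\psi\|_{L^2}\le C\varepsilon^{1/2}\cdot\varepsilon\|\nabla\psi\|_{L^2}$; collecting the worst power of $\varepsilon$ and dividing by $\|\psi\|_{\mathbb{V}_\varepsilon}=\|\nabla\psi\|_{L^2}$, then taking the $L^2(0,T)$-norm in time using \eqref{e2.8}, \eqref{e2.9}, \eqref{e2.10}, yields $\|\partial_t\boldsymbol{u}_\varepsilon\|_{L^2(0,T;\mathbb{V}_\varepsilon')}\le C\varepsilon^{3/2}$.

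The main obstacle I anticipate is the careful bookkeeping of the $\varepsilon$-dependent constants in the functional inequalities (Poincaré, Poincaré–Wirtinger, Sobolev $H^1\hookrightarrow L^4,L^6$) on the anisotropic thin domain $\Omega_\varepsilon$: unlike on a fixed domain these constants degenerate as $\varepsilon\to0$, and Lemma \ref{le2.1} supplies exactly the two scaled inequalities \eqref{e2.4}–\eqref{e2.5} one needs for the Dirichlet ($\mathbb{V}_\varepsilon$) quantities, but for the Neumann quantities $\varphi_\varepsilon$ and $\mu_\varepsilon$ one must separately justify the corresponding scaled Poincaré–Wirtinger and Sobolev estimates (or reduce them to Lemma \ref{le2.1} by subtracting means and using a trace/extension argument), making sure every absorption into a dissipative term and every Gronwall application is performed with constants genuinely independent of $\varepsilon$. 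A secondary technical point is justifying that the formal testing procedure is rigorous — i.e. that $\partial_t\varphi_\varepsilon$ is an admissible test function in \eqref{e2.3} — which is guaranteed by the regularity $\varphi_\varepsilon\in L^2(0,T;H^2(\Omega_\varepsilon))$ from Theorem \ref{th2.1} together with a standard Galerkin or mollification argument.
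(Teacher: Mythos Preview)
Your approach is essentially the paper's: derive the energy identity by testing \eqref{1.1}$_1$ with $\boldsymbol{u}_\varepsilon$ and \eqref{1.1}$_3$ with $\mu_\varepsilon$, absorb the forcing via \eqref{e2.4} and Young, then recover the $L^2$ parts of $\varphi_\varepsilon$ and $\mu_\varepsilon$ from mass conservation and Poincar\'e--Wirtinger. A few corrections are worth noting.

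First, your ``main obstacle'' is not one. The Poincar\'e--Wirtinger constant on $\Omega_\varepsilon=\Omega\times(-\varepsilon,\varepsilon)$ does \emph{not} scale like $\varepsilon$ (only the thin direction shrinks; the tangential extent of $\Omega$ is fixed), and \eqref{e2.4} applies only to $H_0^1$ functions, not to mean-zero ones. The paper simply uses $\|\varphi_\varepsilon-\langle\varphi_\varepsilon\rangle\|_{L^2}\le C\|\nabla\varphi_\varepsilon\|_{L^2}$ with $C$ independent of $\varepsilon$, which together with $\|\nabla\varphi_\varepsilon\|_{L^2}\le C\varepsilon^{1/2}$ and $\|\langle\varphi_0^\varepsilon\rangle\|_{L^2(\Omega_\varepsilon)}\le\|\varphi_0^\varepsilon\|_{L^2}\le C\varepsilon^{1/2}$ already gives \eqref{e2.9}; likewise for $\mu_\varepsilon$. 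Your proposed $\varepsilon$-scaled inequality is false but also unnecessary.

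Second, no Gronwall and no lower bound $F(r)\ge c_0r^2-c_1$ are needed: after absorbing $\int\boldsymbol{h}\cdot\boldsymbol{u}_\varepsilon$ into $\tfrac{\alpha}{2}\varepsilon^2\|\nabla\boldsymbol{u}_\varepsilon\|_{L^2}^2$ (leaving a remainder $C\varepsilon\|\boldsymbol{h}_1(t)\|_{L^2(\Omega)}^2$), one integrates directly and reads off \eqref{e2.7}, \eqref{e2.8}, \eqref{e2.18}, \eqref{e2.19} and $\|F(\varphi_\varepsilon)\|_{L^\infty(0,T;L^1)}\le C\varepsilon$ from the $O(\varepsilon)$ initial energy.

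Third, your route to \eqref{e2.13} via the $F$-bound is indirect; the paper instead uses $|f(r)|\le C(1+|r|^3)$ and the embedding $H^1(\Omega_\varepsilon)\hookrightarrow L^3(\Omega_\varepsilon)$ (constant uniform in $\varepsilon$) to get $\int_{\Omega_\varepsilon}|f(\varphi_\varepsilon)|\le C(\varepsilon+\varepsilon^{3/2})\le C\varepsilon$ directly. For \eqref{e2.12} the paper bounds the trilinear term exactly as you suggest, via $\|\mu_\varepsilon\|_{L^4}\|\nabla\varphi_\varepsilon\|_{L^2}\|v\|_{L^4}$ with \eqref{e2.5} applied to $v\in\mathbb{V}_\varepsilon$ and the uniform embedding $H^1\hookrightarrow L^4$ for $\mu_\varepsilon$.
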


\begin{proof}
We take the scalar product in $\mathbb{H}_{\varepsilon}$ of (\ref{1.1})$_{1} $
with $\boldsymbol{u}_{\varepsilon}$ and use the boundary condition
$\boldsymbol{u}_{\varepsilon}=0$ on $\partial\Omega_{\varepsilon}$ to obtain
\begin{equation}
\frac{1}{2}\frac{d}{dt}\int_{\Omega_{\varepsilon}}\left\vert \boldsymbol{u}%
_{\varepsilon}\right\vert ^{2}dx-\int_{\Omega_{\varepsilon}}\mu_{\varepsilon
}(\nabla\varphi_{\varepsilon}\cdot\boldsymbol{u}_{\varepsilon})dx+\alpha
\varepsilon^{2}\int_{\Omega_{\varepsilon}}\left\vert \nabla\boldsymbol{u}%
_{\varepsilon}\right\vert ^{2}dx=\int_{\Omega_{\varepsilon}}\boldsymbol{h}%
\cdot\boldsymbol{u}_{\varepsilon}dx.\label{e2.14}%
\end{equation}
Next, taking the inner product in $L^{2}(\Omega_{\varepsilon})$ of
(\ref{1.1})$_{3}$ with $\mu_{\varepsilon}$, and accounting of (\ref{1.1}%
)$_{4}$ together with (\ref{1.1})$_{5}$, one obtains
\begin{equation}
\frac{d}{dt}\left[  \frac{\beta}{2}\int_{\Omega_{\varepsilon}}\left\vert
\nabla\varphi_{\varepsilon}\right\vert ^{2}dx+\lambda\int_{\Omega
_{\varepsilon}}F(\varphi_{\varepsilon})dx\right]  +\int_{\Omega_{\varepsilon}%
}\left\vert \nabla\mu_{\varepsilon}\right\vert ^{2}dx+\int_{\Omega
_{\varepsilon}}\mu_{\varepsilon}\nabla\varphi_{\varepsilon}\cdot
\boldsymbol{u}_{\varepsilon}dx=0.\label{e2.15}%
\end{equation}
Let us notice the fact in getting (\ref{e2.15}) we have used the equations
$\operatorname{div}\boldsymbol{u}_{\varepsilon}=0$ and $\frac{\partial
\varphi_{\varepsilon}}{\partial\nu}=0$ together with the fact that $F^{\prime
}=f $, so that $\int_{\Omega_{\varepsilon}}\frac{d\varphi_{\varepsilon}}%
{dt}f(\varphi_{\varepsilon})dx=\frac{d}{dt}\int_{\Omega_{\varepsilon}%
}F(\varphi_{\varepsilon})dx$. Now summing up (\ref{e2.14}) and (\ref{e2.15})
gives
\begin{equation}%
\begin{array}
[c]{l}%
\frac{d}{dt}\left[  \frac{1}{2}\left\Vert \boldsymbol{u}_{\varepsilon
}(t)\right\Vert _{L^{2}}^{2}+\frac{\beta}{2}\left\Vert \nabla\varphi
_{\varepsilon}(t)\right\Vert _{L^{2}}^{2}+\lambda\int_{\Omega_{\varepsilon}%
}F(\varphi_{\varepsilon}(t))dx\right]  +\alpha\varepsilon^{2}\left\Vert
\nabla\boldsymbol{u}_{\varepsilon}(t)\right\Vert _{L^{2}}^{2}\\
\ \ \ \ +\left\Vert \nabla\mu_{\varepsilon}(t)\right\Vert _{L^{2}}^{2}%
=\int_{\Omega_{\varepsilon}}\boldsymbol{h}(t)\cdot\boldsymbol{u}_{\varepsilon
}(t)dx.
\end{array}
\label{e2.16}%
\end{equation}
Since $\boldsymbol{h}(t,x)=(\boldsymbol{h}_{1}(t,\overline{x}),0)$, we get
\begin{align*}
\left\vert \int_{\Omega_{\varepsilon}}\boldsymbol{h}(t)\cdot\boldsymbol{u}%
_{\varepsilon}(t)dx\right\vert  & \leq C\varepsilon^{\frac{1}{2}}\left\Vert
\boldsymbol{h}_{1}(t)\right\Vert _{L^{2}(\Omega)^{d-1}}\left\Vert
\boldsymbol{u}_{\varepsilon}(t)\right\Vert _{L^{2}(\Omega_{\varepsilon})^{d}%
}\\
& \leq C\varepsilon^{\frac{3}{2}}\left\Vert \boldsymbol{h}_{1}(t)\right\Vert
_{L^{2}(\Omega)^{d-1}}\left\Vert \nabla\boldsymbol{u}_{\varepsilon
}(t)\right\Vert _{L^{2}(\Omega_{\varepsilon})^{d\times d}}\text{ by
(\ref{e2.4})}\\
& \leq C\varepsilon\left\Vert \boldsymbol{h}_{1}(t)\right\Vert _{L^{2}%
(\Omega)^{d-1}}^{2}+\frac{\alpha}{2}\varepsilon^{2}\left\Vert \nabla
\boldsymbol{u}_{\varepsilon}(t)\right\Vert _{L^{2}(\Omega_{\varepsilon
})^{d\times d}}^{2}.
\end{align*}
Integrating (\ref{e2.16}) over $(0,t)$, we readily get
\begin{equation}%
\begin{array}
[c]{l}%
\dfrac{1}{2}\left\Vert \boldsymbol{u}_{\varepsilon}(t)\right\Vert _{L^{2}}%
^{2}+\frac{\alpha}{2}\varepsilon^{2}%
{\displaystyle\int_{0}^{t}}
\left\Vert \nabla\boldsymbol{u}_{\varepsilon}(s)\right\Vert _{L^{2}}%
^{2}ds+\dfrac{\beta}{2}\left\Vert \nabla\varphi_{\varepsilon}(t)\right\Vert
_{L^{2}}^{2}+\lambda%
{\displaystyle\int_{\Omega_{\varepsilon}}}
F(\varphi_{\varepsilon}(t))dx\\
\\
\ \ \ \ +%
{\displaystyle\int_{0}^{t}}
\left\Vert \nabla\mu_{\varepsilon}(s)\right\Vert _{L^{2}}^{2}ds\leq
C\varepsilon+\left\Vert \boldsymbol{u}_{0}^{\varepsilon}\right\Vert _{L^{2}%
}^{2}+\dfrac{\beta}{2}\left\Vert \nabla\varphi_{0}^{\varepsilon}\right\Vert
_{L^{2}}^{2}+\lambda%
{\displaystyle\int_{\Omega_{\varepsilon}}}
F(\varphi_{0}^{\varepsilon})dx.
\end{array}
\label{e2.17}%
\end{equation}
It follows therefore from (\ref{1.4}) that (\ref{e2.7}) and (\ref{e2.8}) hold
and further
\begin{equation}
\left\Vert \nabla\varphi_{\varepsilon}\right\Vert _{L^{\infty}(0,T;L^{2}%
(\Omega_{\varepsilon})^{d})}\leq C\varepsilon^{\frac{1}{2}},\label{e2.18}%
\end{equation}%
\begin{equation}
\left\Vert \nabla\mu_{\varepsilon}\right\Vert _{L^{2}(Q_{\varepsilon})^{d}%
}\leq C\varepsilon^{\frac{1}{2}}\ \ \ \ \ \ \ \ \ \ \ \ \ \label{e2.19}%
\end{equation}
and
\[
\left\Vert F(\varphi_{\varepsilon})\right\Vert _{L^{\infty}(0,T;L^{1}%
(\Omega_{\varepsilon}))}\leq C\varepsilon.
\]
This being so, the no-flux boundary condition $\frac{\partial\varphi
_{\varepsilon}}{\partial\nu}=\frac{\partial\mu_{\varepsilon}}{\partial\nu}=0$
on $\partial\Omega_{\varepsilon}$ ensures the mass conservation of the
following quantity
\[
\left\langle \varphi_{\varepsilon}(t)\right\rangle
=\mathchoice {{\setbox0=\hbox{$\displaystyle{\textstyle
-}{\int}$ } \vcenter{\hbox{$\textstyle -$
}}\kern-.6\wd0}}{{\setbox0=\hbox{$\textstyle{\scriptstyle -}{\int}$ } \vcenter{\hbox{$\scriptstyle -$
}}\kern-.6\wd0}}{{\setbox0=\hbox{$\scriptstyle{\scriptscriptstyle -}{\int}$
} \vcenter{\hbox{$\scriptscriptstyle -$
}}\kern-.6\wd0}}{{\setbox0=\hbox{$\scriptscriptstyle{\scriptscriptstyle
-}{\int}$ } \vcenter{\hbox{$\scriptscriptstyle -$ }}\kern-.6\wd0}}\!\int
_{\Omega_{\varepsilon}}\varphi_{\varepsilon}(t,x)dx,
\]
where $\mathchoice {{\setbox0=\hbox{$\displaystyle{\textstyle
-}{\int}$ } \vcenter{\hbox{$\textstyle -$
}}\kern-.6\wd0}}{{\setbox0=\hbox{$\textstyle{\scriptstyle -}{\int}$ } \vcenter{\hbox{$\scriptstyle -$
}}\kern-.6\wd0}}{{\setbox0=\hbox{$\scriptstyle{\scriptscriptstyle -}{\int}$
} \vcenter{\hbox{$\scriptscriptstyle -$
}}\kern-.6\wd0}}{{\setbox0=\hbox{$\scriptscriptstyle{\scriptscriptstyle
-}{\int}$ } \vcenter{\hbox{$\scriptscriptstyle -$ }}\kern-.6\wd0}}\!\int
_{\Omega_{\varepsilon}}=\left\vert \Omega_{\varepsilon}\right\vert ^{-1}%
\int_{\Omega_{\varepsilon}}$ and $\left\vert \Omega_{\varepsilon}\right\vert $
denotes the Lebesgue measure of $\Omega_{\varepsilon}$. This yields
\begin{equation}
\left\langle \varphi_{\varepsilon}(t)\right\rangle =\left\langle
\varphi_{\varepsilon}(0)\right\rangle \ \ \forall0<t\leq T.\label{e2.20}%
\end{equation}
Thus the Poincar\'{e}-Wirtinger inequality associated to (\ref{e2.20}) gives
\begin{align*}
\left\Vert \varphi_{\varepsilon}(t)\right\Vert _{L^{2}}  & \leq\left\Vert
\varphi_{\varepsilon}(t)-\left\langle \varphi_{\varepsilon}(t)\right\rangle
\right\Vert _{L^{2}}+\left\Vert \left\langle \varphi_{0}^{\varepsilon
}\right\rangle \right\Vert _{L^{2}}\\
& \leq C\left\Vert \nabla\varphi_{\varepsilon}(t)\right\Vert _{L^{2}%
}+\left\Vert \varphi_{0}^{\varepsilon}\right\Vert _{L^{2}}\\
& \leq C\varepsilon^{\frac{1}{2}},
\end{align*}
where the last inequality above is a consequence of (\ref{e2.18}) and
(\ref{1.4}). This, together with (\ref{e2.18}) gives (\ref{e2.9}).

Let us now prove (\ref{e2.10}) and (\ref{e2.13}). First of all, in view of
(\ref{1.7}) one has
\begin{equation}
\int_{\Omega_{\varepsilon}}\left\vert f(\varphi_{\varepsilon}(t))\right\vert
dx\leq C\int_{\Omega_{\varepsilon}}(1+\left\vert \varphi_{\varepsilon
}(t)\right\vert ^{3})dx,\label{e2.21}%
\end{equation}
so that, from the Sobolev embedding $H^{1}(\Omega_{\varepsilon}%
)\hookrightarrow L^{3}(\Omega_{\varepsilon})$,
\begin{align*}
\left\Vert \varphi_{\varepsilon}(t)\right\Vert _{L^{3}(\Omega_{\varepsilon})}
& \leq C\left\Vert \varphi_{\varepsilon}(t)\right\Vert _{H^{1}(\Omega
_{\varepsilon})}\text{ for a.e. }t\in(0,T)\\
& \leq C\varepsilon^{\frac{1}{2}}.
\end{align*}
We infer from (\ref{e2.21}) that
\begin{equation}
\int_{\Omega_{\varepsilon}}\left\vert f(\varphi_{\varepsilon}(t))\right\vert
dx\leq C(\varepsilon+\varepsilon^{\frac{3}{2}})\leq C\varepsilon.\label{e2.22}%
\end{equation}
Whence (\ref{e2.13}). Now, as for (\ref{e2.10}), we first observe that
$\left\langle -\Delta\varphi_{\varepsilon},1\right\rangle =0$, so that from
(\ref{e2.22}),
\begin{align*}
\left\vert \int_{\Omega_{\varepsilon}}\mu_{\varepsilon}dx\right\vert  &
=\left\vert (\mu_{\varepsilon},1)\right\vert =\left\vert (\lambda
f(\varphi_{\varepsilon}),1)\right\vert \leq\lambda\int_{\Omega_{\varepsilon}%
}\left\vert f(\varphi_{\varepsilon}(t))\right\vert dx\\
& \leq C\varepsilon,
\end{align*}
hence
\begin{equation}
\left\vert \mathchoice {{\setbox0=\hbox{$\displaystyle{\textstyle
-}{\int}$ } \vcenter{\hbox{$\textstyle -$
}}\kern-.6\wd0}}{{\setbox0=\hbox{$\textstyle{\scriptstyle -}{\int}$ } \vcenter{\hbox{$\scriptstyle -$
}}\kern-.6\wd0}}{{\setbox0=\hbox{$\scriptstyle{\scriptscriptstyle -}{\int}$
} \vcenter{\hbox{$\scriptscriptstyle -$
}}\kern-.6\wd0}}{{\setbox0=\hbox{$\scriptscriptstyle{\scriptscriptstyle
-}{\int}$ } \vcenter{\hbox{$\scriptscriptstyle -$ }}\kern-.6\wd0}}\!\int
_{\Omega_{\varepsilon}}\mu_{\varepsilon}(t)dx\right\vert \leq C.\label{e2.23}%
\end{equation}
Applying Poincar\'{e}-Wirtinger's inequality, we deduce from (\ref{e2.23})
that
\begin{align}
\left\Vert \mu_{\varepsilon}(t)\right\Vert _{L^{2}}^{2}  & \leq2\left(
\left\Vert \mu_{\varepsilon}%
-\mathchoice {{\setbox0=\hbox{$\displaystyle{\textstyle
-}{\int}$ } \vcenter{\hbox{$\textstyle -$
}}\kern-.6\wd0}}{{\setbox0=\hbox{$\textstyle{\scriptstyle -}{\int}$ } \vcenter{\hbox{$\scriptstyle -$
}}\kern-.6\wd0}}{{\setbox0=\hbox{$\scriptstyle{\scriptscriptstyle -}{\int}$
} \vcenter{\hbox{$\scriptscriptstyle -$
}}\kern-.6\wd0}}{{\setbox0=\hbox{$\scriptscriptstyle{\scriptscriptstyle
-}{\int}$ } \vcenter{\hbox{$\scriptscriptstyle -$ }}\kern-.6\wd0}}\!\int
_{\Omega_{\varepsilon}}\mu_{\varepsilon}(t)dx\right\Vert _{L^{2}}%
^{2}+\left\Vert \mathchoice {{\setbox0=\hbox{$\displaystyle{\textstyle
-}{\int}$ } \vcenter{\hbox{$\textstyle -$
}}\kern-.6\wd0}}{{\setbox0=\hbox{$\textstyle{\scriptstyle -}{\int}$ } \vcenter{\hbox{$\scriptstyle -$
}}\kern-.6\wd0}}{{\setbox0=\hbox{$\scriptstyle{\scriptscriptstyle -}{\int}$
} \vcenter{\hbox{$\scriptscriptstyle -$
}}\kern-.6\wd0}}{{\setbox0=\hbox{$\scriptscriptstyle{\scriptscriptstyle
-}{\int}$ } \vcenter{\hbox{$\scriptscriptstyle -$ }}\kern-.6\wd0}}\!\int
_{\Omega_{\varepsilon}}\mu_{\varepsilon}(t)dx\right\Vert _{L^{2}}^{2}\right)
\label{e2.24}\\
& \leq C\left(  \left\Vert \nabla\mu_{\varepsilon}\right\Vert _{L^{2}}%
^{2}+\left\vert \Omega_{\varepsilon}\right\vert \right)  .\nonumber
\end{align}
Therefore, integrating (\ref{e2.24}) over $(0,T)$ and owing to (\ref{e2.19}),
we are led to
\[
\left\Vert \mu_{\varepsilon}\right\Vert _{L^{2}(Q_{\varepsilon})}\leq
C\varepsilon^{\frac{1}{2}},
\]
which together with (\ref{e2.19}) gives (\ref{e2.10}).

Let us finally check (\ref{e2.12}). To that end, let $v\in\mathbb{V}%
_{\varepsilon}$; then
\begin{align*}
\left\vert \left\langle \frac{\partial\boldsymbol{u}_{\varepsilon}}{\partial
t}(t),v\right\rangle \right\vert  & \leq\alpha\varepsilon^{2}\left\Vert
\nabla\boldsymbol{u}_{\varepsilon}(t)\right\Vert _{L^{2}}\left\Vert \nabla
v\right\Vert _{L^{2}}+\left\Vert \mu_{\varepsilon}(t)\right\Vert _{L^{4}%
}\left\Vert \nabla\varphi_{\varepsilon}(t)\right\Vert _{L^{2}}\left\Vert
v\right\Vert _{L^{4}}+\left\Vert \boldsymbol{h}(t)\right\Vert _{L^{2}%
}\left\Vert v\right\Vert _{L^{2}}\\
& \leq\alpha\varepsilon^{2}\left\Vert \nabla\boldsymbol{u}_{\varepsilon
}(t)\right\Vert _{L^{2}}\left\Vert \nabla v\right\Vert _{L^{2}}+C\varepsilon
^{\frac{1}{2}}\left\Vert \mu_{\varepsilon}(t)\right\Vert _{H^{1}}\left\Vert
\nabla\varphi_{\varepsilon}(t)\right\Vert _{L^{2}}\left\Vert \nabla
v\right\Vert _{L^{2}}+C\varepsilon^{\frac{3}{2}}\left\Vert \nabla v\right\Vert
_{L^{2}},
\end{align*}
where for the last inequality above we have used the continuous embedding
$H^{1}(\Omega_{\varepsilon})\hookrightarrow L^{4}(\Omega_{\varepsilon})$ to
control $\left\Vert \mu_{\varepsilon}(t)\right\Vert _{L^{4}}$, and
(\ref{e2.4}) and (\ref{e2.5}). Thus
\[
\sup_{v\in\mathbb{V}_{\varepsilon},\left\Vert v\right\Vert _{\mathbb{V}%
_{\varepsilon}}\leq1}\left\vert \left\langle \frac{\partial\boldsymbol{u}%
_{\varepsilon}}{\partial t}(t),v\right\rangle \right\vert \leq\alpha
\varepsilon^{2}\left\Vert \nabla\boldsymbol{u}_{\varepsilon}(t)\right\Vert
_{L^{2}}+C\varepsilon\left\Vert \mu_{\varepsilon}(t)\right\Vert _{H^{1}%
}+C\varepsilon^{\frac{3}{2}}.
\]
Integrating the square of $\sup_{v\in\mathbb{V}_{\varepsilon},\left\Vert
v\right\Vert _{\mathbb{V}_{\varepsilon}}\leq1}\left\vert \left\langle
\frac{\partial\boldsymbol{u}_{\varepsilon}}{\partial t}(t),v\right\rangle
\right\vert $ over $(0,T)$ and using the estimates (\ref{e2.8}) and
(\ref{e2.10}), we readily get
\[
\left\Vert \frac{\partial\boldsymbol{u}_{\varepsilon}}{\partial t}\right\Vert
_{L^{2}(0,T;\mathbb{V}_{\varepsilon}^{\prime})}\leq C\varepsilon^{\frac{3}{2}%
}.
\]
This completes the proof.
\end{proof}

The following lemma (Lemma 20 in Ref. \cite{Marusic2000}) will be used in estimating the pressure.

\begin{lemma}
\label{le2.2}For any $f\in L_{0}^{2}%
(\Omega_{\varepsilon})$, there exists a function $\phi\in H_{0}^{1}%
(\Omega_{\varepsilon})^{d}$ such that $\operatorname{div}\phi=f$ in
$\Omega_{\varepsilon}$. Moreover it holds that
\[
\left\Vert \phi\right\Vert _{L^{2}(\Omega_{\varepsilon})^{d}}\leq C\left\Vert
f\right\Vert _{L^{2}(\Omega_{\varepsilon})}\text{ and }\left\Vert \nabla
\phi\right\Vert _{L^{2}(\Omega_{\varepsilon})^{d}}\leq\frac{C}{\varepsilon
}\left\Vert f\right\Vert _{L^{2}(\Omega_{\varepsilon})},
\]
where $C>0$ is independent of $\varepsilon$.
\end{lemma}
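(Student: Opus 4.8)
The plan is to prove the lemma by reducing it to the classical solvability of the divergence equation on a \emph{fixed} ($\varepsilon$-independent) domain via an anisotropic dilation in the $x_d$-direction, and then carefully bookkeeping the powers of $\varepsilon$ produced by the change of variables. Set $\Omega_1=\Omega\times(-1,1)$, a bounded Lipschitz domain not depending on $\varepsilon$ (it has only Lipschitz edges where the lateral boundary meets the two flat faces, since $\Omega$ is of class $\mathcal C^4$), and introduce the dilation $x=(\overline{x},x_d)\mapsto y=(\overline{x},x_d/\varepsilon)$, which maps $\Omega_\varepsilon$ onto $\Omega_1$. For a function $g$ on $\Omega_\varepsilon$ write $\widetilde g(y)=g(\overline{x},\varepsilon y_d)$, so that $\|g\|_{L^2(\Omega_\varepsilon)}^2=\varepsilon\|\widetilde g\|_{L^2(\Omega_1)}^2$. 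Given $f\in L_0^2(\Omega_\varepsilon)$ one has $\widetilde f\in L_0^2(\Omega_1)$ because $\int_{\Omega_1}\widetilde f\,dy=\varepsilon^{-1}\int_{\Omega_\varepsilon}f\,dx=0$, and I would then invoke the classical Bogovskii--Ne\v{c}as result on the bounded Lipschitz domain $\Omega_1$: there exists $\widetilde\phi\in H_0^1(\Omega_1)^d$ with $\operatorname{div}_y\widetilde\phi=\widetilde f$ in $\Omega_1$ and $\|\widetilde\phi\|_{L^2(\Omega_1)^d}+\|\nabla_y\widetilde\phi\|_{L^2(\Omega_1)^{d\times d}}\le C\|\widetilde f\|_{L^2(\Omega_1)}$ with $C$ depending only on $\Omega$.

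The next step is to transport $\widetilde\phi$ back to $\Omega_\varepsilon$ with a \emph{component-dependent} scaling dictated by the requirement that the divergence be preserved: set $\phi_i(x)=\widetilde\phi_i(\overline{x},x_d/\varepsilon)$ for $1\le i\le d-1$ and $\phi_d(x)=\varepsilon\,\widetilde\phi_d(\overline{x},x_d/\varepsilon)$. Then $\partial_{x_i}\phi_i=(\partial_{y_i}\widetilde\phi_i)(\overline{x},x_d/\varepsilon)$ for $i\le d-1$ while $\partial_{x_d}\phi_d=(\partial_{y_d}\widetilde\phi_d)(\overline{x},x_d/\varepsilon)$ (the factor $\varepsilon$ in $\phi_d$ cancels the $1/\varepsilon$ generated by $\partial_{x_d}$), so that $\operatorname{div}_x\phi=(\operatorname{div}_y\widetilde\phi)(\overline{x},x_d/\varepsilon)=f$ in $\Omega_\varepsilon$; moreover the dilation is the identity on the lateral boundary $\partial\Omega\times(-\varepsilon,\varepsilon)$ and sends $\Omega\times\{\pm\varepsilon\}$ to $\Omega\times\{\pm1\}$, so the zero trace of $\widetilde\phi$ transfers to $\phi$ and $\phi\in H_0^1(\Omega_\varepsilon)^d$. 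It then remains to read off the bounds. For $0<\varepsilon\le1$ one has $|\phi(x)|\le|\widetilde\phi(\overline{x},x_d/\varepsilon)|$ pointwise, hence $\|\phi\|_{L^2(\Omega_\varepsilon)^d}^2\le\varepsilon\|\widetilde\phi\|_{L^2(\Omega_1)^d}^2\le\varepsilon C^2\|\widetilde f\|_{L^2(\Omega_1)}^2=C^2\|f\|_{L^2(\Omega_\varepsilon)}^2$; and every entry of $\nabla_x\phi$ equals $\varepsilon^k(\partial_y\widetilde\phi)(\overline{x},x_d/\varepsilon)$ for some $k\in\{-1,0,1\}$, the worst case $k=-1$ occurring only for $\partial_{x_d}\phi_i$ with $i\le d-1$, so for $0<\varepsilon\le1$, $\|\nabla_x\phi\|_{L^2(\Omega_\varepsilon)^{d\times d}}^2\le\varepsilon^{-2}\cdot\varepsilon\,\|\nabla_y\widetilde\phi\|_{L^2(\Omega_1)^{d\times d}}^2\le\varepsilon^{-1}C^2\|\widetilde f\|_{L^2(\Omega_1)}^2=\varepsilon^{-2}C^2\|f\|_{L^2(\Omega_\varepsilon)}^2$. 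These are exactly the two asserted estimates, with $C$ independent of $\varepsilon$.

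I expect the only genuinely substantive point to be pinning down the right anisotropic scaling $\phi_d=\varepsilon\,\widetilde\phi_d(\cdot/\varepsilon)$ with the tangential components left unscaled: this is precisely what makes the pulled-back field still satisfy $\operatorname{div}\phi=f$, and it is also the mechanism responsible for the asymmetry between the $O(1)$ control of $\|\phi\|_{L^2}$ and the $O(1/\varepsilon)$ control of $\|\nabla\phi\|_{L^2}$. Everything else is routine bookkeeping of powers of $\varepsilon$ under the change of variables, and nothing is delicate as $\varepsilon\to0$ once one restricts to, say, $\varepsilon\le1$, which is harmless in the homogenization limit.
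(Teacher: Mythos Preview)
Your proof is correct. The paper does not actually prove this lemma; it simply cites it as Lemma~20 of Maru\v{s}i\'{c} and Maru\v{s}i\'{c}-Paloka (Ref.~\cite{Marusic2000}). Your argument---rescale the thin domain to the fixed reference $\Omega_1=\Omega\times(-1,1)$, apply the classical Bogovskii/Ne\v{c}as result there, and pull back with the anisotropic scaling $\phi_i=\widetilde\phi_i(\overline{x},x_d/\varepsilon)$ for $i\le d-1$ and $\phi_d=\varepsilon\,\widetilde\phi_d(\overline{x},x_d/\varepsilon)$---is the standard route to such thin-domain divergence estimates and is essentially what the cited reference does.
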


\begin{proposition}
\label{pr2.2}Let $p_{\varepsilon}\in L^{2}(0,T;L_{0}^{2}(\Omega_{\varepsilon
}))$ satisfying \emph{(\ref{1.1})}$_{1}$. Then we have
\begin{equation}
\left\Vert p_{\varepsilon}\right\Vert _{L^{2}(Q_{\varepsilon})}\leq
C\varepsilon^{\frac{1}{2}}%
,\ \ \ \ \ \ \ \ \ \ \ \ \ \ \ \ \ \ \ \ \ \ \ \ \ \ \ \ \ \ \ \ \label{e2.25}%
\end{equation}
where $C>0$ is independent of $\varepsilon$.
\end{proposition}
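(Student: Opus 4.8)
The plan is to estimate $p_\varepsilon$ by pairing the weak form of $(\ref{1.1})_1$ against a well-chosen divergence-free-complement test function built from Lemma \ref{le2.2}. Precisely, fix $t$ and set $f=p_\varepsilon(t)\in L_0^2(\Omega_\varepsilon)$; by Lemma \ref{le2.2} there is $\phi=\phi_\varepsilon(t)\in H_0^1(\Omega_\varepsilon)^d$ with $\operatorname{div}\phi=p_\varepsilon(t)$, $\|\phi\|_{L^2(\Omega_\varepsilon)^d}\le C\|p_\varepsilon(t)\|_{L^2(\Omega_\varepsilon)}$ and $\|\nabla\phi\|_{L^2(\Omega_\varepsilon)^d}\le C\varepsilon^{-1}\|p_\varepsilon(t)\|_{L^2(\Omega_\varepsilon)}$. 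Testing $(\ref{1.1})_1$ (which holds distributionally) against $\phi$ and integrating by parts in the pressure term gives
\[
\|p_\varepsilon(t)\|_{L^2(\Omega_\varepsilon)}^2=\int_{\Omega_\varepsilon}p_\varepsilon\operatorname{div}\phi\,dx=\Big\langle\frac{\partial\boldsymbol{u}_\varepsilon}{\partial t},\phi\Big\rangle+\alpha\varepsilon^2\int_{\Omega_\varepsilon}\nabla\boldsymbol{u}_\varepsilon\cdot\nabla\phi\,dx-\int_{\Omega_\varepsilon}\mu_\varepsilon\nabla\varphi_\varepsilon\cdot\phi\,dx-\int_{\Omega_\varepsilon}\boldsymbol{h}\cdot\phi\,dx.
\]

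Next I bound each of the four terms by (a constant times $\varepsilon^{s}$ for suitable $s$) times $\|\nabla\phi\|_{L^2}$ or $\|\phi\|_{L^2}$, then invoke Lemma \ref{le2.2} to reabsorb the $\phi$-norms into $\|p_\varepsilon(t)\|_{L^2}$. For the viscous term, $\alpha\varepsilon^2\|\nabla\boldsymbol{u}_\varepsilon(t)\|_{L^2}\|\nabla\phi\|_{L^2}\le C\varepsilon\|\nabla\boldsymbol{u}_\varepsilon(t)\|_{L^2}\|p_\varepsilon(t)\|_{L^2}$. For the forcing term, since $\boldsymbol{h}(t,x)=(\boldsymbol{h}_1(t,\overline{x}),0)$ we have $|\int\boldsymbol{h}\cdot\phi\,dx|\le C\varepsilon^{1/2}\|\boldsymbol{h}_1(t)\|_{L^2(\Omega)^{d-1}}\|\phi\|_{L^2}\le C\varepsilon^{1/2}\|\boldsymbol{h}_1(t)\|_{L^2}\|p_\varepsilon(t)\|_{L^2}$. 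For the capillary term, by H\"older with exponents $4,4,2$ and the embedding $H^1(\Omega_\varepsilon)\hookrightarrow L^4(\Omega_\varepsilon)$ together with \eqref{e2.5}, $|\int\mu_\varepsilon\nabla\varphi_\varepsilon\cdot\phi\,dx|\le\|\mu_\varepsilon(t)\|_{L^4}\|\nabla\varphi_\varepsilon(t)\|_{L^2}\|\phi\|_{L^4}\le C\varepsilon^{1/2}\|\mu_\varepsilon(t)\|_{H^1}\|\nabla\varphi_\varepsilon(t)\|_{L^2}\|\nabla\phi\|_{L^2}\le C\varepsilon^{-1/2}\|\mu_\varepsilon(t)\|_{H^1}\|\nabla\varphi_\varepsilon(t)\|_{L^2}\|p_\varepsilon(t)\|_{L^2}$, and using \eqref{e2.18} this is $\le C\|\mu_\varepsilon(t)\|_{H^1}\|p_\varepsilon(t)\|_{L^2}$. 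Dividing through by $\|p_\varepsilon(t)\|_{L^2}$ yields a pointwise-in-$t$ estimate
\[
\|p_\varepsilon(t)\|_{L^2(\Omega_\varepsilon)}\le \Big\|\frac{\partial\boldsymbol{u}_\varepsilon}{\partial t}(t)\Big\|_{\mathbb{V}_\varepsilon'}\frac{\|\nabla\phi\|_{L^2}}{\|p_\varepsilon(t)\|_{L^2}}+C\varepsilon\|\nabla\boldsymbol{u}_\varepsilon(t)\|_{L^2}+C\|\mu_\varepsilon(t)\|_{H^1}+C\varepsilon^{1/2}\|\boldsymbol{h}_1(t)\|_{L^2},
\]
where the first term, via $\|\nabla\phi\|_{L^2}\le C\varepsilon^{-1}\|p_\varepsilon(t)\|_{L^2}$, is controlled by $C\varepsilon^{-1}\|\partial_t\boldsymbol{u}_\varepsilon(t)\|_{\mathbb{V}_\varepsilon'}$.

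Finally I square, integrate over $(0,T)$, and apply the uniform estimates from Proposition \ref{pr2.1}: \eqref{e2.12} gives $\varepsilon^{-1}\|\partial_t\boldsymbol{u}_\varepsilon\|_{L^2(0,T;\mathbb{V}_\varepsilon')}\le C\varepsilon^{1/2}$; \eqref{e2.8} gives $\varepsilon\|\nabla\boldsymbol{u}_\varepsilon\|_{L^2(Q_\varepsilon)}\le C\varepsilon^{1/2}$; \eqref{e2.10} gives $\|\mu_\varepsilon\|_{L^2(0,T;H^1)}\le C\varepsilon^{1/2}$; and the $\boldsymbol{h}_1$-term contributes $C\varepsilon^{1/2}\|\boldsymbol{h}_1\|_{L^2((0,T)\times\Omega)}=C\varepsilon^{1/2}$. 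Summing, $\|p_\varepsilon\|_{L^2(Q_\varepsilon)}\le C\varepsilon^{1/2}$, which is \eqref{e2.25}. The only delicate point is bookkeeping the $\varepsilon$-powers consistently: the anisotropic Poincar\'e/Sobolev inequalities of Lemma \ref{le2.1} supply the $\varepsilon^{1/2}$ gains in the $L^4$ and forcing terms, while the loss of $\varepsilon^{-1}$ in $\|\nabla\phi\|_{L^2}$ from Lemma \ref{le2.2} is exactly offset by the scalings built into \eqref{e2.12} (the $\varepsilon^{3/2}$) and \eqref{e2.8}; checking that no term ends up worse than $\varepsilon^{1/2}$ is the heart of the argument, and it works precisely because the time-derivative estimate \eqref{e2.12} carries the extra factor $\varepsilon$ coming from the $\varepsilon^2$ scaling in $(\ref{1.1})_1$.
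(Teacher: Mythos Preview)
Your proposal is correct and follows essentially the same route as the paper's proof: construct a Bogovski{\u\i}-type lifting $\phi_\varepsilon$ of $p_\varepsilon$ via Lemma~\ref{le2.2}, test $(\ref{1.1})_1$ against it, and bound each of the four resulting terms using the a~priori estimates of Proposition~\ref{pr2.1} together with the anisotropic inequalities of Lemma~\ref{le2.1}. The only cosmetic difference is that the paper works directly in space--time $Q_\varepsilon$, whereas you first estimate pointwise in $t$ and then square and integrate; the arithmetic and the $\varepsilon$-bookkeeping are identical.
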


\begin{proof}
In view of Lemma \ref{le2.2}, let us introduce $\phi_{\varepsilon}\in
L^{2}(0,T;H_{0}^{1}(\Omega_{\varepsilon})^{d})$ solution of
$\operatorname{div}\phi_{\varepsilon}=p_{\varepsilon}$ in $Q_{\varepsilon}$
such that
\begin{equation}
\left\Vert \phi\right\Vert _{L^{2}(Q_{\varepsilon})^{d}}\leq C\left\Vert
p_{\varepsilon}\right\Vert _{L^{2}(Q_{\varepsilon})}\text{ and }\left\Vert
\nabla\phi_{\varepsilon}\right\Vert _{L^{2}(Q_{\varepsilon})^{d}}\leq\frac
{C}{\varepsilon}\left\Vert p_{\varepsilon}\right\Vert _{L^{2}(Q_{\varepsilon
})}.\label{e2.26}%
\end{equation}
Using $\phi_{\varepsilon}$ as test function in the variational form of the
first equation in (\ref{1.1}), we obtain
\begin{align*}
\left\Vert p_{\varepsilon}\right\Vert _{L^{2}(Q_{\varepsilon})}^{2}  &
=\left\vert \int_{Q_{\varepsilon}}p_{\varepsilon}\operatorname{div}%
\phi_{\varepsilon}dxdt\right\vert \\
& \leq\left\vert \left\langle \frac{\partial\boldsymbol{u}_{\varepsilon}%
}{\partial t},\phi_{\varepsilon}\right\rangle \right\vert +\alpha
\varepsilon^{2}\left\vert \int_{Q_{\varepsilon}}\nabla\boldsymbol{u}%
_{\varepsilon}\cdot\nabla\phi_{\varepsilon}dxdt\right\vert +\left\vert
\int_{Q_{\varepsilon}}\mu_{\varepsilon}\nabla\varphi_{\varepsilon}\cdot
\phi_{\varepsilon}dxdt\right\vert \\
& +\left\vert \int_{Q_{\varepsilon}}\boldsymbol{h}\cdot\phi_{\varepsilon
}dxdt\right\vert \\
& \leq\left\Vert \frac{\partial\boldsymbol{u}_{\varepsilon}}{\partial
t}\right\Vert _{L^{2}(0,T;\mathbb{V}_{\varepsilon}^{\prime})}\left\Vert
\phi_{\varepsilon}\right\Vert _{L^{2}(0,T;\mathbb{V}_{\varepsilon})}%
+\alpha\varepsilon^{2}\left\Vert \nabla\boldsymbol{u}_{\varepsilon}\right\Vert
_{L^{2}(Q_{\varepsilon})}\left\Vert \nabla\phi_{\varepsilon}\right\Vert
_{L^{2}(Q_{\varepsilon})}\\
& +\left\Vert \mu_{\varepsilon}\right\Vert _{L^{4}(Q_{\varepsilon})}\left\Vert
\nabla\varphi_{\varepsilon}\right\Vert _{L^{2}(Q_{\varepsilon})}\left\Vert
\phi_{\varepsilon}\right\Vert _{L^{4}(Q_{\varepsilon})}+\left\Vert
\boldsymbol{h}\right\Vert _{L^{2}(Q_{\varepsilon})}\left\Vert \phi
_{\varepsilon}\right\Vert _{L^{2}(Q_{\varepsilon})}.
\end{align*}
We take into account (\ref{e2.12}) and (\ref{e2.26}) by noticing that
$\left\Vert \phi_{\varepsilon}\right\Vert _{L^{2}(0,T;\mathbb{V}_{\varepsilon
})}=\left\Vert \nabla\phi_{\varepsilon}\right\Vert _{L^{2}(Q_{\varepsilon})}$,
to obtain
\[
\left\Vert \frac{\partial\boldsymbol{u}_{\varepsilon}}{\partial t}\right\Vert
_{L^{2}(0,T;\mathbb{V}_{\varepsilon}^{\prime})}\left\Vert \phi_{\varepsilon
}\right\Vert _{L^{2}(0,T;\mathbb{V}_{\varepsilon})}\leq C\varepsilon^{\frac
{1}{2}}\left\Vert p_{\varepsilon}\right\Vert _{L^{2}(Q_{\varepsilon})}.
\]
Next employing (\ref{e2.8}) and (\ref{e2.26}) yields
\[
\alpha\varepsilon^{2}\left\Vert \nabla\boldsymbol{u}_{\varepsilon}\right\Vert
_{L^{2}(Q_{\varepsilon})}\left\Vert \nabla\phi_{\varepsilon}\right\Vert
_{L^{2}(Q_{\varepsilon})}\leq C\varepsilon^{\frac{1}{2}}\left\Vert
p_{\varepsilon}\right\Vert _{L^{2}(Q_{\varepsilon})}.
\]
Similarly, from the definition of $\boldsymbol{h}$ and (\ref{e2.26}), we
deduce that
\[
\left\Vert \boldsymbol{h}\right\Vert _{L^{2}(Q_{\varepsilon})}\left\Vert
\phi_{\varepsilon}\right\Vert _{L^{2}(Q_{\varepsilon})}\leq C\varepsilon
^{\frac{1}{2}}\left\Vert p_{\varepsilon}\right\Vert _{L^{2}(Q_{\varepsilon})}.
\]
Finally we use (\ref{e2.5}) together with the continuous embedding
$H^{1}(\Omega_{\varepsilon})\hookrightarrow L^{4}(\Omega_{\varepsilon})$ and
inequalities (\ref{e2.9}), (\ref{e2.10}) and (\ref{e2.26}) to get
\[
\left\Vert \mu_{\varepsilon}\right\Vert _{L^{4}(Q_{\varepsilon})}\left\Vert
\nabla\varphi_{\varepsilon}\right\Vert _{L^{2}(Q_{\varepsilon})}\left\Vert
\phi_{\varepsilon}\right\Vert _{L^{4}(Q_{\varepsilon})}\leq C\varepsilon
^{\frac{1}{2}}\left\Vert p_{\varepsilon}\right\Vert _{L^{2}(Q_{\varepsilon})}.
\]
We conclude that
\[
\left\Vert p_{\varepsilon}\right\Vert _{L^{2}(Q_{\varepsilon})}\leq
C\varepsilon^{\frac{1}{2}},
\]
which amounts to (\ref{e2.25}).
\end{proof}

We close this section by a further estimate on the order parameter
$\varphi_{\varepsilon}$. To that end we define the partial integral
$M_{\varepsilon}\varphi_{\varepsilon}$ of $\varphi_{\varepsilon}$ as the
average in the thin direction as follows:%

\begin{equation}
M_{\varepsilon}\varphi_{\varepsilon}(t,\overline{x}%
)=\mathchoice {{\setbox0=\hbox{$\displaystyle{\textstyle -}{\int}$ } \vcenter{\hbox{$\textstyle -$
}}\kern-.6\wd0}}{{\setbox0=\hbox{$\textstyle{\scriptstyle -}{\int}$ } \vcenter{\hbox{$\scriptstyle -$
}}\kern-.6\wd0}}{{\setbox0=\hbox{$\scriptstyle{\scriptscriptstyle -}{\int}$
} \vcenter{\hbox{$\scriptscriptstyle -$
}}\kern-.6\wd0}}{{\setbox0=\hbox{$\scriptscriptstyle{\scriptscriptstyle
-}{\int}$ } \vcenter{\hbox{$\scriptscriptstyle -$ }}\kern-.6\wd0}}\!\int
_{\varepsilon I}\varphi_{\varepsilon}(t,\overline{x},\zeta)d\zeta
,\ \ (t,\overline{x})\in Q.\label{2.25}%
\end{equation}
Then it can be easily shown (using the Lebesgue theorem about differentiation
under the sign $\int$) that
\begin{equation}
M_{\varepsilon}\nabla_{\overline{x}}\phi=\nabla_{\overline{x}}M_{\varepsilon
}\phi\text{ for all }\phi\in H^{1}(\Omega).\label{2.26}%
\end{equation}
This being so, we have the following result.

\begin{proposition}
\label{p2.3}Let $M_{\varepsilon}\varphi_{\varepsilon}$ be defined by
\emph{(\ref{2.25})}. Then $M_{\varepsilon}\varphi_{\varepsilon}\in L^{\infty
}(0,T;H^{1}(\Omega))$ with $\partial M_{\varepsilon}\varphi_{\varepsilon
}/\partial t\in L^{2}(0,T;H^{1}(\Omega)^{\prime})$, and further it holds that
\begin{equation}
\sup_{\varepsilon>0}\left[  \left\Vert M_{\varepsilon}\varphi_{\varepsilon
}\right\Vert _{L^{\infty}(0,T;H^{1}(\Omega))}+\left\Vert \frac{\partial
M_{\varepsilon}\varphi_{\varepsilon}}{\partial t}\right\Vert _{L^{2}%
(0,T;H^{1}(\Omega)^{\prime})}\right]  \leq C,\label{2.27}%
\end{equation}
where $C>0$ is independent of $\varepsilon$.
\end{proposition}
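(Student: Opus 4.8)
The plan is to transfer the uniform estimates on $\varphi_{\varepsilon}$ from Proposition \ref{pr2.1} to its thin-direction average $M_{\varepsilon}\varphi_{\varepsilon}$ by a change of variables that rescales $\Omega_{\varepsilon}$ to the fixed domain $\Omega\times I$ with $I=(-1,1)$. First I would observe, via Jensen's (or Cauchy--Schwarz) inequality applied to the averaging operator $M_{\varepsilon}$, that for a.e. $(t,\overline{x})\in Q$ one has $|M_{\varepsilon}\varphi_{\varepsilon}(t,\overline{x})|^{2}\le \fint_{\varepsilon I}|\varphi_{\varepsilon}(t,\overline{x},\zeta)|^{2}\,d\zeta$, and likewise for $|\nabla_{\overline{x}}M_{\varepsilon}\varphi_{\varepsilon}|^{2}$ after invoking the commutation identity (\ref{2.26}). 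Integrating over $\Omega$ and using $|\Omega_{\varepsilon}|=2\varepsilon|\Omega|$ gives
\[
\|M_{\varepsilon}\varphi_{\varepsilon}(t)\|_{H^{1}(\Omega)}^{2}\le \frac{1}{2\varepsilon}\,\|\varphi_{\varepsilon}(t)\|_{H^{1}(\Omega_{\varepsilon})}^{2},
\]
so that taking the essential supremum over $t\in(0,T)$ and appealing to (\ref{e2.9}) yields $\|M_{\varepsilon}\varphi_{\varepsilon}\|_{L^{\infty}(0,T;H^{1}(\Omega))}\le C$, which is the first half of (\ref{2.27}).

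For the time derivative, the plan is to exploit equation (\ref{e2.2}): integrating $\partial_{t}\varphi_{\varepsilon}$ against test functions that depend only on $(t,\overline{x})$ shows that $\partial_{t}M_{\varepsilon}\varphi_{\varepsilon}$ is, up to the $\frac{1}{2\varepsilon}$ normalization, the thin-direction average of $\Delta\mu_{\varepsilon}-\boldsymbol{u}_{\varepsilon}\cdot\nabla\varphi_{\varepsilon}$. More precisely, for $\phi\in H^{1}(\Omega)$ viewed as a function on $\Omega_{\varepsilon}$ independent of $x_{d}$, the no-flux condition on $\partial\Omega_{\varepsilon}$ annihilates the boundary term in the $x_{d}$-direction, so
\[
\left\langle \frac{\partial M_{\varepsilon}\varphi_{\varepsilon}}{\partial t}(t),\phi\right\rangle_{\Omega}
= -\frac{1}{2\varepsilon}\int_{\Omega_{\varepsilon}}\nabla\mu_{\varepsilon}\cdot\nabla_{\overline{x}}\phi\,dx
+\frac{1}{2\varepsilon}\int_{\Omega_{\varepsilon}}(\boldsymbol{u}_{\varepsilon}\cdot\nabla_{\overline{x}}\phi)\,\varphi_{\varepsilon}\,dx .
\]
The first term on the right is bounded by $\frac{C}{\varepsilon}\|\nabla\mu_{\varepsilon}(t)\|_{L^{2}(\Omega_{\varepsilon})}\|\phi\|_{H^{1}(\Omega)}$, and after squaring, integrating in $t$, and using (\ref{e2.19}) it contributes $O(\varepsilon^{-1/2})\cdot O(\varepsilon^{1/2})=O(1)$ to the $L^{2}(0,T;H^{1}(\Omega)')$ norm. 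For the convective term I would use Hölder with exponents $(4,4,2)$ together with Lemma \ref{le2.1}(\ref{e2.5}) to control $\|\boldsymbol{u}_{\varepsilon}(t)\|_{L^{4}(\Omega_{\varepsilon})}\le C\varepsilon^{1/2}\|\nabla\boldsymbol{u}_{\varepsilon}(t)\|_{L^{2}}$, the embedding $H^{1}(\Omega_{\varepsilon})\hookrightarrow L^{4}(\Omega_{\varepsilon})$ for $\varphi_{\varepsilon}(t)$, and then (\ref{e2.8})--(\ref{e2.9}); one gets a bound of the form $\frac{C}{\varepsilon}\cdot\varepsilon^{1/2}\|\nabla\boldsymbol{u}_{\varepsilon}(t)\|_{L^{2}}\cdot\varepsilon^{1/2}\|\phi\|_{H^{1}(\Omega)}$, and the $L^{2}(0,T)$-norm in $t$ of $\|\nabla\boldsymbol{u}_{\varepsilon}\|_{L^{2}}$ is $O(\varepsilon^{-1/2})$ by (\ref{e2.8}), so this term is also $O(1)$.

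The main obstacle is bookkeeping the $\varepsilon$-powers: every spatial norm over $\Omega_{\varepsilon}$ carries a hidden factor $\varepsilon^{1/2}$ from the thin direction, the averaging normalization contributes $\varepsilon^{-1}$, and the anisotropic Sobolev/Poincaré inequalities of Lemma \ref{le2.1} must be deployed precisely so that the convective nonlinearity does not blow up. Once the powers are matched — each of the three right-hand contributions turning out to be exactly $O(1)$ after the $t$-integration — the conclusion $\partial_{t}M_{\varepsilon}\varphi_{\varepsilon}\in L^{2}(0,T;H^{1}(\Omega)')$ with the stated uniform bound follows, and the fact that the pairing is well defined and consistent with the distributional derivative is standard given parts (i)--(iii) of Definition \ref{d1.1}. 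I would close by noting that (\ref{2.26}) together with $\varphi_{\varepsilon}\in L^{2}(0,T;H^{2}(\Omega_{\varepsilon}))$ from Theorem \ref{th2.1} legitimizes differentiating under the integral sign, so all manipulations above are rigorous.
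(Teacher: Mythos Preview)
Your approach is correct and essentially identical to the paper's: both transfer the $H^{1}$ bound via Jensen/Cauchy--Schwarz together with (\ref{2.26}), then test (\ref{1.1})$_{3}$ against $x_{d}$-independent $\phi\in H^{1}(\Omega)$, split into the $\nabla\mu_{\varepsilon}$-term and the convective term, and close using (\ref{e2.5}), the embedding $H^{1}(\Omega_{\varepsilon})\hookrightarrow L^{4}(\Omega_{\varepsilon})$, and (\ref{e2.8})--(\ref{e2.10}). Your displayed $\varepsilon$-bookkeeping is off by one factor of $\varepsilon^{1/2}$ in each term (you dropped the $\sqrt{2\varepsilon}$ coming from $\|\nabla_{\overline{x}}\phi\|_{L^{2}(\Omega_{\varepsilon})}=\sqrt{2\varepsilon}\,\|\nabla_{\overline{x}}\phi\|_{L^{2}(\Omega)}$ in the first bound, and the $\varepsilon^{1/2}$ from $\|\varphi_{\varepsilon}\|_{H^{1}(\Omega_{\varepsilon})}\le C\varepsilon^{1/2}$ in the second); with that correction the stated $O(1)$ conclusions are exactly right and match the paper's final estimate $C\varepsilon^{1/2}\|\nabla\boldsymbol{u}_{\varepsilon}(t)\|_{L^{2}}+\|\nabla_{\overline{x}}\widetilde{\mu}_{\varepsilon}\|_{L^{2}(\Omega)}$.
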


\begin{proof}
We recall that (\ref{1.1})$_{3}$ (with the help of (\ref{1.1})$_{2}$) is
equivalent to
\begin{equation}
\frac{\partial\varphi_{\varepsilon}}{\partial t}+\operatorname{div}%
(\boldsymbol{u}_{\varepsilon}\varphi_{\varepsilon})-\Delta\mu_{\varepsilon
}=0\text{ in }Q_{\varepsilon}\text{.\ \ \ }\label{2.28}%
\end{equation}
With this in mind, we set, for any function $v$ defined in $Q_{\varepsilon}$,
$\widetilde{v}(t,\overline{x})=(M_{\varepsilon}v)(t,\overline{x})$
\ ($(t,\overline{x})\in Q$). Then we apply $M_{\varepsilon}$ on (\ref{2.28})
to get
\begin{equation}
\frac{\partial\widetilde{\varphi}_{\varepsilon}}{\partial t}%
+\operatorname{div}_{\overline{x}}(\widetilde{\boldsymbol{u}_{\varepsilon
}\varphi_{\varepsilon}})-\Delta_{\overline{x}}\widetilde{\mu}_{\varepsilon
}=0\text{ in }Q.\label{2.29}%
\end{equation}
Indeed, in order to obtain (\ref{2.29}), we observe that it is enough to check
that $\Delta_{\overline{x}}\widetilde{\mu}_{\varepsilon}=\widetilde
{\Delta_{\overline{x}}\mu_{\varepsilon}}$ in $Q$. To achieve this, let us
first observe that in view of the equality (\ref{2.26}), one has from
(\ref{2.28})
\[
\widetilde{\Delta\mu_{\varepsilon}}=\frac{\partial\widetilde{\varphi
}_{\varepsilon}}{\partial t}+\operatorname{div}_{\overline{x}}(\widetilde
{\boldsymbol{u}_{\varepsilon}\varphi_{\varepsilon}})\text{ in }%
Q\text{.\ \ \ \ \ \ }%
\]
Thus, for any $v\in\mathcal{C}_{0}^{\infty}(Q)$,
\begin{align*}
\left\langle \Delta_{\overline{x}}\widetilde{\mu}_{\varepsilon}%
,v\right\rangle  &  =-\int_{Q}\nabla_{\overline{x}}\widetilde{\mu
}_{\varepsilon}\cdot\nabla_{\overline{x}}vd\overline{x}dt=-\int_{Q}%
\widetilde{\nabla_{\overline{x}}\mu_{\varepsilon}}\cdot\nabla_{\overline{x}%
}vd\overline{x}dt\\
&  =-\int_{Q_{\varepsilon}}\nabla_{\overline{x}}\mu_{\varepsilon}\cdot
\nabla_{\overline{x}}vdxdt=-\int_{Q_{\varepsilon}}\varphi_{\varepsilon}%
\frac{\partial v}{\partial t}dxdt-\int_{Q_{\varepsilon}}\boldsymbol{u}%
_{\varepsilon}\varphi_{\varepsilon}\cdot\nabla_{\overline{x}}vdxdt\\
&  =-\int_{Q}\widetilde{\varphi}_{\varepsilon}\frac{\partial v}{\partial
t}d\overline{x}dt-\int_{Q}\widetilde{\boldsymbol{u}_{\varepsilon}%
\varphi_{\varepsilon}}\cdot\nabla_{\overline{x}}vd\overline{x}dt\\
&  =\left\langle \frac{\partial\widetilde{\varphi}_{\varepsilon}}{\partial
t}+\operatorname{div}_{\overline{x}}(\widetilde{\boldsymbol{u}_{\varepsilon
}\varphi_{\varepsilon}}),v\right\rangle =\left\langle \widetilde{\Delta
\mu_{\varepsilon}},v\right\rangle .
\end{align*}

Next we notice that from (\ref{e2.9}) and (\ref{e2.10}), one has
\begin{equation}
\left\Vert \widetilde{\varphi}_{\varepsilon}\right\Vert _{L^{\infty}%
(0,T;H^{1}(\Omega))}\leq C\text{ and }\left\Vert \widetilde{\mu}_{\varepsilon
}\right\Vert _{L^{2}(0,T;H^{1}(\Omega))}\leq C,\label{2.9'}%
\end{equation}
where $C>0$ is independent of $\varepsilon$.

Now, for $\phi\in H^{1}(\Omega)$, we have
\begin{align*}
\left\vert \left\langle \frac{\partial\widetilde{\varphi}_{\varepsilon}%
}{\partial t},\phi\right\rangle \right\vert  & \leq\left\vert \int_{\Omega
}\widetilde{\boldsymbol{u}_{\varepsilon}\varphi_{\varepsilon}}\cdot
\nabla_{\overline{x}}\phi d\overline{x}\right\vert +\left\vert \int_{\Omega
}\nabla_{\overline{x}}\widetilde{\mu}_{\varepsilon}\cdot\nabla_{\overline{x}%
}\phi d\overline{x}\right\vert \\
& \leq\frac{1}{2\varepsilon}\left\vert \int_{\Omega_{\varepsilon}%
}\boldsymbol{u}_{\varepsilon}\varphi_{\varepsilon}\cdot\nabla_{\overline{x}%
}\phi dx\right\vert +\left\Vert \nabla_{\overline{x}}\widetilde{\mu
}_{\varepsilon}\right\Vert _{L^{2}(\Omega)}\left\Vert \nabla_{\overline{x}%
}\phi\right\Vert _{L^{2}(\Omega)}\\
& \leq\frac{C}{\varepsilon}\left\Vert \boldsymbol{u}_{\varepsilon
}(t)\right\Vert _{L^{4}(\Omega_{\varepsilon})}\left\Vert \varphi_{\varepsilon
}(t)\right\Vert _{L^{4}(\Omega_{\varepsilon})}\left\Vert \nabla_{\overline{x}%
}\phi\right\Vert _{L^{2}(\Omega_{\varepsilon})}+\\
& +\left\Vert \nabla_{\overline{x}}\widetilde{\mu}_{\varepsilon}\right\Vert
_{L^{2}(\Omega)}\left\Vert \nabla_{\overline{x}}\phi\right\Vert _{L^{2}%
(\Omega)}.
\end{align*}
Since $\left\Vert \nabla_{\overline{x}}\phi\right\Vert _{L^{2}(\Omega
_{\varepsilon})}=\sqrt{2}\varepsilon^{\frac{1}{2}}\left\Vert \nabla
_{\overline{x}}\phi\right\Vert _{L^{2}(\Omega)}$ and by (\ref{e2.4}) in (Lemma
\ref{le2.1}) together with the embedding $H^{1}(\Omega_{\varepsilon
})\hookrightarrow L^{4}(\Omega_{\varepsilon})$ with the Sobolev constant being
independent of $\varepsilon$, we are led to
\begin{align*}
\left\vert \left\langle \frac{\partial\widetilde{\varphi}_{\varepsilon}%
}{\partial t},\phi\right\rangle \right\vert  & \leq\left(  C\left\Vert
\nabla\boldsymbol{u}_{\varepsilon}(t)\right\Vert _{L^{2}(\Omega_{\varepsilon
})}\left\Vert \varphi_{\varepsilon}(t)\right\Vert _{H^{1}(\Omega_{\varepsilon
})}+\left\Vert \nabla_{\overline{x}}\widetilde{\mu}_{\varepsilon}\right\Vert
_{L^{2}(\Omega)}\right)  \left\Vert \nabla_{\overline{x}}\phi\right\Vert
_{L^{2}(\Omega)}\\
& \leq\left(  C\varepsilon^{\frac{1}{2}}\left\Vert \nabla\boldsymbol{u}%
_{\varepsilon}(t)\right\Vert _{L^{2}(\Omega_{\varepsilon})}+\left\Vert
\nabla_{\overline{x}}\widetilde{\mu}_{\varepsilon}\right\Vert _{L^{2}(\Omega
)}\right)  \left\Vert \nabla_{\overline{x}}\phi\right\Vert _{L^{2}(\Omega)}.
\end{align*}
We conclude as for $\frac{\partial\boldsymbol{u}_{\varepsilon}}{\partial t} $
by integrating the square of $\sup_{\phi\in H^{1}(\Omega),\left\Vert
\phi\right\Vert _{H^{1}(\Omega)}\leq1}\left\vert \left\langle \frac
{\partial\widetilde{\varphi}_{\varepsilon}}{\partial t},\phi\right\rangle
\right\vert $ over $(0,T)$ and using the estimates (\ref{e2.8}) and
(\ref{2.9'}), we get
\[
\left\Vert \frac{\partial\widetilde{\varphi}_{\varepsilon}}{\partial
t}\right\Vert _{L^{2}(0,T;H^{1}(\Omega)^{\prime})}\leq C\text{,}%
\]
where $C$ is independent of $\varepsilon$. This concludes the proof.
\end{proof}

\section{Sigma-convergence for thin heterogeneous domains\label{sec3}}

In this section we gather for the reader some basic concepts about the
algebras with mean value\cite{Jikov,ZK} and the associated Sobolev-type
spaces\cite{CMP,NA}.

Let $A$ be an algebra with mean value on $\mathbb{R}^{m}$ (integer $m\geq1$)\cite{Jikov,ZK}, that is, a closed subalgebra of the $\mathcal{C}^{\ast}%
$-algebra of bounded uniformly continuous real-valued functions on
$\mathbb{R}^{m}$, $\mathrm{BUC}(\mathbb{R}^{m})$, which contains the
constants, is translation invariant and is such that any of its elements
possesses a mean value in the following sense: for every $u\in A$, the
sequence $(u^{\varepsilon})_{\varepsilon>0}$ ($u^{\varepsilon}%
(x)=u(x/\varepsilon)$) weakly $\ast$-converges in $L^{\infty}(\mathbb{R}^{m})$
to some real number $M(u)$ (called the mean value of $u$) as $\varepsilon
\rightarrow0$. The mean value expresses as
\begin{equation}
M(u)=\lim_{R\rightarrow\infty}%
\mathchoice {{\setbox0=\hbox{$\displaystyle{\textstyle -}{\int}$ } \vcenter{\hbox{$\textstyle -$
}}\kern-.6\wd0}}{{\setbox0=\hbox{$\textstyle{\scriptstyle -}{\int}$ } \vcenter{\hbox{$\scriptstyle -$
}}\kern-.6\wd0}}{{\setbox0=\hbox{$\scriptstyle{\scriptscriptstyle -}{\int}$
} \vcenter{\hbox{$\scriptscriptstyle -$
}}\kern-.6\wd0}}{{\setbox0=\hbox{$\scriptscriptstyle{\scriptscriptstyle
-}{\int}$ } \vcenter{\hbox{$\scriptscriptstyle -$ }}\kern-.6\wd0}}\!\int
_{B_{R}}u(y)dy\text{ for }u\in A\label{0.1}%
\end{equation}
where we have set $\mathchoice {{\setbox0=\hbox{$\displaystyle{\textstyle
-}{\int}$ } \vcenter{\hbox{$\textstyle -$
}}\kern-.6\wd0}}{{\setbox0=\hbox{$\textstyle{\scriptstyle -}{\int}$ } \vcenter{\hbox{$\scriptstyle -$
}}\kern-.6\wd0}}{{\setbox0=\hbox{$\scriptstyle{\scriptscriptstyle -}{\int}$
} \vcenter{\hbox{$\scriptscriptstyle -$
}}\kern-.6\wd0}}{{\setbox0=\hbox{$\scriptscriptstyle{\scriptscriptstyle
-}{\int}$ } \vcenter{\hbox{$\scriptscriptstyle -$ }}\kern-.6\wd0}}\!\int
_{B_{R}}=\left\vert B_{R}\right\vert ^{-1}\int_{B_{R}}$.

To an algebra with mean value $A$ are associated its regular subalgebras
$A^{k}=\{\psi\in\mathcal{C}^{k}(\mathbb{R}^{m}):$ $D_{y}^{\alpha}\psi\in A $
$\forall\alpha=(\alpha_{1},...,\alpha_{m})\in\mathbb{N}^{m}$ with $\left\vert
\alpha\right\vert \leq k\}$ ($k\geq0$ an integer with $A^{0}=A$, and
$D_{y}^{\alpha}\psi=\frac{\partial^{\left\vert \alpha\right\vert }\psi
}{\partial y_{1}^{\alpha_{1}}\cdot\cdot\cdot\partial y_{m}^{\alpha_{m}}}$).
Under the norm $\left\Vert \left\vert u\right\vert \right\Vert _{k}%
=\sup_{\left\vert \alpha\right\vert \leq k}\left\Vert D_{y}^{\alpha}%
\psi\right\Vert _{\infty}$, $A^{k}$ is a Banach space. We also define the
space $A^{\infty}=\{\psi\in\mathcal{C}^{\infty}(\mathbb{R}^{m}):$
$D_{y}^{\alpha}\psi\in A$ $\forall\alpha=(\alpha_{1},...,\alpha_{m}%
)\in\mathbb{N}^{m}\}$, a Fr\'{e}chet space when endowed with the locally
convex topology defined by the family of norms $\left\Vert \left\vert
\cdot\right\vert \right\Vert _{m}$. The space $A^{\infty}$ is dense in any
$A^{k} $ (integer $k\geq0$).

The notion of a vector-valued algebra with mean value will be very useful in
this study.

Let $\mathbb{F}$ be a Banach space. We denote by \textrm{BUC}$(\mathbb{R}%
^{m};\mathbb{F})$ the Banach space of bounded uniformly continuous functions
$u:\mathbb{R}^{m}\rightarrow\mathbb{F}$, endowed with the norm
\[
\left\Vert u\right\Vert _{\infty}=\sup_{y\in\mathbb{R}^{m}}\left\Vert
u(y)\right\Vert _{\mathbb{F}}%
\]
where $\left\Vert \cdot\right\Vert _{\mathbb{F}}$ stands for the norm in
$\mathbb{F}$. Let $A$ be an algebra with mean value on $\mathbb{R}^{m}$. We
denote by $A\otimes\mathbb{F}$ the usual space of functions of the form
\[
\sum_{\text{finite}}u_{i}\otimes e_{i}\text{ with }u_{i}\in A\text{ and }%
e_{i}\in\mathbb{F}%
\]
where $(u_{i}\otimes e_{i})(y)=u_{i}(y)e_{i}$ for $y\in\mathbb{R}^{m}$. With
this in mind, we define the vector-valued algebra with mean value
$A(\mathbb{R}^{m};\mathbb{F})$ as the closure of $A\otimes\mathbb{F}$ in
\textrm{BUC}$(\mathbb{R}^{m};\mathbb{F})$. Then it holds that (see
Ref. \cite{CMP}), for any $f\in A(\mathbb{R}^{m};\mathbb{F})$, the set
$\{L(f):L\in\mathbb{F}^{\prime}$ with $\left\Vert L\right\Vert _{\mathbb{F}%
^{\prime}}\leq1\}$ is relatively compact in $A$.

Let us note that we may still define the space $A(\mathbb{R}^{m};\mathbb{F})$
where $\mathbb{F}$ in this case is a Fr\'{e}chet space. In that case, we
replace the norm by the family of seminorms defining the topology of
$\mathbb{F}$.

Now, let $f\in A(\mathbb{R}^{m};\mathbb{F})$. Then, defining $\left\Vert
f\right\Vert _{\mathbb{F}}$ by $\left\Vert f\right\Vert _{\mathbb{F}%
}(y)=\left\Vert f(y)\right\Vert _{\mathbb{F}}$ ($y\in\mathbb{R}^{m}$), we have
that $\left\Vert f\right\Vert _{\mathbb{F}}\in A$. Similarly we can define
(for $0<p<\infty$) the function $\left\Vert f\right\Vert _{\mathbb{F}}^{p}$
and $\left\Vert f\right\Vert _{\mathbb{F}}^{p}\in A$. This allows us to define
the Besicovitch seminorm on $A(\mathbb{R}^{m};\mathbb{F})$ as follows: for
$1\leq p<\infty$, we define the Marcinkiewicz-type space $\mathfrak{M}%
^{p}(\mathbb{R}^{m};\mathbb{F})$ to be the vector space of functions $u\in
L_{loc}^{p}(\mathbb{R}^{m};\mathbb{F})$ such that
\[
\left\Vert u\right\Vert _{p}=\left(  \underset{R\rightarrow\infty}{\lim\sup
}\mathchoice {{\setbox0=\hbox{$\displaystyle{\textstyle
-}{\int}$ } \vcenter{\hbox{$\textstyle -$
}}\kern-.6\wd0}}{{\setbox0=\hbox{$\textstyle{\scriptstyle -}{\int}$ } \vcenter{\hbox{$\scriptstyle -$
}}\kern-.6\wd0}}{{\setbox0=\hbox{$\scriptstyle{\scriptscriptstyle -}{\int}$
} \vcenter{\hbox{$\scriptscriptstyle -$
}}\kern-.6\wd0}}{{\setbox0=\hbox{$\scriptscriptstyle{\scriptscriptstyle
-}{\int}$ } \vcenter{\hbox{$\scriptscriptstyle -$ }}\kern-.6\wd0}}\!\int
_{B_{R}}\left\Vert u(y)\right\Vert _{\mathbb{F}}^{p}dy\right)  ^{\frac{1}{p}%
}<\infty
\]
where $B_{R}$ is the open ball in $\mathbb{R}^{m}$ centered at the origin and
of radius $R$. Under the seminorm $\left\Vert \cdot\right\Vert _{p,\mathbb{F}%
}$, $\mathfrak{M}^{p}(\mathbb{R}^{m};\mathbb{F})$ is a complete seminormed
space with the property that $A(\mathbb{R}^{m};\mathbb{F})\subset
\mathfrak{M}^{p}(\mathbb{R}^{m};\mathbb{F})$ since $\left\Vert u\right\Vert
_{p}<\infty$ for any $u\in A(\mathbb{R}^{m};\mathbb{F})$. We therefore define
the generalized Besicovitch space $B_{A}^{p}(\mathbb{R}^{m};\mathbb{F})$ as
the closure of $A(\mathbb{R}^{m};\mathbb{F})$ in $\mathfrak{M}^{p}%
(\mathbb{R}^{m};\mathbb{F})$. The following hold true\cite{CMP,NA}:

\begin{itemize}
\item[(\textbf{i)}] The space $\mathcal{B}_{A}^{p}(\mathbb{R}^{m}%
;\mathbb{F})=B_{A}^{p}(\mathbb{R}^{m};\mathbb{F})/\mathcal{N}$ (where
$\mathcal{N}=\{u\in B_{A}^{p}(\mathbb{R}^{m};\mathbb{F}):\left\Vert
u\right\Vert _{p}=0\} $) is a Banach space under the norm $\left\Vert
u+\mathcal{N}\right\Vert _{p}=\left\Vert u\right\Vert _{p}$ for $u\in
B_{A}^{p}(\mathbb{R}^{m};\mathbb{F})$.

\item[(\textbf{ii)}] The mean value $M:A(\mathbb{R}^{m};\mathbb{F}%
)\rightarrow\mathbb{F}$ extends by continuity to a continuous linear mapping
(still denoted by $M$) on $B_{A}^{p}(\mathbb{R}^{m};\mathbb{F})$ satisfying
\[
L(M(u))=M(L(u))\text{ for all }L\in\mathbb{F}^{\prime}\text{ and }u\in
B_{A}^{p}(\mathbb{R}^{m};\mathbb{F}).
\]
Moreover, for $u\in B_{A}^{p}(\mathbb{R}^{m};\mathbb{F})$ we have
\[
\left\Vert u\right\Vert _{p}=\left[  M(\left\Vert u\right\Vert _{\mathbb{F}%
}^{p})\right]  ^{1/p}\equiv\left[  \lim_{R\rightarrow\infty}%
\mathchoice {{\setbox0=\hbox{$\displaystyle{\textstyle
-}{\int}$ } \vcenter{\hbox{$\textstyle -$
}}\kern-.6\wd0}}{{\setbox0=\hbox{$\textstyle{\scriptstyle -}{\int}$ } \vcenter{\hbox{$\scriptstyle -$
}}\kern-.6\wd0}}{{\setbox0=\hbox{$\scriptstyle{\scriptscriptstyle -}{\int}$
} \vcenter{\hbox{$\scriptscriptstyle -$
}}\kern-.6\wd0}}{{\setbox0=\hbox{$\scriptscriptstyle{\scriptscriptstyle
-}{\int}$ } \vcenter{\hbox{$\scriptscriptstyle -$ }}\kern-.6\wd0}}\!\int
_{B_{R}}\left\Vert u(y)\right\Vert _{\mathbb{F}}^{p}dy\right]  ^{\frac{1}{p}},
\]
and for $u\in\mathcal{N}$ one has $M(u)=0$.
\end{itemize}

It is worth noticing that $\mathcal{B}_{A}^{2}(\mathbb{R}^{m};H)$ (when
$\mathbb{F}=H$ is a Hilbert space) is a Hilbert space with inner product
\[
\left(  u,v\right)  _{2}=M\left[  \left(  u,v\right)  _{H}\right]  \text{ for
}u,v\in\mathcal{B}_{A}^{2}(\mathbb{R}^{m};H),
\]
$(~,~)_{H}$ denoting the inner product in $H$ and $\left(  u,v\right)  _{H}$
the function $y\mapsto\left(  u(y),v(y)\right)  _{H}$ from $\mathbb{R}^{m}$ to
$\mathbb{R}$, which belongs to $\mathcal{B}_{A}^{1}(\mathbb{R}^{m}%
;\mathbb{R})$.

We also define the Sobolev-Besicovitch type spaces as follows:
\[
B_{A}^{1,p}(\mathbb{R}^{m};\mathbb{F})=\{u\in B_{A}^{p}(\mathbb{R}%
^{m};\mathbb{F}):\nabla_{y}u\in(B_{A}^{p}(\mathbb{R}^{m};\mathbb{F}))^{m}\},
\]
endowed with the seminorm
\[
\left\Vert u\right\Vert _{1,p}=\left(  \left\Vert u\right\Vert _{p}%
^{p}+\left\Vert \nabla_{y}u\right\Vert _{p}^{p}\right)  ^{\frac{1}{p}},
\]
which is a complete seminormed space. The Banach counterpart of $B_{A}%
^{1,p}(\mathbb{R}^{m};\mathbb{F})$ denoted by $\mathcal{B}_{A}^{1,p}%
(\mathbb{R}^{m};\mathbb{F})$ is defined by replacing $B_{A}^{p}(\mathbb{R}%
^{m};\mathbb{F})$ by $\mathcal{B}_{A}^{p}(\mathbb{R}^{m};\mathbb{F})$ and
$\partial/\partial y_{i}$ by $\overline{\partial}/\partial y_{i}$, where
$\overline{\partial}/\partial y_{i}$ is defined by
\begin{equation}
\frac{\overline{\partial}}{\partial y_{i}}(u+\mathcal{N}):=\frac{\partial
u}{\partial y_{i}}+\mathcal{N}\text{ for }u\in B_{A}^{1,p}(\mathbb{R}%
^{m};\mathbb{F}).\label{e3}%
\end{equation}
It is important to note that $\overline{\partial}/\partial y_{i}$ is also
defined as the infinitesimal generator in the $i$th direction coordinate of
the strongly continuous group $\mathcal{T}(y):\mathcal{B}_{A}^{p}%
(\mathbb{R}^{m};\mathbb{F})\rightarrow\mathcal{B}_{A}^{p}(\mathbb{R}%
^{m};\mathbb{F});\ \mathcal{T}(y)(u+\mathcal{N})=u(\cdot+y)+\mathcal{N}$. Let
us denote by $\varrho:B_{A}^{p}(\mathbb{R}^{m};\mathbb{F})\rightarrow
\mathcal{B}_{A}^{p}(\mathbb{R}^{m};\mathbb{F})=B_{A}^{p}(\mathbb{R}%
^{m};\mathbb{F})/\mathcal{N}$, $\varrho(u)=u+\mathcal{N}$, the canonical
surjection. We remark that if $u\in B_{A}^{1,p}(\mathbb{R}^{m};\mathbb{F})$
then $\varrho(u)\in\mathcal{B}_{A}^{1,p}(\mathbb{R}^{m};\mathbb{F})$ with
further
\[
\frac{\overline{\partial}\varrho(u)}{\partial y_{i}}=\varrho\left(
\frac{\partial u}{\partial y_{i}}\right)  ,
\]
as seen above in (\ref{e3}).

We define a further notion by restricting ourselves to the case $\mathbb{F}%
=\mathbb{R}$. We say that the algebra $A$ is ergodic if any $u\in
\mathcal{B}_{A}^{1}(\mathbb{R}^{m};\mathbb{R})$ that is invariant under
$(\mathcal{T}(y))_{y\in\mathbb{R}^{m}}$ is a constant in $\mathcal{B}_{A}%
^{1}(\mathbb{R}^{m};\mathbb{R})$: this amounts to, if $\mathcal{T}(y)u=u$\ in
$\mathcal{B}_{A}^{1}(\mathbb{R}^{m};\mathbb{R})$ for every $y\in\mathbb{R}%
^{m}$, then $u=c$\ in $\mathcal{B}_{A}^{1}(\mathbb{R}^{m};\mathbb{R})$ in the
sense that $\left\Vert u-c\right\Vert _{1}=0$, $c$ being a constant.

The following \textit{corrector function}\ space will be useful in the sequel.
Let $G$ be an open bounded subset in $\mathbb{R}^{N}$. We define the
\textit{corrector function}\ space $B_{\#A}^{1,p}(\mathbb{R}^{m};W^{1,p}(G))$
by
\[%
\begin{array}
[c]{l}%
B_{\#A}^{1,p}(\mathbb{R}^{m};W^{1,p}(G))=\{u\in W_{loc}^{1,p}(\mathbb{R}%
^{m};W^{1,p}(G)):\nabla u\in B_{A}^{p}(\mathbb{R}^{m};L^{p}(G))^{m+N}\\
\text{
\ \ \ \ \ \ \ \ \ \ \ \ \ \ \ \ \ \ \ \ \ \ \ \ \ \ \ \ \ \ \ \ \ \ \ \ \ \ \ and
}\int_{G}M(\nabla u(\cdot,\zeta))d\zeta=0\}\text{,}%
\end{array}
\]
where in this case $\nabla=(\nabla_{y},\nabla_{\zeta})$, $\nabla_{y}$ (resp.
$\nabla_{\zeta}$) being the gradient operator with respect to the variable
$y\in\mathbb{R}^{m}$ (resp. $\zeta\in\mathbb{R}^{N}$). We identify two
elements of $B_{\#A}^{1,p}(\mathbb{R}^{m};W^{1,p}(G))$ by their gradients in
the sense that: $u=v$ in $B_{\#A}^{1,p}(\mathbb{R}^{m};W^{1,p}(G))$ iff
$\nabla(u-v)=0$, i.e. $\int_{G}\left\Vert \nabla(u(\cdot,\zeta)-v(\cdot
,\zeta))\right\Vert _{p}^{p}d\zeta=0$. The space $B_{\#A}^{1,p}(\mathbb{R}%
^{m};W^{1,p}(G))$ is therefore a Banach space under the norm $\left\Vert
u\right\Vert _{\#,p}=\left(  \int_{G}\left\Vert \nabla u(\cdot,\zeta
)\right\Vert _{p}^{p}d\zeta\right)  ^{1/p}$.

The sigma-convergence concept has been introduced in Ref. \cite{Hom1} in order to
tackle multiscale phenomena occurring in deterministic media. It is concerned
with multiscale phenomena taking place in all space dimensions. Its periodic
counterpart has then been generalized in Ref. \cite{RJ2007} to thin heterogeneous
media with periodic microstructures.

We provide here a suitable generalization of the definition contained in
Ref. \cite{RJ2007} to media displaying nonperiodic (but deterministic) structure.
Let us note that this generalization has already just been proposed for steady
state heterogeneous structures by the second and third authors in
Ref. \cite{JW2022}.

Our aim in this section is to provide, in the light of Ref.  \cite{JW2022}, a
systematic study of the concept of sigma-convergence applied to thin
heterogeneous domains whose heterogeneous structure is of general
deterministic type including the periodic one and the almost periodic one as
special cases. The compactness results obtained here generalize therefore
those in Ref. \cite{RJ2007} which are concerned only with periodic structures.

More precisely, let $d\geq2$ be a given integer, and let $\Omega
\subset\mathbb{R}^{d-1}$ be an open set, which will be assumed throughout this
section to be not necessarily bounded. For $\varepsilon>0$ a given small
parameter, we define the thin domain by $\Omega_{\varepsilon}=\Omega
\times(-\varepsilon,\varepsilon)$. When $\varepsilon\rightarrow0$,
$\Omega_{\varepsilon}$ shrinks to the "interface" $\Omega_{0}=\Omega
\times\{0\}$.

The space $\mathbb{R}_{\xi}^{m}$ is the numerical space $\mathbb{R}^{m}$ of
generic variable $\xi$. In this regard we set $\mathbb{R}^{d-1}=\mathbb{R}%
_{\overline{x}}^{d-1}$ or $\mathbb{R}_{\overline{y}}^{d-1}$ where
$\overline{x}=(x_{1},...,x_{d-1})$, so that $x\in\mathbb{R}^{d}$ writes
$(\overline{x},x_{d})$ or $(\overline{x},\zeta)$. We identify $\Omega_{0}$
with $\Omega$ so that the generic element in $\Omega_{0}$ is also denoted by
$\overline{x} $ instead of $(\overline{x},0)$.

To our spatial thin domain we associate the spatiotemporal domain
$Q_{\varepsilon}=(0,T)\times\Omega_{\varepsilon}$. Finally we set
$Q=(0,T)\times\Omega_{0}\equiv(0,T)\times\Omega$ and $I=(-1,1)$.

With this in mind, let $A$ be an algebra with mean value on $\mathbb{R}^{d-1}
$. We denote by $M$ the mean value on $A$ as well as its extension on the
associated generalized Besicovitch spaces $B_{A}^{p}(\mathbb{R}^{d-1}%
;L^{p}(I))$ and $\mathcal{B}_{A}^{p}(\mathbb{R}^{d-1};L^{p}(I))$, $1\leq
p<\infty$.

We introduce here below the notion of $\Sigma$-convergence for thin
heterogeneous domains; see Ref. \cite{JW2022} for the stationary version.

\begin{definition}
\label{d2.1}\emph{A sequence }$(u_{\varepsilon})_{\varepsilon>0}\subset
L^{p}(Q_{\varepsilon})$\emph{\ is said to }

\begin{itemize}
\item[(i)] \emph{weakly }$\Sigma$\emph{-converge in }$L^{p}(Q_{\varepsilon}%
)$\emph{\ to }$u_{0}\in L^{p}(Q;\mathcal{B}_{A}^{p}(\mathbb{R}^{d-1}%
;L^{p}(I)))$\emph{\ if as }$\varepsilon\rightarrow0$\emph{, }%
\[
\frac{1}{\varepsilon}\int_{Q_{\varepsilon}}u_{\varepsilon}(t,x)f\left(
t,\overline{x},\frac{x}{\varepsilon}\right)  dxdt\rightarrow\int_{Q}\int
_{I}M(u_{0}(t,\overline{x},\cdot,y_{d})f(t,\overline{x},\cdot,y_{d}%
))dy_{d}d\overline{x}dt
\]
\emph{for any }$f\in L^{p^{\prime}}(Q;A(\mathbb{R}^{d-1};L^{p^{\prime}}%
(I)))$\emph{\ (}$1/p^{\prime}=1-1/p$\emph{); we denote this by "}%
$u_{\varepsilon}\rightarrow u_{0}$\emph{\ in }$L^{p}(Q_{\varepsilon}%
)$\emph{-weak }$\Sigma_{A}$\emph{";}

\item[(ii)] \emph{strongly }$\Sigma$\emph{-converge in }$L^{p}(Q_{\varepsilon
})$\emph{\ to }$u_{0}\in L^{p}(Q;\mathcal{B}_{A}^{p}(\mathbb{R}^{d-1}%
;L^{p}(I)))$\emph{\ if it is weakly sigma-convergent and further }%
\begin{equation}
\varepsilon^{-\frac{1}{p}}\left\Vert u_{\varepsilon}\right\Vert _{L^{p}%
(Q_{\varepsilon})}\rightarrow\left\Vert u_{0}\right\Vert _{L^{p}%
(Q;\mathcal{B}_{A}^{p}(\mathbb{R}^{d-1};L^{p}(I)))};\label{2.2}%
\end{equation}
\emph{we denote this by "}$u_{\varepsilon}\rightarrow u_{0}$\emph{\ in }%
$L^{p}(Q_{\varepsilon})$\emph{-strong }$\Sigma_{A}$\emph{".}
\end{itemize}
\end{definition}

\begin{remark}
\label{r2.1}\emph{It is easy to see that if }$u_{0}\in L^{p}(Q;A(\mathbb{R}%
^{d-1};L^{p}(I)))$\emph{\ then (\ref{2.2}) is equivalent to }%
\begin{equation}
\varepsilon^{-\frac{1}{p}}\left\Vert u_{\varepsilon}-u_{0}^{\varepsilon
}\right\Vert _{L^{p}(Q_{\varepsilon})}\rightarrow0\text{\emph{\ as }%
}\varepsilon\rightarrow0,\label{2.3}%
\end{equation}
\emph{where }$u_{0}^{\varepsilon}(t,x)=u_{0}(t,\overline{x},x/\varepsilon
)$\emph{\ for }$(t,x)\in Q_{\varepsilon}$\emph{.}
\end{remark}

Before we state the first compactness result for this section, we need a
further notation. Throughout the work, the letter $E$ will stand for any
ordinary sequence $(\varepsilon_{n})_{n\geq1}$ with $0<\varepsilon_{n}\leq1$
and $\varepsilon_{n}\rightarrow0$ when $n\rightarrow\infty$. The generic term
of $E$ will be merely denote by $\varepsilon$ and $\varepsilon\rightarrow0$
will mean $\varepsilon_{n}\rightarrow0$ as $n\rightarrow\infty$. This being
so, the following compactness result holds true.

\begin{theorem}
\label{t2.1}Let $(u_{\varepsilon})_{\varepsilon\in E}$ be a sequence in
$L^{p}(Q_{\varepsilon})$ $(1<p<\infty)$ such that
\[
\sup_{\varepsilon\in E}\varepsilon^{-1/p}\left\Vert u_{\varepsilon}\right\Vert
_{L^{p}(Q_{\varepsilon})}\leq C
\]
where $C$ is a positive constant independent of $\varepsilon$. Then there
exists a subsequence $E^{\prime}$ of $E$ such that the sequence
$(u_{\varepsilon})_{\varepsilon\in E^{\prime}}$ weakly $\Sigma$-converges in
$L^{p}(Q_{\varepsilon})$ to some $u_{0}\in L^{p}(Q;\mathcal{B}_{A}%
^{p}(\mathbb{R}^{d-1};L^{p}(I)))$.
\end{theorem}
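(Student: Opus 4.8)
The plan is to adapt the classical functional-analytic proof of the compactness of sigma-convergence to the thin-layer setting, the only new ingredient being the anisotropic rescaling $x_{d}=\varepsilon\zeta$ of the thin variable. First I would introduce, for each $\varepsilon\in E$, the linear functional
\[
L_{\varepsilon}(f)=\frac{1}{\varepsilon}\int_{Q_{\varepsilon}}u_{\varepsilon}(t,x)\,f\!\left(t,\overline{x},\frac{x}{\varepsilon}\right)dx\,dt ,\qquad f\in\mathcal{F}:=L^{p^{\prime}}\!\big(Q;A(\mathbb{R}^{d-1};L^{p^{\prime}}(I))\big),
\]
and estimate it. Writing $f^{\varepsilon}(t,x)=f(t,\overline{x},x/\varepsilon)$ and applying H\"older's inequality on $Q_{\varepsilon}$ gives $|L_{\varepsilon}(f)|\le\big(\varepsilon^{-1/p}\|u_{\varepsilon}\|_{L^{p}(Q_{\varepsilon})}\big)\big(\varepsilon^{-1/p^{\prime}}\|f^{\varepsilon}\|_{L^{p^{\prime}}(Q_{\varepsilon})}\big)$; the first factor is $\le C$ by hypothesis, while for the second the change of variable $x_{d}=\varepsilon\zeta$ (Jacobian $\varepsilon$, which cancels the prefactor $1/\varepsilon$) turns $\varepsilon^{-1/p^{\prime}}\|f^{\varepsilon}\|_{L^{p^{\prime}}(Q_{\varepsilon})}^{p^{\prime}}$ into $\int_{Q}\int_{I}|f(t,\overline{x},\overline{x}/\varepsilon,\zeta)|^{p^{\prime}}\,d\zeta\,d\overline{x}\,dt$. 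Here I would invoke the mean value property of $A$: checking the limit first for decomposable $f$ of the form $\varphi(t,\overline{x},\zeta)\,g(\overline{x}/\varepsilon)$ with $g\in A$ (where it reduces to the defining convergence $g(\cdot/\varepsilon)\rightharpoonup M(g)$ together with dominated convergence in $(t,\overline{x},\zeta)$), then for finite sums, then for arbitrary $f\in\mathcal{F}$ by density, I obtain
\[
\varepsilon^{-1/p^{\prime}}\|f^{\varepsilon}\|_{L^{p^{\prime}}(Q_{\varepsilon})}\longrightarrow\|f\|_{L^{p^{\prime}}(Q;\mathcal{B}_{A}^{p^{\prime}}(\mathbb{R}^{d-1};L^{p^{\prime}}(I)))}\qquad(\varepsilon\to0).
\]

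Consequently $|L_{\varepsilon}(f)|\le\big(C+o_{\varepsilon}(1)\big)\,\|f\|_{L^{p^{\prime}}(Q;\mathcal{B}_{A}^{p^{\prime}}(\mathbb{R}^{d-1};L^{p^{\prime}}(I)))}$, so each $L_{\varepsilon}$ is continuous for the (weaker) norm of $\mathcal{G}:=L^{p^{\prime}}(Q;\mathcal{B}_{A}^{p^{\prime}}(\mathbb{R}^{d-1};L^{p^{\prime}}(I)))$; since $\varrho\big(A(\mathbb{R}^{d-1};L^{p^{\prime}}(I))\big)$ is dense in $\mathcal{B}_{A}^{p^{\prime}}(\mathbb{R}^{d-1};L^{p^{\prime}}(I))$, $\mathcal{F}$ is dense in $\mathcal{G}$ and $L_{\varepsilon}$ extends to $\widetilde{L}_{\varepsilon}\in\mathcal{G}^{\prime}$ with $\|\widetilde{L}_{\varepsilon}\|_{\mathcal{G}^{\prime}}\le 2C$ for $\varepsilon$ small. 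Now, since $1<p^{\prime}<\infty$, the space $\mathcal{B}_{A}^{p^{\prime}}(\mathbb{R}^{d-1};L^{p^{\prime}}(I))$ is reflexive with dual $\mathcal{B}_{A}^{p}(\mathbb{R}^{d-1};L^{p}(I))$, which one sees through the Gelfand representation $\mathcal{B}_{A}^{q}(\mathbb{R}^{d-1};L^{q}(I))\cong L^{q}(\Delta(A)\times I)$ ($\Delta(A)$ the spectrum of $A$ endowed with its mean-value measure; cf. \cite{CMP,NA,Jikov,ZK}). Hence $\mathcal{G}\cong L^{p^{\prime}}(Q\times\Delta(A)\times I)$ is reflexive over a $\sigma$-finite measure space, with $\mathcal{G}^{\prime}\cong L^{p}(Q\times\Delta(A)\times I)\cong L^{p}(Q;\mathcal{B}_{A}^{p}(\mathbb{R}^{d-1};L^{p}(I)))$.

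A reflexive Banach space has weakly sequentially compact closed balls, so along a subsequence $E^{\prime}\subset E$ one has $\widetilde{L}_{\varepsilon}\rightharpoonup\widetilde{L}$ in $\mathcal{G}^{\prime}$; let $u_{0}\in L^{p}(Q;\mathcal{B}_{A}^{p}(\mathbb{R}^{d-1};L^{p}(I)))$ be the element representing $\widetilde{L}$. Because $\mathcal{G}$ is reflexive, weak and weak-$*$ convergence in $\mathcal{G}^{\prime}$ coincide, so $\widetilde{L}_{\varepsilon}(g)\to\widetilde{L}(g)$ for every $g\in\mathcal{G}$, in particular for every $f\in\mathcal{F}$; unwinding the identifications (using that the Gelfand transform is multiplicative, so the pairing becomes $M(u_{0}f)$) yields
\[
L_{\varepsilon}(f)\longrightarrow\int_{Q}\int_{I}M\big(u_{0}(t,\overline{x},\cdot,y_{d})\,f(t,\overline{x},\cdot,y_{d})\big)\,dy_{d}\,d\overline{x}\,dt\qquad\text{for all }f\in\mathcal{F},
\]
which is precisely the assertion that $(u_{\varepsilon})_{\varepsilon\in E^{\prime}}$ weakly $\Sigma$-converges to $u_{0}$ in $L^{p}(Q_{\varepsilon})$.

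I expect the main obstacle to be the mean value limit $\varepsilon^{-1/p^{\prime}}\|f^{\varepsilon}\|_{L^{p^{\prime}}(Q_{\varepsilon})}\to\|f\|_{L^{p^{\prime}}(Q;\mathcal{B}_{A}^{p^{\prime}})}$ of the first step: it must be bootstrapped from the decomposable case to arbitrary $f\in\mathcal{F}$ through a density and equi-integrability argument, and one has to keep track of the fact that the microscopic variable enters only through the tangential directions $\overline{x}$ while $x_{d}$ is merely rescaled to $\zeta\in I$ — the thinness is thereby harmless, and the estimate is, modulo the inert parameter $\zeta$, exactly the one used for sigma-convergence on a fixed domain. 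Once this limit together with the reflexivity and duality of the Besicovitch spaces $\mathcal{B}_{A}^{p}(\mathbb{R}^{d-1};L^{p}(I))$ are at hand, the extraction of the subsequence and the representation of the limit are routine.
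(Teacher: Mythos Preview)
Your proposal is correct and follows the standard route. The paper itself does not give a proof of this theorem; it simply states that ``the proof of the above theorem follows the same way of proceeding as the one in Ref.~\cite{JW2022}'' (the companion paper on sigma-convergence for thin heterogeneous domains). The argument you outline---bounding the linear functionals $L_{\varepsilon}$ via H\"older and the mean-value limit, identifying $\mathcal{B}_{A}^{p}$ with $L^{p}(\Delta(A))$ through the Gelfand representation to obtain reflexivity and duality, and then extracting a weak-$*$ limit---is precisely the classical compactness proof for sigma-convergence (cf.~\cite{Hom1,Casado,CMP,NA}) adapted, as you correctly observe, by the harmless rescaling $x_{d}=\varepsilon\zeta$ in the thin direction; this is the method of \cite{JW2022}.
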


The proof of the above theorem follows the same way of proceeding as the one
in Ref. \cite{JW2022}.

\begin{remark}
\label{r2.2}\emph{Theorem \ref{t2.1} generalizes its periodic counterpart in
Ref. \cite{RJ2007}; see for instance Proposition 4.2 in Ref. \cite{RJ2007} that
corresponds to the special case }$A=\mathcal{C}_{per}(Y)$\emph{\ (with
}$Y=(0,1)^{d-1}$\emph{) of our result here.}
\end{remark}

We denote by $\varrho:B_{A}^{p}(\mathbb{R}^{d-1};\mathbb{F})\rightarrow
\mathcal{B}_{A}^{p}(\mathbb{R}^{d-1};\mathbb{F})$ the canonical mapping
defined by $\varrho(u)=u+\mathcal{N}$, $\mathcal{N}=\{u\in B_{A}%
^{p}(\mathbb{R}^{d-1};\mathbb{F}):\left\Vert u\right\Vert _{p}=0\}$, where
$\left\Vert u\right\Vert _{p}=\left[  M(\left\Vert u\right\Vert _{\mathbb{F}%
}^{p})\right]  ^{1/p}$ for $1\leq p<\infty$. We set $\mathcal{D}%
_{A}(\mathbb{R}^{d-1};\mathbb{F})=\varrho(A^{\infty}(\mathbb{R}^{d-1}%
;\mathbb{F}))$ where $A$ is an algebra with mean value on $\mathbb{R}^{d-1}$.
For function $\mathbf{g}=(g_{i})_{1\leq i\leq d}\in\lbrack\mathcal{B}_{A}%
^{p}(\mathbb{R}^{d-1};L^{p}(I))]^{d}$ we define the divergence $\overline
{\operatorname{div}}_{y}\mathbf{g}$ by
\[
\overline{\operatorname{div}}_{y}\mathbf{g}:=\sum_{i=1}^{d-1}\frac
{\overline{\partial}g_{i}}{\partial y_{i}}+\frac{\partial g_{d}}{\partial
y_{d}},
\]
that is, for any $\Phi=(\phi_{i})_{1\leq i\leq d}\in\lbrack\mathcal{B}%
_{A}^{1,p^{\prime}}(\mathbb{R}^{d-1};W^{1,p^{\prime}}(I))]^{d}$,
\[
\left\langle \overline{\operatorname{div}}_{y}\mathbf{g,\Phi}\right\rangle
=-\sum_{i=1}^{d-1}\int_{I}M(g_{i}(\cdot,y_{d})\frac{\overline{\partial}%
\phi_{i}}{\partial y_{i}}(\cdot,y_{d}))dy_{d}-\int_{I}M(g_{d}(\cdot
,y_{d})\frac{\partial\phi_{d}}{\partial y_{d}}(\cdot,y_{d}))dy_{d}.
\]

The following result arising from Ref. \cite{JW2022} (Corollary 3.1) is of interest in the
forthcoming compactness result.

\begin{lemma}
\label{c2.1}Let $1<p<\infty$ and let
$\mathbf{f}\in\lbrack\mathcal{B}_{A}^{p}(\mathbb{R}^{d-1};L^{p}(I))]^{d}$ be
such that
\[
\int_{I}M(\mathbf{f}(\cdot,y_{d})\cdot\mathbf{g}(\cdot,y_{d}))dy_{d}=0\text{
for all }\mathbf{g}\in\mathcal{V}_{\operatorname{div}},
\]
where $M$ stands for the mean value operator defined on $\mathcal{B}_{A}%
^{p}(\mathbb{R}^{d-1};L^{p}(I))$ and
\[
\mathcal{V}_{\operatorname{div}}=\{\Phi\in\lbrack\mathcal{D}_{A}%
(\mathbb{R}^{d-1};\mathcal{C}_{0}^{\infty}(I))]^{d}:\overline
{\operatorname{div}}_{y}\Phi=0\}.
\]
Then there exists a function $u\in B_{\#A}^{1,p}(\mathbb{R}^{d-1};W^{1,p}(I))
$, uniquely determined modulo constants, such that $\mathbf{f}=\nabla_{y}u$.
\end{lemma}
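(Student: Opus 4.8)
The statement is the Besicovitch-space analogue of the classical de Rham lemma: a vector field orthogonal (in the mean-value pairing over $\mathbb{R}^{d-1}\times I$) to all divergence-free test fields must be a gradient. My plan is to follow the strategy used in Ref. \cite{JW2022} but adapted to the mixed geometry where one group of variables $y\in\mathbb{R}^{d-1}$ lives in the algebra $A$ and the remaining variable $\zeta\in I$ is a genuine Sobolev variable with a boundary. First I would reformulate the hypothesis at the level of the Hilbert/Banach-space duality: the condition $\int_I M(\mathbf{f}\cdot\mathbf{g})\,dy_d=0$ for all $\mathbf{g}\in\mathcal{V}_{\operatorname{div}}$ says precisely that $\mathbf{f}$ annihilates the closed subspace $\overline{\mathcal{V}_{\operatorname{div}}}$ of $[\mathcal{B}_A^p(\mathbb{R}^{d-1};L^p(I))]^d$ generated by gradient-free fields with $\mathcal{C}_0^\infty(I)$-profile in the thin direction. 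By duality, $\mathbf{f}$ belongs to the polar of that subspace, and the whole point is to identify this polar with the closure of the set of gradients $\{\nabla_y u : u\in B_{\#A}^{1,p}(\mathbb{R}^{d-1};W^{1,p}(I))\}$.

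The key technical device is the Fourier/spectral decomposition with respect to the strongly continuous group $(\mathcal{T}(y))_{y\in\mathbb{R}^{d-1}}$ acting on $\mathcal{B}_A^p$. As recalled in the excerpt, $\overline\partial/\partial y_i$ is the infinitesimal generator of this group in the $i$th direction, and one can decompose elements of $\mathcal{B}_A^2(\mathbb{R}^{d-1};L^2(I))$ along the joint spectrum of the commuting generators $\overline\partial/\partial y_1,\dots,\overline\partial/\partial y_{d-1}$. On each spectral component labelled by a "frequency" $\xi\in\mathbb{R}^{d-1}$, the operators $\overline\partial/\partial y_i$ act as multiplication by $i\xi_i$, so the divergence constraint $\overline{\operatorname{div}}_y\Phi=0$ becomes the algebraic relation $i\,\xi\cdot\overline\Phi + \partial_{y_d}\phi_d=0$ coupling the tangential components $\overline\Phi$ to the normal derivative of $\phi_d$ on $I$. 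On the component $\xi=0$ (the mean-value part) the tangential derivatives vanish and the constraint reduces to $\partial_{y_d}\phi_d=0$ with $\phi_d\in\mathcal{C}_0^\infty(I)$, forcing $\phi_d=0$; so on this component the orthogonality conditions only constrain the $d$th component of $\mathbf{f}$, and there $\int_I M(f_d(\cdot,y_d))\,\psi'(y_d)\,dy_d=0$ for all $\psi\in\mathcal{C}_0^\infty(I)$ yields $\partial_{y_d}M(f_d)=0$, i.e. $M(f_d)$ is a constant in $y_d$; this constant is precisely the normalization freedom quotiented out by the condition $\int_G M(\nabla u)\,d\zeta=0$ in the definition of $B_{\#A}^{1,p}$. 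For each $\xi\neq0$ one solves, on the interval $I$, the one-dimensional problem: given that $\mathbf{f}(\xi,\cdot)$ is orthogonal in $L^2(I)^d$ to every $(\overline\Phi,\phi_d)$ satisfying $i\,\xi\cdot\overline\Phi=-\partial_{y_d}\phi_d$, $\phi_d\in\mathcal{C}_0^\infty(I)$, produce a scalar potential $u(\xi,\cdot)\in W^{1,p}(I)$ with $\nabla_y u(\xi,\cdot)=(i\xi\, u(\xi,\cdot),\partial_{y_d}u(\xi,\cdot))=\mathbf{f}(\xi,\cdot)$. Reassembling the $u(\xi,\cdot)$ over all $\xi$ (and checking that the resulting object lies in $W_{loc}^{1,p}(\mathbb{R}^{d-1};W^{1,p}(I))$ with the gradient in $B_A^p$, using that $\mathbf{f}=\nabla_y u$ has the right regularity by hypothesis) gives the desired $u\in B_{\#A}^{1,p}(\mathbb{R}^{d-1};W^{1,p}(I))$, unique modulo the additive constants that the spectral argument isolates on the $\xi=0$ component.

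The main obstacle, and the step I would spend the most care on, is making the spectral decomposition rigorous in the non-Hilbertian range $p\neq2$ and simultaneously handling the boundary $\partial I=\{\pm1\}$. For $p=2$ the group $(\mathcal{T}(y))$ on the Hilbert space $\mathcal{B}_A^2$ admits a clean joint spectral measure and the argument above is essentially a Fourier-multiplier computation; for general $1<p<\infty$ one instead argues by density of $A^\infty$-type functions and a duality/approximation scheme, exactly as in Ref. \cite{JW2022}, transferring the $p=2$ construction and controlling the $\mathcal{B}_A^p$-seminorms. The role of the space $\mathcal{C}_0^\infty(I)$ in $\mathcal{V}_{\operatorname{div}}$ (rather than all of $\mathcal{C}^\infty(\overline I)$) is what makes the normal component genuinely free at the endpoints, and it is this freedom that delivers $u\in W^{1,p}(I)$ with no imposed boundary condition in the $\zeta$-variable — one must check that the weak formulation of $\mathbf{f}=\nabla_y u$ is consistent with test functions that do not vanish at $\pm1$, which is automatic once $u$ is constructed since $\nabla_y u$ then acts as an $L^p$ function, not as a distribution with boundary terms. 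Finally, uniqueness modulo constants follows by applying the same orthogonality hypothesis to $\mathbf{f}=\nabla_y(u_1-u_2)$: testing against $\overline{\mathcal{V}_{\operatorname{div}}}$ and against the $\xi=0$ normal directions forces $\nabla_y(u_1-u_2)=0$, i.e. $u_1=u_2$ in $B_{\#A}^{1,p}$ by the very identification built into that space.
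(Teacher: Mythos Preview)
The paper does not give its own proof of this lemma: it is quoted verbatim as Corollary~3.1 of Ref.~\cite{JW2022} and used as a black box. So there is no in-paper argument to compare against; what can be assessed is whether your plan would actually produce a proof at the level of generality required here (an arbitrary ergodic algebra with mean value $A$).

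Your high-level picture (a de~Rham--type identification of the polar of divergence-free fields with gradients, with the $I$-variable handled as a genuine Sobolev direction) is correct, but the engine you propose --- a decomposition of $\mathcal{B}_A^p(\mathbb{R}^{d-1};L^p(I))$ into ``frequency components $\xi\in\mathbb{R}^{d-1}$'' on which $\overline{\partial}/\partial y_i$ acts as multiplication by $i\xi_i$ --- is only available in special cases. For $A=\mathcal{C}_{per}(Y)$ this is Fourier series; for almost periodic $A$ it is Bohr--Fourier expansion; but for a general ergodic algebra with mean value the joint spectral measure of the translation group can be continuous, and then expressions like ``$\mathbf{f}(\xi,\cdot)$'' or ``solve on each $\xi\neq 0$ the one-dimensional problem'' have no pointwise meaning. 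You acknowledge this as the ``main obstacle'' and then resolve it by saying one argues ``exactly as in Ref.~\cite{JW2022}'', which is circular since that is precisely the result you are trying to prove. The route actually taken in that reference (and in the companion papers \cite{CMP,NA,Deterhom}) avoids spectral decomposition altogether: one passes through the Gelfand representation of $A$ on its compact spectrum $\Delta(A)$, where the mean value becomes an honest Radon measure, and then invokes a classical de~Rham/closed-range argument on the compact product $\Delta(A)\times I$; the $W^{1,p}(I)$ regularity in the thin variable and the $B_{\#A}^{1,p}$ normalization drop out of that construction. If you want to salvage a group-theoretic proof, you would need to replace the mode-by-mode ODE analysis by an argument in terms of the resolvents $(\lambda-\overline{\partial}/\partial y_i)^{-1}$ or the associated spectral projections, and that is substantially more work than what your outline suggests.
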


We are now able to state and prove the next compactness result dealing with
the convergence of the gradient.

\begin{theorem}
\label{t2.2}Assume that the algebra with mean value $A$ on $\mathbb{R}^{d-1}$
is ergodic. Let $(u_{\varepsilon})_{\varepsilon\in E}$ be a sequence in
$L^{p}(0,T;W^{1,p}(\Omega_{\varepsilon}))$ ($1<p<\infty$) such that
\begin{equation}
\sup_{\varepsilon\in E}\left(  \varepsilon^{-1/p}\left\Vert u_{\varepsilon
}\right\Vert _{L^{p}(Q_{\varepsilon})}+\varepsilon^{-1/p}\left\Vert \nabla
u_{\varepsilon}\right\Vert _{L^{p}(Q_{\varepsilon})}\right)  \leq C\label{2.7}%
\end{equation}
where $C>0$ is independent of $\varepsilon$. Then there exist a subsequence
$E^{\prime}$ of $E$ and a couple $(u_{0},u_{1})$ with $u_{0}\in L^{p}%
(0,T;W^{1,p}(\Omega_{0}))$ and $u_{1}\in L^{p}(Q;B_{\#A}^{1,p}(\mathbb{R}%
^{d-1};W^{1,p}(I)))$ such that, as $E^{\prime}\ni\varepsilon\rightarrow0$,
\begin{equation}
u_{\varepsilon}\rightarrow u_{0}\text{ in }L^{p}(Q_{\varepsilon})\text{-weak
}\Sigma_{A},\label{2.8}%
\end{equation}%
\begin{equation}
\frac{\partial u_{\varepsilon}}{\partial x_{i}}\rightarrow\frac{\partial
u_{0}}{\partial x_{i}}+\frac{\partial u_{1}}{\partial y_{i}}\text{ in }%
L^{p}(Q_{\varepsilon})\text{-weak }\Sigma_{A},\ 1\leq i\leq d-1,\label{2.9}%
\end{equation}
and
\begin{equation}
\frac{\partial u_{\varepsilon}}{\partial x_{d}}\rightarrow\frac{\partial
u_{1}}{\partial y_{d}}\text{ in }L^{p}(Q_{\varepsilon})\text{-weak }\Sigma
_{A}.\label{2.10}%
\end{equation}

\end{theorem}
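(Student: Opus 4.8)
The plan is to carry out the standard ``two-scale'' gradient argument, adapted to the thin-domain scaling, with the divergence-free correctors of Lemma \ref{c2.1} playing the crucial role. First I would apply Theorem \ref{t2.1}: by \eqref{2.7} each of the sequences $(u_\varepsilon)$ and $(\partial u_\varepsilon/\partial x_i)$, $1\le i\le d$, satisfies the hypothesis of that theorem, so along a common subsequence $E'$ there exist $u_0\in L^p(Q;\mathcal{B}_A^p(\mathbb{R}^{d-1};L^p(I)))$ and $v_1,\dots,v_d\in L^p(Q;\mathcal{B}_A^p(\mathbb{R}^{d-1};L^p(I)))$ with $u_\varepsilon\to u_0$ and $\partial u_\varepsilon/\partial x_i\to v_i$ in $L^p(Q_\varepsilon)$-weak $\Sigma_A$. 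This already gives \eqref{2.8}; the work is to locate $u_0$ and to identify the $v_i$ as in \eqref{2.9}--\eqref{2.10}.

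Second I would show that $u_0$ depends only on $(t,\overline x)$. Since \eqref{2.7} forces $\|\nabla u_\varepsilon\|_{L^p(Q_\varepsilon)}\le C\varepsilon^{1/p}$, for any smooth vector field $\Psi=\Psi(t,\overline x,y,y_d)$ with compact support in $\overline x\in\Omega$ and in $y_d\in I$, writing $\Psi^\varepsilon(t,x)=\Psi(t,\overline x,\overline x/\varepsilon,x_d/\varepsilon)$, one has $\int_{Q_\varepsilon}\nabla u_\varepsilon\cdot\Psi^\varepsilon\,dx\,dt=O(\varepsilon)$; integrating by parts (no boundary terms, by the support of $\Psi$) and keeping the only singular contribution $-\varepsilon^{-1}\int_{Q_\varepsilon}u_\varepsilon(\overline{\operatorname{div}}_y\Psi)^\varepsilon\,dx\,dt$, which tends by Definition \ref{d2.1}(i) to $-\int_Q\int_I M(u_0\,\overline{\operatorname{div}}_y\Psi)\,dy_d\,d\overline x\,dt$, one gets $\int_Q\int_I M(u_0\,\overline{\operatorname{div}}_y\Psi)\,dy_d\,d\overline x\,dt=0$ for all such $\Psi$. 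Choosing $\Psi$ with a single nonzero component among the first $d-1$ shows that $u_0(t,\overline x,\cdot,y_d)$ has vanishing Besicovitch $y$-gradient for a.e. $(t,\overline x,y_d)$, hence is $(\mathcal{T}(y))$-invariant and so constant in $y$ by the ergodicity of $A$; choosing a $y$-independent $d$-th component gives $\partial u_0/\partial y_d=0$. Thus $u_0=u_0(t,\overline x)$. To see that $u_0\in L^p(0,T;W^{1,p}(\Omega))$ I would bring in the vertical averages $M_\varepsilon$: by Jensen's inequality and \eqref{2.7}, $(M_\varepsilon u_\varepsilon)$ is bounded in $L^p(0,T;W^{1,p}(\Omega))$, hence converges weakly there along a subsequence, and testing against $\theta\in\mathcal{C}_0^\infty(Q)$ while using the weak $\Sigma_A$-convergence identifies its limit with $u_0$ (the factor $|I|=2$ cancelling between $M_\varepsilon$ and $\int_I$); the same computation applied to $\partial M_\varepsilon u_\varepsilon/\partial x_i=M_\varepsilon(\partial u_\varepsilon/\partial x_i)$ gives $\tfrac12\int_I M(v_i(t,\overline x,\cdot,y_d))\,dy_d=\partial u_0/\partial x_i$, $1\le i\le d-1$.

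Third, the heart of the proof: constructing $u_1$. Put $w_i=v_i-\partial u_0/\partial x_i$ for $1\le i\le d-1$, $w_d=v_d$, so that $\mathbf w=(w_i)_{1\le i\le d}\in L^p(Q;[\mathcal{B}_A^p(\mathbb{R}^{d-1};L^p(I))]^d)$. Fix $\theta\in\mathcal{C}_0^\infty(Q)$ and $\mathbf g\in\mathcal{V}_{\operatorname{div}}$, and take a representative $(\tilde g_i)$ of $\mathbf g$ in $[A^\infty(\mathbb{R}^{d-1};\mathcal{C}_0^\infty(I))]^d$ which is genuinely divergence-free (such $\mathbf g$ form a dense subclass of $\mathcal{V}_{\operatorname{div}}$). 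Testing the weak $\Sigma_A$-convergences against $\theta\,\tilde g_i(\overline x/\varepsilon,x_d/\varepsilon)$ yields $\varepsilon^{-1}\int_{Q_\varepsilon}\sum_i(\partial u_\varepsilon/\partial x_i)\theta\,\tilde g_i^\varepsilon\,dx\,dt\to\int_Q\theta\int_I M(\sum_i v_i g_i)\,dy_d\,d\overline x\,dt$; integrating the left side by parts in $x$, the $\varepsilon^{-1}$-term disappears exactly because $\overline{\operatorname{div}}_y\mathbf g=0$, leaving $-\varepsilon^{-1}\int_{Q_\varepsilon}u_\varepsilon\sum_{i\le d-1}(\partial\theta/\partial x_i)\tilde g_i^\varepsilon\,dx\,dt$, which tends to $-\int_Q\int_I M(u_0\sum_{i\le d-1}(\partial\theta/\partial x_i)g_i)\,dy_d\,d\overline x\,dt$; since $u_0=u_0(t,\overline x)\in W^{1,p}(\Omega)$, an integration by parts in $\overline x$ turns the last expression into $\int_Q\theta\int_I M(\sum_{i\le d-1}(\partial u_0/\partial x_i)g_i)\,dy_d\,d\overline x\,dt$. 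Comparing, $\int_Q\theta\int_I M(\mathbf w\cdot\mathbf g)\,dy_d\,d\overline x\,dt=0$ for every $\theta$, hence by localization and density $\int_I M(\mathbf w(t,\overline x,\cdot,\cdot)\cdot\mathbf g)\,dy_d=0$ for a.e. $(t,\overline x)$ and all $\mathbf g\in\mathcal{V}_{\operatorname{div}}$. Lemma \ref{c2.1} then gives, for a.e. $(t,\overline x)$, a unique-modulo-constants $u_1(t,\overline x,\cdot,\cdot)\in B_{\#A}^{1,p}(\mathbb{R}^{d-1};W^{1,p}(I))$ whose full $(y,y_d)$-gradient equals $\mathbf w$, i.e. $\partial u_1/\partial y_i=v_i-\partial u_0/\partial x_i$ for $i\le d-1$ and $\partial u_1/\partial y_d=v_d$, which are precisely \eqref{2.9} and \eqref{2.10}; as the reconstruction $\mathbf w\mapsto u_1$ of Lemma \ref{c2.1} is linear and bounded, $u_1\in L^p(Q;B_{\#A}^{1,p}(\mathbb{R}^{d-1};W^{1,p}(I)))$ follows from $\mathbf w\in L^p(Q;[\mathcal{B}_A^p(\mathbb{R}^{d-1};L^p(I))]^d)$.

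I expect the main obstacle to be the bookkeeping in the third step: one must choose the oscillating test fields with exactly the right powers of $\varepsilon$ so that, after integration by parts, the singular $\varepsilon^{-1}$-order contribution genuinely cancels when $\overline{\operatorname{div}}_y\mathbf g=0$ --- which in turn requires care in passing between a class in $\mathcal{B}_A$ and its representatives (the $\mathcal{N}$-equivalence), together with the density of the divergence-free correctors in $\mathcal{V}_{\operatorname{div}}$ --- while simultaneously ensuring no boundary term survives (hence the roles of $\mathcal{C}_0^\infty(I)$ in the thin variable and of compact support in $\Omega$). Pinning down $u_0$ in the second step also needs both the ergodicity of $A$ (for $y$-independence) and the correct normalization relating $M_\varepsilon$ to $\int_I$; once $u_0\in L^p(0,T;W^{1,p}(\Omega))$ is available, the rest is routine.
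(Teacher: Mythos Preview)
Your proposal is correct and follows essentially the same route as the paper: extract weak $\Sigma_A$-limits via Theorem~\ref{t2.1}, show $u_0$ is independent of $y$ by testing $\varepsilon\nabla u_\varepsilon$ against oscillating fields and invoking ergodicity, then identify the corrector $u_1$ by testing $\nabla u_\varepsilon$ against divergence-free oscillating fields and applying Lemma~\ref{c2.1}. The one minor difference is in how you establish $u_0\in L^p(0,T;W^{1,p}(\Omega))$: you use the vertical average $M_\varepsilon$ and weak compactness in $W^{1,p}(\Omega)$, whereas the paper stays inside the test-function computation \eqref{2.15} and specializes to $y$-independent $\Psi=(\varphi\delta_{ij})$, reading off $\partial u_0/\partial x_j=\tfrac12\int_I M(v_j)\,dy_d$ directly. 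Both are standard and equally short; the paper's version has the small advantage of not needing a separate subsequence extraction or identification of the $M_\varepsilon$-limit, since the distributional derivative is produced by the very same integration-by-parts identity already in hand.
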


\begin{remark}
\label{r2.3}\emph{If we set }%
\[
\nabla_{\overline{x}}u_{0}=\left(  \frac{\partial u_{0}}{\partial x_{1}%
},...,\frac{\partial u_{0}}{\partial x_{d-1}},0\right)  ,
\]
\emph{then (\ref{2.9}) and (\ref{2.10}) are equivalent to }%
\[
\nabla u_{\varepsilon}\rightarrow\nabla_{\overline{x}}u_{0}+\nabla_{y}%
u_{1}\text{\emph{\ in }}L^{p}(Q_{\varepsilon})^{d}\text{\emph{-weak }}%
\Sigma_{A}.
\]

\end{remark}

\begin{proof}
[Proof of Theorem \ref{t2.2}] In view of the assumption (\ref{2.7}), we appeal
to Theorem \ref{t2.1} to derive the existence of a subsequence $E^{\prime}$ of
$E$ and $u_{0}\in L^{p}(Q;\mathcal{B}_{A}^{p}(\mathbb{R}^{d-1};W^{1,p}(I)))$
and $\mathbf{v}\in\lbrack L^{p}(Q;\mathcal{B}_{A}^{p}(\mathbb{R}^{d-1}%
;W^{1,p}(I)))]^{d}$ such that
\begin{equation}
u_{\varepsilon}\rightarrow u_{0}\text{ in }L^{p}(Q_{\varepsilon})\text{-weak
}\Sigma_{A},\label{2.11}%
\end{equation}%
\begin{equation}
\frac{\partial u_{\varepsilon}}{\partial x_{i}}\rightarrow v_{i}\text{ in
}L^{p}(Q_{\varepsilon})\text{-weak }\Sigma_{A},\ 1\leq i\leq d-1,\label{2.12}%
\end{equation}
and
\begin{equation}
\frac{\partial u_{\varepsilon}}{\partial x_{d}}\rightarrow v_{d}\text{ in
}L^{p}(Q_{\varepsilon})\text{-weak }\Sigma_{A},\ \label{2.13}%
\end{equation}
where we have set $x=(\overline{x},x_{d})$ with $\overline{x}=(x_{i})_{1\leq
i\leq d-1}$ and thus $\nabla=(\nabla_{\overline{x}},\frac{\partial}{\partial
x_{d}})$. Let us first show that $u_{0}$ does not depend on $(\overline
{y},y_{d})=y$. To that end, let $\Phi\in(\mathcal{C}_{0}^{\infty}(Q)\otimes
A^{\infty}\otimes\mathcal{C}_{0}^{\infty}(I))^{d}$. One has
\begin{align*}
& \varepsilon^{-1}\int_{Q_{\varepsilon}}\varepsilon\nabla u_{\varepsilon
}(t,x)\cdot\Phi\left(  t,\overline{x},\frac{x}{\varepsilon}\right)  dxdt\\
& =-\int_{Q_{\varepsilon}}\varepsilon^{-1}u_{\varepsilon}(t,x)\left[
\varepsilon(\operatorname{div}_{\overline{x}}\Phi)\left(  t,\overline{x}%
,\frac{x}{\varepsilon}\right)  +(\operatorname{div}_{y}\Phi)\left(
t,\overline{x},\frac{x}{\varepsilon}\right)  \right]  dxdt.
\end{align*}
Letting $E^{\prime}\ni\varepsilon\rightarrow0$ and using (\ref{2.11}%
)-(\ref{2.12}), we get
\[
\int_{Q}\int_{I}M(u_{0}(t,\overline{x},\cdot,y_{d})\operatorname{div}_{y}%
\Phi(t,\overline{x},\cdot,y_{d}))dy_{d}d\overline{x}dt=0.
\]
This shows that $\overline{\nabla}_{y}u_{0}(t,\overline{x},\cdot)=0$ for a.e.
$(t,\overline{x})$, which amounts to $u_{0}(t,\overline{x},\overline{y}%
,\cdot)$ is independent of $y_{d}$, and $u_{0}(t,\overline{x},\cdot,y_{d})$ is
an invariant function. Since the algebra $A$ is ergodic, $u_{0}(t,\overline
{x},\cdot)$ does not depend on $y$, that is $u_{0}(t,\overline{x},\cdot
)=u_{0}(t,\overline{x})$.

Next let $\Phi_{\varepsilon}(t,x)=\varphi(t,\overline{x})\Psi(x/\varepsilon)$
($(t,x)\in Q_{\varepsilon}$) with $\varphi\in\mathcal{C}_{0}^{\infty}(Q)$ and
$\Psi\in(A^{\infty}(\mathbb{R}^{d-1};\mathcal{C}_{0}^{\infty}(I))^{d}$ with
$\operatorname{div}_{y}\Psi=0$. We set $\Psi=(\Psi_{\overline{x}},\psi_{d})$
with $\Psi_{\overline{x}}=(\psi_{j})_{1\leq j\leq d-1}$. We clearly have
\begin{align}
& \int_{Q_{\varepsilon}}\varepsilon^{-1}\left(  \nabla_{\overline{x}%
}u_{\varepsilon}(t,x)\cdot\Psi_{\overline{x}}\left(  \frac{x}{\varepsilon
}\right)  +\frac{\partial u_{\varepsilon}}{\partial x_{d}}(t,x)\psi_{d}\left(
\frac{x}{\varepsilon}\right)  \right)  \varphi(t,\overline{x})dxdt\label{2.14}%
\\
& =-\int_{Q_{\varepsilon}}\varepsilon^{-1}u_{\varepsilon}(t,x)\Psi
_{\overline{x}}\left(  \frac{x}{\varepsilon}\right)  \cdot\nabla_{\overline
{x}}\varphi(t,\overline{x})dxdt.\nonumber
\end{align}
Indeed
\begin{align*}
\varepsilon^{-1}\int_{Q_{\varepsilon}}\nabla u_{\varepsilon}\cdot
\Phi_{\varepsilon}dxdt  & =-\varepsilon^{-1}\int_{Q_{\varepsilon}%
}u_{\varepsilon}(t,x)\operatorname{div}\left(  \varphi(t,\overline{x}%
)\Psi(\frac{x}{\varepsilon})\right)  dxdt\\
& =-\varepsilon^{-1}\int_{Q_{\varepsilon}}u_{\varepsilon}(t,x)\left[
\varphi(t,\overline{x})\operatorname{div}_{x}\Psi(\frac{x}{\varepsilon}%
)+\Psi(\frac{x}{\varepsilon})\cdot\nabla_{x}\varphi(t,\overline{x})\right]
dxdt\\
& =-\varepsilon^{-1}\int_{Q_{\varepsilon}}u_{\varepsilon}\left[  \frac
{1}{\varepsilon}\varphi(t,\overline{x})(\operatorname{div}_{y}\Psi)(\frac
{x}{\varepsilon})+\Psi_{\overline{x}}(\frac{x}{\varepsilon})\cdot
\nabla_{\overline{x}}\varphi(t,\overline{x})\right]  dxdt,
\end{align*}
the last equality above stemming from the fact that $\varphi$ does not depend
on $x_{d}$, and so $\nabla_{x}\varphi=(\nabla_{\overline{x}}\varphi,0)$.
Finally we use the equality $\operatorname{div}_{y}\Psi=0$ to get (\ref{2.14}).

Letting $E^{\prime}\ni\varepsilon\rightarrow0$ in (\ref{2.14}) yields
\begin{align}
& \int_{Q}\int_{I}M(\mathbf{v}(t,\overline{x},\cdot,y_{d})\cdot\Psi
(\cdot,y_{d}))\varphi(t,\overline{x})d\overline{x}dy_{d}dt\label{2.15}\\
& =-\int_{Q}\int_{I}u_{0}(t,\overline{x})M(\Psi_{\overline{x}}(\cdot
,y_{d}))\cdot\nabla_{\overline{x}}\varphi(t,\overline{x})d\overline{x}%
dy_{d}dt.\nonumber
\end{align}
First, taking in (\ref{2.15}) $\Psi=(\varphi\delta_{ij})_{1\leq i\leq d}$ (for
each fixed $1\leq j\leq d$) with $\varphi\in\mathcal{C}_{0}^{\infty}(Q)$ and
where $\delta_{ij}$ are the Kronecker delta, we notice that $\Psi$ does not
depend on $y$, so that we obtain

\begin{equation}%
\begin{array}
[c]{c}%
\displaystyle
\int_{Q}\left(  \int_{I}M(v_{j}(t,\overline{x},\cdot,y_{d}))dy_{d}\right)
\varphi(t,\overline{x})d\overline{x}dt=-\int_{Q}\left(  \int_{I}%
M(1)dy_{d}\right)  u_{0}(t,\overline{x})\frac{\partial\varphi}{\partial x_{j}%
}(t,\overline{x})d\overline{x}dt\\
\displaystyle =-\int_{Q}u_{0}\frac{\partial\varphi}{\partial x_{j}}d\overline{x}dt\int
_{I}dy_{d}=-2\int_{Q}u_{0}\frac{\partial\varphi}{\partial x_{j}}d\overline
{x}dt\text{ as }\int_{I}dy_{d}=2,
\end{array}
\label{2.16}%
\end{equation}

where we recall that $\mathbf{v}=(v_{j})_{1\leq j\leq d}$. Recalling that
$v_{j}\in L^{p}(Q;\mathcal{B}_{A}^{p}(\mathbb{R}^{d-1};L^{p}(I)))$, we infer
that the function $(t,\overline{x})\mapsto\int_{I}M(v_{j}(t,\overline{x}%
,\cdot,y_{d})dy_{d}$ belongs to $L^{p}(Q)$, so that (\ref{2.16}) yields
$\partial u_{0}/\partial x_{j}\in L^{p}(Q)$ for $1\leq j\leq d-1$, where
$\partial u_{0}/\partial x_{j}$ is the distributional derivative of $u_{0}$
with respect to $x_{j}$. We deduce that $u_{0}\in L^{2}(0,T;W^{1,p}(\Omega
_{0}))$. Coming back to (\ref{2.15}), we have
\begin{align*}
& \int_{Q}\int_{I}M(\mathbf{v}(t,\overline{x},\cdot,y_{d})\cdot\Psi
(\cdot,y_{d}))\varphi(t,\overline{x})d\overline{x}dy_{d}dt\\
& =\int_{Q}\int_{I}\left(  \nabla_{\overline{x}}u_{0}(t,\overline{x})\cdot
M(\Psi_{\overline{x}}(\cdot,y_{d})\right)  \varphi(t,\overline{x}%
)d\overline{x}dy_{d}dt\\
& =\int_{Q}\int_{I}\left(  \nabla_{\overline{x}}u_{0}(t,\overline{x})\cdot
M(\Psi(\cdot,y_{d})\right)  \varphi(t,\overline{x})d\overline{x}dy_{d}dt,
\end{align*}
where the last equality above arises from the equality $\nabla_{\overline{x}%
}u_{0}=\left(  \frac{\partial u_{0}}{\partial x_{1}},...,\frac{\partial u_{0}%
}{\partial x_{d-1}},0\right)  $. We obtain readily
\begin{equation}
\int_{Q}\left(  \int_{I}M\left(  (\mathbf{v}(t,\overline{x},\cdot
,y_{d})-\nabla_{\overline{x}}u_{0}(t,\overline{x}))\cdot\Psi(\cdot
,y_{d})\right)  dy_{d}\right)  \varphi(t,\overline{x})d\overline
{x}dt=0.\label{2.17}%
\end{equation}
From the arbitrariness of $\varphi$, (\ref{2.17}) entails
\[
\int_{I}M\left(  (\mathbf{v}(t,\overline{x},\cdot,y_{d})-\nabla_{\overline{x}%
}u_{0}(t,\overline{x}))\cdot\Psi(\cdot,y_{d})\right)  dy_{d}=0\text{ for a.e.
}(t,\overline{x})\in Q,
\]
and for all $\Psi\in(A^{\infty}(\mathbb{R}^{d-1};\mathcal{C}_{0}^{\infty
}(I))^{d}$ with $\operatorname{div}_{y}\Psi=0$. We make use of Lemma
\ref{c2.1} to deduce the existence of $u_{1}(t,\overline{x},\cdot,\cdot)\in
B_{\#A}^{1,p}(\mathbb{R}^{d-1};W^{1,p}(I))$ such that
\[
\mathbf{v}(t,\overline{x},\cdot,\cdot)-\nabla_{\overline{x}}u_{0}%
(t,\overline{x})=\nabla_{y}u_{1}(t,\overline{x},\cdot,\cdot)\text{ for a.e.
}(t,\overline{x})\in Q.
\]
Hence the existence of a function $(t,\overline{x})\mapsto u_{1}%
(t,\overline{x},\cdot,\cdot)$ from $Q$ into $B_{\#A}^{1,p}(\mathbb{R}%
^{d-1};W^{1,p}(I))$, which belongs to $L^{p}(Q;B_{\#A}^{1,p}(\mathbb{R}%
^{d-1};W^{1,p}(I)))$, such that $\mathbf{v}=\nabla_{\overline{x}}u_{0}%
+\nabla_{y}u_{1}$.
\end{proof}

The following result provides us with sufficient conditions for which the
convergence result in (\ref{2.8}) is strong.

\begin{theorem}
\label{t2.3}The assumptions are those of Theorem \emph{\ref{t2.2}} where
\emph{(\ref{2.7})} is replaced by \emph{(\ref{2.7'})} below
\begin{equation}
\sup_{\varepsilon>0}\varepsilon^{-\frac{1}{p}}\left\Vert u_{\varepsilon
}\right\Vert _{L^{\infty}(0,T;W^{1,p}(\Omega_{\varepsilon}))}\leq
C,\label{2.7'}%
\end{equation}
where $C$ is a positive constant. Moreover suppose that
\begin{equation}
\sup_{\varepsilon>0}\left\Vert \frac{\partial M_{\varepsilon}u_{\varepsilon}%
}{\partial t}\right\Vert _{L^{p^{\prime}}(0,T;(W^{1,p}(\Omega))^{\prime})}\leq
C,\label{3.16'}%
\end{equation}
where $M_{\varepsilon}$ is defined by \emph{(\ref{2.25})}. Assume finally that
$\Omega$ is regular enough so that the embedding $W^{1,p}(\Omega
)\hookrightarrow L^{p}(\Omega)$ is compact. Let $(u_{0},u_{1})$ and
$E^{\prime}$ be as in Theorem \emph{\ref{t2.2}}. Then, as $E^{\prime}%
\ni\varepsilon\rightarrow0$, the conclusions of Theorem \emph{\ref{t2.2}} hold
and further
\begin{equation}
u_{\varepsilon}\rightarrow u_{0}\text{ in }L^{p}(Q_{\varepsilon})\text{-strong
}\Sigma_{A}\text{.}\label{2.8'}%
\end{equation}

\end{theorem}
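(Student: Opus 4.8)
The plan is to reduce \eqref{2.8'} to the single additional requirement \eqref{2.2}, namely $\varepsilon^{-1/p}\|u_{\varepsilon}\|_{L^{p}(Q_{\varepsilon})}\to\|u_{0}\|_{L^{p}(Q;\mathcal{B}_{A}^{p}(\mathbb{R}^{d-1};L^{p}(I)))}$, since the weak $\Sigma$-convergence part is already provided by Theorem \ref{t2.2}. Recall from Theorem \ref{t2.2} that $u_{0}$ is independent of $y=(\overline{y},y_{d})$ and lies in $L^{p}(0,T;W^{1,p}(\Omega_{0}))$; for a function $v$ constant in $y$ one has $\|v\|_{\mathcal{B}_{A}^{p}(\mathbb{R}^{d-1};L^{p}(I))}^{p}=\int_{I}|v|^{p}\,dy_{d}=2|v|^{p}$, hence $\|u_{0}\|_{L^{p}(Q;\mathcal{B}_{A}^{p}(\mathbb{R}^{d-1};L^{p}(I)))}=2^{1/p}\|u_{0}\|_{L^{p}(Q)}$. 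So the target becomes $\varepsilon^{-1/p}\|u_{\varepsilon}\|_{L^{p}(Q_{\varepsilon})}\to 2^{1/p}\|u_{0}\|_{L^{p}(Q)}$.

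First I would prove that $M_{\varepsilon}u_{\varepsilon}\to u_{0}$ strongly in $L^{p}(Q)$. By Jensen's inequality and \eqref{2.26} one has $\|M_{\varepsilon}u_{\varepsilon}(t)\|_{W^{1,p}(\Omega)}\le(2\varepsilon)^{-1/p}\|u_{\varepsilon}(t)\|_{W^{1,p}(\Omega_{\varepsilon})}$, so \eqref{2.7'} makes $(M_{\varepsilon}u_{\varepsilon})$ bounded in $L^{\infty}(0,T;W^{1,p}(\Omega))$. Combined with \eqref{3.16'} and the assumed compactness of $W^{1,p}(\Omega)\hookrightarrow L^{p}(\Omega)$, the Aubin--Lions--Simon lemma yields relative compactness of $(M_{\varepsilon}u_{\varepsilon})_{\varepsilon\in E'}$ in $L^{p}(Q)$; extracting a (non-relabeled) subsequence gives $M_{\varepsilon}u_{\varepsilon}\to\xi$ strongly in $L^{p}(Q)$. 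To identify $\xi=u_{0}$, fix $f\in L^{p'}(Q)$ viewed as independent of the fast variable and of $y_{d}$: on one hand $\varepsilon^{-1}\int_{Q_{\varepsilon}}u_{\varepsilon}f\,dx\,dt=2\int_{Q}(M_{\varepsilon}u_{\varepsilon})f\,d\overline{x}\,dt\to2\int_{Q}\xi f\,d\overline{x}\,dt$, using $\int_{-\varepsilon}^{\varepsilon}u_{\varepsilon}(t,\overline{x},x_{d})\,dx_{d}=2\varepsilon M_{\varepsilon}u_{\varepsilon}(t,\overline{x})$; on the other hand the weak $\Sigma$-convergence \eqref{2.8} tested against $f$, together with the $y$-independence of $u_{0}$, forces the limit to be $2\int_{Q}u_{0}f\,d\overline{x}\,dt$. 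Hence $\xi=u_{0}$, and uniqueness of the limit promotes the convergence to the whole sequence $E'$. In particular $\varepsilon^{-1/p}\|M_{\varepsilon}u_{\varepsilon}\|_{L^{p}(Q_{\varepsilon})}=2^{1/p}\|M_{\varepsilon}u_{\varepsilon}\|_{L^{p}(Q)}\to2^{1/p}\|u_{0}\|_{L^{p}(Q)}$.

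Next I would control the oscillation of $u_{\varepsilon}$ about its vertical average: the Poincar\'e--Wirtinger inequality on $(-\varepsilon,\varepsilon)$, applied slicewise in $\overline{x}$, gives $\|u_{\varepsilon}(t)-M_{\varepsilon}u_{\varepsilon}(t)\|_{L^{p}(\Omega_{\varepsilon})}\le C\varepsilon\|\partial u_{\varepsilon}(t)/\partial x_{d}\|_{L^{p}(\Omega_{\varepsilon})}$ with $C$ independent of $\varepsilon$. By \eqref{2.7'} the right-hand side is at most $C\varepsilon^{1+1/p}$ uniformly in $t$, so integrating in time yields $\varepsilon^{-1/p}\|u_{\varepsilon}-M_{\varepsilon}u_{\varepsilon}\|_{L^{p}(Q_{\varepsilon})}\le C\varepsilon\to0$. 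The triangle inequality then gives $\varepsilon^{-1/p}\|u_{\varepsilon}\|_{L^{p}(Q_{\varepsilon})}\to2^{1/p}\|u_{0}\|_{L^{p}(Q)}=\|u_{0}\|_{L^{p}(Q;\mathcal{B}_{A}^{p}(\mathbb{R}^{d-1};L^{p}(I)))}$, which is precisely \eqref{2.2}, and hence \eqref{2.8'} holds along $E'$.

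The main obstacle is the first step: one must check carefully that \eqref{2.7'} and \eqref{3.16'} place $(M_{\varepsilon}u_{\varepsilon})$ in the hypotheses of Aubin--Lions--Simon with the $\varepsilon^{-1/p}$ scaling correctly absorbed, and, above all, one must cleanly identify the strong limit $\xi$ with $u_{0}$ by matching the two expressions for $\varepsilon^{-1}\int_{Q_{\varepsilon}}u_{\varepsilon}f$; the thin-direction Poincar\'e estimate in the last step is then routine given the gradient bound already furnished by Theorem \ref{t2.2}.
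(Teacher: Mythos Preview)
Your proof is correct and follows essentially the same route as the paper: bound $M_{\varepsilon}u_{\varepsilon}$ in $L^{\infty}(0,T;W^{1,p}(\Omega))$, apply an Aubin--Lions--Simon type compactness to obtain strong convergence of $M_{\varepsilon}u_{\varepsilon}$ to $u_{0}$, and combine this with the thin-direction Poincar\'e--Wirtinger estimate $\varepsilon^{-1/p}\|u_{\varepsilon}-M_{\varepsilon}u_{\varepsilon}\|_{L^{p}(\Omega_{\varepsilon})}\le C\varepsilon\,\varepsilon^{-1/p}\|\nabla u_{\varepsilon}\|_{L^{p}(\Omega_{\varepsilon})}$. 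The only cosmetic difference is that the paper concludes via the equivalent formulation \eqref{2.3} (i.e.\ $\varepsilon^{-1/p}\|u_{\varepsilon}-u_{0}\|_{L^{p}(Q_{\varepsilon})}\to0$, legitimate since $u_{0}$ is $y$-independent) rather than checking the norm convergence \eqref{2.2}, and is less explicit than you are about why the strong limit of $M_{\varepsilon}u_{\varepsilon}$ must equal $u_{0}$.
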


\begin{proof}
Let us first recall the definition of $M_{\varepsilon}$:
\[
(M_{\varepsilon}u_{\varepsilon})(t,\overline{x}%
)=\mathchoice {{\setbox0=\hbox{$\displaystyle{\textstyle
-}{\int}$ } \vcenter{\hbox{$\textstyle -$
}}\kern-.6\wd0}}{{\setbox0=\hbox{$\textstyle{\scriptstyle -}{\int}$ } \vcenter{\hbox{$\scriptstyle -$
}}\kern-.6\wd0}}{{\setbox0=\hbox{$\scriptstyle{\scriptscriptstyle -}{\int}$
} \vcenter{\hbox{$\scriptscriptstyle -$
}}\kern-.6\wd0}}{{\setbox0=\hbox{$\scriptscriptstyle{\scriptscriptstyle
-}{\int}$ } \vcenter{\hbox{$\scriptscriptstyle -$ }}\kern-.6\wd0}}\!\int
_{\varepsilon I}u_{\varepsilon}(t,\overline{x},\zeta)d\zeta\text{ for
}(t,\overline{x})\in Q.
\]
We know that $M_{\varepsilon}u_{\varepsilon}\in L^{\infty}(0,T;W^{1,p}%
(\Omega))$ with
\begin{equation}
\left\Vert M_{\varepsilon}u_{\varepsilon}\right\Vert _{L^{\infty}%
(0,T;W^{1,p}(\Omega))}\leq
C\ \ \ \ \ \ \ \ \ \ \ \ \ \ \ \ \ \ \ \ \ \ \ \ \ \ \label{*0}%
\end{equation}
where $C$ is a positive constant independent of $\varepsilon$, the last
inequality above being a consequence of (\ref{2.7'}). Next the following
Poincar\'{e}-Wirtinger inequality holds:
\begin{equation}
\varepsilon^{-\frac{1}{p}}\left\Vert u_{\varepsilon}-M_{\varepsilon
}u_{\varepsilon}\right\Vert _{L^{\infty}(0,T;L^{p}(\Omega_{\varepsilon}))}\leq
C\varepsilon\left\Vert \nabla u_{\varepsilon}\right\Vert _{L^{\infty
}(0,T;L^{p}(\Omega_{\varepsilon}))},\label{*1}%
\end{equation}
where $C>0$ is independent of $\varepsilon$. Indeed, from the density of
$\mathcal{C}^{1}(\overline{\Omega_{\varepsilon}})$ in $W^{1,p}(\Omega
_{\varepsilon})$, we may assume, without loss of generality, that
$u_{\varepsilon}$ is smooth enough. In that case, one has, for $\xi
\in\varepsilon I$,
\begin{align*}
u_{\varepsilon}(t,\overline{x},\xi)-M_{\varepsilon}u_{\varepsilon}%
(t,\overline{x})  & =\mathchoice {{\setbox0=\hbox{$\displaystyle{\textstyle
-}{\int}$ } \vcenter{\hbox{$\textstyle -$
}}\kern-.6\wd0}}{{\setbox0=\hbox{$\textstyle{\scriptstyle -}{\int}$ } \vcenter{\hbox{$\scriptstyle -$
}}\kern-.6\wd0}}{{\setbox0=\hbox{$\scriptstyle{\scriptscriptstyle -}{\int}$
} \vcenter{\hbox{$\scriptscriptstyle -$
}}\kern-.6\wd0}}{{\setbox0=\hbox{$\scriptscriptstyle{\scriptscriptstyle
-}{\int}$ } \vcenter{\hbox{$\scriptscriptstyle -$ }}\kern-.6\wd0}}\!\int
_{\varepsilon I}(u_{\varepsilon}(t,\overline{x},\xi)-u_{\varepsilon
}(t,\overline{x},z))dz\\
& =\mathchoice {{\setbox0=\hbox{$\displaystyle{\textstyle
-}{\int}$ } \vcenter{\hbox{$\textstyle -$
}}\kern-.6\wd0}}{{\setbox0=\hbox{$\textstyle{\scriptstyle -}{\int}$ } \vcenter{\hbox{$\scriptstyle -$
}}\kern-.6\wd0}}{{\setbox0=\hbox{$\scriptstyle{\scriptscriptstyle -}{\int}$
} \vcenter{\hbox{$\scriptscriptstyle -$
}}\kern-.6\wd0}}{{\setbox0=\hbox{$\scriptscriptstyle{\scriptscriptstyle
-}{\int}$ } \vcenter{\hbox{$\scriptscriptstyle -$ }}\kern-.6\wd0}}\!\int
_{\varepsilon I}\left(  \int_{0}^{1}\frac{\partial u_{\varepsilon}}{\partial
x_{d}}(t,\overline{x},z+s(\xi-z))\cdot(\xi-z)ds\right)  dz,
\end{align*}
so that, using Young's and H\"{o}lder's inequalities,
\begin{align*}
\left\vert u_{\varepsilon}(t,\overline{x},\xi)-M_{\varepsilon}u_{\varepsilon
}(t,\overline{x})\right\vert ^{p}  & \leq
\mathchoice {{\setbox0=\hbox{$\displaystyle{\textstyle
-}{\int}$ } \vcenter{\hbox{$\textstyle -$
}}\kern-.6\wd0}}{{\setbox0=\hbox{$\textstyle{\scriptstyle -}{\int}$ } \vcenter{\hbox{$\scriptstyle -$
}}\kern-.6\wd0}}{{\setbox0=\hbox{$\scriptstyle{\scriptscriptstyle -}{\int}$
} \vcenter{\hbox{$\scriptscriptstyle -$
}}\kern-.6\wd0}}{{\setbox0=\hbox{$\scriptscriptstyle{\scriptscriptstyle
-}{\int}$ } \vcenter{\hbox{$\scriptscriptstyle -$ }}\kern-.6\wd0}}\!\int
_{\varepsilon I}\int_{0}^{1}\left\vert \frac{\partial u_{\varepsilon}%
}{\partial x_{d}}(t,\overline{x},z+s(\xi-z))\right\vert ^{p}\left\vert
\xi-z\right\vert ^{p}dsdz\\
& \leq\mathchoice {{\setbox0=\hbox{$\displaystyle{\textstyle
-}{\int}$ } \vcenter{\hbox{$\textstyle -$
}}\kern-.6\wd0}}{{\setbox0=\hbox{$\textstyle{\scriptstyle -}{\int}$ } \vcenter{\hbox{$\scriptstyle -$
}}\kern-.6\wd0}}{{\setbox0=\hbox{$\scriptstyle{\scriptscriptstyle -}{\int}$
} \vcenter{\hbox{$\scriptscriptstyle -$
}}\kern-.6\wd0}}{{\setbox0=\hbox{$\scriptscriptstyle{\scriptscriptstyle
-}{\int}$ } \vcenter{\hbox{$\scriptscriptstyle -$ }}\kern-.6\wd0}}\!\int
_{\varepsilon I}\left\vert \xi-z\right\vert ^{p}dz\left(  \int_{\varepsilon
I}\left\vert \frac{\partial u_{\varepsilon}}{\partial x_{d}}(t,\overline
{x},\eta)\right\vert ^{p}d\eta\right) \\
& \leq2^{p}\varepsilon^{p}\int_{\varepsilon I}\left\vert \nabla u_{\varepsilon
}(t,\overline{x},\eta)\right\vert ^{p}d\eta.
\end{align*}
Integrating over $\Omega_{\varepsilon}$ the last series of inequalities above
and taking its $\mathrm{esssup}_{0\leq t\leq T}$ gives (\ref{*1}).

In view of (\ref{*0}) together with (\ref{3.16'}), we get that $M_{\varepsilon
}u_{\varepsilon}\in V^{p}=\{v\in L^{\infty}(0,T;W^{1,p}(\Omega)):\partial
v/\partial t\in L^{p^{\prime}}(0,T;(W^{1,p}(\Omega))^{\prime})\}$. It is
classically known that the compactness of the embedding $W^{1,p}%
(\Omega)\hookrightarrow L^{p}(\Omega)$ entails that of the embedding
$V^{p}\hookrightarrow L^{\infty}(0,T;L^{p}(\Omega))$. We therefore infer from
(\ref{*0}), (\ref{3.16'}) and the latter compactness result that there exists
a subsequence of $E^{\prime}$ not relabeled such that, as $E^{\prime}%
\ni\varepsilon\rightarrow0$,
\begin{equation}
M_{\varepsilon}u_{\varepsilon}\rightarrow u_{0}\text{ in }L^{\infty}%
(0,T;L^{p}(\Omega))\text{-strong.}\label{*2}%
\end{equation}
Now the inequality (\ref{*1}) yields, as $E^{\prime}\ni\varepsilon
\rightarrow0$,
\begin{equation}
\varepsilon^{-\frac{1}{p}}\left\Vert u_{\varepsilon}-M_{\varepsilon
}u_{\varepsilon}\right\Vert _{L^{\infty}(0,T;L^{p}(\Omega_{\varepsilon}%
))}\rightarrow0\text{.}\label{*13}%
\end{equation}
Next, we have
\begin{align*}
\varepsilon^{-\frac{1}{p}}\left\Vert u_{\varepsilon}-u_{0}\right\Vert
_{L^{\infty}(0,T;L^{p}(\Omega_{\varepsilon}))}  & \leq\varepsilon^{-\frac
{1}{p}}\left\Vert u_{\varepsilon}-M_{\varepsilon}u_{\varepsilon}\right\Vert
_{L^{\infty}(0,T;L^{p}(\Omega_{\varepsilon}))}\\
& +\varepsilon^{-\frac{1}{p}}\left\Vert M_{\varepsilon}u_{\varepsilon}%
-u_{0}\right\Vert _{L^{\infty}(0,T;L^{p}(\Omega_{\varepsilon}))},
\end{align*}
and
\[
\varepsilon^{-\frac{1}{p}}\left\Vert M_{\varepsilon}u_{\varepsilon}%
-u_{0}\right\Vert _{L^{\infty}(0,T;L^{p}(\Omega_{\varepsilon}))}=2^{\frac
{1}{p}}\left\Vert M_{\varepsilon}u_{\varepsilon}-u_{0}\right\Vert _{L^{\infty
}(0,T;L^{p}(\Omega))}.
\]
It follows readily from (\ref{*2}) and (\ref{*13}) that, as $E^{\prime}%
\ni\varepsilon\rightarrow0$,
\[
\varepsilon^{-\frac{1}{p}}\left\Vert u_{\varepsilon}-u_{0}\right\Vert
_{L^{\infty}(0,T;L^{p}(\Omega_{\varepsilon}))}\rightarrow0.
\]
This completes the proof.
\end{proof}

The next result and its corollary are proved exactly as their homologues in Theorem 6 and Corollary 5 in Ref. 
\cite{DPDE} (see also Ref. \cite{Deterhom}).

\begin{theorem}
\label{t2.4}Let $1<p,q<\infty$ and $r\geq1$ be such that $1/r=1/p+1/q\leq1 $.
Assume $(u_{\varepsilon})_{\varepsilon\in E}\subset L^{q}(Q_{\varepsilon})$ is
weakly $\Sigma_{A}$-convergent in $L^{q}(Q_{\varepsilon})$ to some $u_{0}\in
L^{q}(Q;\mathcal{B}_{A}^{q}(\mathbb{R}^{d-1};L^{q}(I)))$, and $(v_{\varepsilon
})_{\varepsilon\in E}\subset L^{p}(Q_{\varepsilon})$ is strongly $\Sigma_{A}%
$-convergent in $L^{p}(Q_{\varepsilon})$ to some $v_{0}\in L^{p}%
(Q;\mathcal{B}_{A}^{p}(\mathbb{R}^{d-1};L^{p}(I)))$. Then the sequence
$(u_{\varepsilon}v_{\varepsilon})_{\varepsilon\in E}$ is weakly $\Sigma_{A}%
$-convergent in $L^{r}(Q_{\varepsilon})$ to $u_{0}v_{0}$.
\end{theorem}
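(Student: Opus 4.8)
The plan is to verify directly the defining property of weak $\Sigma_A$-convergence in $L^{r}(Q_\varepsilon)$: that for every $\phi\in L^{r'}(Q;A(\mathbb{R}^{d-1};L^{r'}(I)))$, $1/r'=1-1/r$, one has, writing $g^{\varepsilon}(t,x):=g(t,\overline{x},x/\varepsilon)$,
\[
\frac{1}{\varepsilon}\int_{Q_\varepsilon}u_\varepsilon v_\varepsilon\,\phi^{\varepsilon}\,dx\,dt\ \longrightarrow\ \int_{Q}\int_{I}M\bigl(u_0v_0\phi\bigr)\,dy_d\,d\overline{x}\,dt,
\]
the mean value $M$ acting in the fast variable $\overline{y}$. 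As a preliminary, Hölder's inequality in the mixed Lebesgue--Besicovitch space $L^{r}(Q;\mathcal{B}_A^{r}(\mathbb{R}^{d-1};L^{r}(I)))$ — legitimate because the $\mathcal{B}_A^{s}$-norms derive from $M$ — gives $u_0v_0\in L^{r}(Q;\mathcal{B}_A^{r}(\mathbb{R}^{d-1};L^{r}(I)))$ with $\|u_0v_0\|_{L^{r}(Q;\mathcal{B}_A^{r})}\le\|u_0\|_{L^{q}(Q;\mathcal{B}_A^{q})}\|v_0\|_{L^{p}(Q;\mathcal{B}_A^{p})}$, so the right-hand side above is well defined; I will also use the (standard) bound $\sup_{\varepsilon\in E}\varepsilon^{-1/q}\|u_\varepsilon\|_{L^{q}(Q_\varepsilon)}\le C$ attached to the weak $\Sigma_A$-convergence of $(u_\varepsilon)$.

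Fix $\eta>0$. Using the density of $\mathcal{C}_0^{\infty}(Q)\otimes A^{\infty}(\mathbb{R}^{d-1};\mathcal{C}(\overline{I}))$ in $L^{p}(Q;\mathcal{B}_A^{p}(\mathbb{R}^{d-1};L^{p}(I)))$, I choose a bounded, compactly supported admissible function $\psi=\psi_\eta$ with $\|v_0-\psi\|_{L^{p}(Q;\mathcal{B}_A^{p})}<\eta$, and split
\[
\frac{1}{\varepsilon}\int_{Q_\varepsilon}u_\varepsilon v_\varepsilon\phi^{\varepsilon}\,dx\,dt=\frac{1}{\varepsilon}\int_{Q_\varepsilon}u_\varepsilon(v_\varepsilon-\psi^{\varepsilon})\phi^{\varepsilon}\,dx\,dt+\frac{1}{\varepsilon}\int_{Q_\varepsilon}u_\varepsilon(\psi\phi)^{\varepsilon}\,dx\,dt.
\]
Since $\psi$ is bounded, admissible and compactly supported while $\phi\in L^{r'}$, and $1/q'-1/r'=1/p\ge0$ so that $L^{r'}(I)\hookrightarrow L^{q'}(I)$, the product $\psi\phi$ is a legitimate test function in $L^{q'}(Q;A(\mathbb{R}^{d-1};L^{q'}(I)))$; hence the weak $\Sigma_A$-convergence of $(u_\varepsilon)$ in $L^{q}(Q_\varepsilon)$ gives $\varepsilon^{-1}\int_{Q_\varepsilon}u_\varepsilon(\psi\phi)^{\varepsilon}\to\int_{Q}\int_{I}M(u_0\psi\phi)\,dy_d\,d\overline{x}\,dt$, and by the preliminary Hölder estimate applied to $u_0(v_0-\psi)\phi$ this limit differs from $\int_{Q}\int_{I}M(u_0v_0\phi)$ by at most $\|u_0\|_{L^{q}(Q;\mathcal{B}_A^{q})}\,\|\phi\|_{L^{r'}(Q;\mathcal{B}_A^{r'})}\,\eta$.

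For the remaining (error) term I apply Hölder in $(t,x)$ with exponents $q,p,r'$ — permissible precisely because $1/q+1/p+1/r'=1$, which is also the identity that lets the scaling $\varepsilon^{-1}=\varepsilon^{-1/q}\varepsilon^{-1/p}\varepsilon^{-1/r'}$ split across the three factors:
\[
\Bigl|\tfrac{1}{\varepsilon}\!\int_{Q_\varepsilon}\!u_\varepsilon(v_\varepsilon-\psi^{\varepsilon})\phi^{\varepsilon}\Bigr|\le\bigl(\varepsilon^{-1/q}\|u_\varepsilon\|_{L^{q}(Q_\varepsilon)}\bigr)\bigl(\varepsilon^{-1/p}\|v_\varepsilon-\psi^{\varepsilon}\|_{L^{p}(Q_\varepsilon)}\bigr)\bigl(\varepsilon^{-1/r'}\|\phi^{\varepsilon}\|_{L^{r'}(Q_\varepsilon)}\bigr).
\]
The first factor is bounded by $C$; the third converges to $\|\phi\|_{L^{r'}(Q;\mathcal{B}_A^{r'})}$ by the mean-value property of admissible functions (with the obvious modification when $r=1$, i.e. $r'=\infty$, where one uses the $L^\infty$-bound directly); and the middle factor converges to $\|v_0-\psi\|_{L^{p}(Q;\mathcal{B}_A^{p})}<\eta$. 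This last convergence is the crux of the proof and the only genuine obstacle: it is where the \emph{strong} $\Sigma_A$-convergence of $(v_\varepsilon)$ enters. Combining it with the (automatic) strong $\Sigma_A$-convergence of $(\psi^{\varepsilon})$ to $\psi$ and the uniform convexity of $L^{p}$ for $1<p<\infty$, one shows that $v_\varepsilon-\psi^{\varepsilon}$ still strongly $\Sigma_A$-converges, to $v_0-\psi$, whence $\varepsilon^{-1/p}\|v_\varepsilon-\psi^{\varepsilon}\|_{L^{p}(Q_\varepsilon)}\to\|v_0-\psi\|_{L^{p}(Q;\mathcal{B}_A^{p})}$; for $p=2$ this is the one-line expansion of $\varepsilon^{-1}\|v_\varepsilon-\psi^{\varepsilon}\|_{L^{2}(Q_\varepsilon)}^{2}$ (using Remark~\ref{r2.1}), and in general it is the verbatim transcription of \cite[Theorem~6]{DPDE}. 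Putting the two estimates together yields $\limsup_{\varepsilon\to0}\bigl|\varepsilon^{-1}\int_{Q_\varepsilon}u_\varepsilon v_\varepsilon\phi^{\varepsilon}-\int_{Q}\int_{I}M(u_0v_0\phi)\bigr|\le C'\eta$ with $C'$ independent of $\eta$; letting $\eta\to0$ completes the argument. Apart from the strong-convergence lemma for $v_\varepsilon-\psi^{\varepsilon}$, everything is exponent bookkeeping together with Theorem~\ref{t2.1} and the elementary properties of $M$ recorded in Section~\ref{sec3}.
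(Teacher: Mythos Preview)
Your proof is correct and follows essentially the same approach as the paper. Indeed, the paper does not give its own proof but simply states that the result ``is proved exactly as its homologue in Theorem~6 of Ref.~\cite{DPDE}''; your argument --- approximate $v_0$ by an admissible $\psi$, split $u_\varepsilon v_\varepsilon\phi^\varepsilon$ accordingly, use weak $\Sigma_A$-convergence of $(u_\varepsilon)$ against $\psi\phi\in L^{q'}$, and control the remainder via H\"older with exponents $(q,p,r')$ together with the strong $\Sigma_A$-convergence of $(v_\varepsilon-\psi^\varepsilon)$ --- is precisely that transcription, and you even cite the same reference for the one nontrivial step.
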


\begin{corollary}
\label{c2.2}Let $(u_{\varepsilon})_{\varepsilon\in E}\subset L^{p}%
(Q_{\varepsilon})$ and $(v_{\varepsilon})_{\varepsilon\in E}\subset
L^{p^{\prime}}(Q_{\varepsilon})\cap L^{\infty}(Q_{\varepsilon})$ ($1<p<\infty$
and $p^{\prime}=p/(p-1)$) be two sequences such that:

\begin{itemize}
\item[(i)] $u_{\varepsilon}\rightarrow u_{0}$ in $L^{p}(Q_{\varepsilon})$-weak
$\Sigma_{A}$;

\item[(ii)] $v_{\varepsilon}\rightarrow v_{0}$ in $L^{p^{\prime}%
}(Q_{\varepsilon})$-strong $\Sigma_{A}$;

\item[(iii)] $(v_{\varepsilon})_{\varepsilon\in E}$ is bounded in $L^{\infty
}(Q_{\varepsilon})$.
\end{itemize}

\noindent Then $u_{\varepsilon}v_{\varepsilon}\rightarrow u_{0}v_{0}$ in
$L^{p}(Q_{\varepsilon})$-weak $\Sigma_{A}$.
\end{corollary}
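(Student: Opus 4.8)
I would deduce the statement by combining the product-convergence result Theorem \ref{t2.4} with the compactness Theorem \ref{t2.1}, using the $L^{\infty}$-bound (iii) only to raise the integrability of the limiting product from $L^{1}$ to $L^{p}$. The first step is a uniform bound: a weakly $\Sigma_{A}$-convergent sequence is bounded, so (i) gives $\sup_{\varepsilon\in E}\varepsilon^{-1/p}\Vert u_{\varepsilon}\Vert_{L^{p}(Q_{\varepsilon})}\leq C$, and together with (iii) this yields
\[
\varepsilon^{-1/p}\Vert u_{\varepsilon}v_{\varepsilon}\Vert_{L^{p}(Q_{\varepsilon})}\leq\Big(\sup_{\varepsilon\in E}\Vert v_{\varepsilon}\Vert_{L^{\infty}(Q_{\varepsilon})}\Big)\varepsilon^{-1/p}\Vert u_{\varepsilon}\Vert_{L^{p}(Q_{\varepsilon})}\leq C.
\]
Hence $(u_{\varepsilon}v_{\varepsilon})_{\varepsilon\in E}$ satisfies the hypothesis of Theorem \ref{t2.1}, so there exist a subsequence $E'$ of $E$ and some $w_{0}\in L^{p}(Q;\mathcal{B}_{A}^{p}(\mathbb{R}^{d-1};L^{p}(I)))$ with $u_{\varepsilon}v_{\varepsilon}\rightarrow w_{0}$ in $L^{p}(Q_{\varepsilon})$-weak $\Sigma_{A}$ as $E'\ni\varepsilon\rightarrow0$.

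The central step is to identify $w_{0}$ with $u_{0}v_{0}$. Since $1<p<\infty$ and $p'=p/(p-1)$ satisfy $1/p+1/p'=1$, Theorem \ref{t2.4} applies to the pair $(u_{\varepsilon},v_{\varepsilon})$ with $u_{\varepsilon}$ as the weakly $\Sigma_{A}$-convergent factor in $L^{p}$ and $v_{\varepsilon}$ as the strongly $\Sigma_{A}$-convergent factor in $L^{p'}$, the output exponent then being $1$; it gives $u_{\varepsilon}v_{\varepsilon}\rightarrow u_{0}v_{0}$ in $L^{1}(Q_{\varepsilon})$-weak $\Sigma_{A}$ along all of $E$, hence along $E'$, with $u_{0}v_{0}\in L^{1}(Q;\mathcal{B}_{A}^{1}(\mathbb{R}^{d-1};L^{1}(I)))$ by H\"{o}lder's inequality. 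I would then test both convergences along $E'$ against the common class of smooth functions $\phi\in\mathcal{C}_{0}^{\infty}(Q)\otimes A^{\infty}\otimes\mathcal{C}_{0}^{\infty}(I)$, which lies simultaneously in $L^{p'}(Q;A(\mathbb{R}^{d-1};L^{p'}(I)))$ — the admissible test space for the $L^{p}$-weak $\Sigma_{A}$-topology, in which it is moreover dense — and in $L^{\infty}(Q;A(\mathbb{R}^{d-1};L^{\infty}(I)))$, the admissible test space for the $L^{1}$-weak $\Sigma_{A}$-topology; passing to the limit in each forces
\[
\int_{Q}\int_{I}M\big(w_{0}(t,\overline{x},\cdot,y_{d})\phi(t,\overline{x},\cdot,y_{d})\big)dy_{d}d\overline{x}dt=\int_{Q}\int_{I}M\big((u_{0}v_{0})(t,\overline{x},\cdot,y_{d})\phi(t,\overline{x},\cdot,y_{d})\big)dy_{d}d\overline{x}dt
\]
for every such $\phi$, and hence $w_{0}=u_{0}v_{0}$. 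In particular $u_{0}v_{0}\in L^{p}(Q;\mathcal{B}_{A}^{p}(\mathbb{R}^{d-1};L^{p}(I)))$ and $u_{\varepsilon}v_{\varepsilon}\rightarrow u_{0}v_{0}$ in $L^{p}(Q_{\varepsilon})$-weak $\Sigma_{A}$ along $E'$.

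Finally, I would invoke the usual subsequence argument: the $L^{p}$-bound of the first step holds along every subsequence of $E$, and the limit $u_{0}v_{0}$ produced above is independent of the extracted subsequence, so the convergence upgrades to the whole sequence $(u_{\varepsilon}v_{\varepsilon})_{\varepsilon\in E}$. I expect the only genuinely non-routine point to be the identification in the second step, namely the reconciliation of the $L^{1}$-weak $\Sigma_{A}$ limit furnished by Theorem \ref{t2.4} with the $L^{p}$-weak $\Sigma_{A}$ limit produced by the compactness theorem; the extraction, the H\"{o}lder bookkeeping, the passage to the full sequence, and the boundedness of a weakly $\Sigma_{A}$-convergent sequence (obtained from the uniform boundedness principle) are all standard.
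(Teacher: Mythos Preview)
Your argument is correct and follows the natural route: bound the product in $L^{p}$ using (iii), extract a weak $\Sigma_{A}$-limit in $L^{p}$ via Theorem~\ref{t2.1}, identify it with $u_{0}v_{0}$ by invoking Theorem~\ref{t2.4} with $r=1$, and conclude by uniqueness of the limit. The paper does not give its own proof but defers to the homologous Corollary~5 in Ref.~\cite{DPDE} (and Ref.~\cite{Deterhom}), where precisely this extraction-plus-identification scheme is used; your proposal is essentially that argument transcribed to the thin-domain setting.
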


Another important result is the following proposition.

\begin{proposition}
\label{p2.2}Assume that $A$ is an ergodic algebra with mean value on
$\mathbb{R}^{d-1}$. Let $(u_{\varepsilon})_{\varepsilon\in E}$ be a sequence
in $L^{p}(0,T;W^{1,p}(\Omega_{\varepsilon}))$ such that
\[
\sup_{\varepsilon\in E}\left(  \varepsilon^{-1/p}\left\Vert u_{\varepsilon
}\right\Vert _{L^{p}(Q_{\varepsilon})}+\varepsilon^{1-1/p}\left\Vert \nabla
u_{\varepsilon}\right\Vert _{L^{p}(Q_{\varepsilon})}\right)  \leq C
\]
where $C>0$ is independent of $\varepsilon$. Then there exist a subsequence
$E^{\prime}$ of $E$ and a function $u\in L^{p}(Q;B_{\#A}^{1,p}(\mathbb{R}%
^{d-1};W^{1,p}(I)))$ with $u_{0}=\varrho(u)\in L^{p}(Q;\mathcal{B}_{A}%
^{1,p}(\mathbb{R}^{d-1};W^{1,p}(I)))$ such that, as $E^{\prime}\ni
\varepsilon\rightarrow0$,
\[
u_{\varepsilon}\rightarrow u_{0}\text{ in }L^{p}(Q_{\varepsilon})\text{-weak
}\Sigma_{A},
\]
and
\[
\varepsilon\nabla u_{\varepsilon}\rightarrow\nabla_{y}u\text{ in }%
L^{p}(Q_{\varepsilon})^{d}\text{-weak }\Sigma_{A}.
\]

\end{proposition}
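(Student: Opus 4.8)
The plan is to run the scheme of the proof of Theorem~\ref{t2.2} in the complementary regime: here it is $\varepsilon\nabla u_\varepsilon$, not $\nabla u_\varepsilon$, that is uniformly bounded in the scaled norm, since the hypothesis reads
\[
\sup_{\varepsilon\in E}\Big(\varepsilon^{-1/p}\|u_\varepsilon\|_{L^p(Q_\varepsilon)}+\varepsilon^{-1/p}\|\varepsilon\nabla u_\varepsilon\|_{L^p(Q_\varepsilon)}\Big)\le C .
\]
First I would apply Theorem~\ref{t2.1} to $(u_\varepsilon)_{\varepsilon\in E}$ and to each of the $d$ sequences $(\varepsilon\,\partial u_\varepsilon/\partial x_i)_{\varepsilon\in E}$, extracting a common subsequence $E'$ of $E$, a limit $u_0\in L^p(Q;\mathcal B_A^p(\mathbb R^{d-1};L^p(I)))$ and a vector field $\mathbf{v}=(v_i)_{1\le i\le d}\in L^p(Q;\mathcal B_A^p(\mathbb R^{d-1};L^p(I)))^d$ with $u_\varepsilon\to u_0$ and $\varepsilon\,\partial u_\varepsilon/\partial x_i\to v_i$ in $L^p(Q_\varepsilon)$-weak $\Sigma_A$.

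Next I would identify $\mathbf{v}$ as a gradient in the microscopic variable. Testing with $\Phi_\varepsilon(t,x)=\varphi(t,\overline x)\Psi(x/\varepsilon)$, where $\varphi\in\mathcal{C}_0^{\infty}(Q)$ and $\Psi=(\psi_i)_{1\le i\le d}\in(A^\infty(\mathbb R^{d-1};\mathcal{C}_0^{\infty}(I)))^d$, and integrating by parts in $x$ (no boundary terms, since $\varphi$ is compactly supported in $\overline x$, is independent of $x_d$, and $\Psi$ vanishes near $\partial I$) gives
\[
\varepsilon^{-1}\!\int_{Q_\varepsilon}\varepsilon\nabla u_\varepsilon\cdot\Phi_\varepsilon\,dxdt=-\,\varepsilon^{-1}\!\int_{Q_\varepsilon}u_\varepsilon\,\varphi\,(\operatorname{div}_y\Psi)\Big(\tfrac{x}{\varepsilon}\Big)dxdt-\int_{Q_\varepsilon}u_\varepsilon\,\Psi\Big(\tfrac{x}{\varepsilon}\Big)\cdot\nabla_{\overline x}\varphi\,dxdt .
\]
The last integral is $O(\varepsilon)$ because $\|u_\varepsilon\|_{L^p(Q_\varepsilon)}=O(\varepsilon^{1/p})$ and $|Q_\varepsilon\cap\operatorname{supp}\varphi|=O(\varepsilon)$, so letting $E'\ni\varepsilon\to0$ and using the convergences above yields, for a.e.\ $(t,\overline x)\in Q$ and every such $\Psi$,
\[
\int_I M\big(\mathbf{v}(t,\overline x,\cdot,y_d)\cdot\Psi(\cdot,y_d)\big)\,dy_d=-\int_I M\big(u_0(t,\overline x,\cdot,y_d)\,\operatorname{div}_y\Psi(\cdot,y_d)\big)\,dy_d .
\]
Taking $\Psi$ supported in a single component shows that $\partial u_0/\partial y_d=v_d$ and $\overline{\partial}u_0/\partial y_i=v_i$ for $1\le i\le d-1$, hence $u_0(t,\overline x,\cdot)\in\mathcal B_A^{1,p}(\mathbb R^{d-1};W^{1,p}(I))$ with $\nabla_y u_0=\mathbf{v}$; since $\|\mathbf{v}\|_p\in L^p(Q)$ we get $u_0\in L^p(Q;\mathcal B_A^{1,p}(\mathbb R^{d-1};W^{1,p}(I)))$. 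On the other hand, restricting the identity above to $\Psi\in\mathcal V_{\operatorname{div}}$ kills its right-hand side, so $\mathbf{v}(t,\overline x,\cdot)$ meets the hypothesis of Lemma~\ref{c2.1}.

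By Lemma~\ref{c2.1}, for a.e.\ $(t,\overline x)$ there is a function $u(t,\overline x,\cdot)\in B_{\#A}^{1,p}(\mathbb R^{d-1};W^{1,p}(I))$, unique modulo constants, with $\mathbf{v}(t,\overline x,\cdot)=\nabla_y u(t,\overline x,\cdot)$; the uniqueness makes $(t,\overline x)\mapsto u(t,\overline x,\cdot)$ measurable, and $\|u(t,\overline x,\cdot)\|_{\#,p}=\|\mathbf{v}(t,\overline x,\cdot)\|_p$ places $u$ in $L^p(Q;B_{\#A}^{1,p}(\mathbb R^{d-1};W^{1,p}(I)))$. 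Since $\nabla_y u=\mathbf{v}=\nabla_y u_0$, the vanishing of the $y_d$-component of this gradient (together with a Poincaré estimate on the bounded interval $I$) and the ergodicity of $A$ (for the $\overline y$-directions) force $u_0$ and $\varrho(u)$ to differ only by a function of $(t,\overline x)$; after the normalisation built into $B_{\#A}^{1,p}$ we obtain $u_0=\varrho(u)\in L^p(Q;\mathcal B_A^{1,p}(\mathbb R^{d-1};W^{1,p}(I)))$, the precise correspondence between the corrector space and $\mathcal B_A^{1,p}$ being as in Ref.~\cite{JW2022}. The two convergences $u_\varepsilon\to u_0$ and $\varepsilon\nabla u_\varepsilon\to\mathbf{v}=\nabla_y u$ in $L^p(Q_\varepsilon)$-weak $\Sigma_A$ are then exactly the conclusions of the proposition. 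I expect this last step — producing the corrector $u$ with measurable dependence on $(t,\overline x)$ and reconciling its class with $u_0$ — to be the main obstacle, and it is precisely there that ergodicity is used; everything else is the routine transposition of the argument of Theorem~\ref{t2.2} and of the corresponding results in Ref.~\cite{JW2022}.
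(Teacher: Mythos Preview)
Your proof is correct and follows essentially the same route as the paper's: apply Theorem~\ref{t2.1} to extract $u_0$ and $\mathbf{v}$, integrate by parts against oscillating test functions to obtain the identity (\ref{*6}) (your displayed formula is exactly this with $\Phi=\varphi\otimes\Psi$), read off $\mathbf{v}=\overline{\nabla}_y u_0$, then restrict to divergence-free $\Psi$ and invoke Lemma~\ref{c2.1} to produce $u$ with $\nabla_y u=\mathbf{v}$, ergodicity giving $u_0=\varrho(u)+c(t,\overline{x})$. The paper is terser about the measurable dependence of $u$ on $(t,\overline{x})$ and about absorbing the constant $c$, which you rightly flag as the only point needing care.
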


\begin{proof}
From Theorem \ref{t2.1}, we can find a subsequence $E^{\prime}$ from $E$ and a
couple $(u_{0},u_{1})\in L^{p}(Q;\mathcal{B}_{A}^{p}(\mathbb{R}^{d-1}%
;L^{p}(I)))\times L^{p}(Q;\mathcal{B}_{A}^{p}(\mathbb{R}^{d-1};L^{p}(I)))^{d}$
such that, as $E^{\prime}\ni\varepsilon\rightarrow0$,
\begin{align*}
u_{\varepsilon}  & \rightarrow u_{0}\text{ in }L^{p}(Q_{\varepsilon
})\text{-weak }\Sigma_{A},\\
\varepsilon\nabla u_{\varepsilon}  & \rightarrow u_{1}\text{ in }%
L^{p}(Q_{\varepsilon})^{d}\text{-weak }\Sigma_{A}.
\end{align*}
Let us characterize $u_{1}$ in terms of $u_{0}$. To that end, let $\Phi
\in(\mathcal{C}_{0}^{\infty}(Q)\otimes A^{\infty}(\mathbb{R}^{d-1}%
;\mathcal{C}_{0}^{\infty}(I)))^{d}$; then we have
\[
\varepsilon^{-1}\int_{Q_{\varepsilon}}\varepsilon\nabla u_{\varepsilon}%
\cdot\Phi^{\varepsilon}dxdt=-\varepsilon^{-1}\int_{Q_{\varepsilon}%
}u_{\varepsilon}\left[  \varepsilon(\operatorname{div}_{\overline{x}}%
\Phi)^{\varepsilon}+(\operatorname{div}_{y}\Phi)^{\varepsilon}\right]  dxdt.
\]
Letting $E^{\prime}\ni\varepsilon\rightarrow0$, we get
\begin{equation}
\int_{Q}\int_{I}M(u_{1}(t,\overline{x},\cdot,\zeta)\cdot\Phi(t,\overline
{x},\cdot,\zeta))d\zeta d\overline{x}dt=-\int_{Q}\int_{I}M(u_{0}%
(t,\overline{x},\cdot,\zeta)\operatorname{div}_{y}\Phi(t,\overline{x}%
,\cdot,\zeta))d\zeta d\overline{x}dt.\label{*6}%
\end{equation}
This shows that $u_{1}=\overline{\nabla}_{\overline{y},\zeta}u_{0}$, so that
$u_{0}\in L^{p}(Q;\mathcal{B}_{A}^{1,p}(\mathbb{R}^{d-1};W^{1,p}(I)))$.

Now, coming back to (\ref{*6}) and choosing there $\Phi$ such that
$\operatorname{div}_{y}\Phi=0$, we readily get
\[
\int_{Q}\int_{I}M(u_{1}(t,\overline{x},\cdot,\zeta)\cdot\Phi(t,\overline
{x},\cdot,\zeta))d\zeta d\overline{x}dt=0\text{ for all such }\Phi\text{.}%
\]
Owing to Lemma \ref{c2.1}, there exists $u\in L^{p}(Q;B_{\#A}^{1,p}%
(\mathbb{R}^{d-1};W^{1,p}(I)))$ such that $u_{1}=\nabla_{y}u$. This yields
(since $A$ is ergodic) $u_{0}=\varrho(u)+c$ where $c$ is a constant depending
possibly on $(t,\overline{x})$.
\end{proof}

\section{Homogenized system\label{sec4}}

\subsection{On an auxiliary problem}

Our aim here is to study the well-posedness of the following Stokes system
\begin{equation}
\left\{
\begin{array}
[c]{l}%
\dfrac{\partial u}{\partial t}-\alpha\overline{\Delta}u+\overline{\nabla
}p=\boldsymbol{f}+\overline{\operatorname{div}}\boldsymbol{F}\text{ in
}(0,\infty)\times\mathbb{R}^{d-1}\times I,\\
\\
\overline{\operatorname{div}}u=0\text{ in }(0,\infty)\times\mathbb{R}%
^{d-1}\times I,\\
\\
u=0\text{ on }(0,\infty)\times\mathbb{R}^{d-1}\times\{-1,1\},\\
\\
u(0)=v_{0}\text{ in }\mathbb{R}^{d-1}\times I,
\end{array}
\right. \label{4.1}%
\end{equation}
where $\boldsymbol{f}$ and $\boldsymbol{F}$ are respectively $1\times d$ and
$d\times d$ matrices having their entries in $L^{2}(0,\infty;\mathcal{B}%
_{A}^{2}(\mathbb{R}^{d-1};L^{2}(I)))$ and $v_{0}\in\mathcal{B}_{A}%
^{2}(\mathbb{R}^{d-1};L^{2}(I))^{d}$; $\alpha$ is a given positive constant.
Here, for the sake of simplicity, we use the following notation:
\[
\overline{\Delta}=\left(  \sum_{i=1}^{d-1}\frac{\overline{\partial}^{2}%
}{\partial y_{i}^{2}}\right)  +\frac{\partial^{2}}{\partial y_{d}^{2}%
}\text{,\ }\overline{\nabla}=\left(  \frac{\overline{\partial}}{\partial
y_{1}},...,\frac{\overline{\partial}}{\partial y_{d-1}},\frac{\partial
}{\partial y_{d}}\right)  \text{, and }\overline{\operatorname{div}}%
=\overline{\nabla}\cdot\text{,}%
\]
the dot being denoting the usual Euclidean product.

We endow the space $\mathcal{B}_{A}^{2}(\mathbb{R}^{d-1};L^{2}(I))$ with the
norm
\[
\left\Vert u\right\Vert _{2}=\left[  \int_{-1}^{1}M\left(  \left\vert
u(\cdot,y_{d})\right\vert ^{2}\right)  dy_{d}\right]  ^{1/2},\ u\in
\mathcal{B}_{A}^{2}(\mathbb{R}^{d-1};L^{2}(I)).
\]
This being so, before we proceed forward, we need to establish the following
Poincar\'{e}-type inequality.

\begin{lemma}
\label{l4.1}There exists a positive constant $C$ such that
\[
\left\Vert u\right\Vert _{2}\leq C\left\Vert \overline{\nabla}u\right\Vert
_{2}\text{, all }u\in\mathcal{B}_{A}^{1,2}(\mathbb{R}^{d-1};H_{0}^{1}(I)).
\]

\end{lemma}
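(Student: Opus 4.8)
The plan is to reduce the inequality to a standard Poincaré inequality in the thin variable $y_d\in I=(-1,1)$ for functions vanishing on $\{-1,1\}$, and then integrate in $y_d$ and take the mean value. First I would note that it suffices to prove the estimate for $u\in A^{\infty}(\mathbb{R}^{d-1};\mathcal{C}_0^{\infty}(I))$ (or its image $\mathcal{D}_A(\mathbb{R}^{d-1};\mathcal{C}_0^{\infty}(I))$ in the Besicovitch space), since such functions are dense in $\mathcal{B}_A^{1,2}(\mathbb{R}^{d-1};H_0^1(I))$ for the graph norm $\|u\|_2+\|\overline{\nabla}u\|_2$, and both sides of the inequality are continuous for that norm. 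For such a smooth representative, fix $\overline{y}\in\mathbb{R}^{d-1}$; the function $y_d\mapsto u(\overline{y},y_d)$ is in $\mathcal{C}_0^{\infty}(I)$, so the classical one-dimensional Poincaré inequality on $I$ gives
\[
\int_{-1}^{1}|u(\overline{y},y_d)|^2\,dy_d\le C_0\int_{-1}^{1}\Bigl|\frac{\partial u}{\partial y_d}(\overline{y},y_d)\Bigr|^2\,dy_d
\]
with $C_0$ the Poincaré constant of $I$, independent of $\overline{y}$.

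Next I would integrate this pointwise-in-$\overline{y}$ inequality by applying the mean value operator $M$ on $\mathbb{R}^{d-1}_{\overline{y}}$. Since for each fixed $y_d$ the function $\overline{y}\mapsto|u(\overline{y},y_d)|^2$ lies in $A$ (more precisely in $\mathcal{B}_A^1$), and $M$ is linear, positive and commutes with integration in $y_d$ by Fubini (using the property $\|u\|_2^2=M(\|u\|_{L^2(I)}^2)=\int_{-1}^1 M(|u(\cdot,y_d)|^2)\,dy_d$ recorded in the excerpt), I obtain
\[
\|u\|_2^2=\int_{-1}^{1}M\bigl(|u(\cdot,y_d)|^2\bigr)\,dy_d\le C_0\int_{-1}^{1}M\Bigl(\Bigl|\tfrac{\partial u}{\partial y_d}(\cdot,y_d)\Bigr|^2\Bigr)\,dy_d=C_0\Bigl\|\tfrac{\partial u}{\partial y_d}\Bigr\|_2^2.
\]
Finally, since $\bigl\|\tfrac{\partial u}{\partial y_d}\bigr\|_2^2\le\|\overline{\nabla}u\|_2^2$ by definition of $\overline{\nabla}$, this yields $\|u\|_2\le\sqrt{C_0}\,\|\overline{\nabla}u\|_2$, and passing to the limit along a sequence of smooth representatives gives the claim with $C=\sqrt{C_0}$.

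The only genuinely delicate point is the justification that the pointwise (in $\overline{y}$) one-dimensional inequality can be integrated through $M$; this requires knowing that the relevant functions of $\overline{y}$ belong to $\mathcal{B}_A^1(\mathbb{R}^{d-1};\mathbb{R})$ and that $M$ is monotone and interchanges with the $dy_d$-integral, both of which are among the properties of $\mathcal{B}_A^p$ spaces recalled in Section~\ref{sec3}. Equivalently — and perhaps more cleanly — one can observe that $u\mapsto\|\,\cdot\,\|_2$ and $u\mapsto\|\partial u/\partial y_d\|_2$ are exactly the $L^2$-type norms associated to the product measure "$M\otimes dy_d$" obtained from the spectral/Haar-measure realization of $\mathcal{B}_A^2(\mathbb{R}^{d-1};L^2(I))\cong L^2(\mathcal{K}\times I)$ where $\mathcal{K}$ is the compact spectrum of $A$; the inequality is then just the Poincaré inequality on $I$ tensored with the identity on $L^2(\mathcal{K})$, and there is nothing further to check. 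I would present the first (density + $M$-monotonicity) argument as the main line and mention the spectral viewpoint as a remark.
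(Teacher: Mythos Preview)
Your proposal is correct and follows essentially the same route as the paper: reduce to the one-dimensional Poincar\'e inequality in the $y_d$-variable for functions in $H_0^1(I)$, then push the inequality through the mean value $M$ using its positivity and its commutation with the $dy_d$-integral. The only cosmetic differences are that the paper derives the 1D Poincar\'e inequality from scratch via the fundamental theorem of calculus and Cauchy--Schwarz (obtaining the explicit constant $C=2$), applies $M$ at fixed $\zeta$ and then integrates, and does not explicitly invoke density of smooth functions; your density-then-blackbox-Poincar\'e presentation and the spectral remark are fine and lead to the same estimate.
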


\begin{proof}
For $u\in\mathcal{B}_{A}^{1,2}(\mathbb{R}^{d-1};H_{0}^{1}(I))$, we have
\[
u(\overline{y},\zeta)=\int_{-1}^{\zeta}\frac{\partial u}{\partial y_{d}%
}(\overline{y},\tau)d\tau\text{ for all }\zeta\in(-1,1)\text{, }%
\]
so that, from the Cauchy-Schwarz inequality, one has
\[
\left\vert u(\overline{y},\zeta)\right\vert ^{2}\leq\left(  \int_{-1}^{\zeta
}\left\vert \frac{\partial u}{\partial y_{d}}(\overline{y},\tau)\right\vert
^{2}d\tau\right)  \left(  \int_{-1}^{\zeta}d\tau\right)  .
\]
Hence
\[
M\left(  \left\vert u(\cdot,\zeta)\right\vert ^{2}\right)  \leq2M\left(
\int_{-1}^{\zeta}\left\vert \frac{\partial u}{\partial y_{d}}(\cdot
,\tau)\right\vert ^{2}d\tau\right)  =2\int_{-1}^{\zeta}M\left(  \left\vert
\frac{\partial u}{\partial y_{d}}(\cdot,\tau)\right\vert ^{2}\right)  d\tau,
\]
the last equality above being stemming from the continuity of the mean value
operator. Now integrating over $I$, we readily get
\begin{align*}
\left\Vert u\right\Vert _{2}^{2}  & \leq2\int_{-1}^{1}\left(  \int_{-1}%
^{\zeta}M\left(  \left\vert \frac{\partial u}{\partial y_{d}}(\cdot
,\tau)\right\vert ^{2}\right)  d\tau\right)  d\zeta\\
& \leq2\int_{-1}^{1}\left(  \int_{-1}^{1}M\left(  \left\vert \frac{\partial
u}{\partial y_{d}}(\cdot,\tau)\right\vert ^{2}\right)  d\tau\right)  d\zeta\\
& \leq4\left\Vert \overline{\nabla}u\right\Vert _{2}^{2},
\end{align*}
and the proof is complete.
\end{proof}

Owing to Lemma \ref{l4.1}, it is a fact that $\mathcal{B}_{A}^{1,2}%
(\mathbb{R}^{d-1};H_{0}^{1}(I))$, endowed with the gradient norm $\left\Vert
\overline{\nabla}\right\Vert _{2}$, is a Hilbert space.

Now, we define the following function space
\[
\mathcal{V}=\{u\in(A^{\infty}(\mathbb{R}^{d-1};\mathcal{C}_{0}^{1}%
(I)))^{d}:\operatorname{div}u=0\},
\]
and we set $V=~$the closure of $\mathcal{V}$ in $\mathcal{B}_{A}%
^{1,2}(\mathbb{R}^{d-1};H_{0}^{1}(I))^{d}$ and $H=$~the closure of
$\mathcal{V}$ in $\mathcal{B}_{A}^{2}(\mathbb{R}^{d-1};L^{2}(I))^{d}$. We
equip $V$ and $H$ with the relative topologies defined by their respective
norms
\[
\left\Vert u\right\Vert _{V}=\left\Vert \nabla u\right\Vert _{2}=\left(
\int_{I}M\left(  \left\vert \overline{\nabla}\otimes u(\cdot,y_{d})\right\vert
^{2}\right)  dy_{d}\right)  ^{1/2}\text{ for }u\in V,
\]
where $\overline{\nabla}\otimes u=\left(  \frac{\overline{\partial}u_{i}%
}{\partial y_{j}}\right)  _{1\leq i,j\leq d}$ with $\frac{\overline{\partial}%
}{\partial y_{d}}=\frac{\partial}{\partial y_{d}}$ (the classical partial
derivative in the sense of distributions);
\[
\left\Vert u\right\Vert _{H}=\left\Vert u\right\Vert _{2}\text{ for }u\in H.
\]
One can easily see that $V=\{u\in\mathcal{B}_{A}^{1,2}(\mathbb{R}^{d-1}%
;H_{0}^{1}(I))^{d}:\overline{\operatorname{div}}u=0\}$.

The following existence result is in order.

\begin{proposition}
\label{p4.1}Assume $v_{0}\in H$. There exists a unique $u\in\mathcal{C}%
([0,\infty);H)\cap L^{2}(0,T;V)$ solving \emph{(\ref{4.1})}. Moreover
$\partial u/\partial t\in L^{2}(0,T;V^{\prime})$ and there exists $p\in
L^{2}(0,T;H)$ such that $(u,p)$ satisfies \emph{(\ref{4.1})}$_{1}$. $p$ is
unique provided that $\int_{I}M(p(\cdot,\zeta))d\zeta=0$.
\end{proposition}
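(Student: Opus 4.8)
The plan is to set up \eqref{4.1} as an abstract parabolic evolution equation in the Gelfand triple $V \hookrightarrow H \hookrightarrow V'$ and invoke the standard Lions--Magenes existence theory, then recover the pressure via a De Rham-type argument adapted to the Besicovitch setting. First I would establish the abstract variational formulation: multiply \eqref{4.1}$_1$ by a test function $\varphi \in V$, apply the mean value $M$ and integrate over $I$, using that for divergence-free $\varphi$ the pressure term drops out because $\langle \overline{\nabla} p, \varphi \rangle = -\langle p, \overline{\operatorname{div}}\,\varphi\rangle = 0$. This yields: find $u \in L^2(0,T;V)$ with $\partial u/\partial t \in L^2(0,T;V')$ such that
\[
\left\langle \frac{\partial u}{\partial t}, \varphi \right\rangle + \alpha\, a(u,\varphi) = \langle \boldsymbol{f}, \varphi \rangle - \langle \boldsymbol{F}, \overline{\nabla}\otimes\varphi \rangle \quad \text{for all } \varphi \in V,
\]
with $u(0) = v_0$, where $a(u,\varphi) = \int_I M(\overline{\nabla}\otimes u \cdot \overline{\nabla}\otimes\varphi)\,dy_d$. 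The bilinear form $a$ is clearly bounded on $V \times V$, and it is coercive on $V$ precisely because of the Poincar\'e inequality of Lemma \ref{l4.1}: $a(u,u) = \|\overline{\nabla}\otimes u\|_2^2 \geq c\|u\|_2^2$, so $a(u,u) \geq c'\|u\|_V^2$ when $V$ carries the gradient norm (here one needs the componentwise version of Lemma \ref{l4.1}, which follows by applying it to each component).

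Next I would apply the classical existence-uniqueness theorem for linear parabolic problems (e.g. Lions--Magenes, or Temam's treatment for Navier--Stokes-type problems): since $a$ is bounded and coercive, $v_0 \in H$, and the right-hand side $\boldsymbol{f} + \overline{\operatorname{div}}\boldsymbol{F}$ lies in $L^2(0,T;V')$ (because $\boldsymbol{f}$ and $\boldsymbol{F}$ have entries in $L^2(0,T;\mathcal{B}_A^2(\mathbb{R}^{d-1};L^2(I)))$, and $\varphi \mapsto \langle \boldsymbol{F}, \overline{\nabla}\otimes\varphi\rangle$ is bounded on $V$), there exists a unique $u \in L^2(0,T;V)$ with $\partial u/\partial t \in L^2(0,T;V')$ solving the variational problem, and moreover $u \in \mathcal{C}([0,T];H)$ by the standard interpolation lemma (the function $t \mapsto \|u(t)\|_H^2$ is absolutely continuous with $\frac{d}{dt}\|u\|_H^2 = 2\langle \partial_t u, u\rangle$). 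Since $T$ is arbitrary one gets $u \in \mathcal{C}([0,\infty);H) \cap L^2_{loc}(0,\infty;V)$. Uniqueness is immediate from coercivity: the difference $w$ of two solutions satisfies $\frac12\frac{d}{dt}\|w\|_H^2 + \alpha c'\|w\|_V^2 \leq 0$ with $w(0) = 0$, hence $w \equiv 0$.

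The main obstacle is the recovery of the pressure $p$, i.e. a De Rham theorem in the space $\mathcal{B}_A^2(\mathbb{R}^{d-1};L^2(I))$. The plan is as follows: having $u$, define the functional $\ell(t) := \boldsymbol{f}(t) + \overline{\operatorname{div}}\boldsymbol{F}(t) - \partial_t u(t) + \alpha\overline{\Delta} u(t) \in [\mathcal{B}_A^{1,2}(\mathbb{R}^{d-1};H_0^1(I))^d]'$ for a.e.\ $t$, which by the variational identity annihilates every $\varphi$ in the dense subspace $\mathcal{V}$ of divergence-free fields, hence annihilates all of $V$. The key structural input is Lemma \ref{c2.1} (with $p=2$): a field in $[\mathcal{B}_A^2(\mathbb{R}^{d-1};L^2(I))]^d$ that is orthogonal to all smooth divergence-free test fields is a gradient $\nabla_y q$ for some $q \in B_{\#A}^{1,2}(\mathbb{R}^{d-1};H^1(I))$. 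To use this one must first check that $\ell(t)$, a priori only in the dual of $V$, is actually represented by an $L^2$-type object; this is where one argues — as in the proof of Theorem \ref{t2.2} or via the Bogovski\u{\i}-type / inf-sup structure — that annihilation of divergence-free fields forces $\ell(t)$ to be a gradient of an element $p(t)$ with $\int_I M(p(t)(\cdot,\zeta))\,d\zeta = 0$, and that the estimate $\|p(t)\|_2 \leq C\|\ell(t)\|_{V'}$ holds uniformly in $t$, giving $p \in L^2(0,T;H)$ (or more precisely $p \in L^2(0,T;\mathcal{B}_A^2)$ with zero mean over $I$). Uniqueness of $p$ under the normalization $\int_I M(p(\cdot,\zeta))\,d\zeta = 0$ follows because the only normalized $q$ with $\nabla_y q = 0$ is $q = 0$ in the quotient sense, by the identification built into the definition of $B_{\#A}^{1,2}$. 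I would carry out the time-regularity and measurability of $t \mapsto p(t)$ last, using that $\ell \in L^2(0,T;V')$ and the linearity and continuity of the representation map $\ell(t) \mapsto p(t)$.
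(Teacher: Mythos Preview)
Your approach is essentially the same as the paper's: both set up the Gelfand triple $(V,H,V')$, rewrite \eqref{4.1} as the abstract parabolic problem $u'+\mathcal{A}u=\ell$ with $\mathcal{A}$ bounded and coercive thanks to the Poincar\'e inequality of Lemma~\ref{l4.1}, and then invoke the classical Lions--Magenes theory to obtain existence, uniqueness, and the regularity $u\in\mathcal{C}([0,\infty);H)\cap L^2(0,T;V)$ with $\partial_t u\in L^2(0,T;V')$.

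The only notable difference is in the pressure recovery. The paper dispatches this in a single line by citing Proposition~3.1 of Ref.~\cite{JW2022}, whereas you outline a more self-contained De~Rham-type argument based on Lemma~\ref{c2.1}. Your sketch is sound in spirit but, as you correctly flag, Lemma~\ref{c2.1} applies to fields in $[\mathcal{B}_A^2(\mathbb{R}^{d-1};L^2(I))]^d$, not to general functionals in $V'$; bridging that gap requires precisely the inf--sup/Bogovski\u{\i}-type input that the external reference supplies in this Besicovitch setting. So your plan is correct but would ultimately rely on the same structural result the paper invokes.
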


\begin{proof}
The triple $(V,H,V^{\prime})$ is a Gelfand triple. With this in mind,
(\ref{4.1}) can be rewritten in the following equivalent form:
\begin{equation}
u^{\prime}+\mathcal{A}u=\ell\text{ in }V^{\prime}\text{, a.e. }%
t>0,\ u(0)=v_{0}\text{ in }H,\label{4.2}%
\end{equation}
where the linear operator $\mathcal{A}:V\rightarrow V^{\prime}$ is defined on
$V$ by
\[
\left\langle \mathcal{A}u,v\right\rangle =\alpha\int_{I}M(\overline{\nabla
}u(\cdot,y_{d})\cdot\overline{\nabla}v(\cdot,y_{d}))dy_{d}\text{ \ for }u,v\in
V,
\]
and $\ell\in V^{\prime}$ is defined by
\[
\left\langle \ell,v\right\rangle =\int_{I}M(\boldsymbol{f}(\cdot,y_{d}%
)v(\cdot,y_{d})-\boldsymbol{F}(\cdot,y_{d})\cdot\overline{\nabla}v(\cdot
,y_{d}))dy_{d},\ \ v\in V\text{.}%
\]
Then because of Lemma \ref{l4.1}, $\mathcal{A}$ is bounded and coercive.
Moreover $\ell$ defines a bounded linear functional on $V$. Therefore, using a
well known classical method of solving linear parabolic equations, we see that
(\ref{4.2}) admits a unique solution $u\in\mathcal{C}([0,\infty);H)\cap
L^{2}(0,\infty;V)$ with $u^{\prime}\in L^{2}(0,\infty;V^{\prime})$. The
existence of $p$ is a consequence of Proposition 3.1 in Ref. \cite{JW2022}.
\end{proof}

Of special interest will be the solutions of the following problems:
\begin{equation}
\left\{
\begin{array}
[c]{l}%
\dfrac{\partial\omega^{j}}{\partial t}-\alpha\overline{\Delta}_{y}\omega
^{j}+\overline{\nabla}_{y}\pi^{j}=0\text{ in }(0,T)\times\mathbb{R}%
^{d-1}\times I,\\
\\
\overline{\operatorname{div}}_{y}\omega^{j}=0\text{ in }(0,T)\times
\mathbb{R}^{d-1}\times I,\\
\\
\omega^{j}=0\text{ on }(0,T)\times\mathbb{R}^{d-1}\times\{-1,1\},\\
\\
\omega^{j}(0)=e_{j}\text{ in }\mathbb{R}^{d-1}\times I,\
{\displaystyle\int_{I}}
M(\omega_{d}^{j}(t,\cdot,\zeta))d\zeta=0,
\end{array}
\right. \label{4.1'}%
\end{equation}
for $1\leq j\leq d-1$, and
\[
\left\{
\begin{array}
[c]{l}%
\dfrac{\partial\omega^{d}}{\partial t}-\alpha\overline{\Delta}_{y}\omega
^{d}+\overline{\nabla}_{y}\pi^{d}=0\text{ in }(0,T)\times\mathbb{R}%
^{d-1}\times I,\\
\\
\overline{\operatorname{div}}_{y}\omega^{d}=0\text{ in }(0,T)\times
\mathbb{R}^{d-1}\times I,\\
\\
\omega^{d}=0\text{ on }(0,T)\times\mathbb{R}^{d-1}\times\{-1,1\},\\
\\
\omega^{d}(0)=e_{d}\text{ in }\mathbb{R}^{d-1}\times I,\
\end{array}
\right.
\]
for $j=d$, where $e_{j}$ ($1\leq j\leq d$) is the $j$th vector of the
canonical basis in $\mathbb{R}^{d}$ and $\omega^{j}=(\omega_{i}^{j})_{1\leq
i\leq d}$. Since the space
\[
V_{d}=\left\{  \boldsymbol{u}=(u_{i})_{1\leq i\leq d}\in V:\int_{I}%
M(u_{d}(\cdot,\zeta))d\zeta=0\right\}
\]
is a closed subspace of $V$ endowed with the relative norm, we deduce from
Proposition \ref{p4.1} that (\ref{4.1'}), in the case when $1\leq j\leq d-1$,
possesses a unique solution $\omega^{j}\in\mathcal{C}([0,T];H)\cap
L^{2}(0,T;V_{d})$, for any fixed $T>0$. It is also known from the same
proposition that $\omega^{d}$ exists uniquely in $\mathcal{C}([0,T];H)\cap
L^{2}(0,T;V)$. For such solutions, we define
\begin{align*}
G_{ij}(t)  & =\frac{1}{2}\int_{-1}^{1}M(\omega^{i}(t,\cdot,\zeta))e_{j}%
d\zeta,\ \ t\in\lbrack0,T],\ 1\leq i,j\leq d\\
& \equiv\frac{1}{2}\int_{-1}^{1}M(\omega_{j}^{i}(t,\cdot,\zeta))d\zeta.
\end{align*}
Since $\int_{-1}^{1}M(\omega^{j}(t,\cdot,\zeta))e_{d}d\zeta=0$, we have
$G_{jd}=0$ for all $1\leq j\leq d-1$. We are going to see below that the
matrix $(G_{ij})_{1\leq i,j\leq d}$ is symmetric, so that $G_{dj}=0$, and
therefore setting $G=(G_{ij})_{1\leq i,j\leq d-1}$, the following result holds.

\begin{proposition}
\label{p4.2'}The matrix $G$ is symmetric, positive definite and has entries
which decrease exponentially as $t$ increases.
\end{proposition}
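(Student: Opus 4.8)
The plan is to identify $G(t)$ with one half of the ``Gram matrix'' of the linear semigroup driving the cell problems (\ref{4.1'}), and then to read off symmetry, positive definiteness and exponential decay from the underlying self-adjoint structure. First I would fix the abstract setting: let $\mathcal{A}_{d}\colon V_{d}\to V_{d}^{\prime}$ be the operator associated with the symmetric bilinear form $a(u,v)=\alpha\int_{I}M(\overline{\nabla}u(\cdot,y_{d})\cdot\overline{\nabla}v(\cdot,y_{d}))\,dy_{d}$ restricted to $V_{d}$, and let $H_{0}$ denote the closure of $V_{d}$ in $\mathcal{B}_{A}^{2}(\mathbb{R}^{d-1};L^{2}(I))^{d}$, so that $(V_{d},H_{0},V_{d}^{\prime})$ is a Gelfand triple with $V_{d}$ dense in $H_{0}$ by construction. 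A componentwise application of Lemma \ref{l4.1} shows that $a$ is bounded and coercive on $V_{d}$, with $a(u,u)\geq\lambda_{1}\Vert u\Vert_{H}^{2}$ for $\lambda_{1}=\alpha/C^{2}>0$, $C$ being the constant of Lemma \ref{l4.1}. Hence the part of $\mathcal{A}_{d}$ in $H_{0}$ is a nonnegative self-adjoint operator and generates an analytic, self-adjoint, injective contraction semigroup $S(t)$ on $H_{0}$ with $\Vert S(t)\Vert_{\mathcal{L}(H_{0})}\leq e^{-\lambda_{1}t}$. For $1\leq j\leq d-1$ one has $e_{j}\in H_{0}$, and by the classical theory of linear parabolic equations in a Gelfand triple the unique solution of (\ref{4.1'}) is $\omega^{j}(t)=S(t)e_{j}$. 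Writing $(\cdot,\cdot)_{H}$ for the inner product of $\mathcal{B}_{A}^{2}(\mathbb{R}^{d-1};L^{2}(I))^{d}$, so that $(\omega^{i}(t),e_{j})_{H}=\int_{I}M(\omega_{j}^{i}(t,\cdot,\zeta))\,d\zeta$, one obtains for $1\leq i,j\leq d-1$ the identity
\[
G_{ij}(t)=\tfrac{1}{2}\bigl(S(t)e_{i},e_{j}\bigr)_{H}.
\]

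Symmetry of $G$ is then immediate from self-adjointness of $S(t)$, namely $(S(t)e_{i},e_{j})_{H}=(e_{i},S(t)e_{j})_{H}=(S(t)e_{j},e_{i})_{H}$. (Alternatively one checks directly that, for fixed $t$, the map $s\mapsto(\omega^{i}(s),\omega^{j}(t-s))_{H}$ on $[0,t]$ is absolutely continuous with derivative $-a(\omega^{i}(s),\omega^{j}(t-s))+a(\omega^{j}(t-s),\omega^{i}(s))=0$, by the variational form of (\ref{4.1'}) and the symmetry of $a$, and then one compares the endpoints $s=0$ and $s=t$.) Along the way I would record that $G_{jd}(t)=\tfrac{1}{2}\int_{I}M(\omega_{d}^{j}(t,\cdot,\zeta))\,d\zeta=0$ for $1\leq j\leq d-1$ because of the side condition in (\ref{4.1'}), and that the same reciprocity applied to the pair $(\omega^{j},\omega^{d})$ yields the symmetry of the full $d\times d$ matrix $(G_{ij})$, hence $G_{dj}=0$, so that $G=(G_{ij})_{1\leq i,j\leq d-1}$ is exactly the nontrivial diagonal block; this last point is where the argument is most delicate, as it requires keeping track of the precise relation between the $V$- and $V_{d}$-formulations of (\ref{4.1'}).

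For positive definiteness I would fix $\xi\in\mathbb{R}^{d-1}$ and set $e_{\xi}=\sum_{i=1}^{d-1}\xi_{i}e_{i}\in H_{0}$; then, using $S(t)=S(t/2)^{2}$ and $S(t/2)^{\ast}=S(t/2)$,
\[
\xi\cdot G(t)\xi=\sum_{i,j=1}^{d-1}G_{ij}(t)\xi_{i}\xi_{j}=\tfrac{1}{2}\bigl(S(t)e_{\xi},e_{\xi}\bigr)_{H}=\tfrac{1}{2}\Vert S(t/2)e_{\xi}\Vert_{H}^{2}\geq 0,
\]
with equality only if $S(t/2)e_{\xi}=0$, i.e. $e_{\xi}=0$ by injectivity, i.e. $\xi=0$; thus $G(t)$ is positive definite for every $t\geq 0$ (note $G(0)=\tfrac{1}{2}\bigl((e_{i},e_{j})_{H}\bigr)_{1\leq i,j\leq d-1}=\mathrm{Id}$). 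Exponential decay then follows from the spectral gap: since $\Vert e_{i}\Vert_{H}=\sqrt{2}$,
\[
|G_{ij}(t)|=\tfrac{1}{2}\bigl|(S(t)e_{i},e_{j})_{H}\bigr|\leq\tfrac{1}{2}\,\Vert S(t)\Vert_{\mathcal{L}(H_{0})}\Vert e_{i}\Vert_{H}\Vert e_{j}\Vert_{H}\leq e^{-\lambda_{1}t}.
\]

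I expect the abstract framework to be entirely routine: boundedness and coercivity of $a$ on $V_{d}$ is Lemma \ref{l4.1} read componentwise, $(V_{d},H_{0},V_{d}^{\prime})$ is a Gelfand triple by construction of $H_{0}$, and the identity $\omega^{j}(t)=S(t)e_{j}$ is Proposition \ref{p4.1} applied on the closed subspace $V_{d}$. The one genuinely delicate step is this identification together with the full $d\times d$ symmetry used to get $G_{dj}=0$; once those are established, symmetry, strict positivity and exponential decay of $G$ are all direct consequences of the spectral theorem for nonnegative self-adjoint operators and of the Poincar\'{e}-type bound in Lemma \ref{l4.1}.
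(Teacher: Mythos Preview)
Your proposal is correct and essentially parallel to the paper for symmetry and exponential decay, but genuinely different for positive definiteness. The paper proves symmetry by exactly your ``alternative'' computation, showing $\frac{d}{d\tau}(\omega^{i}(\tau),\omega^{j}(t-\tau))_{H}=0$ and comparing endpoints; it proves exponential decay by testing (\ref{4.1'}) with $\omega^{j}$, invoking Lemma~\ref{l4.1}, and applying Gronwall, which is precisely your spectral-gap bound rewritten as a differential inequality. For positive definiteness, however, the paper does not argue directly: it simply refers to the proof of Theorem~2 in Ref.~\cite{Sandrakov}. Your semigroup argument $\xi\cdot G(t)\xi=\tfrac{1}{2}\Vert S(t/2)e_{\xi}\Vert_{H}^{2}>0$ via self-adjointness and injectivity of $S(t/2)$ is therefore more self-contained than what the paper offers, and it makes transparent why strict positivity holds for every $t\geq 0$. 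The price is the small amount of setup you flag as delicate (checking $e_{j}\in H_{0}$ for $1\leq j\leq d-1$, which follows by approximating $e_{j}$ with $\phi_{n}(\zeta)e_{j}$, $\phi_{n}\in\mathcal{C}_{0}^{\infty}(I)$, $\phi_{n}\to 1$ in $L^{2}(I)$); once that is in place your abstract framing delivers all three conclusions at once.
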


\begin{proof}
Let us first check that $G$ is symmetric. For $1\leq i,j\leq d$ and for any
$t\in(0,T)$, we have, for a.e. $\tau\in(0,t)$,
\begin{align*}
\frac{d}{d\tau}(\omega^{i}(\tau),\omega^{j}(t-\tau))  & =\left\langle
\frac{\partial\omega^{i}}{\partial\tau}(\tau),\omega^{j}(t-\tau)\right\rangle
-\left\langle \frac{\partial\omega^{j}}{\partial\tau}(t-\tau),\omega^{i}%
(\tau)\right\rangle \\
& =-\alpha(\overline{\nabla}_{y}\omega^{i}(\tau),\overline{\nabla}_{y}%
\omega^{j}(t-\tau))+\alpha(\overline{\nabla}_{y}\omega^{i}(\tau),\overline
{\nabla}_{y}\omega^{j}(t-\tau))\\
& =0.
\end{align*}
Integrating over $(0,t)$ we obtain $(\omega^{i}(t),e_{j})-(e_{i},\omega
^{j}(t))=0$, i.e.,
\[
\int_{-1}^{1}M(\omega^{i}(t,\cdot,\zeta))e_{j}d\zeta=\int_{-1}^{1}M(\omega
^{j}(t,\cdot,\zeta))e_{i}d\zeta,
\]
or $G_{ij}(t)=G_{ji}(t)$. We infer $G_{jd}=G_{dj}=0$ for all $1\leq j\leq d-1
$ as $\int_{-1}^{1}M(\omega^{j}(t,\cdot,\zeta))e_{d}d\zeta=0$. This shows that
in the last row and last column of the matrix $(G_{ij})_{1\leq i,j\leq d}$,
only the coefficient $G_{dd}$ is not identically zero, so that $(G_{ij}%
)_{1\leq i,j\leq d}$ may be reduced to the matrix $G=(G_{ij})_{1\leq i,j\leq
d-1}$.

Let us now show that the $G_{ij}(t)$ decrease exponentially as $t$ increases.
To that end, we test (\ref{4.1'}) with $\omega^{j}$; then
\begin{equation}
\frac{1}{2}\frac{d}{dt}\left\Vert \omega^{j}(t)\right\Vert _{2}^{2}%
+\alpha\left\Vert \overline{\nabla}_{y}\omega^{j}(t)\right\Vert _{2}%
^{2}=0.\label{4.2'}%
\end{equation}
But $\left\Vert \omega^{j}(t)\right\Vert _{2}\leq C\left\Vert \overline
{\nabla}_{y}\omega^{j}(t)\right\Vert _{2}$ (see Lemma \ref{l4.1}), where $C>0$
is independent of $\omega_{j}$. It follows from (\ref{4.2'}) that
\[
\frac{1}{2}\frac{d}{dt}\left\Vert \omega^{j}(t)\right\Vert _{2}^{2}%
+\frac{\alpha}{C}\left\Vert \omega^{j}(t)\right\Vert _{2}^{2}\leq0.
\]
Applying Gronwall's inequality leads us at
\[
\left\Vert \omega^{j}(t)\right\Vert _{2}^{2}\leq\left\Vert \omega
^{j}(0)\right\Vert _{2}^{2}\exp\left(  -\frac{\alpha}{C}t\right)  ,
\]
that is,
\begin{equation}
\left\Vert \omega^{j}(t)\right\Vert _{2}\leq\sqrt{2}\exp\left(  -\frac{\alpha
}{2C}t\right)  \text{ for all }t\in\left[  0,T\right]  .\label{4.4'}%
\end{equation}
The final step is to check that $G$ is positive definite. But arguing exactly
as in the proof of Theorem 2 in Ref. \cite{Sandrakov}, we obtain the result.
\end{proof}

\subsection{Passage to the limit in (\ref{1.1})}

Throughout this section, $A$ is an ergodic algebra with mean value on
$\mathbb{R}^{d-1}$.

According to Propositions \ref{pr2.1} and \ref{pr2.2}, the following uniform
estimates hold: there exists a positive constant $C$ such that for all
$\varepsilon>0$,
\begin{equation}%
\begin{array}
[c]{l}%
\left\Vert \boldsymbol{u}_{\varepsilon}\right\Vert _{L^{\infty}(0,T;L^{2}%
(\Omega_{\varepsilon})^{d})}\leq C\varepsilon^{\frac{1}{2}},\ \varepsilon
\left\Vert \nabla\boldsymbol{u}_{\varepsilon}\right\Vert _{L^{2}%
(Q_{\varepsilon})^{d\times d}}\leq C\varepsilon^{\frac{1}{2}},\ \left\Vert
\varphi_{\varepsilon}\right\Vert _{L^{\infty}(0,T;H^{1}(\Omega_{\varepsilon
}))}\leq C\varepsilon^{\frac{1}{2}},\\
\\
\left\Vert \dfrac{\partial M_{\varepsilon}\varphi_{\varepsilon}}{\partial
t}\right\Vert _{L^{2}(0,T;(H^{1}(\Omega))^{\prime})}\leq C\text{, }\left\Vert
\mu_{\varepsilon}\right\Vert _{L^{2}(0,T;H^{1}(\Omega_{\varepsilon}))}\leq
C\varepsilon^{\frac{1}{2}},\ \left\Vert p_{\varepsilon}\right\Vert
_{L^{2}(Q_{\varepsilon})}\leq C\varepsilon^{\frac{1}{2}}\text{,}\\
\\
\text{and }\left\Vert f(\varphi_{\varepsilon})\right\Vert _{L^{\infty
}(0,T;L^{1}(\Omega_{\varepsilon}))}\leq C\varepsilon.
\end{array}
\label{4.3}%
\end{equation}
In view of Proposition \ref{p2.2} and Theorems \ref{t2.1}, \ref{t2.2} and
\ref{t2.3}, given an ordinary sequence $E$, there exist a subsequence
$E^{\prime}$ of $E$ and functions $\boldsymbol{u}_{0}\in L^{2}(Q;\mathcal{B}%
_{A}^{1,2}(\mathbb{R}^{d-1};H_{0}^{1}(I)))^{d}$, $(\varphi_{0},\varphi
_{1}),(\mu_{0},\mu_{1})\in L^{2}(0,T;H^{1}(\Omega_{0}))\times L^{2}%
(Q;B_{\#A}^{1,2}(\mathbb{R}^{d-1};H^{1}(I)))$ and $p_{0}\in L^{2}%
(Q;\mathcal{B}_{A}^{2}(\mathbb{R}^{d-1};L^{2}(I)))$ such that, as $E^{\prime
}\ni\varepsilon\rightarrow0$,
\begin{equation}
\boldsymbol{u}_{\varepsilon}\rightarrow\boldsymbol{u}_{0}\text{ in }%
L^{2}(Q_{\varepsilon})^{d}\text{-weak }\Sigma_{A}\ \ \ \ \ \ \ \ \ \label{4.4}%
\end{equation}%
\begin{equation}
\varepsilon\nabla\boldsymbol{u}_{\varepsilon}\rightarrow\overline{\nabla}%
_{y}\boldsymbol{u}_{0}\text{ in }L^{2}(Q_{\varepsilon})^{d\times d}\text{-weak
}\Sigma_{A}\label{4.5}%
\end{equation}%
\begin{equation}
p_{\varepsilon}\rightarrow p_{0}\text{ in }L^{2}(Q_{\varepsilon})\text{-weak
}\Sigma_{A}\ \ \ \ \ \ \ \ \ \ \ \ \label{4.6}%
\end{equation}%
\begin{equation}
\varphi_{\varepsilon}\rightarrow\varphi_{0}\text{ in }L^{2}(Q_{\varepsilon
})\text{-strong }\Sigma_{A}\ \ \ \ \ \ \ \ \ \ \ \label{4.7}%
\end{equation}%
\begin{equation}
\nabla\varphi_{\varepsilon}\rightarrow\nabla_{\overline{x}}\varphi_{0}%
+\nabla_{y}\varphi_{1}\text{ in }L^{2}(Q_{\varepsilon})^{d}\text{-weak }%
\Sigma_{A}\label{4.8}%
\end{equation}%
\begin{equation}
\mu_{\varepsilon}\rightarrow\mu_{0}\text{ in }L^{2}(Q_{\varepsilon
})\text{-weak }\Sigma_{A}\ \ \ \ \ \ \ \ \ \ \label{4.9}%
\end{equation}%
\begin{equation}
\nabla\mu_{\varepsilon}\rightarrow\nabla_{\overline{x}}\mu_{0}+\nabla_{y}%
\mu_{1}\text{ in }L^{2}(Q_{\varepsilon})^{d}\text{-weak }\Sigma_{A}%
,\label{4.10}%
\end{equation}
where $\nabla_{\overline{x}}\varphi_{0}=(\frac{\partial\varphi_{0}}{\partial
x_{1}},...,\frac{\partial\varphi_{0}}{\partial x_{d-1}},0)$ (and the same for
$\nabla_{\overline{x}}\mu_{0}$). Since $\operatorname{div}\boldsymbol{u}%
_{\varepsilon}=0$ in $Q_{\varepsilon}$, it follows that $\overline
{\operatorname{div}}_{y}\boldsymbol{u}_{0}=0$ in $Q\times\mathbb{R}%
^{d-1}\times I$. Indeed, setting
\[
\overline{\boldsymbol{u}}_{\varepsilon}=(u_{\varepsilon,1},...,u_{\varepsilon
,d-1}),\ \ \ \ \ \ \ \ \ \ \ \ \ \ \
\]
we have, for $\boldsymbol{\varphi}\in\mathcal{C}_{0}^{\infty}(Q)\otimes
A^{\infty}(\mathbb{R}^{d-1};\mathcal{C}_{0}^{\infty}(I))$,
\begin{align*}
0  & =\int_{Q_{\varepsilon}}\operatorname{div}\boldsymbol{u}_{\varepsilon
}(t,x)\boldsymbol{\varphi}\left(  t,\overline{x},\frac{x}{\varepsilon}\right)
dxdt\\
& =-\int_{Q_{\varepsilon}}\overline{\boldsymbol{u}}_{\varepsilon}\cdot
(\nabla_{\overline{x}}\boldsymbol{\varphi})^{\varepsilon}dxdt+\frac
{1}{\varepsilon}\int_{Q_{\varepsilon}}\boldsymbol{u}_{\varepsilon}\cdot
(\nabla_{y}\boldsymbol{\varphi})^{\varepsilon}dxdt,
\end{align*}
where $\boldsymbol{\varphi}^{\varepsilon}(t,x)=\boldsymbol{\varphi}\left(
t,\overline{x},\frac{x}{\varepsilon}\right)  $ for $(t,x)\in Q_{\varepsilon}
$. Letting $E^{\prime}\ni\varepsilon\rightarrow0$ yields
\[
\int_{Q}\int_{-1}^{1}M(\boldsymbol{u}_{0}(t,\overline{x},\cdot,\zeta
)\cdot\nabla_{y}\boldsymbol{\varphi}(t,\overline{x},\cdot,\zeta))d\zeta
d\overline{x}dt=0.
\]
This amounts to $\overline{\operatorname{div}}_{y}\boldsymbol{u}_{0}=0$ in
$Q\times\mathbb{R}^{d-1}\times I$, where $\overline{\operatorname{div}}%
_{y}\boldsymbol{u}_{0}=\overline{\operatorname{div}}_{\overline{y}}%
\overline{\boldsymbol{u}}_{0}+\frac{\partial u_{0,d}}{\partial\zeta}$ with
$\overline{\boldsymbol{u}}_{0}=(u_{0,i})_{1\leq i\leq d-1}$.

Now, set
\begin{align}
\boldsymbol{u}(t,\overline{x})  & =\frac{1}{2}\int_{-1}^{1}M(\boldsymbol{u}%
_{0}(t,\overline{x},\cdot,\zeta))d\zeta\text{ for }(t,\overline{x})\in
Q\label{4.10'}\\
& =(u_{i}(t,\overline{x}))_{1\leq i\leq d}\text{ and }\overline{\boldsymbol{u}%
}=(u_{i})_{1\leq i\leq d-1}.\nonumber
\end{align}
Then $\boldsymbol{u}\in L^{2}(Q)^{d}$. Moreover
\begin{equation}
\operatorname{div}_{\overline{x}}\overline{\boldsymbol{u}}=0\text{ in }Q\text{
and }\overline{\boldsymbol{u}}\cdot\boldsymbol{n}=0\text{ on }(0,T)\times
\partial\Omega,\label{4.10''}%
\end{equation}
where $\boldsymbol{n}$ is the outward unit normal to $\partial\Omega$. First
of all, we have
\begin{equation}
u_{d}=0\text{ in }Q\text{.\ \ \ \ \ \ \ \ \ \ \ \ \ \ \ \ \ \ \ \ \ \ \ \ \ }%
\label{4.10'''}%
\end{equation}
Indeed, from the equality $\overline{\operatorname{div}}_{y}\boldsymbol{u}%
_{0}=0$ in $Q\times\mathbb{R}^{d-1}\times I$, we have $M(\overline
{\operatorname{div}}_{y}\boldsymbol{u}_{0})=0$, that is $\frac{\partial
}{\partial\zeta}M(u_{0,d}(t,\overline{x},\cdot,\zeta))=0$. This shows that
$u_{0,d}$ is independent of $\zeta$. But $u_{\varepsilon,d}=0$ on
$(0,T)\times\Omega\times\{\varepsilon\}$, so that $M(u_{0,d}(t,\overline
{x},\cdot,\zeta))=0 $ on $(0,T)\times\Omega\times\{1\}$, i.e. $M(u_{0,d}%
(t,\overline{x},\cdot))=0$ in $Q$ since $u_{0,d}$ does not depend on $\zeta$.
This shows that $\boldsymbol{u}=(\overline{\boldsymbol{u}},0)$.

This being so, let us check (\ref{4.10''}). To that end, let $\varphi
\in\mathcal{D}(\overline{Q})$. Using the Stokes formula together with the
equality $\operatorname{div}\boldsymbol{u}_{\varepsilon}=0$ in $Q_{\varepsilon
}$, we obtain
\[
\int_{Q_{\varepsilon}}\overline{\boldsymbol{u}}_{\varepsilon}(t,x)\cdot
\nabla_{\overline{x}}\varphi(t,\overline{x}%
)dxdt=0.\ \ \ \ \ \ \ \ \ \ \ \ \ \
\]
Dividing the last equality above by $\varepsilon$ and letting $E^{\prime}%
\ni\varepsilon\rightarrow0$, we are led to
\[
\int_{Q}\overline{\boldsymbol{u}}(t,x)\cdot\nabla_{\overline{x}}%
\varphi(t,\overline{x})d\overline{x}dt=0.\ \ \ \ \ \ \ \ \ \ \ \ \ \ \ \ \
\]
This yields at once (\ref{4.10''}).

Also since $\int_{\Omega_{\varepsilon}}p_{\varepsilon}dx=0$, we have
$\int_{\Omega_{0}}\int_{I}M(p_{0}(t,\overline{x},\cdot,\zeta)d\zeta
d\overline{x}=0$.

The following global homogenized result holds.

\begin{proposition}
\label{p4.2}The functions $\boldsymbol{u}_{0},\varphi_{0},\varphi_{1},\mu
_{0},\mu_{1}$ and $p_{0}$ solve the following system:
\begin{equation}
\left\{
\begin{array}
[c]{l}%
-\dfrac{1}{2}%
{\displaystyle\int_{Q}}
{\displaystyle\int_{I}}
M\left(  \boldsymbol{u}_{0}(t,\overline{x},\cdot,\zeta)\frac{\partial\Psi
}{\partial t}(t,\overline{x},\cdot,\zeta)\right)  d\zeta d\overline{x}dt\\
\\
+\dfrac{\alpha}{2}%
{\displaystyle\int_{Q}}
{\displaystyle\int_{I}}
M(\overline{\nabla}_{y}\boldsymbol{u}_{0}\cdot\nabla_{y}\Psi)d\zeta
d\overline{x}dt\\
\\
\ \ -\dfrac{1}{2}%
{\displaystyle\int_{Q}}
{\displaystyle\int_{I}}
M\left(  \varphi_{0}\left[  (\nabla_{\overline{x}}\mu_{0}+\nabla_{y}\mu
_{1})\Psi+\mu_{0}\operatorname{div}_{\overline{x}}\Psi\right]  \right)  d\zeta
d\overline{x}dt\\
\\
\ \ \ \ \ \ -\dfrac{1}{2}%
{\displaystyle\int_{Q}}
{\displaystyle\int_{I}}
M(p_{0}\operatorname{div}_{\overline{x}}\Psi)d\zeta d\overline{x}dt=\dfrac
{1}{2}%
{\displaystyle\int_{Q}}
{\displaystyle\int_{I}}
M(\boldsymbol{h}\Psi)d\zeta d\overline{x}dt;
\end{array}
\right. \label{4.11}%
\end{equation}%
\begin{equation}
\left\{
\begin{array}
[c]{l}%
-\dfrac{1}{2}%
{\displaystyle\int_{Q}}
{\displaystyle\int_{I}}
M\left(  \varphi_{0}\frac{\partial\phi_{0}}{\partial t}\right)  d\zeta
d\overline{x}dt-\dfrac{1}{2}%
{\displaystyle\int_{Q}}
{\displaystyle\int_{I}}
M(\varphi_{0}\boldsymbol{u}_{0}(\nabla_{\overline{x}}\phi_{0}+\nabla_{y}%
\phi_{1}))d\zeta d\overline{x}dt\\
\\
\ \ \ +\dfrac{1}{2}%
{\displaystyle\int_{Q}}
{\displaystyle\int_{I}}
M\left(  (\nabla_{\overline{x}}\mu_{0}+\nabla_{y}\mu_{1})(\nabla_{\overline
{x}}\phi_{0}+\nabla_{y}\phi_{1})\right)  d\zeta d\overline{x}dt=0;
\end{array}
\right. \label{4.12}%
\end{equation}%
\begin{equation}
\left\{
\begin{array}
[c]{l}%
\dfrac{1}{2}%
{\displaystyle\int_{Q}}
{\displaystyle\int_{I}}
M(\mu_{0}\chi_{0})d\zeta d\overline{x}dt=\dfrac{\lambda}{2}%
{\displaystyle\int_{Q}}
{\displaystyle\int_{I}}
M(f(\varphi_{0})\chi_{0})d\zeta d\overline{x}dt\\
\\
\ \ \ +\dfrac{\beta}{2}%
{\displaystyle\int_{Q}}
{\displaystyle\int_{I}}
M\left(  (\nabla_{\overline{x}}\varphi_{0}+\nabla_{y}\varphi_{1}%
)(\nabla_{\overline{x}}\chi_{0}+\nabla_{y}\chi_{1})\right)  d\zeta
d\overline{x}dt;
\end{array}
\right. \label{4.13}%
\end{equation}%
\begin{equation}
\boldsymbol{u}_{0}(0,\overline{x},y)=\boldsymbol{u}^{0}(\overline{x})\text{
and }\varphi_{0}(0,\overline{x})=\varphi^{0}(\overline{x})\text{ for a.e.
}\overline{x}\in\Omega\text{ and }y\in\mathbb{R}^{d-1}\times I,\label{4.14}%
\end{equation}
for all $\Psi\in(\mathcal{C}_{0}^{\infty}(Q)\otimes A^{\infty}(\mathbb{R}%
^{d-1};\mathcal{C}_{0}^{\infty}(I)))^{d}$ with $\operatorname{div}_{y}\Psi=0$
and all $(\phi_{0},\phi_{1}),(\chi_{0},\chi_{1})\in\mathcal{C}_{0}^{\infty
}(Q)\times(\mathcal{C}_{0}^{\infty}(Q)\otimes A^{\infty}(\mathbb{R}%
^{d-1};\mathcal{C}_{0}^{\infty}(I)))$.
\end{proposition}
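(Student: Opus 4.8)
The plan is to obtain \eqref{4.11}--\eqref{4.14} by passing to the limit along $E'$ in the weak formulations of \eqref{1.1}, after multiplying each of them by $(2\varepsilon)^{-1}$, and invoking the convergences \eqref{4.3}--\eqref{4.10} together with the product rule of Theorem \ref{t2.4} and Corollary \ref{c2.2}. For \eqref{4.11} I would test the distributional form of \eqref{1.1}$_1$ — with the pressure $p_\varepsilon$ supplied by Theorem \ref{th2.1} — against $\psi_\varepsilon(t,x)=\Psi(t,\overline{x},x/\varepsilon)$, $\Psi\in(\mathcal{C}_0^\infty(Q)\otimes A^\infty(\mathbb{R}^{d-1};\mathcal{C}_0^\infty(I)))^d$ with $\operatorname{div}_y\Psi=0$; here $\psi_\varepsilon\in\mathcal{C}_0^\infty(\Omega_\varepsilon)^d$, $\operatorname{div}_x\psi_\varepsilon=(\operatorname{div}_{\overline{x}}\Psi)^\varepsilon$ (this is why $p_0$ persists in \eqref{4.11}) and $\varepsilon\nabla\psi_\varepsilon=(\nabla_y\Psi)^\varepsilon+\varepsilon(\nabla_{\overline{x}}\Psi)^\varepsilon$. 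For \eqref{4.12}--\eqref{4.13} I would take $\phi_\varepsilon=\phi_0(t,\overline{x})+\varepsilon\phi_1(t,\overline{x},x/\varepsilon)$ and $\chi_\varepsilon=\chi_0(t,\overline{x})+\varepsilon\chi_1(t,\overline{x},x/\varepsilon)$ in \eqref{e2.2}--\eqref{e2.3}, so that $\nabla\phi_\varepsilon=(\nabla_{\overline{x}}\phi_0,0)+(\nabla_y\phi_1)^\varepsilon+\varepsilon(\nabla_{\overline{x}}\phi_1)^\varepsilon$, and likewise for $\chi_\varepsilon$. In every integral, each piece of a test function carrying an explicit factor of $\varepsilon$ is paired with a sequence bounded in the appropriate $\varepsilon$-rescaled norm (by \eqref{4.3}), hence contributes $O(\varepsilon)$ and drops out.

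The linear terms are then routine: the time derivatives are integrated by parts in $t$ and identified via \eqref{4.4} (for $\boldsymbol{u}_\varepsilon$) and \eqref{4.7} (for $\varphi_\varepsilon$); the viscous term, after factoring an $\varepsilon$ from each gradient, via $\varepsilon\nabla\boldsymbol{u}_\varepsilon\to\overline{\nabla}_y\boldsymbol{u}_0$ from \eqref{4.5} (its cross-term with $\nabla_{\overline{x}}\Psi$ being $O(\varepsilon)$ by \eqref{e2.8}); the pressure term via \eqref{4.6}; $\int_{Q_\varepsilon}\nabla\mu_\varepsilon\cdot\nabla\phi_\varepsilon$ via \eqref{4.10} and $\int_{Q_\varepsilon}\mu_\varepsilon\chi_\varepsilon$ via \eqref{4.9}; the forcing term via the fixed form of $\boldsymbol{h}$. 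The homogenized initial data \eqref{4.14} I would get by repeating the same passages with test functions vanishing at $t=T$ but not at $t=0$: the extra boundary terms $-(2\varepsilon)^{-1}\int_{\Omega_\varepsilon}\boldsymbol{u}_0^\varepsilon\cdot\psi_\varepsilon(0)$ and $-(2\varepsilon)^{-1}\int_{\Omega_\varepsilon}\varphi_0^\varepsilon\phi_\varepsilon(0)$ converge — thanks to \eqref{1.6}, which says $\boldsymbol{u}_0^\varepsilon,\varphi_0^\varepsilon$ strongly $\Sigma_A$-converge to the $y$-independent $\boldsymbol{u}^0,\varphi^0$ — and comparison with the limit identities forces $\boldsymbol{u}_0(0,\cdot)=\boldsymbol{u}^0$ and $\varphi_0(0,\cdot)=\varphi^0$.

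The real work is in the three nonlinear terms. The surface-tension term $-\int_{Q_\varepsilon}\mu_\varepsilon\nabla\varphi_\varepsilon\cdot\psi_\varepsilon$ is a product of two only weakly $\Sigma_A$-convergent factors, so I would first integrate by parts, moving the derivative onto $\varphi_\varepsilon$ to get $\int_{Q_\varepsilon}\varphi_\varepsilon(\nabla\mu_\varepsilon\cdot\psi_\varepsilon)+\int_{Q_\varepsilon}\varphi_\varepsilon\mu_\varepsilon\operatorname{div}_x\psi_\varepsilon$; since $\varphi_\varepsilon\to\varphi_0$ strongly $\Sigma_A$ in $L^2$ by \eqref{4.7} while $\nabla\mu_\varepsilon$ and $\mu_\varepsilon$ converge weakly $\Sigma_A$ in $L^2$, Theorem \ref{t2.4} (with $p=q=2$) gives $\varphi_\varepsilon\nabla\mu_\varepsilon\to\varphi_0(\nabla_{\overline{x}}\mu_0+\nabla_y\mu_1)$ and $\varphi_\varepsilon\mu_\varepsilon\to\varphi_0\mu_0$ weakly $\Sigma_A$ in $L^1(Q_\varepsilon)$, and pairing with the bounded oscillating $\psi_\varepsilon$, $\operatorname{div}_x\psi_\varepsilon$ produces the corresponding terms of \eqref{4.11}. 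The convective term is treated the same way via $-\int_{Q_\varepsilon}(\boldsymbol{u}_\varepsilon\cdot\nabla\phi_\varepsilon)\varphi_\varepsilon=-\int_{Q_\varepsilon}(\boldsymbol{u}_\varepsilon\varphi_\varepsilon)\cdot\nabla\phi_\varepsilon$ and $\boldsymbol{u}_\varepsilon\varphi_\varepsilon\to\boldsymbol{u}_0\varphi_0$ weakly $\Sigma_A$ in $L^1$. For the potential term one must pass to the limit in $f(\varphi_\varepsilon)$: writing $f(\varphi_\varepsilon)-f(\varphi_0^\varepsilon)=f'(\xi_\varepsilon)(\varphi_\varepsilon-\varphi_0^\varepsilon)$ with $|\xi_\varepsilon|\le|\varphi_\varepsilon|+|\varphi_0|$, the bound $|f'(r)|\le C(1+|r|^2)$ from \eqref{1.7} and the uniform $H^1$-bound \eqref{e2.9} (hence a uniform rescaled $L^6$-bound when $d=3$, any $L^q$-bound when $d=2$) give $\varepsilon^{-1/3}\|f'(\xi_\varepsilon)\|_{L^3(Q_\varepsilon)}\le C$, while \eqref{4.7} with Remark \ref{r2.1} gives $\varepsilon^{-1/2}\|\varphi_\varepsilon-\varphi_0^\varepsilon\|_{L^2(Q_\varepsilon)}\to0$; Hölder then yields $\varepsilon^{-5/6}\|f(\varphi_\varepsilon)-f(\varphi_0^\varepsilon)\|_{L^{6/5}(Q_\varepsilon)}\to0$, so $f(\varphi_\varepsilon)\to f(\varphi_0)$ strongly $\Sigma_A$ in $L^{6/5}$, and as $(f(\varphi_\varepsilon))$ is also bounded in the rescaled $L^2$ its weak $\Sigma_A$-limit in $L^2$ must be $f(\varphi_0)$, giving the $f(\varphi_0)$ term of \eqref{4.13}. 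The main obstacle is exactly this: one has to exploit the \emph{strong} $\Sigma_A$-convergence of $\varphi_\varepsilon$ and choose the right integration by parts in the surface-tension and convective terms, and run the Nemytskii-type continuity argument for $f(\varphi_\varepsilon)$ within the $\varepsilon$-rescaling; everything else reduces to bookkeeping of powers of $\varepsilon$ and direct use of the compactness and product results of Section \ref{sec3}.
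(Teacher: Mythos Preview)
Your proposal is correct and follows essentially the same route as the paper: the same oscillating test functions $\Psi^\varepsilon$, $\phi_\varepsilon=\phi_0+\varepsilon\phi_1^\varepsilon$, $\chi_\varepsilon=\chi_0+\varepsilon\chi_1^\varepsilon$, the same integration by parts shifting the gradient off the weakly convergent factor onto $\varphi_\varepsilon$ in both the surface-tension and convective terms, and the same identification of the initial data via \eqref{1.6}. The only difference worth noting is the handling of $f(\varphi_\varepsilon)$: the paper rescales to the fixed cylinder $Q_1$, obtains a.e.\ convergence of $\widetilde\varphi_\varepsilon$ to $\varphi_0$ from the strong $\Sigma_A$-convergence, and then invokes a dominated/Vitali-type argument to get $f(\widetilde\varphi_\varepsilon)\to f(\varphi_0)$ in $L^1(Q_1)$; your Nemytskii estimate via the mean value theorem and H\"older is a direct quantitative alternative that avoids the a.e.\ passage and yields strong $\Sigma_A$-convergence in $L^{6/5}$ (hence identifies the weak $L^2$ limit). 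Both arguments are valid and lead to the same conclusion.
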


\begin{proof}
Let $\Psi\in(\mathcal{C}_{0}^{\infty}(Q)\otimes A^{\infty}(\mathbb{R}%
^{d-1};\mathcal{C}_{0}^{\infty}(I)))^{d}$, and let $(\phi_{0},\phi_{1}%
),(\chi_{0},\chi_{1})\in\mathcal{C}_{0}^{\infty}(Q)\times(\mathcal{C}%
_{0}^{\infty}(Q)\otimes A^{\infty}(\mathbb{R}^{d-1};\mathcal{C}_{0}^{\infty
}(I)))$. We define, for $(t,x)\in Q_{\varepsilon}$
\begin{align*}
\Psi^{\varepsilon}(t,x)  & =\Psi(t,\overline{x},\frac{x}{\varepsilon
}),\ \ \phi_{\varepsilon}(t,x)=\phi_{0}(t,\overline{x})+\varepsilon\phi
_{1}(t,\overline{x},\frac{x}{\varepsilon})\\
\chi_{\varepsilon}(t,x)  & =\chi_{0}(t,\overline{x})+\varepsilon\chi
_{1}(t,\overline{x},\frac{x}{\varepsilon}).
\end{align*}
Taking $(\Psi^{\varepsilon},\phi_{\varepsilon},\chi_{\varepsilon}%
)\in\mathcal{C}_{0}^{\infty}(Q_{\varepsilon})^{d}\times\mathcal{C}_{0}%
^{\infty}(Q_{\varepsilon})\times\mathcal{C}_{0}^{\infty}(Q_{\varepsilon})$ as
test function in the variational form (\ref{e2.1}), (\ref{e2.2}) and
(\ref{e2.3}), we obtain
\begin{equation}%
\begin{array}
[c]{l}%
-%
{\displaystyle\int_{Q_{\varepsilon}}}
\boldsymbol{u}_{\varepsilon}\left(  \dfrac{\partial\Psi}{\partial t}\right)
^{\varepsilon}dxdt+\alpha\varepsilon^{2}%
{\displaystyle\int_{Q_{\varepsilon}}}
\nabla\boldsymbol{u}_{\varepsilon}\cdot\left(  (\nabla_{\overline{x}}%
\Psi)^{\varepsilon}+\dfrac{1}{\varepsilon}(\nabla_{y}\Psi)^{\varepsilon
}\right)  dxdt\\
\\
\ \ \ \ -%
{\displaystyle\int_{Q_{\varepsilon}}}
p_{\varepsilon}\left(  (\operatorname{div}_{\overline{x}}\Psi)^{\varepsilon
}+\dfrac{1}{\varepsilon}(\operatorname{div}_{y}\Psi)^{\varepsilon}\right)
dxdt-%
{\displaystyle\int_{Q_{\varepsilon}}}
\mu_{\varepsilon}\nabla\varphi_{\varepsilon}\Psi^{\varepsilon}dxdt\\
\\
\ \ \ \ \ \ \ \ \ \ =%
{\displaystyle\int_{Q_{\varepsilon}}}
\boldsymbol{h}\Psi^{\varepsilon}dxdt;
\end{array}
\label{4.15}%
\end{equation}%
\begin{equation}%
\begin{array}
[c]{l}%
-%
{\displaystyle\int_{Q_{\varepsilon}}}
\varphi_{\varepsilon}\dfrac{\partial\phi_{\varepsilon}}{\partial t}dxdt+%
{\displaystyle\int_{Q_{\varepsilon}}}
(\boldsymbol{u}_{\varepsilon}\cdot\nabla\varphi_{\varepsilon})\phi
_{\varepsilon}dxdt\\
\\
\ \ \ \ +%
{\displaystyle\int_{Q_{\varepsilon}}}
\nabla\mu_{\varepsilon}\cdot(\nabla_{\overline{x}}\phi_{0}+\varepsilon
(\nabla_{\overline{x}}\phi_{1})^{\varepsilon}+(\nabla_{y}\phi_{1}%
)^{\varepsilon})dxdt=0;
\end{array}
\label{4.16}%
\end{equation}%
\begin{equation}
\int_{Q_{\varepsilon}}\mu_{\varepsilon}\chi_{\varepsilon}dxdt=\beta
\int_{Q_{\varepsilon}}\nabla\varphi_{\varepsilon}\cdot\nabla\chi_{\varepsilon
}dxdt+\lambda\int_{Q_{\varepsilon}}f(\varphi_{\varepsilon})\chi_{\varepsilon
}dxdt.\label{4.17}%
\end{equation}

Let us first deal with (\ref{4.15}): We pass to the limit in (\ref{4.15}) when
$E^{\prime}\ni\varepsilon\rightarrow0$ to get
\[
-\frac{1}{2}\int_{Q}\int_{I}M(p_{0}\operatorname{div}_{y}\Psi)d\zeta
d\overline{x}dt=0.\ \ \ \ \ \ \ \ \ \ \ \ \ \ \ \ \
\]
This shows that $p_{0}$ does not depend on $y$, i.e. $p_{0}(t,\overline
{x},y)=p_{0}(t,\overline{x})$, and thus $\int_{\Omega_{0}}p_{0}(t,\overline
{x})d\overline{x}=0$, so that $p_{0}\in L^{2}(0,T;L_{0}^{2}(\Omega))$.

Next, we choose $\Psi$ such that $\operatorname{div}_{y}\Psi=0$, and we divide
both sides of (\ref{4.15}) by $\varepsilon$ to obtain
\begin{equation}%
\begin{array}
[c]{l}%
-\dfrac{1}{\varepsilon}%
{\displaystyle\int_{Q_{\varepsilon}}}
\boldsymbol{u}_{\varepsilon}\left(  \frac{\partial\Psi}{\partial t}\right)
^{\varepsilon}dxdt+\dfrac{\alpha}{\varepsilon}%
{\displaystyle\int_{Q_{\varepsilon}}}
\varepsilon\nabla\boldsymbol{u}_{\varepsilon}\cdot\left(  (\nabla
_{\overline{x}}\Psi)^{\varepsilon}+\dfrac{1}{\varepsilon}(\nabla_{y}%
\Psi)^{\varepsilon}\right)  dxdt\\
\\
\ \ \ \ -\dfrac{1}{\varepsilon}%
{\displaystyle\int_{Q_{\varepsilon}}}
p_{\varepsilon}(\operatorname{div}_{\overline{x}}\Psi)^{\varepsilon
}dxdt-\dfrac{1}{\varepsilon}%
{\displaystyle\int_{Q_{\varepsilon}}}
\mu_{\varepsilon}\nabla\varphi_{\varepsilon}\Psi^{\varepsilon}dxdt\\
\\
\ \ \ \ \ \ \ \ \ \ =\dfrac{1}{\varepsilon}%
{\displaystyle\int_{Q_{\varepsilon}}}
\boldsymbol{h}\Psi^{\varepsilon}dxdt.
\end{array}
\label{4.18}%
\end{equation}
But
\[
\int_{Q_{\varepsilon}}\mu_{\varepsilon}\nabla\varphi_{\varepsilon}%
\Psi^{\varepsilon}dxdt=-\int_{Q_{\varepsilon}}\varphi_{\varepsilon}(\nabla
\mu_{\varepsilon}\Psi^{\varepsilon}+\mu_{\varepsilon}(\operatorname{div}%
_{\overline{x}}\Psi)^{\varepsilon})dxdt.
\]
Letting $E^{\prime}\ni\varepsilon\rightarrow0$ in (\ref{4.18}),
\begin{equation}%
\begin{array}
[c]{l}%
-\dfrac{1}{2}%
{\displaystyle\int_{Q}}
{\displaystyle\int_{I}}
M\left(  \boldsymbol{u}_{0}(t,\overline{x},\cdot,\zeta)\dfrac{\partial\Psi
}{\partial t}(t,\overline{x},\cdot,\zeta)\right)  d\zeta d\overline{x}dt\\
\\
+\dfrac{\alpha}{2}%
{\displaystyle\int_{Q}}
{\displaystyle\int_{I}}
M(\overline{\nabla}_{y}\boldsymbol{u}_{0}\cdot\nabla_{y}\Psi)d\zeta
d\overline{x}dt\\
\\
\ \ +\dfrac{1}{2}%
{\displaystyle\int_{Q}}
{\displaystyle\int_{I}}
M\left(  \varphi_{0}\left[  (\nabla_{\overline{x}}\mu_{0}+\nabla_{y}\mu
_{1})\Psi+\mu_{0}\operatorname{div}_{\overline{x}}\Psi\right]  \right)  d\zeta
d\overline{x}dt\\
\\
\ \ \ \ \ \ -\dfrac{1}{2}%
{\displaystyle\int_{Q}}
{\displaystyle\int_{I}}
M(p_{0}\operatorname{div}_{\overline{x}}\Psi)d\zeta d\overline{x}dt=\dfrac
{1}{2}%
{\displaystyle\int_{Q}}
{\displaystyle\int_{I}}
M(\boldsymbol{h}\Psi)d\zeta d\overline{x}dt,
\end{array}
\label{4.19}%
\end{equation}
that is (\ref{4.11}). We recall that to obtain the penultimate term of the
left-hand side of (\ref{4.19}), we have used the strong sigma-convergence
(\ref{4.7}) associated to the weak sigma-convergence (\ref{4.10}) in light of
Corollary \ref{c2.2}.

Let us now consider (\ref{4.16}). We divide both sides therein by
$\varepsilon$ and use the equality
\[
\int_{Q_{\varepsilon}}(\boldsymbol{u}_{\varepsilon}\nabla\varphi_{\varepsilon
})\phi_{\varepsilon}dxdt=-\int_{Q_{\varepsilon}}\varphi_{\varepsilon
}\boldsymbol{u}_{\varepsilon}\nabla\phi_{\varepsilon}dxdt.
\]
Then passing to the limit when $E^{\prime}\ni\varepsilon\rightarrow0$ in the
resulting equality, we get (\ref{4.12}).

Let us finally deal with (\ref{4.17}). Therein the limit passage in
$\int_{Q_{\varepsilon}}f(\varphi_{\varepsilon})\chi_{\varepsilon}dxdt$ needs a
careful treatment. Indeed we need to check that
\begin{equation}
\frac{1}{\varepsilon}\int_{Q_{\varepsilon}}f(\varphi_{\varepsilon}%
)\chi_{\varepsilon}dxdt\rightarrow\int_{Q}\int_{I}f(\varphi_{0})\chi_{0}d\zeta
d\overline{x}dt.\label{4.21}%
\end{equation}
First of all, from (\ref{4.7}) we have $\varepsilon^{-\frac{1}{2}}\left\Vert
\varphi_{\varepsilon}-\varphi_{0}\right\Vert _{L^{2}(Q_{\varepsilon}%
)}\rightarrow0$ as $E^{\prime}\ni\varepsilon\rightarrow0$. But
\begin{align*}
\varepsilon^{-1}\int_{Q_{\varepsilon}}\left\vert \varphi_{\varepsilon
}(t,x)-\varphi_{0}(t,\overline{x})\right\vert ^{2}dxdt  & =\int_{Q_{1}%
}\left\vert \varphi_{\varepsilon}(t,\overline{x},\varepsilon x_{d}%
)-\varphi_{0}(t,\overline{x})\right\vert ^{2}dxdt\\
& \rightarrow0\text{ as }E^{\prime}\ni\varepsilon\rightarrow0.
\end{align*}
This shows that the sequence $(\widetilde{\varphi}_{\varepsilon}%
)_{\varepsilon\in E^{\prime}}$ defined by $\widetilde{\varphi}_{\varepsilon
}(t,x)=\varphi_{\varepsilon}(t,\overline{x},\varepsilon x_{d})$ \ ($(t,x)\in
Q_{1}$) converges strongly to $\varphi_{0}$ in $L^{2}(Q_{1})$, and so,
$\widetilde{\varphi}_{\varepsilon}\rightarrow\varphi_{0}$ a.e. in $Q_{1}$. The
continuity of $f$ entails $f(\widetilde{\varphi}_{\varepsilon})\rightarrow
f(\varphi_{0})$ a.e. in $Q_{1}$. Now, the uniform bound $\left\Vert
f(\varphi_{\varepsilon})\right\Vert _{L^{1}(Q_{\varepsilon})}\leq
C\varepsilon$ yields $\left\Vert f(\widetilde{\varphi}_{\varepsilon
})\right\Vert _{L^{1}(Q_{1})}\leq C$ for all $\varepsilon>0$. The Lebesgue
dominated convergence theorem leads us to
\[
f(\widetilde{\varphi}_{\varepsilon})\rightarrow f(\varphi_{0})\text{ in }%
L^{1}(Q_{1})\text{-strong.}%
\]
Thus, setting $x_{d}=\varepsilon\zeta$ with $\zeta\in(-1,1)$, we have
\[
\frac{1}{\varepsilon}\int_{Q_{\varepsilon}}f(\varphi_{\varepsilon}%
)\chi_{\varepsilon}dxdt=\frac{1}{\varepsilon}\int_{Q_{\varepsilon}}%
f(\varphi_{\varepsilon})\chi_{0}dxdt+\int_{Q_{\varepsilon}}f(\varphi
_{\varepsilon})\chi_{1}(t,\overline{x},\frac{x}{\varepsilon})dxdt,
\]
and
\begin{align*}
\frac{1}{\varepsilon}\int_{Q_{\varepsilon}}f(\varphi_{\varepsilon})\chi
_{0}dxdt  & =\int_{Q_{1}}f(\widetilde{\varphi}_{\varepsilon}(t,\overline
{x},\zeta))\chi_{0}(t,\overline{x})d\overline{x}d\zeta dt\\
& \\
& \rightarrow\int_{Q_{1}}f(\varphi_{0})\chi_{0}d\overline{x}d\zeta dt=\int
_{Q}\int_{I}f(\varphi_{0})\chi_{0}d\overline{x}d\zeta dt.
\end{align*}
Likewise we have
\begin{align*}
\int_{Q_{\varepsilon}}f(\varphi_{\varepsilon})\chi_{1}(t,\overline{x},\frac
{x}{\varepsilon})dxdt  & =\varepsilon\int_{Q_{1}}f(\widetilde{\varphi
}_{\varepsilon})\chi_{1}(t,\overline{x},\frac{\overline{x}}{\varepsilon}%
,\zeta)d\overline{x}d\zeta dt\\
& \rightarrow0\text{ as }E^{\prime}\ni\varepsilon\rightarrow0\text{.}%
\end{align*}
The convergence result (\ref{4.21}) is therefore proved.

With this in mind, we pass to the limit in (\ref{4.17}) and get (\ref{4.13}).
Finally, since $\boldsymbol{u}_{0}^{\varepsilon}\rightarrow\boldsymbol{u}^{0}$
in $L^{2}(\Omega_{\varepsilon})^{d}$-strong $\Sigma_{A}$ and $\varphi
_{0}^{\varepsilon}\rightarrow\varphi^{0}$ in $L^{2}(\Omega_{\varepsilon}%
)$-strong $\Sigma_{A}$, we conclude by integration by parts that
$\boldsymbol{u}_{0}(0)=\boldsymbol{u}^{0}$ and $\varphi_{0}(0)=\varphi^{0}$.
Let us note that from (\ref{4.10'''}) we get $\boldsymbol{u}^{0}%
=(\overline{\boldsymbol{u}}^{0},0)$ as the last component $u_{d}$ of
$\boldsymbol{u}_{0}$ is zero. This concludes the proof of the proposition.
\end{proof}

\subsection{Derivation of the homogenized system}

Our goal in this subsection is to find the equivalent problem whose
$(\overline{\boldsymbol{u}},\varphi_{0},\mu_{0},p_{0})$ is solution to. We
recall that $\overline{\boldsymbol{u}}$ is defined by (\ref{4.10'}) and
satisfies (\ref{4.10''}). To that end, we first consider (\ref{4.13}); it is
equivalent to the system consisting of (\ref{4.23}) and (\ref{4.24}) below:
\begin{equation}
\left\{
\begin{array}
[c]{l}%
\dfrac{1}{2}%
{\displaystyle\int_{Q}}
{\displaystyle\int_{I}}
M(\mu_{0}\chi_{0})d\zeta d\overline{x}dt=\dfrac{\beta}{2}%
{\displaystyle\int_{Q}}
{\displaystyle\int_{I}}
M\left(  (\nabla_{\overline{x}}\varphi_{0}+\nabla_{y}\varphi_{1})\cdot
\nabla_{\overline{x}}\chi_{0}\right)  d\zeta d\overline{x}dt\\
\\
\ \ \ +\dfrac{\lambda}{2}%
{\displaystyle\int_{Q}}
{\displaystyle\int_{I}}
f(\varphi_{0})\chi_{0}d\zeta d\overline{x}dt\text{ for all }\chi_{0}%
\in\mathcal{C}_{0}^{\infty}(Q);
\end{array}
\right. \label{4.23}%
\end{equation}%
\begin{equation}
\int_{Q}\int_{I}M\left(  (\nabla_{\overline{x}}\varphi_{0}+\nabla_{y}%
\varphi_{1})\cdot\nabla_{y}\chi_{1}\right)  d\zeta d\overline{x}dt=0\text{,
all }\chi_{1}\in\mathcal{C}_{0}^{\infty}(Q)\otimes A^{\infty}(\mathbb{R}%
^{d-1};\mathcal{C}_{0}^{\infty}(I)).\label{4.24}%
\end{equation}
In (\ref{4.24}) we take $\chi_{1}$ under the form $\chi_{1}(t,\overline
{x},y)=\chi_{1}^{0}(t,\overline{x})\theta(y)$ with $\chi_{1}^{0}\in
\mathcal{C}_{0}^{\infty}(Q)$ and $\theta\in A^{\infty}(\mathbb{R}%
^{d-1};\mathcal{C}_{0}^{\infty}(I))$. Then (\ref{4.24}) becomes
\begin{equation}
\int_{I}M\left(  (\nabla_{\overline{x}}\varphi_{0}+\nabla_{y}\varphi_{1}%
)\cdot\nabla_{y}\theta\right)  d\zeta=0\ \ \forall\theta\in A^{\infty
}(\mathbb{R}^{d-1};\mathcal{C}_{0}^{\infty}(I)),\label{4.26'}%
\end{equation}
or equivalently,
\begin{equation}
\int_{I}M\left(  \nabla_{y}\varphi_{1}\cdot\nabla_{y}\theta\right)
d\zeta=0\ \ \forall\theta\in A^{\infty}(\mathbb{R}^{d-1};\mathcal{C}%
_{0}^{\infty}(I))\label{4.27'}%
\end{equation}
since
\begin{align*}
\int_{I}M&\left(  \nabla_{\overline{x}}\varphi_{0}\cdot\nabla_{y}\theta\right)
d\zeta  =\int_{I}M\left(  \nabla_{\overline{x}}\varphi_{0}\cdot
\nabla_{\overline{y}}\theta\right)  d\zeta\text{ (recall that }\nabla
_{\overline{x}}\varphi_{0}\equiv(\nabla_{\overline{x}}\varphi_{0},0)\text{)}\\
& =\int_{I}\nabla_{\overline{x}}\varphi_{0}\cdot M\left(  \nabla_{\overline
{y}}\theta\right)  d\zeta=0\text{ as }M\left(  \nabla_{\overline{y}}%
\theta\right)  =0\text{ (recall that }\theta(\cdot,\zeta)\in A^{\infty
}\text{).}%
\end{align*}
Now it is a fact that (\ref{4.27'}) possesses a unique solution $\varphi
_{1}\equiv0$.

This being so, going back to (\ref{4.23}), we readily see that it is the
variational form of the following equation
\begin{equation}
\mu_{0}=-\beta\Delta_{\overline{x}}\varphi_{0}+\lambda f(\varphi_{0})\text{ in
}Q.\ \ \ \ \ \ \ \ \ \ \ \ \ \ \ \ \ \ \ \ \ \label{4.25}%
\end{equation}

Next, we consider (\ref{4.12}) and choose there $\phi_{0}=0$ and take
$\phi_{1}$ under the form $\phi_{1}(t,\overline{x},y)=\phi_{1}^{0}%
(t,\overline{x})\theta(y)$ with $\phi_{1}^{0}\in A^{\infty}(\mathbb{R}%
^{d-1};\mathcal{C}_{0}^{\infty}(I))$. Then we obtain
\begin{equation}
\left\{
\begin{array}
[c]{l}%
-%
{\displaystyle\int_{I}}
M(\varphi_{0}\boldsymbol{u}_{0}\cdot\nabla_{y}\theta)d\zeta+%
{\displaystyle\int_{I}}
M((\nabla_{\overline{x}}\mu_{0}+\nabla_{y}\mu_{1})\cdot\nabla_{y}\theta
)d\zeta=0\\
\text{for all }\theta\in A^{\infty}(\mathbb{R}^{d-1};\mathcal{C}_{0}^{\infty
}(I)).
\end{array}
\right. \label{4.28''}%
\end{equation}
But
\begin{align*}%
{\displaystyle\int_{I}}
M(\varphi_{0}\boldsymbol{u}_{0}\cdot\nabla_{y}\theta)d\zeta & =%
{\displaystyle\int_{I}}
M(\varphi_{0}\overline{\operatorname{div}}_{y}(\boldsymbol{u}_{0}%
\theta))d\zeta\text{ since }\overline{\operatorname{div}}_{y}\boldsymbol{u}%
_{0}=0\\
& =0\text{ because }\varphi_{0}\text{ does not depend on }y.
\end{align*}
Therefore, (\ref{4.28''}) has the same form like (\ref{4.26'}), and since
$\mu_{0}$ is independent of $y$, we deduce as for (\ref{4.26'}) that $\mu
_{1}=0$.

Taking into account the equality $\operatorname{div}_{\overline{x}%
}\boldsymbol{u}_{0}=0$, we see that (\ref{4.12}) (in which we choose $\phi
_{1}=0$) is the variational form of
\begin{equation}
\frac{\partial\varphi_{0}}{\partial t}+\overline{\boldsymbol{u}}\cdot
\nabla_{\overline{x}}\varphi_{0}-\Delta_{\overline{x}}\mu_{0}=0\text{ in
}Q,\ \ \ \ \ \ \ \ \ \ \label{4.26}%
\end{equation}
where once again we recall that $\overline{\boldsymbol{u}}$ is defined by
(\ref{4.10'}).

Let us move to (\ref{4.11}). It is equivalent to: there exists $p_{1}\in
L^{2}(Q;\mathcal{B}_{A}^{2}(\mathbb{R}^{d-1};L^{2}(I)))$ such that
\begin{equation}
\frac{\partial\boldsymbol{u}_{0}}{\partial t}-\alpha\overline{\Delta}%
_{y}\boldsymbol{u}_{0}+\overline{\nabla}_{y}p_{1}=\boldsymbol{h}%
-\nabla_{\overline{x}}p_{0}+\mu_{0}\nabla_{\overline{x}}\varphi_{0}\text{ in
}Q\times\mathbb{R}^{d-1}\times I.\label{4.27}%
\end{equation}
The existence of $p_{1}$ is provided by Proposition 2.1 in Ref. \cite{JW2022}. To
analyze (\ref{4.27}), let $\omega^{j}=(\omega_{i}^{j})_{1\leq i\leq d}%
\in\mathcal{C}([0,T];\mathcal{B}_{A}^{2}(\mathbb{R}^{d-1};L^{2}(I))^{d})\cap
L^{2}(0,T;\mathcal{B}_{A}^{1,2}(\mathbb{R}^{d-1};H_{0}^{1}(I))^{d})$
satisfying (see Propositions \ref{p4.1} and \ref{p4.2'}) the following
auxiliary problem
\begin{equation}
\left\{
\begin{array}
[c]{l}%
\dfrac{\partial\omega^{j}}{\partial t}-\alpha\overline{\Delta}_{y}\omega
^{j}+\overline{\nabla}_{y}\pi^{j}=0\text{ in }(0,T)\times\mathbb{R}%
^{d-1}\times I,\\
\\
\overline{\operatorname{div}}_{y}\omega^{j}=0\text{ in }(0,T)\times
\mathbb{R}^{d-1}\times I,\\
\\
\omega^{j}(0)=e_{j}\text{ in }\mathbb{R}^{d-1}\times I,\
{\displaystyle\int_{I}}
M(\omega_{d}^{j}(t,\cdot,\zeta))d\zeta=0,
\end{array}
\right. \label{4.28}%
\end{equation}
where $e_{j}$ ($1\leq j\leq d-1$) is the $j$th vector of the canonical basis
in $\mathbb{R}^{d}$. As in the previous subsection, we define
\begin{equation}
G_{ij}(t)=\frac{1}{2}\int_{-1}^{1}M(\omega^{i}(t,\cdot,\zeta))e_{j}%
d\zeta,\ \ t\in\lbrack0,T],\ 1\leq i,j\leq d-1,\label{4.28'}%
\end{equation}
and set $G=(G_{ij})_{1\leq i,j\leq d-1}$. As seen in Proposition \ref{p4.2'},
$G$ is a $(d-1)\times(d-1)$ symmetric positive definite matrix. We fix
$(t,\overline{x})\in Q$ and we take $v(\tau,y)=\boldsymbol{u}_{0}%
(t-\tau,\overline{x},y)$ ($(\tau,y)\in(0,t)\times\mathbb{R}^{d-1}\times I$) as
test function in (\ref{4.28}):
\[
\left\langle \frac{\partial\omega^{j}}{\partial\tau}(\tau),\boldsymbol{u}%
_{0}(t-\tau)\right\rangle +\frac{\alpha}{2}\int_{-1}^{1}M(\overline{\nabla
}\omega^{j}(\tau)\cdot\overline{\nabla}\boldsymbol{u}_{0}(t-\tau))d\zeta=0,
\]
or equivalently,
\[%
\begin{array}
[c]{l}%
\dfrac{1}{2}\dfrac{d}{d\tau}%
{\displaystyle\int_{-1}^{1}}
M(\omega^{j}(\tau)\boldsymbol{u}_{0}(t-\tau))d\zeta+\left\langle
\dfrac{\partial\boldsymbol{u}_{0}}{\partial\tau}(t-\tau),\omega^{j}%
(\tau)\right\rangle \\
\\
\ \ \ \ +\dfrac{\alpha}{2}%
{\displaystyle\int_{-1}^{1}}
M(\overline{\nabla}\omega^{j}(\tau)\cdot\overline{\nabla}\boldsymbol{u}%
_{0}(t-\tau))d\zeta=0.
\end{array}
\]
Integrating over $(0,t)$ the last equality above, we obtain
\begin{equation}%
\begin{array}
[c]{l}%
\dfrac{1}{2}%
{\displaystyle\int_{-1}^{1}}
M(\omega^{j}(t)\boldsymbol{u}_{0}(0))d\zeta-\dfrac{1}{2}%
{\displaystyle\int_{-1}^{1}}
M(\boldsymbol{u}_{0}(t)e_{j})d\zeta+\dfrac{1}{2}%
{\displaystyle\int_{0}^{t}}
\left\langle \dfrac{\partial\boldsymbol{u}_{0}}{\partial\tau}(t-\tau
),\omega^{j}(\tau)\right\rangle d\tau\\
\\
\ \ \ +\dfrac{\alpha}{2}%
{\displaystyle\int_{0}^{t}}
{\displaystyle\int_{-1}^{1}}
M(\overline{\nabla}\omega^{j}(\tau)\cdot\overline{\nabla}\boldsymbol{u}%
_{0}(t-\tau))d\zeta d\tau=0,
\end{array}
\label{4.29}%
\end{equation}
where the brackets $\left\langle ,\right\rangle $ denote the duality pairings
between $\left[  \mathcal{B}_{A}^{1,2}(\mathbb{R}^{d-1};H_{0}^{1}%
(I))^{d}\right]  ^{\prime}$ and $\mathcal{B}_{A}^{1,2}(\mathbb{R}^{d-1}%
;H_{0}^{1}(I))^{d}$.

Next we go back to the variational form of (\ref{4.27}) and multiply it by the
function
\[
\Psi(\tau,\overline{x},y)=\varphi(\overline{x})\omega^{j}(t-\tau,y)\text{ with
}\varphi\in\mathcal{C}_{0}^{\infty}(\Omega)\text{,}%
\]
and next integrate over $(0,t)$. Then we obtain the following equality, which
holds in the sense of distributions in $\Omega$:
\begin{equation}%
\begin{array}
[c]{l}%
\dfrac{1}{2}%
{\displaystyle\int_{0}^{t}}
\left\langle \dfrac{\partial\boldsymbol{u}_{0}}{\partial\tau}(\tau),\omega
^{j}(t-\tau)\right\rangle d\tau+\dfrac{\alpha}{2}%
{\displaystyle\int_{0}^{t}}
{\displaystyle\int_{-1}^{1}}
M(\overline{\nabla}\boldsymbol{u}_{0}(\tau)\cdot\overline{\nabla}\omega
^{j}(t-\tau))d\zeta d\tau\\
\\
\ \ -\dfrac{1}{2}%
{\displaystyle\int_{0}^{t}}
{\displaystyle\int_{-1}^{1}}
\mu_{0}(\tau)\nabla_{\overline{x}}\varphi_{0}(\tau)M(\omega^{j}(t-\tau))d\zeta
d\tau\\
\\
\ \ \ \ +\dfrac{1}{2}%
{\displaystyle\int_{0}^{t}}
{\displaystyle\int_{-1}^{1}}
\nabla_{\overline{x}}p_{0}(\tau)M(\omega^{j}(t-\tau))d\zeta d\tau\\
\\
\ \ \ \ \ \ =\dfrac{1}{2}%
{\displaystyle\int_{0}^{t}}
{\displaystyle\int_{-1}^{1}}
M(\omega^{j}(t-\tau))\boldsymbol{h}(\tau)d\zeta d\tau.
\end{array}
\label{4.30}%
\end{equation}
But
\[
\int_{0}^{t}\left\langle \frac{\partial\boldsymbol{u}_{0}}{\partial\tau}%
(\tau),\omega^{j}(t-\tau)\right\rangle d\tau=\int_{0}^{t}\left\langle
\frac{\partial\boldsymbol{u}_{0}}{\partial\tau}(t-\tau),\omega^{j}%
(\tau)\right\rangle d\tau,
\]
so that, comparing (\ref{4.29}) and (\ref{4.30}), we are led to
\[%
\begin{array}
[c]{l}%
-\dfrac{1}{2}%
{\displaystyle\int_{-1}^{1}}
M(\omega^{j}(t))\boldsymbol{u}^{0}d\zeta+\dfrac{1}{2}%
{\displaystyle\int_{-1}^{1}}
M(\boldsymbol{u}_{0}(t))e_{j}d\zeta+\dfrac{1}{2}%
{\displaystyle\int_{0}^{t}}
{\displaystyle\int_{-1}^{1}}
M(\omega^{j}(t-\tau))\nabla_{\overline{x}}p_{0}(\tau)d\tau d\zeta\\
\\
\ \ -\dfrac{1}{2}%
{\displaystyle\int_{0}^{t}}
{\displaystyle\int_{-1}^{1}}
\mu_{0}(\tau)\nabla_{\overline{x}}\varphi_{0}(\tau)M(\omega^{j}(t-\tau))d\zeta
d\tau=\dfrac{1}{2}%
{\displaystyle\int_{0}^{t}}
{\displaystyle\int_{-1}^{1}}
M(\omega^{j}(t-\tau))\boldsymbol{h}(\tau)d\zeta d\tau,
\end{array}
\]
i.e.
\[%
\begin{array}
[c]{l}%
-G_{j}(t)\boldsymbol{u}^{0}+u_{j}(t)+(G_{j}\ast\nabla_{\overline{x}}%
p_{0})(t)-(G_{j}\ast\mu_{0}\nabla_{\overline{x}}\varphi_{0})(t)\\
\\
\ \ =(G_{j}\ast\boldsymbol{h}_{1})(t),\ \ 1\leq j\leq d-1,
\end{array}
\]
or,
\begin{equation}
\overline{\boldsymbol{u}}(t)=G(t)\overline{\boldsymbol{u}}^{0}+(G\ast
(\boldsymbol{h}_{1}-\nabla_{\overline{x}}p_{0}+\mu_{0}\nabla_{\overline{x}%
}\varphi_{0}))(t)\text{ in }\Omega\text{, }t\in\lbrack0,T],\label{4.31}%
\end{equation}
where $G=(G_{j})_{1\leq j\leq d-1}$.

We have just proved the following result.

\begin{theorem}
\label{t4.1}The quadruplet $(\overline{\boldsymbol{u}},\varphi_{0},\mu
_{0},p_{0})$ defined by \emph{(\ref{4.10'})}, \emph{(\ref{4.7})},
\emph{(\ref{4.9})} and \emph{(\ref{4.6})} solves in the weak sense the
homogenized system \emph{(\ref{4.31})}, \emph{(\ref{4.26})}, \emph{(\ref{4.25}%
)} with appropriate boundary and initial conditions, viz.
\begin{equation}
\left\{
\begin{array}
[c]{l}%
\overline{\boldsymbol{u}}=G\overline{\boldsymbol{u}}^{0}+G\ast(\boldsymbol{h}%
_{1}+\mu_{0}\nabla_{\overline{x}}\varphi_{0}-\nabla_{\overline{x}}p_{0})\text{
in }Q,\\
\\
\operatorname{div}_{\overline{x}}\overline{\boldsymbol{u}}=0\text{ in }Q\text{
and }\overline{\boldsymbol{u}}\cdot\boldsymbol{n}=0\text{ on }(0,T)\times
\partial\Omega,\\
\\
\dfrac{\partial\varphi_{0}}{\partial t}+\overline{\boldsymbol{u}}\cdot
\nabla_{\overline{x}}\varphi_{0}-\Delta_{\overline{x}}\mu_{0}=0\text{ in }Q,\\
\\
\mu_{0}=-\beta\Delta_{\overline{x}}\varphi_{0}+\lambda f(\varphi_{0})\text{ in
}Q,\\
\\
\dfrac{\partial\varphi_{0}}{\partial\boldsymbol{n}}=\dfrac{\partial\mu_{0}%
}{\partial\boldsymbol{n}}=0\text{ on }(0,T)\times\partial\Omega,\\
\\
\varphi_{0}(0)=\varphi^{0}\text{ in }\Omega.
\end{array}
\right.  \ \ \ \ \ \ \ \ \ \ \ \ \ \ \ \ \ \ \ \ \ \ \ \ \ \ \ \ \label{4.33}%
\end{equation}

\end{theorem}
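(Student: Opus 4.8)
The plan is to assemble into (\ref{4.33}) the facts already extracted from the global homogenized system of Proposition \ref{p4.2}, proceeding by first eliminating the microscopic correctors $\varphi_{1}$, $\mu_{1}$ and $p_{1}$, then reading off the macroscopic equations, and finally converting the cell Stokes problem into the Darcy law with memory.

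First I would treat the chemical potential. Splitting (\ref{4.13}) by taking successively $\chi_{1}=0$ and $\chi_{0}=0$ yields the macroscopic identity (\ref{4.23}) and the cell identity (\ref{4.24}). Feeding separated test functions $\chi_{1}=\chi_{1}^{0}(t,\overline{x})\,\theta(y)$ into (\ref{4.24}) and using that $\nabla_{\overline{x}}\varphi_{0}$ has no $y$-dependence (so that its pairing with $\nabla_{\overline{y}}\theta$ has zero mean) reduces the cell problem to (\ref{4.27'}), whose only solution modulo constants, by Lemma \ref{c2.1}, is $\varphi_{1}\equiv0$. Plugging $\varphi_{1}=0$ into (\ref{4.23}) and using density of smooth functions, I recognize it as the weak form of $\mu_{0}=-\beta\Delta_{\overline{x}}\varphi_{0}+\lambda f(\varphi_{0})$ with $\partial\varphi_{0}/\partial\boldsymbol{n}=0$ on $\partial\Omega$, i.e.\ (\ref{4.25}). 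In the same spirit, inserting $\phi_{0}=0$, $\phi_{1}=\phi_{1}^{0}(t,\overline{x})\theta(y)$ into (\ref{4.12}) gives (\ref{4.28''}); the transport term there drops because $\overline{\operatorname{div}}_{y}\boldsymbol{u}_{0}=0$ and $\varphi_{0}$ is $y$-free, so the same uniqueness argument forces $\mu_{1}\equiv0$. With $\mu_{1}=0$ and $\operatorname{div}_{\overline{x}}\overline{\boldsymbol{u}}=0$, equation (\ref{4.12}) (now with $\phi_{1}=0$) is exactly the weak form of the convective Cahn--Hilliard equation $\partial_{t}\varphi_{0}+\overline{\boldsymbol{u}}\cdot\nabla_{\overline{x}}\varphi_{0}-\Delta_{\overline{x}}\mu_{0}=0$ with the no-flux condition on $\mu_{0}$, i.e.\ (\ref{4.26}).

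The core of the argument, and where I expect the main difficulty, is deriving the nonlocal Darcy law (\ref{4.31}) from (\ref{4.11}). I would first invoke Proposition 2.1 of \cite{JW2022} to rewrite (\ref{4.11}) as the cell Stokes equation (\ref{4.27}) for $\boldsymbol{u}_{0}$ with an auxiliary pressure $p_{1}$. Then, using the corrector fields $\omega^{j}$ solving (\ref{4.28}) (well-posed by Propositions \ref{p4.1} and \ref{p4.2'}), I would run a Duhamel-type duality computation: test the $\omega^{j}$-equation against the time-reversed field $(\tau,y)\mapsto\boldsymbol{u}_{0}(t-\tau,\overline{x},y)$ and integrate over $(0,t)$ to get (\ref{4.29}); test (\ref{4.27}) against $\varphi(\overline{x})\,\omega^{j}(t-\tau,y)$ and integrate to get (\ref{4.30}); subtract the two after the symmetrization $\int_{0}^{t}\langle\partial_{\tau}\boldsymbol{u}_{0}(\tau),\omega^{j}(t-\tau)\rangle\,d\tau=\int_{0}^{t}\langle\partial_{\tau}\boldsymbol{u}_{0}(t-\tau),\omega^{j}(\tau)\rangle\,d\tau$, whereupon the viscous terms $\alpha\,M(\overline{\nabla}_{y}\boldsymbol{u}_{0}\cdot\overline{\nabla}_{y}\omega^{j})$ cancel. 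Recalling $u_{d}=0$ from (\ref{4.10'''}) and projecting onto the first $d-1$ components, this collapses to the convolution identity $\overline{\boldsymbol{u}}=G\overline{\boldsymbol{u}}^{0}+G\ast(\boldsymbol{h}_{1}+\mu_{0}\nabla_{\overline{x}}\varphi_{0}-\nabla_{\overline{x}}p_{0})$, that is (\ref{4.31}), where the symmetric positive-definite matrix $G$ is exactly $(G_{ij})_{1\le i,j\le d-1}$ from Proposition \ref{p4.2'}. The delicate points are: legitimizing the integrations by parts in $\tau$ (which needs $\partial_{t}\boldsymbol{u}_{0}\in L^{2}(0,T;V^{\prime})$ and $\omega^{j}\in\mathcal{C}([0,T];H)\cap L^{2}(0,T;V)$ from Proposition \ref{p4.1}), checking admissibility of the reflected test functions, and keeping precise track of the factor $2=\int_{I}d\zeta$ and the $\tfrac12$ prefactors so that the kernel comes out as $G$ and not a multiple of it.

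Finally I would collect the pieces: the incompressibility $\operatorname{div}_{\overline{x}}\overline{\boldsymbol{u}}=0$ together with $\overline{\boldsymbol{u}}\cdot\boldsymbol{n}=0$ on $(0,T)\times\partial\Omega$ is already (\ref{4.10''}); the initial condition $\varphi_{0}(0)=\varphi^{0}$ and $\boldsymbol{u}^{0}=(\overline{\boldsymbol{u}}^{0},0)$ come from (\ref{4.14}) and (\ref{4.10'''}); assembling (\ref{4.31}), (\ref{4.26}), (\ref{4.25}) with these boundary and initial data produces (\ref{4.33}), which completes the argument.
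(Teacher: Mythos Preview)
Your proposal is correct and follows essentially the same route as the paper: decouple (\ref{4.13}) and (\ref{4.12}) into macroscopic and cell parts to kill $\varphi_{1}$ and $\mu_{1}$, read off (\ref{4.25}) and (\ref{4.26}), then derive (\ref{4.31}) from the cell Stokes equation (\ref{4.27}) via the duality between $\boldsymbol{u}_{0}$ and the time-reversed correctors $\omega^{j}$, exactly as in (\ref{4.29})--(\ref{4.30}). One small slip: the vanishing of $\varphi_{1}$ from (\ref{4.27'}) is not a consequence of Lemma \ref{c2.1} (which recovers a potential from a divergence-free-orthogonal field) but of the simpler coercivity observation that $\int_{I}M(|\nabla_{y}\varphi_{1}|^{2})\,d\zeta=0$ forces $\varphi_{1}=0$ in $B_{\#A}^{1,2}$.
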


The equation (\ref{4.33})$_{1}$ is a Hele-Shaw equation with memory, that is,
a non-local (in time) Hele-Shaw equation. Thus, system (\ref{4.33}) is a
\textit{non-local Hele-Shaw-Cahn-Hilliard} (HSCH) system arising from
transient flow through thin domains, and modeling in particular tumor growth.
To the best of our knowledge, this is the first time that such a system is
obtained in the literature. For that reason, we need to make a qualitative
analysis of (\ref{4.33}) in order to prove some regularity results and its
well-posedness. This is the aim of the next section.

\section{Proof of the main results\label{sec5}}

\subsection{Analysis of the homogenized system: Proof of Theorem \ref{t1.1}}

In this subsection, we are concerned with the $2D$ non-local HSCH system
(\ref{4.33}) derived from the upscaling of the $\varepsilon$-model (\ref{1.1})
in $3D$.

\subsubsection{\textbf{Well-posedness of the homogenized system}}

We aim at proving the well-posedness of the system (\ref{4.33}). This will
give rise to the proof of the main result of the work. We start with some
basic estimates. To that end, we shall need the following Gronwall-type
inequality. We recall that, throughout this section $\Omega$ is a bounded
Lipschitz domain in $\mathbb{R}^{2}$.

\begin{lemma}
({see p. 384 in Ref. \cite{MPF1994}})\label{l5.1} Let $u$, $v$ and $h$ be nonnegative
functions, and $c_{1}$, $c_{2}$ be nonnegative constants. If
\[
u(t)\leq c_{1}+c_{2}\int_{0}^{t}\left[  v(s)u(s)+\int_{0}^{s}%
h(s,r)u(r)dr\right]  ds,\ \ t\geq0,
\]
then for any $t\geq0$,
\[
u(t)\leq c_{1}\exp\left[  c_{2}\int_{0}^{t}\left(  v(s)+\int_{0}%
^{s}h(s,r)dr\right)  ds\right]  .
\]

\end{lemma}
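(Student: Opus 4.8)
The statement is a classical iterated (Pachpatte--Bellman type) generalization of Gronwall's lemma, recorded in Ref. \cite{MPF1994}; the plan is to give the short self-contained argument via a differential inequality for a majorant. First I would introduce
\[
w(t)=c_{1}+c_{2}\int_{0}^{t}\left[v(s)u(s)+\int_{0}^{s}h(s,r)u(r)dr\right]ds ,
\]
so that $u(t)\le w(t)$ by hypothesis, $w(0)=c_{1}$, and $w$ is nondecreasing since its integrand is nonnegative. Under the standing (local integrability) assumptions on $u,v,h$, $w$ is absolutely continuous and, for a.e. $t$,
\[
w'(t)=c_{2}\left[v(t)u(t)+\int_{0}^{t}h(t,r)u(r)dr\right].
\]
Using $u\le w$, the monotonicity $w(r)\le w(t)$ for $0\le r\le t$, and the nonnegativity of $v$ and $h$, this yields the linear differential inequality
\[
w'(t)\le c_{2}\left[v(t)+\int_{0}^{t}h(t,r)dr\right]w(t).
\]

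Next I would integrate this. Assume first $c_{1}>0$; then $w(t)\ge c_{1}>0$ for all $t$, so dividing by $w(t)$ and integrating from $0$ to $t$ gives
\[
\log\frac{w(t)}{c_{1}}\le c_{2}\int_{0}^{t}\left(v(s)+\int_{0}^{s}h(s,r)dr\right)ds ,
\]
hence $w(t)\le c_{1}\exp\!\left[c_{2}\int_{0}^{t}\bigl(v(s)+\int_{0}^{s}h(s,r)dr\bigr)ds\right]$, and $u\le w$ delivers the claimed bound. The case $c_{1}=0$ follows by applying the previous step with $c_{1}$ replaced by an arbitrary $\delta>0$ (the hypothesis is a fortiori satisfied with constant $\delta$ in place of $c_{1}=0$) and letting $\delta\to 0^{+}$.

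I do not expect a genuine obstacle here: the only points requiring care are the justification of differentiating $w$ and of the a.e.\ differential inequality (which needs $u$, $v$, $h$ locally integrable in the relevant variables, e.g.\ measurability of $t\mapsto\int_{0}^{t}h(t,r)u(r)\,dr$), and the degenerate case $c_{1}=0$, both handled as above; in the applications of this section the roles of $u,v,h$ are played by continuous functions or $L^{p}$-norms of the solution, so these conditions hold automatically. An alternative route is iteration of the integral inequality (a Picard-type successive-substitution argument), but the majorant/ODE approach is the cleanest and is the one I would present.
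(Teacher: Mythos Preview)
The paper does not give its own proof of this lemma: it is stated as a quotation from Ref.~\cite{MPF1994} (p.~384) and used as a black box in the proofs of Proposition~\ref{p5.1} and Theorem~\ref{t5.1}. Your self-contained argument via the majorant $w(t)$ and the differential inequality $w'(t)\le c_2\bigl[v(t)+\int_0^t h(t,r)\,dr\bigr]w(t)$ is the standard Pachpatte--Bellman proof and is correct; the only delicate point---handling $c_1=0$ by a limiting argument---you address explicitly.
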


We also gather below some classical results, namely the Agmon and
Gagliardo-Nirenberg inequalities in $2$ space dimensions.

\begin{lemma}
(see Ref. \cite{Temam2001})\label{l5.2} Let $\Omega$ be a bounded $\mathcal{C}^{4}%
$-domain in $\mathbb{R}^{2}$. Then

\begin{itemize}
\item[(i)] $\left\Vert f\right\Vert _{L^{4}}\leq C(\left\Vert f\right\Vert
_{L^{2}}^{1/2}\left\Vert \nabla f\right\Vert _{L^{2}}^{1/2}+\left\Vert
f\right\Vert _{L^{2}})$ for any $f\in H^{1}(\Omega)$,

\item[(ii)] $\left\Vert f\right\Vert _{L^{p}}\leq C\left\Vert f\right\Vert
_{H^{1}}$ for any $1\leq p<\infty$ and for any $f\in H^{1}(\Omega)$,

\item[(iii)] $\left\Vert f\right\Vert _{L^{\infty}}\leq C\left\Vert
f\right\Vert _{L^{2}}^{1/2}\left\Vert f\right\Vert _{H^{2}}^{1/2}$ for any
$f\in H^{2}(\Omega)$,

\item[(iv)] $\left\Vert
f-\mathchoice {{\setbox0=\hbox{$\displaystyle{\textstyle
-}{\int}$ } \vcenter{\hbox{$\textstyle -$
}}\kern-.6\wd0}}{{\setbox0=\hbox{$\textstyle{\scriptstyle -}{\int}$ } \vcenter{\hbox{$\scriptstyle -$
}}\kern-.6\wd0}}{{\setbox0=\hbox{$\scriptstyle{\scriptscriptstyle -}{\int}$
} \vcenter{\hbox{$\scriptscriptstyle -$
}}\kern-.6\wd0}}{{\setbox0=\hbox{$\scriptscriptstyle{\scriptscriptstyle
-}{\int}$ } \vcenter{\hbox{$\scriptscriptstyle -$ }}\kern-.6\wd0}}\!\int
_{\Omega}f\right\Vert _{H^{2}}\leq C\left\Vert \Delta f\right\Vert _{L^{2}}$
for any $f\in H^{2}(\Omega)$ with $\nabla f\cdot\boldsymbol{n}=0$ on
$\partial\Omega$,

\item[(v)] $\left\Vert f\right\Vert _{H^{3}}\leq C(\left\Vert \nabla\Delta
f\right\Vert _{L^{2}}+\left\Vert f\right\Vert _{L^{2}})$ for any $f\in
H^{3}(\Omega)$,

\item[(vi)] $\left\Vert f\right\Vert _{H^{2}}\leq C(\left\Vert \Delta
f\right\Vert _{L^{2}}+\left\Vert f\right\Vert _{L^{2}})$ for any $f\in
H^{2}(\Omega)$,
\end{itemize}

\noindent where $C=C(p,\Omega)>0$.
\end{lemma}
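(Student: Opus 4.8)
The plan is to observe that all six inequalities are classical facts about Sobolev spaces on a smooth bounded planar domain, and to reduce each of them either to a known inequality on $\mathbb{R}^{2}$ via a bounded extension operator, or to standard elliptic regularity on $\Omega$. Since $\Omega$ is of class $\mathcal{C}^{4}$, there is a bounded linear extension operator $E:H^{k}(\Omega)\to H^{k}(\mathbb{R}^{2})$ for $k\leq 4$, so any interpolation-type inequality valid on $\mathbb{R}^{2}$ transfers to $\Omega$ at the cost of lower order terms. Everything below is essentially a quotation of the estimates recorded in Ref. \cite{Temam2001}.

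First I would treat (i)--(iii). Item (i) is the two-dimensional Gagliardo--Nirenberg (Ladyzhenskaya) inequality $\left\Vert v\right\Vert _{L^{4}(\mathbb{R}^{2})}\leq C\left\Vert v\right\Vert _{L^{2}(\mathbb{R}^{2})}^{1/2}\left\Vert \nabla v\right\Vert _{L^{2}(\mathbb{R}^{2})}^{1/2}$ applied to $v=Ef$, together with boundedness of $E$ on $H^{1}$, the additive term $\left\Vert f\right\Vert _{L^{2}}$ absorbing the contribution of the extension. Item (ii) is the Sobolev embedding $H^{1}(\mathbb{R}^{2})\hookrightarrow L^{p}(\mathbb{R}^{2})$, valid for every finite $p$ in dimension two, pulled back through $E$. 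Item (iii) is Agmon's inequality in dimension two, $\left\Vert v\right\Vert _{L^{\infty}(\mathbb{R}^{2})}\leq C\left\Vert v\right\Vert _{L^{2}(\mathbb{R}^{2})}^{1/2}\left\Vert v\right\Vert _{H^{2}(\mathbb{R}^{2})}^{1/2}$, itself a special Gagliardo--Nirenberg estimate; applying it to $v=Ef$ gives (iii).

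Next, for (iv) I would set $\bar{f}=\left\vert \Omega\right\vert ^{-1}\int_{\Omega}f\,dx$ and $g=f-\bar{f}$; then $g$ has zero mean and solves the Neumann problem $-\Delta g=-\Delta f$ in $\Omega$ with $\partial g/\partial\boldsymbol{n}=0$ on $\partial\Omega$. Elliptic regularity for the Neumann Laplacian on the $\mathcal{C}^{4}$ domain $\Omega$ yields $\left\Vert g\right\Vert _{H^{2}}\leq C(\left\Vert \Delta f\right\Vert _{L^{2}}+\left\Vert g\right\Vert _{L^{2}})$, and it remains to bound $\left\Vert g\right\Vert _{L^{2}}$: by the Poincar\'{e}--Wirtinger inequality $\left\Vert g\right\Vert _{L^{2}}\leq C\left\Vert \nabla g\right\Vert _{L^{2}}$, while $\left\Vert \nabla g\right\Vert _{L^{2}}^{2}=-(g,\Delta g)_{L^{2}}=-(g,\Delta f)_{L^{2}}\leq\left\Vert g\right\Vert _{L^{2}}\left\Vert \Delta f\right\Vert _{L^{2}}$ combined with Poincar\'{e}--Wirtinger gives $\left\Vert \nabla g\right\Vert _{L^{2}}\leq C\left\Vert \Delta f\right\Vert _{L^{2}}$, hence $\left\Vert g\right\Vert _{L^{2}}\leq C\left\Vert \Delta f\right\Vert _{L^{2}}$ and (iv) follows. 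Items (v) and (vi) are the standard global elliptic (Calder\'{o}n--Zygmund) estimates $\left\Vert f\right\Vert _{H^{k+2}(\Omega)}\leq C(\left\Vert \Delta f\right\Vert _{H^{k}(\Omega)}+\left\Vert f\right\Vert _{L^{2}(\Omega)})$ for $k=1$ and $k=0$, valid because $\partial\Omega$ is smooth; equivalently they assert that $\left\Vert \Delta f\right\Vert _{L^{2}}+\left\Vert f\right\Vert _{L^{2}}$ is an equivalent norm on $H^{2}(\Omega)$ and $\left\Vert \nabla\Delta f\right\Vert _{L^{2}}+\left\Vert f\right\Vert _{L^{2}}$ on $H^{3}(\Omega)$.

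If a fully self-contained argument were required, the only delicate point is (iv): one must be careful about the normalization of the Neumann problem, since elliptic regularity controls $f$ only modulo constants --- this is precisely why the mean value is subtracted --- and one needs the elementary but crucial chain through Poincar\'{e}--Wirtinger and the integration-by-parts identity to close $\left\Vert \nabla f\right\Vert _{L^{2}}\leq C\left\Vert \Delta f\right\Vert _{L^{2}}$. All the remaining items are direct applications of inequalities already in the literature combined with the extension operator, so the proof amounts to citing Ref. \cite{Temam2001} and assembling these pieces.
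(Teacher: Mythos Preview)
The paper does not prove this lemma at all; it is stated with a citation to Temam and used as a toolbox of classical inequalities. Your sketch for items (i)--(iv) is sound and more detailed than anything the paper offers, so in that sense you have gone beyond the paper rather than diverged from it.

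There is, however, a genuine gap in your justification of (v) and (vi). You write that these are ``the standard global elliptic (Calder\'on--Zygmund) estimates \ldots\ valid because $\partial\Omega$ is smooth'' and that they make $\left\Vert \Delta f\right\Vert _{L^{2}}+\left\Vert f\right\Vert _{L^{2}}$ an equivalent norm on $H^{2}(\Omega)$. That is false for a bounded domain without boundary conditions: on the unit disk the harmonic functions $f_{n}(r,\theta)=r^{n}\cos(n\theta)$ satisfy $\Delta f_{n}=0$ while $\left\Vert f_{n}\right\Vert _{H^{2}}/\left\Vert f_{n}\right\Vert _{L^{2}}\to\infty$, so no such estimate can hold for arbitrary $f\in H^{2}(\Omega)$. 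Elliptic regularity of the type you invoke requires a boundary condition (Dirichlet or Neumann) to rule out this harmonic defect. The paper's statement of (v) and (vi) is itself imprecise on this point, but every place the paper actually uses these items the function in question satisfies the homogeneous Neumann condition $\nabla f\cdot\boldsymbol{n}=0$ on $\partial\Omega$ (it is applied to $\varphi$, $\mu$, or elements of $H_{N}^{2}(\Omega)$), and with that hypothesis the estimates follow exactly as in your treatment of (iv). Your argument for (v) and (vi) should therefore either add the Neumann hypothesis explicitly or note that the lemma as literally stated is not correct and record the intended version.
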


\begin{remark}
\label{r5.1}\emph{Putting together (iii) and (iv) of Lemma \ref{l5.2}, we
obtain }%
\begin{equation}
\left\Vert f\right\Vert _{L^{\infty}}\leq C\left\Vert f\right\Vert _{L^{2}%
}^{\frac{1}{2}}\left\Vert \Delta f\right\Vert _{L^{2}}^{\frac{1}{2}%
}\text{\emph{\ for any }}f\in H^{2}(\Omega)\text{\emph{\ with }}\nabla
f\cdot\boldsymbol{n}=0\text{\emph{\ on }}\partial\Omega\text{\emph{\ and }%
}\mathchoice {{\setbox0=\hbox{$\displaystyle{\textstyle
-}{\int}$ } \vcenter{\hbox{$\textstyle -$
}}\kern-.6\wd0}}{{\setbox0=\hbox{$\textstyle{\scriptstyle -}{\int}$ } \vcenter{\hbox{$\scriptstyle -$
}}\kern-.6\wd0}}{{\setbox0=\hbox{$\scriptstyle{\scriptscriptstyle -}{\int}$
} \vcenter{\hbox{$\scriptscriptstyle -$
}}\kern-.6\wd0}}{{\setbox0=\hbox{$\scriptscriptstyle{\scriptscriptstyle
-}{\int}$ } \vcenter{\hbox{$\scriptscriptstyle -$ }}\kern-.6\wd0}}\!\int
_{\Omega}f=0.\label{e5.0}%
\end{equation}
\emph{where }$C=C(\Omega)>0$\emph{.}
\end{remark}

Before proceeding further, let us recall the statement of (\ref{4.33}) below.
We drop the subscripts on the unknown functions and we assume without loss of
generality that $\beta=\lambda=1$. Then (\ref{4.33}) therefore reads as
follows
\begin{equation}
\left\{
\begin{array}
[c]{l}%
\boldsymbol{u}=G\boldsymbol{u}^{0}+G\ast(\boldsymbol{h}_{1}+\mu\nabla
\varphi-\nabla p)\text{ in }Q,\\
\\
\operatorname{div}\boldsymbol{u}=0\text{ in }Q\text{ and }\boldsymbol{u}%
\cdot\boldsymbol{n}=0\text{ on }(0,T)\times\partial\Omega,\\
\\
\dfrac{\partial\varphi}{\partial t}+\boldsymbol{u}\cdot\nabla\varphi-\Delta
\mu=0\text{ in }Q,\\
\\
\mu=-\Delta\varphi+f(\varphi)\text{ in }Q,\\
\\
\dfrac{\partial\varphi}{\partial\boldsymbol{n}}=\dfrac{\partial\mu}%
{\partial\boldsymbol{n}}=0\text{ on }(0,T)\times\partial\Omega,\\
\\
\varphi(0)=\varphi^{0}\text{ in }\Omega.
\end{array}
\right.  \ \ \ \ \ \ \ \ \ \ \ \ \ \ \ \ \ \ \ \ \ \ \ \ \ \ \ \ \label{5.1}%
\end{equation}
In (\ref{5.1}), $\boldsymbol{n}$ denotes the outward unit normal to
$\partial\Omega$. We know from the homogenization process that there exists at
least a quadruple $(\boldsymbol{u},\varphi,\mu,p)$ solving (\ref{5.1}) such
that $\boldsymbol{u}\in L^{2}(0,T;\mathbb{H})$, $\varphi\in L^{\infty
}(0,T;H^{1}(\Omega))$, $\mu\in L^{2}(0,T;H^{1}(\Omega))$ and $p\in
L^{2}(0,T;L_{0}^{2}(\Omega))$, where
\[
\mathbb{H}=\{\boldsymbol{u}\in L^{2}(\Omega)^{2}:\operatorname{div}%
\boldsymbol{u}=0\text{ in }\Omega\text{ and }\boldsymbol{u}\cdot
\boldsymbol{n}=0\text{ on }\partial\Omega\}.
\]
Our first goal here is to improve the regularity on $\varphi$, $\boldsymbol{u}%
$, $\mu$ and $p$. We start with the following result.

\begin{lemma}
\label{l5.3}The order parameter $\varphi$ in \emph{(\ref{5.1})} satisfies
$\varphi\in\mathcal{C}([0,T];H^{1}(\Omega))\cap L^{4}(0,T;H^{2}(\Omega))\cap
L^{2}(0,T;H^{3}(\Omega))$.
\end{lemma}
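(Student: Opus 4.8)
The plan is an elliptic bootstrap for the Cahn--Hilliard part of (\ref{5.1}), taking as input the regularity produced by the homogenization, namely $\varphi\in L^{\infty}(0,T;H^{1}(\Omega))$, $\mu\in L^{2}(0,T;H^{1}(\Omega))$, $\boldsymbol{u}\in L^{2}(0,T;\mathbb{H})$, the mass conservation $\langle\varphi(t)\rangle=\langle\varphi^{0}\rangle$ inherited from the no-flux condition on $\mu$, and the two-dimensional Agmon/Gagliardo--Nirenberg inequalities of Lemma \ref{l5.2}. The growth bounds (\ref{1.7}) on $f$ and the embeddings $H^{1}(\Omega)\hookrightarrow L^{q}(\Omega)$ ($q<\infty$) and $H^{2}(\Omega)\hookrightarrow L^{\infty}(\Omega)$ (valid in dimension two) are used throughout.

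First I would note that, for a.e. $t$, $\varphi(t)$ solves the Neumann problem $-\Delta\varphi=\mu-f(\varphi)$ with $\partial_{\boldsymbol{n}}\varphi=0$ on $\partial\Omega$. Since $\|f(\varphi(t))\|_{L^{2}}\le C(1+\|\varphi(t)\|_{L^{6}}^{3})\le C$ uniformly in $t$, classical elliptic $H^{2}$-regularity for the Neumann problem together with Lemma \ref{l5.2}(iv) (and the Poincar\'{e}--Wirtinger inequality applied to $\varphi-\langle\varphi\rangle$) gives $\|\varphi(t)\|_{H^{2}}\le C(1+\|\mu(t)\|_{L^{2}})$, which is square-integrable in time; hence $\varphi\in L^{2}(0,T;H^{2}(\Omega))$. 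Next, differentiating $\mu=-\Delta\varphi+f(\varphi)$ yields $\nabla\Delta\varphi=f'(\varphi)\nabla\varphi-\nabla\mu$; using (\ref{1.7}), $H^{1}\hookrightarrow L^{8}$ and Lemma \ref{l5.2}(i) applied to $\nabla\varphi$, together with $\varphi\in L^{\infty}(0,T;H^{1})$, one estimates
\[
\|f'(\varphi)\nabla\varphi\|_{L^{2}}\le C\bigl(1+\|\varphi\|_{L^{8}}^{2}\bigr)\|\nabla\varphi\|_{L^{4}}\le C\bigl(1+\|\varphi\|_{H^{2}}^{1/2}\bigr).
\]
Lemma \ref{l5.2}(v) then gives $\|\varphi\|_{H^{3}}\le C(1+\|\nabla\mu\|_{L^{2}}+\|\varphi\|_{H^{2}}^{1/2})$, whose right-hand side lies in $L^{2}(0,T)$ by the previous step and by $\mu\in L^{2}(0,T;H^{1})$; therefore $\varphi\in L^{2}(0,T;H^{3}(\Omega))$. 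Integrating by parts (using $\partial_{\boldsymbol{n}}\varphi=0$) gives $\|\Delta\varphi\|_{L^{2}}^{2}\le\|\nabla\varphi\|_{L^{2}}\|\nabla\Delta\varphi\|_{L^{2}}$, so by Lemma \ref{l5.2}(v),(vi) one has $\|\varphi\|_{H^{2}}\le C(\|\varphi\|_{H^{1}}^{1/2}\|\varphi\|_{H^{3}}^{1/2}+\|\varphi\|_{L^{2}})$; raising to the fourth power and integrating in time, and using $\varphi\in L^{\infty}(0,T;H^{1})\cap L^{2}(0,T;H^{3})$, yields $\varphi\in L^{4}(0,T;H^{2}(\Omega))$.

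For the continuity $\varphi\in\mathcal{C}([0,T];H^{1}(\Omega))$ I would rewrite (\ref{5.1})$_{3}$ as $\partial_{t}\varphi=\Delta\mu-\operatorname{div}(\boldsymbol{u}\varphi)$, legitimate since $\operatorname{div}\boldsymbol{u}=0$ and $\boldsymbol{u}\cdot\boldsymbol{n}=0$. For $w\in H^{3}(\Omega)$ one has $|\langle\Delta\mu,w\rangle|\le\|\nabla\mu\|_{L^{2}}\|w\|_{H^{1}}$ and, since $\nabla w\in H^{2}\hookrightarrow L^{\infty}$ in dimension two, $|\langle\operatorname{div}(\boldsymbol{u}\varphi),w\rangle|\le\|\boldsymbol{u}\|_{L^{2}}\|\varphi\|_{L^{2}}\|\nabla w\|_{L^{\infty}}\le C\|\boldsymbol{u}\|_{L^{2}}\|w\|_{H^{3}}$; hence $\partial_{t}\varphi\in L^{2}(0,T;(H^{3}(\Omega))')$, the dual being taken with respect to the $H^{1}$ pairing. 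Combining this with $\varphi\in L^{2}(0,T;H^{3}(\Omega))$ and the fact that $(H^{3},H^{1},(H^{3})')$ is a Gelfand triple, the standard lemma on the continuity of functions in $L^{2}(0,T;V)$ whose derivative lies in $L^{2}(0,T;V')$ (Lions--Magenes) gives $\varphi\in\mathcal{C}([0,T];H^{1}(\Omega))$.

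I expect the main obstacle to be the $L^{2}(0,T;H^{3})$ bound: the cubic term $f'(\varphi)\nabla\varphi$ must be controlled in $L^{2}_{x}$ by no more than a half power of $\|\varphi\|_{H^{2}}$, so that, once $\varphi\in L^{2}(0,T;H^{2})$ has been obtained, the estimate for $\|\varphi\|_{H^{3}}$ closes with no circular dependence on $\|\varphi\|_{H^{3}}$ itself. This is exactly what the two-dimensional Gagliardo--Nirenberg inequalities afford, and is the step that would fail in three space dimensions.
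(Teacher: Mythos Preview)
Your proof is correct and follows essentially the same route as the paper: elliptic regularity for the Neumann problem $-\Delta\varphi=\mu-f(\varphi)$ to reach $L^2(0,T;H^2)$ and then $L^2(0,T;H^3)$, interpolation $\|\varphi\|_{H^2}\le C\|\varphi\|_{H^1}^{1/2}\|\varphi\|_{H^3}^{1/2}$ for $L^4(0,T;H^2)$, and a time-derivative bound for $\mathcal{C}([0,T];H^1)$. The only differences are cosmetic: the paper argues $f(\varphi)\in L^2(0,T;H^1)$ via $H^2\hookrightarrow L^\infty$ rather than your explicit Gagliardo--Nirenberg estimate on $f'(\varphi)\nabla\varphi$, and it invokes $\partial_t\varphi\in L^2(0,T;(H^1)')$ directly rather than spelling out the Gelfand triple $(H^3,H^1,(H^3)')$.
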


\begin{proof}
First of all, we infer from (\ref{5.1})$_{4}$-(\ref{5.1})$_{5}$ that
$\varphi(t)$ (for a.e. $t\in(0,T)$) solves the Neumann problem
\begin{equation}
-\Delta\varphi=\mu-f(\varphi)\text{ in }\Omega\text{, }\frac{\partial\varphi
}{\partial\boldsymbol{n}}=0\text{ on }\partial\Omega.\label{6.0}%
\end{equation}
Since $\mu\in L^{2}(0,T;H^{1}(\Omega))$, we have that $\mu(t)\in H^{1}%
(\Omega)$ for a.e. $t\in(0,T)$. Next, because of (\ref{1.7}), it holds that
$f(\varphi(t))\in L^{2}(\Omega)$ for a.e. $t\in(0,T)$. Indeed, one has
\[
\int_{\Omega}\left\vert f(\varphi(t))\right\vert ^{2}dx\leq C\int_{\Omega
}(1+\left\vert \varphi(t)\right\vert ^{6})dx,
\]
so that the continuous embedding $H^{1}(\Omega)\hookrightarrow L^{6}(\Omega)$
yields $\left\Vert \varphi(t)\right\Vert _{L^{6}(\Omega)}\leq C\left\Vert
\varphi(t)\right\Vert _{H^{1}(\Omega)}$, and hence
\[
\int_{\Omega}\left\vert f(\varphi(t))\right\vert ^{2}dx\leq C+C\left\Vert
\varphi(t)\right\Vert _{H^{1}(\Omega)}^{6}.
\]
Thus $f(\varphi)\in L^{\infty}(0,T;L^{2}(\Omega))$. Therefore $\mu
(t)-f(\varphi(t))\in L^{2}(\Omega)$, a.e. $t\in(0,T)$. By a classical
regularity result, we get $\varphi(t)\in H^{2}(\Omega)$, so that $\varphi\in
L^{2}(0,T;H^{2}(\Omega))$. Next, the continuous Sobolev embedding
$H^{2}(\Omega)\hookrightarrow L^{\infty}(\Omega)$ yields $\varphi\in
L^{2}(0,T;L^{\infty}(\Omega))$, in such a way that, still from (\ref{1.7}), we
have $f(\varphi)\in L^{2}(0,T;H^{1}(\Omega))$. We infer that $\varphi\in
L^{2}(0,T;H^{3}(\Omega))$. It follows that $\varphi\in L^{\infty}%
(0,T;H^{1}(\Omega))\cap L^{2}(0,T;H^{3}(\Omega))$, and by (\ref{5.1})$_{3}$,
we have that $\partial\varphi/\partial t\in L^{2}(0,T;H^{1}(\Omega)^{\prime}%
)$; thus it comes that $\varphi\in\mathcal{C}([0,T];H^{1}(\Omega))$.

Now, noticing that since $\varphi\in\mathcal{C}([0,T];H^{1}(\Omega))\cap
L^{2}(0,T;H^{3}(\Omega))$, it follows by interpolation that, for any $q\geq
1$,
\[
\int_{0}^{T}\left\Vert \varphi(t)\right\Vert _{H^{2}}^{q}dt\leq\int_{0}%
^{T}\left\Vert \varphi(t)\right\Vert _{H^{1}}^{q/2}\left\Vert \varphi
(t)\right\Vert _{H^{3}}^{q/2}dt\leq C\int_{0}^{T}\left\Vert \varphi
(t)\right\Vert _{H^{3}}^{q/2}dt,
\]
so that if $q\leq4$, one has $\int_{0}^{T}\left\Vert \varphi(t)\right\Vert
_{H^{2}}^{q}dt\leq C$. In particular we have $\int_{0}^{T}\left\Vert
\varphi(t)\right\Vert _{H^{2}}^{4}dt\leq C$, so that $\varphi\in
L^{4}(0,T;H^{2}(\Omega))$. The proof is completed.
\end{proof}

In the sequel we shall deal with the space $H_{N}^{m}(\Omega)$ (integer
$m\geq1$) defined as
\[
H_{N}^{m}(\Omega)=\{u\in H^{m}(\Omega):\partial u/\partial\boldsymbol{n}%
=0\text{ on }\partial\Omega\}.
\]
It is known that $H_{N}^{2}(\Omega)$ is the domain of the unbounded Laplace
operator in $\Omega$ with homogeneous Neumann boundary condition. This being
so, the next result shows that the weak solution of (\ref{5.1}) is actually a
strong one, provided that $\varphi^{0}\in H_{N}^{2}(\Omega)$. It reads as follows.

\begin{proposition}
\label{p5.1}Let $\boldsymbol{u}^{0}\in\mathbb{H}$, $\varphi^{0}\in H_{N}%
^{2}(\Omega)$ and $T>0$ be given. Then the solution $(\boldsymbol{u}%
,\varphi,\mu,p)$ of \emph{(\ref{5.1})} satisfies $\boldsymbol{u}\in
\mathcal{C}([0,T];\mathbb{H})$, $\varphi\in\mathcal{C}([0,T];H^{2}%
(\Omega))\cap L^{2}(0,T;H^{4}(\Omega))\cap H^{1}(0,T;L^{2}(\Omega))$, $\mu
\in\mathcal{C}([0,T];H^{1}(\Omega))\cap L^{2}(0,T;H^{2}(\Omega))$ and $p\in
L^{2}(0,T;H^{1}(\Omega)\cap L_{0}^{2}(\Omega))$. Furthermore it holds that
\begin{equation}
\left\Vert \Delta\varphi(t)\right\Vert _{L^{2}}^{2}+\int_{0}^{t}\left(
\left\Vert \Delta^{2}\varphi(s)\right\Vert _{L^{2}}^{2}+\left\Vert
\mu(s)\right\Vert _{H^{2}}^{2}+\left\Vert \frac{\partial\varphi}{\partial
t}(s)\right\Vert _{L^{2}}^{2}\right)  ds\leq C,\label{5.16'}%
\end{equation}
all $t\in\lbrack0,T]$, where $C>0$ depends on $\left\Vert \boldsymbol{h}%
_{1}\right\Vert _{L^{2}(Q)}$, $\left\Vert \boldsymbol{u}^{0}\right\Vert
_{L^{2}(\Omega)}$, $\left\Vert \varphi^{0}\right\Vert _{H^{2}(\Omega)}$ and
$T$.
\end{proposition}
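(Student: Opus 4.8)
The plan is to upgrade the weak solution of (\ref{5.1}) to a strong one by a bootstrap argument based on higher-order energy estimates, with the memory term in (\ref{5.1})$_1$ handled through the exponential decay of $G$ from Proposition \ref{p4.2'} and the Gronwall-type inequality in Lemma \ref{l5.1}. First I would record what Lemma \ref{l5.3} already gives: $\varphi\in\mathcal{C}([0,T];H^1)\cap L^4(0,T;H^2)\cap L^2(0,T;H^3)$, $\mu\in L^2(0,T;H^1)$, and, by the mass conservation inherited from the Neumann conditions, $\langle\varphi(t)\rangle=\langle\varphi^0\rangle$ for all $t$. Then I would test the equation obtained by applying $-\Delta$ to (\ref{5.1})$_3$ against $\Delta^2\varphi$; more precisely, differentiate the basic energy identity at the level of $\|\Delta\varphi\|_{L^2}^2$. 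Using (\ref{5.1})$_3$--(\ref{5.1})$_4$ one gets
\[
\frac12\frac{d}{dt}\|\Delta\varphi(t)\|_{L^2}^2+\|\Delta^2\varphi(t)\|_{L^2}^2
=-\int_\Omega\Delta\big(\boldsymbol u\cdot\nabla\varphi\big)\Delta^2\varphi\,dx
+\int_\Omega\Delta\big(\Delta f(\varphi)\big)\Delta^2\varphi\,dx,
\]
and the second term is absorbed after using (\ref{1.7}) and the $2D$ Gagliardo--Nirenberg/Agmon inequalities of Lemma \ref{l5.2} to bound $\|\Delta f(\varphi)\|_{H^2}$ by $C(1+\|\varphi\|_{H^2}^3)\|\varphi\|_{H^4}$-type expressions with a small coefficient on $\|\Delta^2\varphi\|_{L^2}$, the remaining factor being integrable in time thanks to $\varphi\in L^4(0,T;H^2)$.

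The genuinely delicate part is the convective term $\int_\Omega\Delta(\boldsymbol u\cdot\nabla\varphi)\Delta^2\varphi\,dx$, because $\boldsymbol u$ is only in $L^2(0,T;\mathbb H)$ a priori and it is itself given nonlocally in time by (\ref{5.1})$_1$ in terms of $\mu\nabla\varphi-\nabla p$. The hard part will be closing the estimate for $\boldsymbol u$: from (\ref{5.1})$_1$, $\boldsymbol u(t)=G(t)\boldsymbol u^0+(G*(\boldsymbol h_1+\mu\nabla\varphi-\nabla p))(t)$, and since $G$ is a bounded matrix decaying exponentially (Proposition \ref{p4.2'}), Young's inequality for convolutions gives $\|\boldsymbol u(t)\|_{L^2}\le C\|\boldsymbol u^0\|_{L^2}+C\int_0^t(\|\boldsymbol h_1(s)\|_{L^2}+\|\mu(s)\nabla\varphi(s)\|_{L^2}+\|\nabla p(s)\|_{L^2})\,ds$, and similarly in higher norms. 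I would first extract the pressure: taking the divergence of (\ref{5.1})$_1$ and using $\operatorname{div}\boldsymbol u=0$ together with $\operatorname{div}(G*v)=G*\operatorname{div}v$ one finds $p$ solves an elliptic problem driven by $\mu\nabla\varphi$, hence by elliptic regularity $\|\nabla p(t)\|_{L^2}\le C\|\mu(t)\nabla\varphi(t)\|_{L^2}$ and $\|p(t)\|_{H^1}\le C\|\mu(t)\nabla\varphi(t)\|_{L^2}$ (using also $\int_\Omega p=0$); controlling $\|\mu\nabla\varphi\|_{L^2}$ via $\|\mu\|_{L^4}\|\nabla\varphi\|_{L^4}$ and Lemma \ref{l5.2} then feeds back into the bound for $\boldsymbol u$. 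The upshot is a bound of the form $\|\boldsymbol u(t)\|_{L^2}^2\le C+C\int_0^t(1+\|\varphi(s)\|_{H^3}^2)(1+\|\Delta\varphi(s)\|_{L^2}^2)\,ds$ plus analogous $H^1$-in-space control, with the double-integral structure of Lemma \ref{l5.1} appearing precisely because $\boldsymbol u$ depends on the whole history.

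After inserting these bounds into the $\|\Delta\varphi\|_{L^2}^2$ identity, integrating in time, and collecting the convective contributions (integrated by parts so that at most two derivatives fall on $\boldsymbol u$, one derivative being moved onto $\Delta^2\varphi$ via $\operatorname{div}\boldsymbol u=0$, and the rest estimated by Agmon's inequality in $2D$), I expect to arrive at an inequality
\[
\|\Delta\varphi(t)\|_{L^2}^2+\int_0^t\Big(\|\Delta^2\varphi(s)\|_{L^2}^2+\|\mu(s)\|_{H^2}^2+\|\tfrac{\partial\varphi}{\partial t}(s)\|_{L^2}^2\Big)ds
\le c_1+c_2\int_0^t\Big[v(s)y(s)+\int_0^s h(s,r)y(r)\,dr\Big]ds,
\]
with $y=\|\Delta\varphi\|_{L^2}^2$, $v\in L^1(0,T)$ (built from $\|\varphi\|_{H^3}^2$, which is in $L^1$ by Lemma \ref{l5.3}) and $h$ bounded by an $L^1$ kernel in each variable coming from the memory term. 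Lemma \ref{l5.1} then yields $\varphi\in L^\infty(0,T;H_N^2(\Omega))$ and $\Delta^2\varphi\in L^2(0,T;L^2)$, whence $\varphi\in L^2(0,T;H^4(\Omega))$ by Lemma \ref{l5.2}(iv),(vi); reading (\ref{5.1})$_3$ gives $\partial\varphi/\partial t\in L^2(0,T;L^2)$, hence $\varphi\in\mathcal{C}([0,T];H^2(\Omega))\cap H^1(0,T;L^2(\Omega))$; then $\mu=-\Delta\varphi+f(\varphi)$ together with (\ref{1.7}) and the improved regularity of $\varphi$ gives $\mu\in\mathcal{C}([0,T];H^1(\Omega))\cap L^2(0,T;H^2(\Omega))$; elliptic regularity for $p$ gives $p\in L^2(0,T;H^1(\Omega)\cap L_0^2(\Omega))$; and finally the convolution formula (\ref{5.1})$_1$ with $G$ continuous and the newly-bounded right-hand side in $L^2(0,T;\mathbb H)$ gives $\boldsymbol u\in\mathcal{C}([0,T];\mathbb H)$. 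The quantitative bound (\ref{5.16'}), with the stated dependence of $C$ on $\|\boldsymbol h_1\|_{L^2(Q)}$, $\|\boldsymbol u^0\|_{L^2(\Omega)}$, $\|\varphi^0\|_{H^2(\Omega)}$ and $T$, is exactly what the Gronwall step produces, since $c_1$ there is controlled by the initial data and $\int_0^T(v+\int_0^\cdot h)$ by $T$ and the data through Lemma \ref{l5.3}.
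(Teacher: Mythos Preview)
Your overall strategy---test (\ref{5.1})$_{3}$ against $\Delta^{2}\varphi$, feed in the convolution structure of $\boldsymbol u$, and close with the nonlocal Gronwall inequality of Lemma~\ref{l5.1}---is exactly the one the paper uses in its Step~3. Two concrete points, however, need correction.

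First, the displayed energy identity is not the right one: testing (\ref{5.1})$_{3}$ directly against $\Delta^{2}\varphi$ (and using $\mu=-\Delta\varphi+f(\varphi)$) gives
\[
\tfrac12\tfrac{d}{dt}\|\Delta\varphi\|_{L^{2}}^{2}+\|\Delta^{2}\varphi\|_{L^{2}}^{2}
=(\Delta f(\varphi),\Delta^{2}\varphi)+(\boldsymbol u\cdot\nabla\varphi,\Delta^{2}\varphi),
\]
with no extra Laplacians on the right. Your version would require control of $\|\Delta f(\varphi)\|_{H^{2}}$ and of $\Delta(\boldsymbol u\cdot\nabla\varphi)$, which is circular at this stage. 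With the correct identity you only need $\|\Delta f(\varphi)\|_{L^{2}}$, which is bounded by $C(1+\|\Delta\varphi\|_{L^{2}}^{2})(1+\|\Delta\varphi\|_{L^{2}}^{4})$ via (\ref{1.7}) and Lemma~\ref{l5.2}, and $\|\boldsymbol u\|_{L^{2}}\|\nabla\varphi\|_{L^{\infty}}$ for the convective term.

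Second, and more importantly, your pressure step is not right: taking the divergence of (\ref{5.1})$_{1}$ does \emph{not} give an elliptic equation for $p(t)$, because the convolution does not cancel. What satisfies a pointwise-in-time Neumann problem is $q=G\ast p$, namely
\[
-\Delta q+\operatorname{div}\bigl(G\boldsymbol u^{0}+G\ast(\boldsymbol h_{1}+\mu\nabla\varphi)\bigr)=0\ \text{in }\Omega,\qquad \nabla q\cdot\boldsymbol n=(\cdots)\cdot\boldsymbol n\ \text{on }\partial\Omega,
\]
which yields $\|\nabla q(t)\|_{L^{2}}\le \|\boldsymbol g(t)\|_{L^{2}}+\|G\ast\mu\nabla\varphi(t)\|_{L^{2}}$ with $\boldsymbol g=G\boldsymbol u^{0}+G\ast\boldsymbol h_{1}$. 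Since $\boldsymbol u=\boldsymbol g+G\ast\mu\nabla\varphi-\nabla q$, this already gives $\|\boldsymbol u(t)\|_{L^{2}}^{2}\le C(\|\boldsymbol g(t)\|_{L^{2}}^{2}+\|G\ast\mu\nabla\varphi(t)\|_{L^{2}}^{2})$, and the second term is bounded by $C\int_{0}^{t}\|\mu(\tau)\|_{L^{2}}^{2}\|\nabla\varphi(\tau)\|_{L^{\infty}}^{2}\,d\tau$ thanks to the exponential decay of $G$. This is precisely where the double-integral structure enters the Gronwall step, and it is how the paper proceeds. Your claimed estimate $\|\nabla p(t)\|_{L^{2}}\le C\|\mu(t)\nabla\varphi(t)\|_{L^{2}}$ is false in general for this memory problem; the paper recovers $p\in L^{2}(0,T;H^{1})$ separately, via the Laplace transform applied to the divergence of (\ref{5.1})$_{1}$.

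One small difference worth noting: you derive $\mu\in\mathcal C([0,T];H^{1})\cap L^{2}(0,T;H^{2})$ a posteriori from $\mu=-\Delta\varphi+f(\varphi)$ once $\varphi\in\mathcal C([0,T];H^{2})\cap L^{2}(0,T;H^{4})$ is known; the paper instead devotes its Step~2 to writing and solving a fourth-order parabolic equation (\ref{E6.1}) for $\mu$. Your route is simpler and works just as well here.
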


\begin{proof}
The proof is done in three steps.

\emph{Step 1}. It is a fact from the definition of $\boldsymbol{u}$ in
(\ref{5.1})$_{1}$ that it belongs to $\mathcal{C}([0,T];\mathbb{H})$ (recall
that $G$ is continuous). Let us check that the pressure $p$ lies in
$L^{2}(0,T;H^{1}(\Omega))$. In order to do that, we need to establish an
estimate on the term $\mu\nabla\varphi$. We first recall that from Lemma
\ref{l5.3}, it holds that
\begin{equation}
\int_{0}^{T}\left\Vert \varphi(t)\right\Vert _{H^{2}}^{4}dt\leq
C.\ \ \ \ \ \ \ \ \ \ \ \ \ \ \ \label{5.16}%
\end{equation}

Now, concerning $\mu\nabla\varphi$, we have, for any $v\in L^{8/3}%
(0,T;\mathbb{H})$,
\[
\int_{\Omega}\mu\nabla\varphi\cdot vdx=\int_{\partial\Omega}(v\cdot
\boldsymbol{n})\varphi\mu d\sigma-\int_{\Omega}\varphi v\nabla\mu
dx=-\int_{\Omega}\varphi v\nabla\mu dx,
\]
so that
\begin{align*}
\left\vert \left\langle \mu\nabla\varphi,v\right\rangle \right\vert  &
=\left\vert \int_{\Omega}\varphi v\nabla\mu dx\right\vert \leq\left\Vert
v\right\Vert _{L^{2}}\left\Vert \nabla\mu\right\Vert _{L^{2}}\left\Vert
\varphi\right\Vert _{L^{\infty}}\\
& \leq\left\Vert v\right\Vert _{L^{2}}\left\Vert \nabla\mu\right\Vert _{L^{2}%
}\left\Vert \varphi\right\Vert _{L^{2}}^{1/2}\left\Vert \varphi\right\Vert
_{H^{2}}^{1/2}\text{ by Agmon's inequality}\\
& \leq C\left\Vert v\right\Vert _{L^{2}}\left\Vert \nabla\mu\right\Vert
_{L^{2}}\left\Vert \varphi\right\Vert _{H^{2}}^{1/2}.
\end{align*}
Making use of (\ref{5.16}), we get
\begin{align*}
\left\vert \int_{0}^{T}\left\langle \mu\nabla\varphi,v\right\rangle
dt\right\vert  & \leq C\left(  \int_{0}^{T}\left\Vert v\right\Vert _{L^{2}%
}^{8/3}dt\right)  ^{3/8}\left(  \int_{0}^{T}\left\Vert \nabla\mu\right\Vert
_{L^{2}}^{2}dt\right)  ^{1/2}\left(  \int_{0}^{T}\left\Vert \varphi\right\Vert
_{H^{2}}^{4}\right)  ^{1/8}\\
& \leq C\left(  \int_{0}^{T}\left\Vert v\right\Vert _{L^{2}}^{8/3}dt\right)
^{3/8}.
\end{align*}
This gives
\begin{equation}
\mu\nabla\varphi\in L^{8/5}(0,T;\mathbb{H}^{\prime}%
).\ \ \ \ \ \ \ \ \ \ \ \ \ \ \ \ \ \ \ \ \ \ \ \ \ \ \ \ \ \ \label{5.17}%
\end{equation}
Owing to (\ref{5.17}) and thanks to the fact that $\boldsymbol{h}_{1}\in
L^{2}(0,T;L^{2}(\Omega)^{2})$, we infer that $\boldsymbol{h}_{1}+\mu
\nabla\varphi\in L^{8/5}(0,T;L^{2}(\Omega)^{2})$. Also $G(t)\boldsymbol{u}%
^{0}\in L^{8/5}(0,T;L^{2}(\Omega)^{2})$. At this level, we proceed as in
Ref. \cite{Mikelic} by using the Laplace transform, which is well defined in
$\mathcal{D}_{+}^{\prime}((0,\infty);L_{0}^{2}(\Omega))$ (see for instance 
Ref. \cite{Vladimirov}, p.p. 158-170): we apply it to (\ref{5.1})$_{1}$ and
(\ref{5.1})$_{2}$ to obtain the following equation
\begin{equation}
\left\{
\begin{array}
[c]{l}%
\operatorname{div}\left(  \widehat{G}(\tau)(\widehat{\boldsymbol{h}}_{1}%
(\tau)+\widehat{\mu\nabla\varphi}(\tau)-\nabla\widehat{p}(\tau))+\widehat
{G}(\tau)\boldsymbol{u}^{0}\right)  =0\text{ in }\Omega,\\
\left(  \widehat{G}(\tau)(\widehat{\boldsymbol{h}}_{1}(\tau)+\widehat
{\mu\nabla\varphi}(\tau)-\nabla\widehat{p}(\tau))+\widehat{G}(\tau
)\boldsymbol{u}^{0}\right)  \cdot\boldsymbol{n}=0\text{ on }\partial\Omega.
\end{array}
\right. \label{5.18}%
\end{equation}
In (\ref{5.18}) the hat $\widehat{}$ stands for the Laplace transform which is
a function of variable $\tau$. We recall that $\widehat{G}(\tau)$ is an
analytic function of $\tau\in\mathbb{C}$ (the complex field) for
$\operatorname{Re}\tau>0$. Also, as $G$ is a symmetric positive definite
$(d-1)\times(d-1)$ matrix, so is $\widehat{G}(\tau)$. Now, since, for any
$\tau\in\mathbb{C}$ with $\operatorname{Re}\tau>0$, the functions $\widehat
{G}(\tau)(\widehat{\boldsymbol{h}}_{1}+\widehat{\mu\nabla\varphi})(\tau)$ and
$\widehat{G}(\tau)\boldsymbol{u}^{0}$ belong to $L^{2}(\Omega)^{2}$, we get
that (\ref{5.18}) possesses a unique solution $\widehat{p}(\tau)$ in
$H^{1}(\Omega)$ for such $\tau$. Therefore $p\in L^{2}(0,T;H^{1}(\Omega)\cap
L_{0}^{2}(\Omega))$.

With the existence of the pressure $p$ as above, let us first estimate
$\left\Vert G\ast p\right\Vert _{L^{2}}$ in terms of the other unknowns. Set
$q=G\ast p$, $\boldsymbol{g}=G\boldsymbol{u}^{0}+G\ast\boldsymbol{h}_{1}$.
Then (\ref{5.1})$_{1}$ and (\ref{5.1})$_{2}$ amount to
\begin{equation}
\left\{
\begin{array}
[c]{l}%
-\Delta q+\operatorname{div}(\boldsymbol{g}+G\ast\mu\nabla\varphi)=0\text{ in
}\Omega\\
\nabla q\cdot\boldsymbol{n}=\left(  \boldsymbol{g}+G\ast\mu\nabla
\varphi\right)  \cdot\boldsymbol{n}\text{ on }\partial\Omega\text{ and }%
\int_{\Omega}qdx=0.
\end{array}
\right. \label{5.2'}%
\end{equation}
We multiply (\ref{5.2'})$_{1}$ by $q$ and integrate by parts to obtain
\[
\left\Vert \nabla q\right\Vert _{L^{2}}^{2}\leq\left\Vert \boldsymbol{g}%
+G\ast\mu\nabla\varphi\right\Vert _{L^{2}}\left\Vert \nabla q\right\Vert
_{L^{2}},
\]
so that
\begin{equation}
\left\Vert \nabla q\right\Vert _{L^{2}}\leq\left\Vert \boldsymbol{g}%
\right\Vert _{L^{2}}+\left\Vert G\ast\mu\nabla\varphi\right\Vert _{L^{2}%
}.\ \ \ \ \ \ \ \ \ \ \ \ \ \ \ \ \ \ \ \ \ \ \ \ \ \ \ \ \ \ \label{5.3'}%
\end{equation}
Now, noticing that $\boldsymbol{u}=\boldsymbol{g}+G\ast\mu\nabla\varphi-\nabla
q$, we see that
\[
\left\vert \boldsymbol{u}\right\vert ^{2}\leq2(\left\vert \boldsymbol{g}%
\right\vert ^{2}+\left\vert G\ast\mu\nabla\varphi\right\vert ^{2}+\left\vert
\nabla q\right\vert ^{2}).
\]

\medskip

\emph{Step 2}. We need to check that $\mu\in\mathcal{C}([0,T];H_{N}^{1}%
(\Omega))\cap L^{2}(0,T;H_{N}^{2}(\Omega))$. To that end, we notice that the
evolution of the potential is governed by the equation (\ref{6.1}) below
\begin{equation}
\frac{\partial\mu}{\partial t}+\Delta^{2}\mu-f^{\prime}(\varphi)\Delta
\mu=-f^{\prime}(\varphi)(\boldsymbol{u}\cdot\nabla\varphi)+\Delta
(\boldsymbol{u}\cdot\nabla\varphi)\text{ in }Q\text{.}\label{6.1}%
\end{equation}
This is obtained by differentiating (in the sense of distributions in $Q$)
formally (\ref{5.1})$_{4}$ with respect to time and taking advantage of
(\ref{5.1})$_{3}$. Letting $H=-f^{\prime}(\varphi)(\boldsymbol{u}\cdot
\nabla\varphi)+\Delta(\boldsymbol{u}\cdot\nabla\varphi)$, it is an easy task,
using the series of equalities
\begin{align*}
\left\langle \Delta(\boldsymbol{u}\cdot\nabla\varphi),\phi\right\rangle  &
=\int_{\Omega}(\boldsymbol{u}\cdot\nabla\varphi)\Delta\phi dx-\int
_{\partial\Omega}\left[  \phi\frac{\partial}{\partial\boldsymbol{n}%
}(\boldsymbol{u}\cdot\nabla\varphi)-(\boldsymbol{u}\cdot\nabla\varphi
)\frac{\partial\phi}{\partial\boldsymbol{n}}\right]  d\sigma\\
& =\int_{\Omega}(\boldsymbol{u}\cdot\nabla\varphi)\Delta\phi dx\text{ for all
}\phi\in H_{N}^{2}(\Omega)
\end{align*}
(recall that $\frac{\partial}{\partial\boldsymbol{n}}(\boldsymbol{u}%
\cdot\nabla\varphi)=\frac{\partial}{\partial\boldsymbol{n}}\left(
-\frac{\partial\varphi}{\partial t}+\Delta\mu\right)  =0$ on $\partial\Omega$)
to see that $H\in L^{2}(0,T;(H_{N}^{2}(\Omega)^{\prime})$. With this in mind,
we observe that $\mu$ solves the equation
\begin{equation}
\left\{
\begin{array}
[c]{l}%
\dfrac{\partial\mu}{\partial t}+\Delta^{2}\mu-f^{\prime}(\varphi)\Delta
\mu=-f^{\prime}(\varphi)(\boldsymbol{u}\cdot\nabla\varphi)+\Delta
(\boldsymbol{u}\cdot\nabla\varphi)\text{ in }Q,\\
\\
\dfrac{\partial\mu}{\partial\boldsymbol{n}}=\dfrac{\partial\Delta\mu}%
{\partial\boldsymbol{n}}=0\text{ on }(0,T)\times\partial\Omega,\\
\\
\mu(0)=\mu^{0}\text{ in }\Omega,
\end{array}
\right. \label{E6.1}%
\end{equation}
where $\mu^{0}=-\Delta\varphi^{0}+f(\varphi^{0})\in L^{2}(\Omega)$ (remind
that $\varphi^{0}\in H_{N}^{2}(\Omega)$). Our aim is to show that (\ref{E6.1})
possesses a unique solution $\mu\in L^{\infty}(0,T;L^{2}(\Omega))\cap
\mathcal{C}([0,T];H_{N}^{1}(\Omega))\cap L^{2}(0,T;H_{N}^{2}(\Omega))$. To
achieve this, we set
\[
\mathcal{B}(u,v)=\int_{\Omega}\left[  (\Delta u)(\Delta v)-f^{\prime}%
(\varphi)(\Delta u)v\right]  dx\text{ for }u,v\in H_{N}^{2}(\Omega).
\]
Then, using the obvious inequality
\[
\left\vert \int_{\Omega}f^{\prime}(\varphi)(\Delta v)vdx\right\vert \leq
\frac{1}{4}\left\Vert \Delta v\right\Vert _{L^{2}(\Omega)}^{2}+\left\Vert
f^{\prime}(\varphi)\right\Vert _{L^{\infty}(Q)}^{2}\left\Vert v\right\Vert
_{L^{2}(\Omega)}^{2},
\]
we get that
\begin{align}
\mathcal{B}(v,v)+\left(  \frac{3}{4}+\left\Vert f^{\prime}(\varphi)\right\Vert
_{L^{\infty}(Q)}^{2}\right)  \left\Vert v\right\Vert _{L^{2}(\Omega)}^{2}  &
\geq\frac{3}{4}\left(  \left\Vert \Delta v\right\Vert _{L^{2}(\Omega)}%
^{2}+\left\Vert v\right\Vert _{L^{2}(\Omega)}^{2}\right) \label{E6.2}\\
& =\frac{3}{4}\left\Vert v\right\Vert _{H_{N}^{2}(\Omega)}^{2}\text{ fot all
}v\in H_{N}^{2}(\Omega),\nonumber
\end{align}
where we have used parts (iii) and (vi) of Lemma \ref{l5.2} to get
respectively that $\frac{3}{4}+\left\Vert f^{\prime}(\varphi)\right\Vert
_{L^{\infty}(Q)}^{2}<\infty$ and the equality of the right-hand side of
(\ref{E6.2}).

It follows from a classical existence result that (\ref{E6.1}) possesses a
unique solution $\mu\in L^{\infty}(0,T;L^{2}(\Omega))\cap L^{2}(0,T;H_{N}%
^{2}(\Omega))$. We also infer from (\ref{E6.1})$_{1}$ that
\[
\left\Vert \frac{\partial\mu}{\partial t}\right\Vert _{L^{2}(0,T,(H_{N}%
^{2}(\Omega))^{\prime})}\leq
C.\ \ \ \ \ \ \ \ \ \ \ \ \ \ \ \ \ \ \ \ \ \ \ \ \ \ \ \ \ \ \ \
\]

This shows that $\mu\in L^{2}(0,T,H_{N}^{2}(\Omega))$ with $\partial
\mu/\partial t\in L^{2}(0,T,(H_{N}^{2}(\Omega))^{\prime})$. Thus $\mu
\in\mathcal{C}([0,T];H_{N}^{1}(\Omega))$ by a classical embedding result.

\medskip

\emph{Step 3}. Let us check (\ref{5.16'}). With Step 2 in mind, if we go back
to (\ref{6.0}) then we notice that assuming there $\varphi^{0}\in H^{2}%
(\Omega)$ gives easily (with the properties of $f$) $\mu-f(\varphi)\in
L^{2}(0,T;H^{2}(\Omega))$, so that, by a classical regularity result, it holds
that $\varphi\in L^{2}(0,T;H^{4}(\Omega))$. This being so, we multiply
(\ref{5.1})$_{3}$ by $\Delta^{2}\varphi$ and use integration by parts to get
\begin{align*}
\frac{1}{2}\frac{d}{dt}\left\Vert \Delta\varphi\right\Vert _{L^{2}}%
^{2}+\left\Vert \Delta^{2}\varphi\right\Vert _{L^{2}}^{2}  & =(\Delta
f(\varphi),\Delta^{2}\varphi)+(\boldsymbol{u}\nabla\varphi,\Delta^{2}%
\varphi)\\
& \leq\frac{1}{4}\left\Vert \Delta^{2}\varphi\right\Vert _{L^{2}}^{2}%
+3\int_{\Omega}(\left\vert \Delta f(\varphi)\right\vert ^{2}+\left\vert
\boldsymbol{u}\right\vert ^{2}\left\vert \nabla\varphi\right\vert ^{2})dx.
\end{align*}

First, we have
\begin{align*}
3\int_{\Omega}\left\vert \boldsymbol{u}\right\vert ^{2}\left\vert
\nabla\varphi\right\vert ^{2}dx  & \leq C\int_{\Omega}(\left\vert
\boldsymbol{g}\right\vert ^{2}+\left\vert G\ast\mu\nabla\varphi\right\vert
^{2}+\left\vert \nabla q\right\vert ^{2})\left\vert \nabla\varphi\right\vert
^{2}dx\\
& \leq C\left(  \left\Vert \boldsymbol{g}\right\Vert _{L^{2}}^{2}+\left\Vert
G\ast\mu\nabla\varphi\right\Vert _{L^{2}}^{2}+\left\Vert \nabla q\right\Vert
_{L^{2}}^{2}\right)  \left\Vert \nabla\varphi\right\Vert _{L^{\infty}}^{2}\\
& \leq C\left(  \left\Vert \boldsymbol{g}\right\Vert _{L^{2}}^{2}+\left\Vert
G\ast\mu\nabla\varphi\right\Vert _{L^{2}}^{2}\right)  \left\Vert \nabla
\varphi\right\Vert _{L^{\infty}}^{2}\text{ by (\ref{5.3'}).}%
\end{align*}
But
\begin{align*}
\left\Vert G\ast\mu\nabla\varphi\right\Vert _{L^{2}}^{2}  & =\int_{\Omega
}\left\vert G\ast\mu\nabla\varphi\right\vert ^{2}dx=\int_{\Omega}\left\vert
\int_{0}^{t}G(t-\tau)\mu(\tau)\nabla\varphi(\tau)d\tau\right\vert ^{2}dx\\
& \leq\int_{\Omega}\left(  \left[  \int_{0}^{t}\left\vert G(t-\tau)\right\vert
^{2}d\tau\right]  ^{\frac{1}{2}}\left[  \int_{0}^{t}\left\vert \mu
(\tau)\right\vert ^{2}\left\vert \nabla\varphi(\tau)\right\vert ^{2}%
d\tau\right]  ^{\frac{1}{2}}\right)  ^{2}dx\\
& \leq\int_{0}^{\infty}\left\vert G(\tau)\right\vert ^{2}d\tau\int_{0}%
^{t}\left\Vert \mu(\tau)\right\Vert _{L^{2}}^{2}\left\Vert \nabla\varphi
(\tau)\right\Vert _{L^{\infty}}^{2}d\tau\\
& \leq C\int_{0}^{t}\left\Vert \mu(\tau)\right\Vert _{L^{2}}^{2}\left\Vert
\nabla\varphi(\tau)\right\Vert _{L^{\infty}}^{2}d\tau.
\end{align*}
Now, we use Agmon's inequality for $\nabla\varphi$ to obtain
\begin{align*}
\left\Vert \mu(\tau)\right\Vert _{L^{2}}^{2}\left\Vert \nabla\varphi
(\tau)\right\Vert _{L^{\infty}}^{2}  & \leq C\left\Vert \varphi(\tau
)\right\Vert _{H^{1}}\left\Vert \varphi(\tau)\right\Vert _{H^{3}}\left\Vert
\mu(\tau)\right\Vert _{L^{2}}^{2}\\
& \leq C(1+\left\Vert \nabla\Delta\varphi(\tau)\right\Vert _{L^{2}%
})(1+\left\Vert \Delta\varphi(\tau)\right\Vert _{L^{2}}^{2}),
\end{align*}
where we have taken advantage of the estimate $\left\Vert \varphi
(\tau)\right\Vert _{H^{1}}\leq C$ for all $\tau\in\lbrack0,T]$, so that
\[
\left\Vert G\ast\mu\nabla\varphi\right\Vert _{L^{2}}^{2}\leq C\int_{0}%
^{t}(1+\left\Vert \nabla\Delta\varphi(\tau)\right\Vert _{L^{2}})(1+\left\Vert
\Delta\varphi(\tau)\right\Vert _{L^{2}}^{2})d\tau.
\]
Therefore
\begin{align}
3\int_{\Omega}\left\vert \boldsymbol{u}\right\vert ^{2}\left\vert
\nabla\varphi\right\vert ^{2}dx  & \leq C\left\Vert \boldsymbol{g}\right\Vert
_{L^{2}}^{2}(1+\left\Vert \nabla\Delta\varphi(t)\right\Vert _{L^{2}%
})\label{e6.1}\\
& +C\int_{0}^{t}(1+\left\Vert \nabla\Delta\varphi(t)\right\Vert _{L^{2}%
})(1+\left\Vert \nabla\Delta\varphi(\tau)\right\Vert _{L^{2}})(1+\left\Vert
\Delta\varphi(\tau)\right\Vert _{L^{2}}^{2})d\tau.\nonumber
\end{align}

As for $\int_{\Omega}\left\vert \Delta f(\varphi)\right\vert ^{2}dx$, we have
$\Delta f(\varphi)=f^{\prime}(\varphi)\Delta\varphi+f^{\prime\prime}%
(\varphi)\left\vert \nabla\varphi\right\vert ^{2}$, and so, using
(\ref{1.7}),
\begin{align*}
\left\Vert \Delta f(\varphi)\right\Vert _{L^{2}}  & \leq\left\Vert f^{\prime
}(\varphi)\Delta\varphi\right\Vert _{L^{2}}+\left\Vert f^{\prime\prime
}(\varphi)\left\vert \nabla\varphi\right\vert ^{2}\right\Vert _{L^{2}}\\
& \leq C(1+\left\Vert \varphi\right\Vert _{L^{\infty}}^{2})\left\Vert
\Delta\varphi\right\Vert _{L^{2}}+C(1+\left\Vert \varphi\right\Vert
_{L^{\infty}})\left\Vert \nabla\varphi\right\Vert _{L^{4}}^{2}\\
& \leq C(1+\left\Vert \Delta\varphi\right\Vert _{L^{2}})\left\Vert
\Delta\varphi\right\Vert _{L^{2}}+C(1+\left\Vert \Delta\varphi\right\Vert
_{L^{2}}^{\frac{1}{2}})\left\Vert \nabla\varphi\right\Vert _{H^{1}}^{2}\\
& \leq C(1+\left\Vert \Delta\varphi\right\Vert _{L^{2}})\left\Vert
\Delta\varphi\right\Vert _{L^{2}}+C(1+\left\Vert \Delta\varphi\right\Vert
_{L^{2}}^{\frac{1}{2}})\left\Vert \varphi\right\Vert _{H^{2}}^{2}\\
& \leq C(1+\left\Vert \Delta\varphi\right\Vert _{L^{2}})\left\Vert
\Delta\varphi\right\Vert _{L^{2}}+C(1+\left\Vert \Delta\varphi\right\Vert
_{L^{2}}^{\frac{1}{2}})(1+\left\Vert \Delta\varphi\right\Vert _{L^{2}}^{2}).
\end{align*}
Thus,
\begin{align}
\left\Vert \Delta f(\varphi)\right\Vert _{L^{2}}^{2}  & \leq C(1+\left\Vert
\Delta\varphi\right\Vert _{L^{2}}^{2})(1+\left\Vert \Delta\varphi\right\Vert
_{L^{2}}+\left\Vert \Delta\varphi\right\Vert _{L^{2}}^{2}+\left\Vert
\Delta\varphi\right\Vert _{L^{2}}^{3})\label{e5.10}\\
& \leq C(1+\left\Vert \Delta\varphi\right\Vert _{L^{2}}^{2})(1+\left\Vert
\Delta\varphi\right\Vert _{L^{2}}^{4}),\nonumber
\end{align}
that is, using the fact that $\left\Vert \Delta\varphi\right\Vert _{L^{2}}%
^{2}\leq\left\Vert \nabla\varphi\right\Vert _{L^{2}}\left\Vert \nabla
\Delta\varphi\right\Vert _{L^{2}}$ (recall that $\nabla\varphi\cdot
\boldsymbol{n}=0$ on $\partial\Omega$),
\[
\left\Vert \Delta f(\varphi)\right\Vert _{L^{2}}^{2}\leq C(1+\left\Vert
\nabla\Delta\varphi\right\Vert _{L^{2}}^{2})(1+\left\Vert \Delta
\varphi\right\Vert _{L^{2}}^{2}).
\]
It follows immediately that
\begin{align*}
& \frac{1}{2}\frac{d}{dt}\left\Vert \Delta\varphi\right\Vert _{L^{2}}%
^{2}+\frac{3}{4}\left\Vert \Delta^{2}\varphi\right\Vert _{L^{2}}^{2}\\
& \leq C(1+\left\Vert \nabla\Delta\varphi\right\Vert _{L^{2}}^{2}%
)(1+\left\Vert \Delta\varphi\right\Vert _{L^{2}}^{2})+C\left\Vert
\boldsymbol{g}\right\Vert _{L^{2}}^{2}(1+\left\Vert \nabla\Delta
\varphi\right\Vert _{L^{2}})\\
& +C\int_{0}^{t}(1+\left\Vert \nabla\Delta\varphi(t)\right\Vert _{L^{2}%
})(1+\left\Vert \nabla\Delta\varphi(\tau)\right\Vert _{L^{2}})(1+\left\Vert
\Delta\varphi(\tau)\right\Vert _{L^{2}}^{2})d\tau,
\end{align*}
or, integrating over $(0,t)$,
\begin{align}
& \left\Vert \Delta\varphi(t)\right\Vert _{L^{2}}^{2}+\int_{0}^{t}\left\Vert
\Delta^{2}\varphi(s)\right\Vert _{L^{2}}^{2}ds\label{*10}\\
& \leq\left\Vert \Delta\varphi^{0}\right\Vert _{L^{2}}^{2}+C\int_{0}%
^{t}\left\Vert \boldsymbol{g}(s)\right\Vert _{L^{2}}^{2}(1+\left\Vert
\nabla\Delta\varphi(s)\right\Vert _{L^{2}})ds\nonumber\\
& +C\int_{0}^{t}(1+\left\Vert \nabla\Delta\varphi(s)\right\Vert _{L^{2}}%
^{2})(1+\left\Vert \Delta\varphi(s)\right\Vert _{L^{2}}^{2})ds\nonumber\\
& +C\int_{0}^{t}\left(  \int_{0}^{s}(1+\left\Vert \nabla\Delta\varphi
(s)\right\Vert _{L^{2}})(1+\left\Vert \nabla\Delta\varphi(\tau)\right\Vert
_{L^{2}})(1+\left\Vert \Delta\varphi(\tau)\right\Vert _{L^{2}}^{2}%
)d\tau\right)  ds.\nonumber
\end{align}
Set
\begin{align*}
x(t)  & =1+\left\Vert \Delta\varphi(t)\right\Vert _{L^{2}}^{2},\\
a_{0}  & =1+\left\Vert \Delta\varphi^{0}\right\Vert _{L^{2}}^{2}+C\int_{0}%
^{T}\left\Vert \boldsymbol{g}(s)\right\Vert _{L^{2}}^{2}(1+\left\Vert
\nabla\Delta\varphi(s)\right\Vert _{L^{2}})ds,\\
a_{1}(t)  & =1+\left\Vert \nabla\Delta\varphi(t)\right\Vert _{L^{2}}^{2},\\
a_{2}(t,s)  & =C(1+\left\Vert \nabla\Delta\varphi(t)\right\Vert _{L^{2}%
})(1+\left\Vert \nabla\Delta\varphi(s)\right\Vert _{L^{2}}).
\end{align*}
Then (\ref{*10}) yields
\[
x(t)\leq a_{0}+\int_{0}^{t}\left(  a_{1}(s)x(s)+\int_{0}^{s}a_{2}%
(s,\tau)x(\tau)d\tau\right)  ds,\ \ t\in\lbrack0,T].
\]
Since $a_{0}<\infty$, and the functions $a_{1}$ and $a_{2}$ are integrable on
$[0,T]$ and $[0,T]^{2}$ respectively, Lemma \ref{l5.1} entails
\[
x(t)\leq a_{0}\exp\left[  \int_{0}^{T}\left(  a_{1}(s)+\int_{0}^{T}%
a_{2}(s,\tau)d\tau\right)  ds\right]  \leq C\text{, all }t\in\lbrack
0,T]\text{.}%
\]
We infer that $\varphi\in L^{\infty}(0,T;H^{2}(\Omega))$, and from
(\ref{*10}), that
\begin{equation}
\int_{0}^{T}\left\Vert \Delta^{2}\varphi(s)\right\Vert _{L^{2}}^{2}ds\leq
C,\ \ \ \ \ \ \ \ \ \ \ \ \ \ \ \ \ \ \ \ \ \ \ \ \ \ \ \ \ \ \ \ \ \ \ \ \label{e5.11}%
\end{equation}
so that $\varphi\in L^{2}(0,T;H^{4}(\Omega))$.

Next, we have
\begin{align*}
\int_{0}^{T}\left\Vert \mu(t)\right\Vert _{H^{2}}^{2}dt  & \leq C\int_{0}%
^{T}\left(  \left\Vert \Delta\mu(t)\right\Vert _{L^{2}}^{2}+\left\Vert
\mu(t)\right\Vert _{L^{2}}^{2}\right)  dt\\
& \leq C\int_{0}^{T}\left(  \left\Vert \Delta^{2}\varphi(t)\right\Vert
_{L^{2}}^{2}+\left\Vert \Delta f(\varphi(t))\right\Vert _{L^{2}}%
^{2}+\left\Vert \mu(t)\right\Vert _{L^{2}}^{2}\right)  dt.
\end{align*}
From (\ref{e5.10}), we have
\begin{align*}
\left\Vert \Delta f(\varphi)(t)\right\Vert _{L^{2}}^{2}  & \leq C(1+\left\Vert
\Delta\varphi(t)\right\Vert _{L^{2}}^{2})(1+\left\Vert \Delta\varphi
(t)\right\Vert _{L^{2}}+\left\Vert \Delta\varphi(t)\right\Vert _{L^{2}}%
^{2}+\left\Vert \Delta\varphi(t)\right\Vert _{L^{2}}^{3})\\
& \leq C\text{ for a.e. }t\in\lbrack0,T],
\end{align*}
where $C$ in the last inequality above is independent of $t$, so that,
appealing to (\ref{e5.11}), we conclude that
\[
\int_{0}^{T}\left\Vert \mu(t)\right\Vert _{H^{2}}^{2}dt\leq
C.\ \ \ \ \ \ \ \ \ \ \ \ \ \ \ \ \ \ \ \ \ \ \ \ \ \ \ \ \ \ \ \ \ \ \ \ \ \ \ \ \ \ \ \ \ \ \ \ \ \ \ \ \ \ \ \ \ \ \ \ \
\]
Finally, concerning $\partial\varphi/\partial t$, we have
\[
\frac{\partial\varphi}{\partial t}=-\boldsymbol{u}\cdot\nabla\varphi+\Delta
\mu\text{ in }Q.
\]
Thus,
\begin{align*}
\left\Vert \frac{\partial\varphi}{\partial t}\right\Vert _{L^{2}}  &
\leq\left\Vert \boldsymbol{u}\cdot\nabla\varphi\right\Vert _{L^{2}}+\left\Vert
\Delta\mu\right\Vert _{L^{2}} \leq\left\Vert \boldsymbol{u}\right\Vert _{L^{2}}\left\Vert \nabla
\varphi\right\Vert _{L^{2}}+\left\Vert \Delta\mu\right\Vert _{L^{2}}\\
& \leq C\left\Vert \boldsymbol{u}\right\Vert _{L^{2}}+\left\Vert \Delta
\mu\right\Vert _{L^{2}},
\end{align*}
where $C$ in the last inequality above is independent of $t$. Hence
\[
\int_{0}^{T}\left\Vert \frac{\partial\varphi}{\partial t}(t)\right\Vert
_{L^{2}}^{2}dt\leq C\int_{0}^{T}\left(  \left\Vert \boldsymbol{u}%
(t)\right\Vert _{L^{2}}^{2}+\left\Vert \Delta\mu(t)\right\Vert _{L^{2}}%
^{2}\right)  dt\leq C.
\]
We have shown that $\varphi$ belongs to $L^{\infty}(0,T;H^{2}(\Omega))\cap
L^{2}(0,T;H^{4}(\Omega))$ and is such that $\partial\varphi/\partial t\in
L^{2}(0,T;L^{2}(\Omega))$. This yields $\varphi\in\mathcal{C}([0,T];H^{2}%
(\Omega))$. Finally the fact that $\mu\in\mathcal{C}([0,T];L^{2}(\Omega))$ is
an easy consequence of the definition of $\mu$ together with the properties
$\varphi\in\mathcal{C}([0,T];H^{2}(\Omega))$ and $f(\varphi)\in\mathcal{C}%
([0,T];L^{2}(\Omega))$. This completes the proof.
\end{proof}

We are now able to prove the uniqueness of the solution to (\ref{5.1}).

\begin{theorem}
\label{t5.1}Let $(\boldsymbol{u},\varphi,\mu,p)$ be a solution of
\emph{(\ref{5.1})}. If further $\varphi^{0}\in H_{N}^{2}(\Omega)$, then
$(\boldsymbol{u},\varphi,\mu,p)$ is the unique solution of Problem
\emph{(\ref{5.1})}.
\end{theorem}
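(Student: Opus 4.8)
Let $(\boldsymbol{u}_1,\varphi_1,\mu_1,p_1)$ and $(\boldsymbol{u}_2,\varphi_2,\mu_2,p_2)$ be two solutions of \eqref{5.1} with the same data, and set $\boldsymbol{u}=\boldsymbol{u}_1-\boldsymbol{u}_2$, $\varphi=\varphi_1-\varphi_2$, $\mu=\mu_1-\mu_2$, $p=p_1-p_2$. Since $\varphi^{0}\in H_N^{2}(\Omega)$, Proposition \ref{p5.1} applies to both solutions, so all the quantities in \eqref{5.16'} are available; in particular $\varphi_i\in\mathcal C([0,T];H^{2}(\Omega))\cap L^{2}(0,T;H^{4}(\Omega))\cap H^{1}(0,T;L^{2}(\Omega))$, $\mu_i\in L^{2}(0,T;H^{2}(\Omega))$ and $\boldsymbol{u}_i\in\mathcal C([0,T];\mathbb H)$. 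The differences satisfy $\varphi(0)=0$, $\mu=-\Delta\varphi+\bigl(f(\varphi_1)-f(\varphi_2)\bigr)$, $\partial_t\varphi+\boldsymbol{u}_1\!\cdot\!\nabla\varphi_1-\boldsymbol{u}_2\!\cdot\!\nabla\varphi_2-\Delta\mu=0$, together with the Neumann conditions, while subtracting \eqref{5.1}$_1$ (the $G\boldsymbol{u}^0$ and $G*\boldsymbol h_1$ terms cancelling) gives $\boldsymbol{u}=G*(\mu_1\nabla\varphi_1-\mu_2\nabla\varphi_2)-\nabla(G*p)$ with $\operatorname{div}\boldsymbol{u}=0$ and $\boldsymbol{u}\cdot\boldsymbol n=0$. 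Note also that mass is conserved for $\varphi$ (the flux terms vanish), so $\int_\Omega\varphi(t)\,dx=0$ for all $t$. I would take the inner product of the equation for $\varphi$ with $\varphi$ itself; using the boundary conditions and $\operatorname{div}\boldsymbol{u}_i=0$, $\boldsymbol{u}_i\cdot\boldsymbol n=0$ one is led (writing $\boldsymbol{u}_1\!\cdot\!\nabla\varphi_1-\boldsymbol{u}_2\!\cdot\!\nabla\varphi_2=\boldsymbol{u}\!\cdot\!\nabla\varphi_1+\boldsymbol{u}_2\!\cdot\!\nabla\varphi$ and integrating the last term to zero) to
\[
\tfrac12\tfrac{d}{dt}\|\varphi(t)\|_{L^{2}}^{2}+\|\Delta\varphi(t)\|_{L^{2}}^{2}
=\int_{\Omega}\bigl(f(\varphi_1)-f(\varphi_2)\bigr)\Delta\varphi\,dx+\int_{\Omega}\varphi_1\,\boldsymbol{u}\cdot\nabla\varphi\,dx,
\]
so that $\|\Delta\varphi\|_{L^{2}}^{2}\simeq\|\varphi\|_{H^{2}}^{2}$ (recall $\varphi$ has zero mean and zero normal derivative) provides the dissipation.

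\textbf{Estimating the right-hand side.} For the potential term I would use \eqref{1.7} and $\varphi_i\in\mathcal C([0,T];H^{2})\hookrightarrow\mathcal C([0,T];L^{\infty})$ to get $\|f(\varphi_1)-f(\varphi_2)\|_{L^{2}}\le C\|\varphi\|_{L^{2}}$, hence $\bigl|\int(f(\varphi_1)-f(\varphi_2))\Delta\varphi\bigr|\le\epsilon\|\Delta\varphi\|_{L^{2}}^{2}+C_\epsilon\|\varphi\|_{L^{2}}^{2}$. For the convective term, using $\|\varphi_1\|_{L^{\infty}}\le C$ and $\|\nabla\varphi\|_{L^{2}}\le\|\varphi\|_{L^{2}}^{1/2}\|\Delta\varphi\|_{L^{2}}^{1/2}$,
\[
\Bigl|\int_{\Omega}\varphi_1\,\boldsymbol{u}\cdot\nabla\varphi\,dx\Bigr|\le C\|\boldsymbol{u}(t)\|_{L^{2}}\|\varphi(t)\|_{L^{2}}^{1/2}\|\Delta\varphi(t)\|_{L^{2}}^{1/2}\le\epsilon\|\Delta\varphi(t)\|_{L^{2}}^{2}+C_\epsilon\|\boldsymbol{u}(t)\|_{L^{2}}^{4/3}\|\varphi(t)\|_{L^{2}}^{2/3}.
\]
The crucial step is to control $\boldsymbol{u}$. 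Testing the identity $\boldsymbol{u}=G*(\mu_1\nabla\varphi_1-\mu_2\nabla\varphi_2)-\nabla(G*p)$ against $\boldsymbol{u}$ and integrating the gradient term by parts (it vanishes since $\boldsymbol{u}$ is solenoidal and tangential) gives $\|\boldsymbol{u}(t)\|_{L^{2}}\le\|(G*\boldsymbol{v})(t)\|_{L^{2}}\le\int_0^t|G(t-s)|_{\mathrm{op}}\|\boldsymbol{v}(s)\|_{L^{2}}\,ds$ with $\boldsymbol{v}=\mu\nabla\varphi_1+\mu_2\nabla\varphi$. Writing $\mu=-\Delta\varphi+(f(\varphi_1)-f(\varphi_2))$ and using $\|\nabla\varphi_1(s)\|_{L^{\infty}}=:c_1(s)\in L^{2}(0,T)$ (from $\varphi_1\in L^{2}(0,T;H^{4})$, $d=2$) and $\|\mu_2(s)\|_{L^{\infty}}=:c_2(s)\in L^{2}(0,T)$ (from $\mu_2\in L^{2}(0,T;H^{2})$), one gets $\|\boldsymbol{v}(s)\|_{L^{2}}\le C\sigma(s)\bigl(\|\Delta\varphi(s)\|_{L^{2}}+\|\varphi(s)\|_{L^{2}}\bigr)$ with $\sigma=c_1+c_2\in L^{2}(0,T)$; since $G$ is bounded with finite $L^{1}(0,\infty)$ norm (exponential decay, Proposition \ref{p4.2'}), Cauchy--Schwarz then yields $\|\boldsymbol{u}(t)\|_{L^{2}}^{2}\le C\int_0^t\bigl(\|\Delta\varphi(s)\|_{L^{2}}^{2}+\|\varphi(s)\|_{L^{2}}^{2}\bigr)\,ds$.

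\textbf{Closing the estimate and conclusion.} Inserting this bound into the energy identity, applying Young's inequality (and using that $\|\varphi(t)\|_{L^{2}}^{2}$ is uniformly bounded to reduce $\|\varphi\|_{L^{2}}^{4}$ to $\|\varphi\|_{L^{2}}^{2}$), and choosing $\epsilon$ small, I obtain with $y(t)=\|\varphi(t)\|_{L^{2}}^{2}$
\[
\tfrac12 y'(t)+(1-2\epsilon)\|\Delta\varphi(t)\|_{L^{2}}^{2}\le C_1\,y(t)+C_2\int_0^t\bigl(\|\Delta\varphi(s)\|_{L^{2}}^{2}+y(s)\bigr)\,ds .
\]
Integrating over $(0,t)$ and using $y(0)=0$, the double time integral is $\le T\int_0^t(\|\Delta\varphi\|_{L^{2}}^{2}+y)\,ds$; on a short interval $[0,T_1]$ with $T_1$ small enough that $1-2\epsilon-C_2T_1>0$, the $\int\|\Delta\varphi\|_{L^{2}}^{2}$ contribution is absorbed by the dissipation and Gronwall's inequality forces $y\equiv0$ there (alternatively, one may keep the double integral and invoke Lemma \ref{l5.1}). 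Partitioning $[0,T]$ into finitely many such intervals and iterating — on each step $\varphi\equiv0$ on the previous interval forces $\mu_1=\mu_2$ and hence $\boldsymbol v\equiv0$ there, so the memory integral for $\boldsymbol{u}$ effectively restarts — gives $\varphi_1=\varphi_2$ on all of $[0,T]$. Then $\mu=-\Delta\varphi+(f(\varphi_1)-f(\varphi_2))=0$, whence $\boldsymbol{u}=G*0=0$, i.e.\ $\boldsymbol{u}_1=\boldsymbol{u}_2$; finally $\nabla(G*p)=0$ together with $\int_\Omega p\,dx=0$ gives $G*p=0$, and since $G(0)=I$ (because $\omega^j(0)=e_j$) the Volterra operator of the first kind with kernel $G$ is injective (differentiate to reduce to a second-kind equation $p=-(G'*p)$ and apply Gronwall), so $p_1=p_2$. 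The main obstacle is precisely the control of the velocity $\boldsymbol{u}$: it depends on the whole history through $G*(\mu\nabla\varphi_1+\mu_2\nabla\varphi)$ and on $\Delta\varphi$ through $\mu$, so the estimate does not close at the level of the dissipation alone; it is made to work only by exploiting the strong regularity of Proposition \ref{p5.1} ($\varphi_i\in L^{2}(0,T;H^{4})$, $\mu_i\in L^{2}(0,T;H^{2})$), the bounded, exponentially decaying kernel $G$, and the double-integral Gronwall argument.
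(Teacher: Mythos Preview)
Your proof is correct and takes a genuinely more economical route than the paper's. The paper works simultaneously with $\|\varphi\|_{L^2}^2$ and $\|\nabla\varphi\|_{L^2}^2$: it tests \eqref{5.21} with $\psi=\varphi$ and $\psi=\mu$, tests \eqref{5.22} with $\phi=\mu$ and $\phi=\Delta\varphi$, and tests \eqref{5.22'} with $v=\boldsymbol u$, producing terms $I_1,\dots,I_4$ and $J_1,J_2,J_3$ that must be estimated separately (including a Neumann problem for $G\ast p$ to control $I_3$); the resulting double-integral inequality is then closed via Lemma~\ref{l5.1}. Your approach collapses all of this to a single $L^2$ energy identity with $\|\Delta\varphi\|_{L^2}^2$ as dissipation, by substituting $\mu=-\Delta\varphi+(f(\varphi_1)-f(\varphi_2))$ directly. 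What makes your shortcut work is a more aggressive use of the regularity from Proposition~\ref{p5.1}: you invoke $\|\nabla\varphi_1(\cdot)\|_{L^\infty}\in L^2(0,T)$ (from $\varphi_1\in L^2(0,T;H^4)$) and $\|\mu_2(\cdot)\|_{L^\infty}\in L^2(0,T)$ (from $\mu_2\in L^2(0,T;H^2)$) to bound $\|\boldsymbol v(s)\|_{L^2}$ in terms of $\|\Delta\varphi(s)\|_{L^2}+\|\varphi(s)\|_{L^2}$, whereas the paper stays at the $H^1/H^2$ level and carries $\|\mu\|_{H^1}$ as an independent unknown in its Gronwall functional. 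Both end with a double-integral Gronwall structure (your short-time-plus-iteration is equivalent to Lemma~\ref{l5.1}). For the pressure, the paper uses the Laplace transform and positive definiteness of $\widehat G(\tau)$, while you use $G(0)=I$ and a Volterra argument; both are valid, though yours tacitly needs $G'\in L^1_{\mathrm{loc}}$, which does hold but is not stated in the paper. A minor point: your Young split $\|\boldsymbol u\|_{L^2}^{4/3}\|\varphi\|_{L^2}^{2/3}$ is unnecessarily awkward; splitting $C\|\boldsymbol u\|_{L^2}\|\nabla\varphi\|_{L^2}\le \|\boldsymbol u\|_{L^2}^2+C\|\nabla\varphi\|_{L^2}^2$ and then $\|\nabla\varphi\|_{L^2}^2\le\epsilon\|\Delta\varphi\|_{L^2}^2+C_\epsilon\|\varphi\|_{L^2}^2$ closes cleanly without invoking the uniform bound on $\|\varphi\|_{L^2}$.
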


\begin{proof}
The existence of the solution is obtained through the homogenization process,
and some of its properties are obtained in Lemma \ref{l5.3} and in Proposition
\ref{p5.1}. Our aim here is to check the uniqueness of the solution of
(\ref{5.1}). Let $(\boldsymbol{u}_{1},\varphi_{1},\mu_{1},p_{1})$ and
$(\boldsymbol{u}_{2},\varphi_{2},\mu_{2},p_{2})$ be two solutions of
(\ref{5.1}) on the same interval $(0,T)$ having the same initial condition. We
set $\boldsymbol{u}=\boldsymbol{u}_{1}-\boldsymbol{u}_{2}$, $\varphi
=\varphi_{1}-\varphi_{2}$, $\mu=\mu_{1}-\mu_{2}$ and $p=p_{1}-p_{2}$. Then the
quadruple $(\boldsymbol{u},\varphi,\mu,p)$ satisfies
\begin{equation}
\left\{
\begin{array}
[c]{l}%
\boldsymbol{u}=G\ast(\mu\nabla\varphi_{1}+\mu_{2}\nabla\varphi-\nabla p)\\
\\
\operatorname{div}\boldsymbol{u}=0\\
\\
\dfrac{\partial\varphi}{\partial t}+\boldsymbol{u}\nabla\varphi_{1}%
+\boldsymbol{u}_{2}\nabla\varphi-\Delta\mu=0\\
\\
\mu=-\Delta\varphi+f(\varphi_{1})-f(\varphi_{2})\\
\\
\dfrac{\partial\varphi}{\partial\boldsymbol{n}}=\dfrac{\partial\mu}%
{\partial\boldsymbol{n}}=\boldsymbol{u}\cdot\boldsymbol{n}=0\\
\\
\varphi(0)=\varphi_{1}(0)-\varphi_{2}(0)=0.
\end{array}
\right.  \ \ \ \ \ \ \ \ \ \ \ \ \ \ \ \ \ \ \ \ \ \ \ \ \ \ \ \ \label{5.19}%
\end{equation}
We consider the variational form of (\ref{5.19}) and we get, for a.e.
$t\in(0,T)$,
\begin{equation}
\left\langle \frac{\partial\varphi}{\partial t},\psi\right\rangle +(\nabla
\mu,\nabla\psi)=(\boldsymbol{u}\varphi_{1},\nabla\psi)+(\boldsymbol{u}%
_{2}\varphi,\nabla\psi)\text{\ }\forall\psi\in H^{1}(\Omega)\text{ with }%
\frac{\partial\psi}{\partial\boldsymbol{n}}=0\text{ on }\partial
\Omega,\label{5.21}%
\end{equation}%
\begin{equation}
(\mu,\phi)-(\nabla\varphi,\nabla\phi)-(f(\varphi_{1})-f(\varphi_{2}%
),\phi)=0\ \ \forall\phi\in H^{1}(\Omega),\label{5.22}%
\end{equation}%
\begin{equation}
(\boldsymbol{u},v)=(G\ast\mu\nabla\varphi_{1},v)+(G\ast\mu_{2}\nabla
\varphi,v)\ \forall v\in\mathbb{H},\label{5.22'}%
\end{equation}
where, to get (\ref{5.22'}), we used the equality $(G\ast\nabla p,v)=0$ since
$(G\ast\nabla p,v)=-(G\ast p,\operatorname{div}v)=0$ as $\operatorname{div}%
v=0$. Choosing $\psi=1$ in (\ref{5.21}) we readily get $\left\langle
\varphi(t)\right\rangle =\varphi(0)=0$ $\forall t\in\lbrack0,T]$, where
$\left\langle \varphi(t)\right\rangle
=\mathchoice {{\setbox0=\hbox{$\displaystyle{\textstyle
-}{\int}$ } \vcenter{\hbox{$\textstyle -$
}}\kern-.6\wd0}}{{\setbox0=\hbox{$\textstyle{\scriptstyle -}{\int}$ } \vcenter{\hbox{$\scriptstyle -$
}}\kern-.6\wd0}}{{\setbox0=\hbox{$\scriptstyle{\scriptscriptstyle -}{\int}$
} \vcenter{\hbox{$\scriptscriptstyle -$
}}\kern-.6\wd0}}{{\setbox0=\hbox{$\scriptscriptstyle{\scriptscriptstyle
-}{\int}$ } \vcenter{\hbox{$\scriptscriptstyle -$ }}\kern-.6\wd0}}\!\int
_{\Omega}\varphi(t,x)dx$. Therefore, owing to the Poincar\'{e}-Wirtinger
inequality, $\left\Vert \varphi(t)\right\Vert _{H^{1}}\sim\left\Vert
\nabla\varphi(t)\right\Vert _{L^{2}}$. With this in mind, we choose the test
functions $\psi=\varphi$ in (\ref{5.21}) and $\phi=\mu$ in (\ref{5.22}), next
adding the resulting equalities, we obtain
\begin{equation}
\frac{1}{2}\frac{d}{dt}\left\Vert \varphi(t)\right\Vert _{L^{2}}%
^{2}+\left\Vert \mu\right\Vert _{L^{2}}^{2}=-(\boldsymbol{u}\nabla\varphi
_{1},\varphi)+(f(\varphi_{1})-f(\varphi_{2}),\mu)=0.\label{5.23}%
\end{equation}
We recall that to obtain (\ref{5.23}), we used the obvious equalities
$(\boldsymbol{u}_{2},\nabla(\varphi^{2}))=-\left\langle \operatorname{div}%
\boldsymbol{u}_{2},\varphi^{2}\right\rangle =0$ and $(\boldsymbol{u}%
,\nabla(\varphi\varphi_{1}))=-\left\langle \operatorname{div}\boldsymbol{u}%
,\varphi\varphi_{1}\right\rangle =0$. Now, we use (\ref{e5.0}) (in Remark
\ref{r5.1}) to get
\begin{align*}
\frac{d}{dt}\left\Vert \varphi\right\Vert _{L^{2}}^{2}+2\left\Vert
\mu\right\Vert _{L^{2}}^{2}  & \leq2\left\Vert \boldsymbol{u}\right\Vert
_{L^{2}}\left\Vert \nabla\varphi_{1}\right\Vert _{L^{2}}\left\Vert
\varphi\right\Vert _{L^{\infty}}+2\left\Vert f(\varphi_{1})-f(\varphi
_{2})\right\Vert _{L^{2}}\left\Vert \mu\right\Vert _{L^{2}}\\
& \leq\frac{1}{4}\left\Vert \boldsymbol{u}\right\Vert _{L^{2}}^{2}+C\left\Vert
\nabla\varphi_{1}\right\Vert _{L^{2}}^{2}\left\Vert \Delta\varphi\right\Vert
_{L^{2}}\left\Vert \varphi\right\Vert _{L^{2}}\\
& +C(1+\left\Vert \varphi_{1}\right\Vert _{L^{\infty}}^{2}+\left\Vert
\varphi_{2}\right\Vert _{L^{\infty}}^{2})\left\Vert \varphi\right\Vert
_{L^{2}}\left\Vert \mu\right\Vert _{L^{2}}\\
& \leq\frac{1}{4}\left\Vert \boldsymbol{u}\right\Vert _{L^{2}}^{2}+\frac
{1}{16}\left\Vert \Delta\varphi\right\Vert _{L^{2}}^{2}+C\left\Vert
\nabla\varphi_{1}\right\Vert _{L^{2}}^{4}\left\Vert \varphi\right\Vert
_{L^{2}}^{2}+\left\Vert \mu\right\Vert _{L^{2}}^{2}\\
& +C(1+\left\Vert \varphi_{1}\right\Vert _{L^{\infty}}^{4}+\left\Vert
\varphi_{2}\right\Vert _{L^{\infty}}^{4})\left\Vert \varphi\right\Vert
_{L^{2}}^{2}.
\end{align*}
Thus,
\begin{align}
\frac{d}{dt}\left\Vert \varphi\right\Vert _{L^{2}}^{2}+\left\Vert
\mu\right\Vert _{L^{2}}^{2}  & \leq\frac{1}{4}\left\Vert \boldsymbol{u}%
\right\Vert _{L^{2}}^{2}+\frac{1}{16}\left\Vert \Delta\varphi\right\Vert
_{L^{2}}^{2}+\label{5.24}\\
& +C\left(  1+\left\Vert \varphi_{1}\right\Vert _{L^{\infty}}^{4}+\left\Vert
\varphi_{2}\right\Vert _{L^{\infty}}^{4}+\left\Vert \nabla\varphi
_{1}\right\Vert _{L^{2}}^{4}\right)  \left\Vert \varphi\right\Vert _{L^{2}%
}^{2}.\nonumber
\end{align}
We consider once again (\ref{5.22}) and take there $\phi=\Delta\varphi$; then
\[
\left\Vert \Delta\varphi\right\Vert _{L^{2}}^{2}=-\left(  \mu,\Delta
\varphi\right)  +\left(  f(\varphi_{1})-f(\varphi_{2}),\Delta\varphi\right)  .
\]
The use of the Young inequality in the last equality above gives
\[
\left\Vert \Delta\varphi\right\Vert _{L^{2}}^{2}\leq\frac{1}{4}\left\Vert
\Delta\varphi\right\Vert _{L^{2}}^{2}+\left\Vert \mu\right\Vert _{L^{2}}%
^{2}+\frac{1}{4}\left\Vert \Delta\varphi\right\Vert _{L^{2}}^{2}+C\left(
1+\left\Vert \varphi_{1}\right\Vert _{L^{\infty}}^{4}+\left\Vert \varphi
_{2}\right\Vert _{L^{\infty}}^{4}\right)  \left\Vert \varphi\right\Vert
_{L^{2}}^{2},
\]
that is,
\begin{equation}
\left\Vert \Delta\varphi\right\Vert _{L^{2}}^{2}\leq2\left\Vert \mu\right\Vert
_{L^{2}}^{2}+C\left(  1+\left\Vert \varphi_{1}\right\Vert _{L^{\infty}}%
^{4}+\left\Vert \varphi_{2}\right\Vert _{L^{\infty}}^{4}\right)  \left\Vert
\varphi\right\Vert _{L^{2}}^{2}.\label{5.25}%
\end{equation}
Now, in (\ref{5.22'}) we take $v=\boldsymbol{u}$; then
\begin{equation}
\left\Vert \boldsymbol{u}\right\Vert _{L^{2}}^{2}=(G\ast\mu\nabla\varphi
_{1},\boldsymbol{u})+(G\ast\mu_{2}\nabla\varphi,\boldsymbol{u}),\label{5.26'}%
\end{equation}
and next, we take $\psi=\mu$ in (\ref{5.21}), and since $\mu(t)\in
H^{1}(\Omega)$ for a.e. $t\in\lbrack0,T]$, we consider the well defined
expression $-\left\langle \frac{\partial\varphi}{\partial t},\mu\right\rangle
$, and we obtain in (\ref{5.22})
\begin{equation}
\left\langle \frac{\partial\varphi}{\partial t},\mu\right\rangle +\left\Vert
\nabla\mu\right\Vert _{L^{2}}^{2}=(\varphi_{1}\boldsymbol{u}+\varphi
\boldsymbol{u}_{2},\nabla\mu),\label{5.27}%
\end{equation}
and
\begin{equation}
-\left\langle \frac{\partial\varphi}{\partial t},\mu\right\rangle +\frac{1}%
{2}\frac{d}{dt}\left\Vert \nabla\varphi\right\Vert _{L^{2}}^{2}+\left\langle
\frac{\partial\varphi}{\partial t},f(\varphi_{1})-f(\varphi_{2})\right\rangle
=0.\label{5.28}%
\end{equation}
We add (\ref{5.26'}), (\ref{5.27}) and (\ref{5.28}), and we obtain
\begin{equation}%
\begin{array}
[c]{l}%
\left\Vert \boldsymbol{u}\right\Vert _{L^{2}}^{2}+\left\Vert \nabla
\mu\right\Vert _{L^{2}}^{2}+\dfrac{1}{2}\dfrac{d}{dt}\left\Vert \nabla
\varphi\right\Vert _{L^{2}}^{2}+\left\langle \dfrac{\partial\varphi}{\partial
t},f(\varphi_{1})-f(\varphi_{2})\right\rangle \\
\ \ =(G\ast\mu\nabla\varphi_{1},\boldsymbol{u})+(G\ast\mu_{2}\nabla
\varphi,\boldsymbol{u})-(\mu\nabla\varphi_{1},\boldsymbol{u})-(\boldsymbol{u}%
_{2}\nabla\varphi,\mu)\\
\ \ \ =I_{1}+I_{2}+I_{3}+I_{4}.
\end{array}
\label{5.29}%
\end{equation}
Let us bound from above each $I_{i}$. Starting from $I_{1}$, one has
\begin{equation}
\left\vert I_{1}\right\vert \leq\frac{1}{4}\left\Vert \boldsymbol{u}%
\right\Vert _{L^{2}}^{2}+C\int_{0}^{t}\left\Vert \nabla\varphi_{1}%
(\tau)\right\Vert _{L^{4}}^{2}\left\Vert \mu(\tau)\right\Vert _{H^{1}}%
^{2}d\tau,\label{5.30'}%
\end{equation}
where to get (\ref{5.30'}), we have used the Sobolev embedding $H^{1}%
\hookrightarrow L^{4}$. Concerning $I_{2}$, we use the Gagliardo-Nirenberg
inequality (see (i) in Lemma \ref{l5.2}) associated to the continuous
embedding $H^{1}\hookrightarrow L^{4}$ to get%
\begin{align}
\left\vert I_{2}\right\vert  & \leq\int_{0}^{t}\int_{\Omega}\left\vert
G(t-\tau)\right\vert \left\vert \mu_{2}(\tau)\right\vert \left\vert
\nabla\varphi(\tau)\right\vert \left\vert \boldsymbol{u}(t)\right\vert
d\tau\label{5.31'}\\
& \leq C\int_{0}^{t}\left\Vert \mu_{2}(\tau)\right\Vert _{L^{4}}\left\Vert
\nabla\varphi(\tau)\right\Vert _{L^{4}}\left\Vert \boldsymbol{u}(t)\right\Vert
_{L^{2}}d\tau\nonumber\\
& \leq C\int_{0}^{t}\left\Vert \boldsymbol{u}(t)\right\Vert _{L^{2}}\left\Vert
\mu_{2}(\tau)\right\Vert _{H^{1}}\left(  \left\Vert \Delta\varphi
(\tau)\right\Vert _{L^{2}}^{\frac{1}{2}}\left\Vert \nabla\varphi
(\tau)\right\Vert _{L^{2}}^{\frac{1}{2}}+\left\Vert \nabla\varphi
(\tau)\right\Vert _{L^{2}}\right)  d\tau\nonumber\\
& \leq\frac{1}{4}\left\Vert \boldsymbol{u}(t)\right\Vert _{L^{2}}^{2}+\int
_{0}^{t}\left[  \frac{1}{16}\left\Vert \Delta\varphi(\tau)\right\Vert _{L^{2}%
}^{2}+C\left(  \left\Vert \mu_{2}(\tau)\right\Vert _{H^{1}}^{4}+\left\Vert
\mu_{2}(\tau)\right\Vert _{H^{1}}^{2}\right)  \left\Vert \nabla\varphi
(\tau)\right\Vert _{L^{2}}^{2}\right]  d\tau.\nonumber
\end{align}
As for $I_{3}$, noting that $\boldsymbol{u}=G\ast(\mu\nabla\varphi_{1}+\mu
_{2}\nabla\varphi-\nabla p)$, we have
\begin{align*}
\left\vert I_{3}\right\vert  & \leq\int_{\Omega}\left\vert \mu\right\vert
\left\vert \nabla\varphi_{1}\right\vert \left\vert \boldsymbol{u}\right\vert
dx\\
& \leq\int_{\Omega}\left\vert \mu\right\vert \left\vert \nabla\varphi
_{1}\right\vert \left\vert G\ast\mu\nabla\varphi_{1}\right\vert dx+\int
_{\Omega}\left\vert \mu\right\vert \left\vert \nabla\varphi_{1}\right\vert
\left\vert G\ast\mu_{2}\nabla\varphi\right\vert dx+\int_{\Omega}\left\vert
\mu\right\vert \left\vert \nabla\varphi_{1}\right\vert \left\vert G\ast\nabla
p\right\vert dx\\
& \leq\left\Vert \mu\right\Vert _{L^{4}}\left\Vert \nabla\varphi
_{1}\right\Vert _{L^{4}}\left(  \left\Vert G\ast\mu\nabla\varphi
_{1}\right\Vert _{L^{2}}+\left\Vert G\ast\mu_{2}\nabla\varphi\right\Vert
_{L^{2}}+\left\Vert G\ast\nabla p\right\Vert _{L^{2}}\right)  .
\end{align*}
But, as $\boldsymbol{u}$ is defined by (\ref{5.19}) it holds that $q=G\ast p$
solves the Neumann problem
\[
\left\{
\begin{array}
[c]{c}%
-\Delta q+\operatorname{div}(G\ast(\mu\nabla\varphi_{1}+\mu_{2}\nabla
\varphi))=0\text{ in }\Omega,\\
\nabla q\cdot\boldsymbol{n}=(G\ast(\mu\nabla\varphi_{1}+\mu_{2}\nabla
\varphi))\cdot\boldsymbol{n}\text{ on }\partial\Omega,
\end{array}
\right.
\]
so that
\begin{equation}
\left\Vert \nabla q\right\Vert _{L^{2}}\leq\left\Vert G\ast\mu\nabla
\varphi_{1}\right\Vert _{L^{2}}+\left\Vert G\ast\mu_{2}\nabla\varphi
\right\Vert _{L^{2}}.\label{5.20}%
\end{equation}
We deduce from (\ref{5.20}) that
\begin{align*}
\left\vert I_{3}\right\vert  & \leq2\left\Vert \mu\right\Vert _{L^{4}%
}\left\Vert \nabla\varphi_{1}\right\Vert _{L^{4}}\left(  \left\Vert G\ast
\mu\nabla\varphi_{1}\right\Vert _{L^{2}}+\left\Vert G\ast\mu_{2}\nabla
\varphi\right\Vert _{L^{2}}\right) \\
& \leq C\left\Vert \mu\right\Vert _{H^{1}}\left\Vert \nabla\varphi
_{1}\right\Vert _{H^{1}}\left(  \left\Vert G\ast\mu\nabla\varphi
_{1}\right\Vert _{L^{2}}+\left\Vert G\ast\mu_{2}\nabla\varphi\right\Vert
_{L^{2}}\right) \\
& \leq\frac{1}{4}\left\Vert \mu\right\Vert _{H^{1}}^{2}+C\left\Vert
\varphi_{1}\right\Vert _{H^{2}}^{2}\left(  \left\Vert G\ast\mu\nabla
\varphi_{1}\right\Vert _{L^{2}}^{2}+\left\Vert G\ast\mu_{2}\nabla
\varphi\right\Vert _{L^{2}}^{2}\right)  .
\end{align*}
But
\begin{align*}
\left\Vert G\ast\mu\nabla\varphi_{1}\right\Vert _{L^{2}}^{2}  & =\int_{\Omega
}\left\vert \int_{0}^{t}G(t-\tau)\mu(\tau)\nabla\varphi_{1}(\tau
)d\tau\right\vert ^{2}dx\\
& \leq C\int_{0}^{t}\left(  \int_{\Omega}\left\vert \mu(\tau)\right\vert
^{2}\left\vert \nabla\varphi_{1}(\tau)\right\vert ^{2}dx\right)  d\tau\\
& \leq C\int_{0}^{t}\left(  \int_{\Omega}\left\vert \mu(\tau)\right\vert
^{4}dx\right)  ^{\frac{1}{2}}\left(  \int_{\Omega}\left\vert \nabla\varphi
_{1}(\tau)\right\vert ^{4}dx\right)  ^{\frac{1}{2}}d\tau\\
& \leq C\int_{0}^{t}\left\Vert \mu(\tau)\right\Vert _{L^{4}}^{2}\left\Vert
\nabla\varphi_{1}(\tau)\right\Vert _{L^{4}}^{2}d\tau\\
& \leq C\int_{0}^{t}\left\Vert \mu(\tau)\right\Vert _{H^{1}}^{2}\left\Vert
\varphi_{1}(\tau)\right\Vert _{H^{2}}^{2}d\tau.
\end{align*}
Also it holds that
\[
\left\Vert G\ast\mu_{2}\nabla\varphi\right\Vert _{L^{2}}^{2}\leq C\int_{0}%
^{t}\left\Vert \mu_{2}(\tau)\right\Vert _{L^{2}}^{2}\left\Vert \nabla
\varphi(\tau)\right\Vert _{L^{2}}^{2}d\tau.
\]
We are therefore led to
\[
\left\vert I_{3}\right\vert \leq\frac{1}{4}\left\Vert \mu\right\Vert _{H^{1}%
}^{2}+C\left\Vert \varphi_{1}\right\Vert _{H^{2}}^{2}\left(  \int_{0}%
^{t}\left\Vert \mu(\tau)\right\Vert _{H^{1}}^{2}\left\Vert \varphi_{1}%
(\tau)\right\Vert _{H^{2}}^{2}d\tau+\int_{0}^{t}\left\Vert \mu_{2}%
(\tau)\right\Vert _{L^{2}}^{2}\left\Vert \nabla\varphi(\tau)\right\Vert
_{L^{2}}^{2}d\tau\right)  .
\]
Finally, dealing with $I_{4}$, one has
\begin{align*}
\left\vert I_{4}\right\vert  & \leq\left\Vert \boldsymbol{u}_{2}\right\Vert
_{L^{2}}\left\Vert \nabla\varphi\right\Vert _{L^{4}}\left\Vert \mu\right\Vert
_{L^{4}}\\
& \leq C\left\Vert \boldsymbol{u}_{2}\right\Vert _{L^{2}}\left(  \left\Vert
\nabla\varphi\right\Vert _{L^{2}}^{\frac{1}{2}}\left\Vert \Delta
\varphi\right\Vert _{L^{2}}^{\frac{1}{2}}+\left\Vert \nabla\varphi\right\Vert
_{L^{2}}\right)  \left\Vert \mu\right\Vert _{H^{1}}\\
& \leq C\left\Vert \boldsymbol{u}_{2}\right\Vert _{L^{2}}\left\Vert
\nabla\varphi\right\Vert _{L^{2}}^{\frac{1}{2}}\left\Vert \Delta
\varphi\right\Vert _{L^{2}}^{\frac{1}{2}}\left\Vert \mu\right\Vert _{H^{1}%
}+\left\Vert \boldsymbol{u}_{2}\right\Vert _{L^{2}}\left\Vert \nabla
\varphi\right\Vert _{L^{2}}\left\Vert \mu\right\Vert _{H^{1}}\\
& \leq\frac{1}{16}\left\Vert \Delta\varphi\right\Vert _{L^{2}}^{2}+\frac{1}%
{4}\left\Vert \mu\right\Vert _{H^{1}}^{2}+C\left(  \left\Vert \boldsymbol{u}%
_{2}\right\Vert _{L^{2}}^{4}+\left\Vert \boldsymbol{u}_{2}\right\Vert _{L^{2}%
}^{2}\right)  \left\Vert \nabla\varphi\right\Vert _{L^{2}}^{2}.
\end{align*}
Putting together the inequalities for $I_{1}$ to $I_{4}$, we are led to
\begin{align*}
& \left\Vert \boldsymbol{u}\right\Vert _{L^{2}}^{2}+\left\Vert \nabla
\mu\right\Vert _{L^{2}}^{2}+\dfrac{1}{2}\dfrac{d}{dt}\left\Vert \nabla
\varphi\right\Vert _{L^{2}}^{2}+\left\langle \dfrac{\partial\varphi}{\partial
t},f(\varphi_{1})-f(\varphi_{2})\right\rangle \\
& \leq\frac{1}{4}\left\Vert \boldsymbol{u}\right\Vert _{L^{2}}^{2}+C\int
_{0}^{t}\left\Vert \varphi_{1}(\tau)\right\Vert _{H^{2}}^{2}\left\Vert
\mu(\tau)\right\Vert _{H^{1}}^{2}d\tau+\frac{1}{4}\left\Vert \boldsymbol{u}%
\right\Vert _{L^{2}}^{2}+\frac{1}{4}\left\Vert \mu\right\Vert _{H^{1}}^{2}\\
& +\int_{0}^{t}\left[  \frac{1}{16}\left\Vert \Delta\varphi(\tau)\right\Vert
_{L^{2}}^{2}+C\left(  \left\Vert \mu_{2}(\tau)\right\Vert _{H^{1}}%
^{4}+\left\Vert \mu_{2}(\tau)\right\Vert _{H^{1}}^{2}\right)  \left\Vert
\nabla\varphi(\tau)\right\Vert _{L^{2}}^{2}\right]  d\tau\\
& +C\left\Vert \varphi_{1}\right\Vert _{H^{2}}^{2}\left(  \int_{0}%
^{t}\left\Vert \mu(\tau)\right\Vert _{H^{1}}^{2}\left\Vert \varphi_{1}%
(\tau)\right\Vert _{H^{2}}^{2}d\tau+\int_{0}^{t}\left\Vert \mu_{2}%
(\tau)\right\Vert _{L^{2}}^{2}\left\Vert \nabla\varphi(\tau)\right\Vert
_{L^{2}}^{2}d\tau\right) \\
& +\frac{1}{16}\left\Vert \Delta\varphi\right\Vert _{L^{2}}^{2}+\frac{1}%
{4}\left\Vert \mu\right\Vert _{H^{1}}^{2}+C\left(  \left\Vert \boldsymbol{u}%
_{2}\right\Vert _{L^{2}}^{4}+\left\Vert \boldsymbol{u}_{2}\right\Vert _{L^{2}%
}^{2}\right)  \left\Vert \nabla\varphi\right\Vert _{L^{2}}^{2},
\end{align*}
that is,
\begin{align}
& \frac{1}{2}\left\Vert \boldsymbol{u}\right\Vert _{L^{2}}^{2}+\left\Vert
\nabla\mu\right\Vert _{L^{2}}^{2}+\dfrac{1}{2}\dfrac{d}{dt}\left\Vert
\nabla\varphi\right\Vert _{L^{2}}^{2}\label{5.30''}\\
& \leq\frac{1}{16}\left\Vert \Delta\varphi\right\Vert _{L^{2}}^{2}+\frac{1}%
{2}\left\Vert \mu\right\Vert _{H^{1}}^{2}+C\int_{0}^{t}\left\Vert \varphi
_{1}(\tau)\right\Vert _{H^{2}}^{2}\left\Vert \mu(\tau)\right\Vert _{H^{1}}%
^{2}d\tau\nonumber\\
& +\int_{0}^{t}\left[  \frac{1}{16}\left\Vert \Delta\varphi(\tau)\right\Vert
_{L^{2}}^{2}+C\left(  \left\Vert \mu_{2}(\tau)\right\Vert _{H^{1}}%
^{4}+\left\Vert \mu_{2}(\tau)\right\Vert _{H^{1}}^{2}\right)  \left\Vert
\nabla\varphi(\tau)\right\Vert _{L^{2}}^{2}\right]  d\tau\nonumber\\
& C\left\Vert \varphi_{1}\right\Vert _{H^{2}}^{2}\left(  \int_{0}%
^{t}\left\Vert \mu(\tau)\right\Vert _{H^{1}}^{2}\left\Vert \varphi_{1}%
(\tau)\right\Vert _{H^{2}}^{2}d\tau+\int_{0}^{t}\left\Vert \mu_{2}%
(\tau)\right\Vert _{L^{2}}^{2}\left\Vert \nabla\varphi(\tau)\right\Vert
_{L^{2}}^{2}d\tau\right) \nonumber\\
& +C\left(  \left\Vert \boldsymbol{u}_{2}\right\Vert _{L^{2}}^{4}+\left\Vert
\boldsymbol{u}_{2}\right\Vert _{L^{2}}^{2}\right)  \left\Vert \nabla
\varphi\right\Vert _{L^{2}}^{2}-\left\langle \dfrac{\partial\varphi}{\partial
t},f(\varphi_{1})-f(\varphi_{2})\right\rangle .\nonumber
\end{align}
To bound the last term on the right-hand side of (\ref{5.30''}), we appeal to
(\ref{5.19})$_{3}$ and get
\begin{align*}
-\left\langle \dfrac{\partial\varphi}{\partial t},f(\varphi_{1})-f(\varphi
_{2})\right\rangle  & =\left(  \boldsymbol{u}\cdot\nabla\varphi_{1}%
,f(\varphi_{1})-f(\varphi_{2})\right)  +\left(  \boldsymbol{u}_{2}\cdot
\nabla\varphi,f(\varphi_{1})-f(\varphi_{2})\right) \\
& +\left(  \nabla\mu,\nabla\lbrack f(\varphi_{1})-f(\varphi_{2})]\right) \\
& =J_{1}+J_{2}+J_{3}.
\end{align*}
Firstly, we have
\begin{align*}
\left\vert J_{3}\right\vert  & \leq\left\Vert \nabla\mu\right\Vert _{L^{2}%
}\left\Vert \nabla\lbrack f(\varphi_{1})-f(\varphi_{2})]\right\Vert _{L^{2}}\\
& \leq\frac{1}{8}\left\Vert \nabla\mu\right\Vert _{L^{2}}^{2}+C\left\Vert
\nabla\lbrack f(\varphi_{1})-f(\varphi_{2})]\right\Vert _{L^{2}}^{2},
\end{align*}
and
\begin{align*}
\left\Vert \nabla\lbrack f(\varphi_{1})-f(\varphi_{2})]\right\Vert _{L^{2}%
}^{2}  & \leq2\left\Vert \left(  f^{\prime}(\varphi_{1})-f^{\prime}%
(\varphi_{2})\right)  \nabla\varphi_{1}\right\Vert _{L^{2}}^{2}+2\left\Vert
f^{\prime}(\varphi_{2})\nabla\varphi\right\Vert _{L^{2}}^{2}\\
& \leq C(1+\left\Vert \varphi_{1}\right\Vert _{L^{\infty}}^{2}+\left\Vert
\varphi_{2}\right\Vert _{L^{\infty}}^{2})\left\Vert \nabla\varphi
_{1}\right\Vert _{L^{\infty}}^{2}\left\Vert \varphi\right\Vert _{L^{2}}^{2}\\
& +C(1+\left\Vert \varphi_{2}\right\Vert _{L^{\infty}}^{4})\left\Vert
\nabla\varphi\right\Vert _{L^{2}}^{2}.
\end{align*}
Thus,
\begin{align}
\left\vert J_{3}\right\vert  & \leq\frac{1}{8}\left\Vert \nabla\mu\right\Vert
_{L^{2}}^{2}+C(1+\left\Vert \varphi_{1}\right\Vert _{L^{\infty}}%
^{2}+\left\Vert \varphi_{2}\right\Vert _{L^{\infty}}^{2})\left\Vert
\nabla\varphi_{1}\right\Vert _{L^{\infty}}^{2}\left\Vert \varphi\right\Vert
_{L^{2}}^{2}\label{5.32}\\
& +C(1+\left\Vert \varphi_{2}\right\Vert _{L^{\infty}}^{4})\left\Vert
\nabla\varphi\right\Vert _{L^{2}}^{2}.\nonumber
\end{align}
Secondly, we have
\begin{align*}
\left\vert J_{1}\right\vert  & \leq\left\Vert \boldsymbol{u}\right\Vert
_{L^{2}}\left\Vert \nabla\varphi_{1}\right\Vert _{L^{4}}\left\Vert
f(\varphi_{1})-f(\varphi_{2})\right\Vert _{L^{4}}\\
& \leq C(1+\left\Vert \varphi_{1}\right\Vert _{L^{\infty}}^{2}+\left\Vert
\varphi_{2}\right\Vert _{L^{\infty}}^{2})\left\Vert \boldsymbol{u}\right\Vert
_{L^{2}}\left\Vert \varphi_{1}\right\Vert _{H^{2}}\left\Vert \varphi
\right\Vert _{L^{4}}\\
& \leq C(1+\left\Vert \varphi_{1}\right\Vert _{L^{\infty}}^{2}+\left\Vert
\varphi_{2}\right\Vert _{L^{\infty}}^{2})\left\Vert \boldsymbol{u}\right\Vert
_{L^{2}}\left\Vert \varphi_{1}\right\Vert _{H^{2}}(\left\Vert \varphi
\right\Vert _{L^{2}}^{1/2}\left\Vert \nabla\varphi\right\Vert _{L^{2}}%
^{1/2}+\left\Vert \varphi\right\Vert _{L^{2}}).
\end{align*}
Now, using the inequality $\left\Vert \varphi\right\Vert _{L^{2}}\leq
C\left\Vert \nabla\varphi\right\Vert _{L^{2}}$ together with Young's
inequality, it follows that
\begin{equation}
\left\vert J_{1}\right\vert \leq\frac{1}{8}\left\Vert \boldsymbol{u}%
\right\Vert _{L^{2}}^{2}+C(1+\left\Vert \varphi_{1}\right\Vert _{L^{\infty}%
}^{4}+\left\Vert \varphi_{2}\right\Vert _{L^{\infty}}^{4})\left\Vert
\varphi_{1}\right\Vert _{H^{2}}^{2}\left\Vert \nabla\varphi\right\Vert
_{L^{2}}^{2}.\label{5.33}%
\end{equation}
Thirdly, it holds that
\begin{align*}
\left\vert J_{2}\right\vert  & \leq\left\Vert \boldsymbol{u}_{2}\right\Vert
_{L^{2}}\left\Vert \nabla\varphi\right\Vert _{L^{4}}\left\Vert f(\varphi
_{1})-f(\varphi_{2})\right\Vert _{L^{4}}\\
& \leq C(1+\left\Vert \varphi_{1}\right\Vert _{L^{\infty}}^{2}+\left\Vert
\varphi_{2}\right\Vert _{L^{\infty}}^{2})\left\Vert \boldsymbol{u}%
_{2}\right\Vert _{L^{2}}\left\Vert \nabla\varphi\right\Vert _{L^{4}}\left\Vert
\varphi\right\Vert _{L^{4}}\\
& \leq C(1+\left\Vert \varphi_{1}\right\Vert _{L^{\infty}}^{2}+\left\Vert
\varphi_{2}\right\Vert _{L^{\infty}}^{2})\left\Vert \boldsymbol{u}%
_{2}\right\Vert _{L^{2}}(\left\Vert \Delta\varphi\right\Vert _{L^{2}}%
^{1/2}\left\Vert \nabla\varphi\right\Vert _{L^{2}}^{1/2}+\left\Vert
\nabla\varphi\right\Vert _{L^{2}})\left\Vert \nabla\varphi\right\Vert _{L^{2}%
},
\end{align*}
where in the last inequality above, the Gagliardo-Nirenberg and
Poincar\'{e}-Wirtinger inequalities yield
\[
\left\Vert \varphi\right\Vert _{L^{4}}\leq C\left(  \left\Vert \varphi
\right\Vert _{L^{2}}^{1/2}\left\Vert \nabla\varphi\right\Vert _{L^{2}}%
^{1/2}+\left\Vert \varphi\right\Vert _{L^{2}}\right)  \leq C\left\Vert
\nabla\varphi\right\Vert _{L^{2}}.
\]
Thus,
\begin{align*}
\left\vert J_{2}\right\vert  & \leq\frac{1}{16}\left\Vert \Delta
\varphi\right\Vert _{L^{2}}^{2}+C(1+\left\Vert \varphi_{1}\right\Vert
_{L^{\infty}}^{2}+\left\Vert \varphi_{2}\right\Vert _{L^{\infty}}^{2}%
)^{\frac{4}{3}}\left\Vert \boldsymbol{u}_{2}\right\Vert _{L^{2}}^{\frac{4}{3}%
}\left\Vert \nabla\varphi\right\Vert _{L^{2}}^{2}\\
& +C(1+\left\Vert \varphi_{1}\right\Vert _{L^{\infty}}^{2}+\left\Vert
\varphi_{2}\right\Vert _{L^{\infty}}^{2})\left\Vert \boldsymbol{u}%
_{2}\right\Vert _{L^{2}}\left\Vert \nabla\varphi\right\Vert _{L^{2}}^{2}\\
& \leq\frac{1}{16}\left\Vert \Delta\varphi\right\Vert _{L^{2}}^{2}%
+C(1+\left\Vert \varphi_{1}\right\Vert _{L^{\infty}}^{2}+\left\Vert
\varphi_{2}\right\Vert _{L^{\infty}}^{2})^{2}\left\Vert \boldsymbol{u}%
_{2}\right\Vert _{L^{2}}^{\frac{4}{3}}\left\Vert \nabla\varphi\right\Vert
_{L^{2}}^{2}\\
& +C(1+\left\Vert \varphi_{1}\right\Vert _{L^{\infty}}^{2}+\left\Vert
\varphi_{2}\right\Vert _{L^{\infty}}^{2})\left\Vert \boldsymbol{u}%
_{2}\right\Vert _{L^{2}}\left\Vert \nabla\varphi\right\Vert _{L^{2}}^{2}.
\end{align*}
It therefore follows that
\begin{equation}%
\begin{array}
[c]{l}%
\left\vert J_{2}\right\vert \leq\frac{1}{16}\left\Vert \Delta\varphi
\right\Vert _{L^{2}}^{2}\\
+C\left(  (1+\left\Vert \varphi_{1}\right\Vert _{L^{\infty}}^{2}+\left\Vert
\varphi_{2}\right\Vert _{L^{\infty}}^{2})\left\Vert \boldsymbol{u}%
_{2}\right\Vert _{L^{2}}+(1+\left\Vert \varphi_{1}\right\Vert _{L^{\infty}%
}^{4}+\left\Vert \varphi_{2}\right\Vert _{L^{\infty}}^{4})\left\Vert
\boldsymbol{u}_{2}\right\Vert _{L^{2}}^{\frac{4}{3}}\right)  \left\Vert
\nabla\varphi\right\Vert _{L^{2}}^{2}.
\end{array}
\label{5.34}%
\end{equation}
Collecting (\ref{5.32}), (\ref{5.33}) and (\ref{5.34}), we are led to
\begin{equation}%
\begin{array}
[c]{l}%
\left\vert \left\langle \dfrac{\partial\varphi}{\partial t},f(\varphi
_{1})-f(\varphi_{2})\right\rangle \right\vert \\
\leq\frac{1}{8}\left\Vert \boldsymbol{u}\right\Vert _{L^{2}}^{2}+\frac{1}%
{16}\left\Vert \Delta\varphi\right\Vert _{L^{2}}^{2}+\frac{1}{8}\left\Vert
\nabla\mu\right\Vert _{L^{2}}^{2}+C(1+\left\Vert \varphi_{1}\right\Vert
_{L^{\infty}}^{2}+\left\Vert \varphi_{2}\right\Vert _{L^{\infty}}%
^{2})\left\Vert \nabla\varphi_{1}\right\Vert _{L^{\infty}}^{2}\left\Vert
\varphi\right\Vert _{L^{2}}^{2}\\
+C[(1+\left\Vert \varphi_{1}\right\Vert _{L^{\infty}}^{2}+\left\Vert
\varphi_{2}\right\Vert _{L^{\infty}}^{2})\left\Vert \boldsymbol{u}%
_{2}\right\Vert _{L^{2}}+(1+\left\Vert \varphi_{1}\right\Vert _{L^{\infty}%
}^{4}+\left\Vert \varphi_{2}\right\Vert _{L^{\infty}}^{4})\left\Vert
\boldsymbol{u}_{2}\right\Vert _{L^{2}}^{\frac{4}{3}}\\
+1+\left\Vert \varphi_{2}\right\Vert _{L^{\infty}}^{4}+\left(  1+\left\Vert
\varphi_{1}\right\Vert _{L^{\infty}}^{4}+\left\Vert \varphi_{2}\right\Vert
_{L^{\infty}}^{4}\right)  \left\Vert \varphi_{1}\right\Vert _{H^{2}}%
^{2}]\left\Vert \nabla\varphi\right\Vert _{L^{2}}^{2}.
\end{array}
\label{5.35}%
\end{equation}
Finally (\ref{5.30''}) becomes
\begin{align*}
& \frac{1}{2}\left\Vert \boldsymbol{u}\right\Vert _{L^{2}}^{2}+\left\Vert
\nabla\mu\right\Vert _{L^{2}}^{2}+\dfrac{1}{2}\dfrac{d}{dt}\left\Vert
\nabla\varphi\right\Vert _{L^{2}}^{2}\\
& \leq\frac{1}{8}\left\Vert \boldsymbol{u}\right\Vert _{L^{2}}^{2}+\frac
{1}{16}\left\Vert \Delta\varphi\right\Vert _{L^{2}}^{2}+\frac{1}{8}\left\Vert
\nabla\mu\right\Vert _{L^{2}}^{2}+C\left\Vert \nabla\varphi_{1}\right\Vert
_{L^{\infty}}^{2}+C(1+\left\Vert \varphi_{2}\right\Vert _{L^{\infty}}%
^{4})\left\Vert \nabla\varphi\right\Vert _{L^{2}}^{2}\\
& +C\left(  (1+\left\Vert \varphi_{1}\right\Vert _{L^{\infty}}^{2}+\left\Vert
\varphi_{2}\right\Vert _{L^{\infty}}^{2})\left\Vert \boldsymbol{u}%
_{2}\right\Vert _{L^{2}}+(1+\left\Vert \varphi_{1}\right\Vert _{L^{\infty}%
}^{4}+\left\Vert \varphi_{2}\right\Vert _{L^{\infty}}^{4})\left\Vert
\boldsymbol{u}_{2}\right\Vert _{L^{2}}^{\frac{4}{3}}\right)  \left\Vert
\nabla\varphi\right\Vert _{L^{2}}^{2}\\
& \left(  1+\left\Vert \varphi_{1}\right\Vert _{L^{\infty}}^{4}+\left\Vert
\varphi_{2}\right\Vert _{L^{\infty}}^{4}\right)  \left\Vert \varphi
_{1}\right\Vert _{H^{2}}^{2}\left\Vert \nabla\varphi\right\Vert _{L^{2}}^{2}\\
& +\frac{1}{16}\left\Vert \Delta\varphi\right\Vert _{L^{2}}^{2}+\frac{1}%
{2}\left\Vert \mu\right\Vert _{H^{1}}^{2}+C\int_{0}^{t}\left\Vert \varphi
_{1}(\tau)\right\Vert _{H^{2}}^{2}\left\Vert \mu(\tau)\right\Vert _{H^{1}}%
^{2}d\tau\\
& +\int_{0}^{t}\left[  \frac{1}{16}\left\Vert \Delta\varphi(\tau)\right\Vert
_{L^{2}}^{2}+C\left(  \left\Vert \mu_{2}(\tau)\right\Vert _{H^{1}}%
^{4}+\left\Vert \mu_{2}(\tau)\right\Vert _{H^{1}}^{2}\right)  \left\Vert
\nabla\varphi(\tau)\right\Vert _{L^{2}}^{2}\right]  d\tau\\
& C\left\Vert \varphi_{1}\right\Vert _{H^{2}}^{2}\left(  \int_{0}%
^{t}\left\Vert \mu(\tau)\right\Vert _{H^{1}}^{2}\left\Vert \varphi_{1}%
(\tau)\right\Vert _{H^{2}}^{2}d\tau+\int_{0}^{t}\left\Vert \mu_{2}%
(\tau)\right\Vert _{L^{2}}^{2}\left\Vert \nabla\varphi(\tau)\right\Vert
_{L^{2}}^{2}d\tau\right) \\
& +C\left(  \left\Vert \boldsymbol{u}_{2}\right\Vert _{L^{2}}^{4}+\left\Vert
\boldsymbol{u}_{2}\right\Vert _{L^{2}}^{2}\right)  \left\Vert \nabla
\varphi\right\Vert _{L^{2}}^{2}+\frac{1}{8}\left\Vert \nabla\mu\right\Vert
_{L^{2}}^{2},
\end{align*}
or equivalently,
\begin{align}
& \frac{3}{8}\left\Vert \boldsymbol{u}\right\Vert _{L^{2}}^{2}+\frac{7}%
{8}\left\Vert \nabla\mu\right\Vert _{L^{2}}^{2}+\frac{1}{2}\frac{d}%
{dt}\left\Vert \nabla\varphi\right\Vert _{L^{2}}^{2}\label{5.36}\\
& \leq\frac{1}{8}\left\Vert \Delta\varphi\right\Vert _{L^{2}}^{2}+\dfrac{1}%
{2}\left\Vert \mu\right\Vert _{H^{1}}^{2}+C\int_{0}^{t}\left\Vert \varphi
_{1}(\tau)\right\Vert _{H^{2}}^{2}\left\Vert \mu(\tau)\right\Vert _{H^{1}}%
^{2}d\tau\nonumber\\
& +C\left[  \left\Vert \nabla\varphi_{1}\right\Vert _{L^{\infty}}%
^{2}+(1+\left\Vert \varphi_{1}\right\Vert _{L^{\infty}}^{2}+\left\Vert
\varphi_{2}\right\Vert _{L^{\infty}}^{2})\left\Vert \nabla\varphi
_{1}\right\Vert _{L^{\infty}}^{2}\right]  \left\Vert \varphi\right\Vert
_{L^{2}}^{2}\nonumber\\
& +C[1+\left\Vert \varphi_{2}\right\Vert _{L^{\infty}}^{4}+\left\Vert
\boldsymbol{u}_{2}\right\Vert _{L^{2}}^{4}+\left\Vert \boldsymbol{u}%
_{2}\right\Vert _{L^{2}}^{2}+\left(  1+\left\Vert \varphi_{1}\right\Vert
_{L^{\infty}}^{4}+\left\Vert \varphi_{2}\right\Vert _{L^{\infty}}^{4}\right)
\left\Vert \varphi_{1}\right\Vert _{H^{2}}^{2}\nonumber\\
& +(1+\left\Vert \varphi_{1}\right\Vert _{L^{\infty}}^{2}+\left\Vert
\varphi_{2}\right\Vert _{L^{\infty}}^{2})\left\Vert \boldsymbol{u}%
_{2}\right\Vert _{L^{2}}+(1+\left\Vert \varphi_{1}\right\Vert _{L^{\infty}%
}^{4}+\left\Vert \varphi_{2}\right\Vert _{L^{\infty}}^{4})\left\Vert
\boldsymbol{u}_{2}\right\Vert _{L^{2}}^{\frac{4}{3}}]\left\Vert \nabla
\varphi\right\Vert _{L^{2}}^{2}\nonumber\\
& +\int_{0}^{t}\left[  \frac{1}{16}\left\Vert \Delta\varphi(\tau)\right\Vert
_{L^{2}}^{2}+C\left(  \left\Vert \mu_{2}(\tau)\right\Vert _{H^{1}}%
^{4}+\left\Vert \mu_{2}(\tau)\right\Vert _{H^{1}}^{2}\right)  \left\Vert
\nabla\varphi(\tau)\right\Vert _{L^{2}}^{2}\right]  d\tau\nonumber\\
& +C\left\Vert \varphi_{1}\right\Vert _{H^{2}}^{2}\left(  \int_{0}%
^{t}\left\Vert \varphi_{1}(\tau)\right\Vert _{H^{2}}^{2}\left\Vert \mu
(\tau)\right\Vert _{H^{1}}^{2}d\tau+\int_{0}^{t}\left\Vert \mu_{2}%
(\tau)\right\Vert _{L^{2}}^{2}\left\Vert \nabla\varphi(\tau)\right\Vert
_{L^{2}}^{2}d\tau\right)  .\nonumber
\end{align}
Putting together (\ref{5.24}) and (\ref{5.36}) where we take into account
(\ref{5.25}), we obtain
\begin{align*}
& \frac{d}{dt}\left\Vert \varphi\right\Vert _{L^{2}}^{2}+\frac{1}{2}\frac
{d}{dt}\left\Vert \nabla\varphi\right\Vert _{L^{2}}^{2}+\left\Vert
\mu\right\Vert _{L^{2}}^{2}+\dfrac{7}{8}\left\Vert \nabla\mu\right\Vert
_{L^{2}}^{2}+\frac{3}{8}\left\Vert \boldsymbol{u}\right\Vert _{L^{2}}^{2}\\
& \leq\frac{1}{4}\left\Vert \boldsymbol{u}\right\Vert _{L^{2}}^{2}+\frac{3}%
{8}\left\Vert \mu\right\Vert _{L^{2}}^{2}+C(1+\left\Vert \varphi
_{1}\right\Vert _{L^{\infty}}^{4}+\left\Vert \varphi_{2}\right\Vert
_{L^{\infty}}^{4})\left\Vert \varphi\right\Vert _{L^{2}}^{2}\\
& +\frac{1}{2}\left\Vert \mu\right\Vert _{H^{1}}^{2}+C\int_{0}^{t}\left\Vert
\varphi_{1}(\tau)\right\Vert _{H^{2}}^{2}\left\Vert \mu(\tau)\right\Vert
_{H^{1}}^{2}d\tau\\
& +C\left[  \left\Vert \nabla\varphi_{1}\right\Vert _{L^{\infty}}%
^{2}+(1+\left\Vert \varphi_{1}\right\Vert _{L^{\infty}}^{2}+\left\Vert
\varphi_{2}\right\Vert _{L^{\infty}}^{2})\left\Vert \nabla\varphi
_{1}\right\Vert _{L^{\infty}}^{2}\right]  \left\Vert \varphi\right\Vert
_{L^{2}}^{2}\\
& +C[1+\left\Vert \varphi_{2}\right\Vert _{L^{\infty}}^{4}+\left\Vert
\boldsymbol{u}_{2}\right\Vert _{L^{2}}^{4}+\left\Vert \boldsymbol{u}%
_{2}\right\Vert _{L^{2}}^{2}+\left(  1+\left\Vert \varphi_{1}\right\Vert
_{L^{\infty}}^{4}+\left\Vert \varphi_{2}\right\Vert _{L^{\infty}}^{4}\right)
\left\Vert \varphi_{1}\right\Vert _{H^{2}}^{2}\\
& +(1+\left\Vert \varphi_{1}\right\Vert _{L^{\infty}}^{2}+\left\Vert
\varphi_{2}\right\Vert _{L^{\infty}}^{2})\left\Vert \boldsymbol{u}%
_{2}\right\Vert _{L^{2}}+(1+\left\Vert \varphi_{1}\right\Vert _{L^{\infty}%
}^{4}+\left\Vert \varphi_{2}\right\Vert _{L^{\infty}}^{4})\left\Vert
\boldsymbol{u}_{2}\right\Vert _{L^{2}}^{\frac{4}{3}}]\left\Vert \nabla
\varphi\right\Vert _{L^{2}}^{2}\\
& +\int_{0}^{t}[\frac{1}{8}\left\Vert \mu(\tau)\right\Vert _{H^{1}}%
^{2}+C\left(  \left(  1+\left\Vert \varphi_{1}(\tau)\right\Vert _{L^{\infty}%
}^{4}+\left\Vert \varphi_{2}(\tau)\right\Vert _{L^{\infty}}^{4}\right)
\left\Vert \varphi_{1}(\tau)\right\Vert _{L^{2}}^{2}\right)  +\\
& +C(\left\Vert \mu_{2}(\tau)\right\Vert _{H^{1}}^{4}+\left\Vert \mu_{2}%
(\tau)\right\Vert _{H^{1}}^{2})\left\Vert \nabla\varphi(\tau)\right\Vert
_{L^{2}}^{2}]d\tau\\
& +C\left\Vert \varphi_{1}\right\Vert _{H^{2}}^{2}\left(  \int_{0}%
^{t}\left\Vert \varphi_{1}(\tau)\right\Vert _{H^{2}}^{2}\left\Vert \mu
(\tau)\right\Vert _{H^{1}}^{2}d\tau+\int_{0}^{t}\left\Vert \mu_{2}%
(\tau)\right\Vert _{L^{2}}^{2}\left\Vert \nabla\varphi(\tau)\right\Vert
_{L^{2}}^{2}d\tau\right)  .
\end{align*}
This leads us at
\begin{align*}
& \frac{d}{dt}\left\Vert \varphi\right\Vert _{L^{2}}^{2}+\frac{1}{2}\frac
{d}{dt}\left\Vert \nabla\varphi\right\Vert _{L^{2}}^{2}+\frac{1}{8}\left\Vert
\mu\right\Vert _{L^{2}}^{2}+\dfrac{3}{8}\left\Vert \nabla\mu\right\Vert
_{L^{2}}^{2}+\frac{1}{8}\left\Vert \boldsymbol{u}\right\Vert _{L^{2}}^{2}\\
& \leq C\left[  1+\left\Vert \varphi_{1}\right\Vert _{L^{\infty}}%
^{4}+\left\Vert \varphi_{2}\right\Vert _{L^{\infty}}^{4}+\left\Vert
\nabla\varphi_{1}\right\Vert _{L^{\infty}}^{2}+(1+\left\Vert \varphi
_{1}\right\Vert _{L^{\infty}}^{2}+\left\Vert \varphi_{2}\right\Vert
_{L^{\infty}}^{2})\left\Vert \nabla\varphi_{1}\right\Vert _{L^{\infty}}%
^{2}\right]  \left\Vert \varphi\right\Vert _{L^{2}}^{2}\\
& +C[1+\left\Vert \varphi_{2}\right\Vert _{L^{\infty}}^{4}+\left\Vert
\boldsymbol{u}_{2}\right\Vert _{L^{2}}^{4}+\left\Vert \boldsymbol{u}%
_{2}\right\Vert _{L^{2}}^{2}+\left(  1+\left\Vert \varphi_{1}\right\Vert
_{L^{\infty}}^{4}+\left\Vert \varphi_{2}\right\Vert _{L^{\infty}}^{4}\right)
\left\Vert \varphi_{1}\right\Vert _{H^{2}}^{2}\\
& +(1+\left\Vert \varphi_{1}\right\Vert _{L^{\infty}}^{2}+\left\Vert
\varphi_{2}\right\Vert _{L^{\infty}}^{2})\left\Vert \boldsymbol{u}%
_{2}\right\Vert _{L^{2}}+(1+\left\Vert \varphi_{1}\right\Vert _{L^{\infty}%
}^{4}+\left\Vert \varphi_{2}\right\Vert _{L^{\infty}}^{4})\left\Vert
\boldsymbol{u}_{2}\right\Vert _{L^{2}}^{\frac{4}{3}}]\left\Vert \nabla
\varphi\right\Vert _{L^{2}}^{2}\\
& +C\int_{0}^{t}\left(  \left\Vert \varphi_{1}(\tau)\right\Vert _{H^{2}}%
^{2}+\left\Vert \varphi_{1}(t)\right\Vert _{H^{2}}^{2}\left\Vert \varphi
_{1}(\tau)\right\Vert _{H^{2}}^{2}+1\right)  \left\Vert \mu(\tau)\right\Vert
_{H^{1}}^{2}d\tau\\
& +C\int_{0}^{t}\left(  1+\left\Vert \varphi_{1}(\tau)\right\Vert _{L^{\infty
}}^{4}+\left\Vert \varphi_{2}(\tau)\right\Vert _{L^{\infty}}^{4}\right)
\left\Vert \varphi(\tau)\right\Vert _{L^{2}}^{2}d\tau\\
& +C\int_{0}^{t}\left(  \left\Vert \mu_{2}(\tau)\right\Vert _{H^{1}}%
^{4}+\left\Vert \mu_{2}(\tau)\right\Vert _{H^{1}}^{2}\right)  \left\Vert
\nabla\varphi(\tau)\right\Vert _{L^{2}}^{2}d\tau\\
& +C\int_{0}^{t}\left\Vert \varphi_{1}(t)\right\Vert _{H^{2}}^{2}\left\Vert
\mu_{2}(\tau)\right\Vert _{L^{2}}^{2}\left\Vert \nabla\varphi(\tau)\right\Vert
_{L^{2}}^{2}d\tau.
\end{align*}
Integrating the last inequality above with respect to $s$ on $(0,t)$, we get
\begin{align}
& \left\Vert \varphi\right\Vert _{L^{2}}^{2}+\left\Vert \nabla\varphi
\right\Vert _{L^{2}}^{2}+\int_{0}^{t}\left\Vert \mu(\tau)\right\Vert _{H^{1}%
}^{2}d\tau\label{5.40}\\
& \leq\int_{0}^{t}a_{1}(s)\left\Vert \varphi(s)\right\Vert _{L^{2}}^{2}%
ds+\int_{0}^{t}a_{2}(s)\left\Vert \nabla\varphi(s)\right\Vert _{L^{2}}%
^{2}ds+\int_{0}^{t}\left(  \int_{0}^{s}a_{3}(s,\tau)\left\Vert \mu
(\tau)\right\Vert _{H^{1}}^{2}d\tau\right)  ds\nonumber\\
& +\int_{0}^{t}\left(  \int_{0}^{s}a_{4}(\tau)\left\Vert \varphi
(\tau)\right\Vert _{L^{2}}^{2}d\tau\right)  ds+\int_{0}^{t}\left(  \int
_{0}^{s}a_{5}(\tau)\left\Vert \nabla\varphi(\tau)\right\Vert _{L^{2}}^{2}%
d\tau\right)  ds\nonumber\\
& +\int_{0}^{t}\left(  \int_{0}^{s}a_{6}(s,\tau)\left\Vert \nabla\varphi
(\tau)\right\Vert _{L^{2}}^{2}d\tau\right)  ds,\nonumber
\end{align}
where
\[%
\begin{array}
[c]{l}%
a_{1}(t)=C[1+1+\left\Vert \varphi_{1}(t)\right\Vert _{L^{\infty}}%
^{4}+\left\Vert \varphi_{2}(t)\right\Vert _{L^{\infty}}^{4}+\left\Vert
\nabla\varphi_{1}(t)\right\Vert _{L^{\infty}}^{2}\\
\ \ \ \ \ \ \ \ \ \ \ \ \ \ \ \ \ \ +(1+\left\Vert \varphi_{1}(t)\right\Vert
_{L^{\infty}}^{2}+\left\Vert \varphi_{2}(t)\right\Vert _{L^{\infty}}%
^{2})\left\Vert \nabla\varphi_{1}(t)\right\Vert _{L^{\infty}}^{2}],\\
\\
a_{2}(t)=C[1+\left\Vert \varphi_{2}(t)\right\Vert _{L^{\infty}}^{4}+\left\Vert
\boldsymbol{u}_{2}(t)\right\Vert _{L^{2}}^{4}+\left\Vert \boldsymbol{u}%
_{2}(t)\right\Vert _{L^{2}}^{2}\\
\ \ \ \ \ \ \ \ \ \ \ \ \ +\left(  1+\left\Vert \varphi_{1}(t)\right\Vert
_{L^{\infty}}^{4}+\left\Vert \varphi_{2}(t)\right\Vert _{L^{\infty}}%
^{4}\right)  \left\Vert \varphi_{1}(t)\right\Vert _{H^{2}}^{2}\\
\ \ \ \ \ \ \ \ \ \ \ \ \ \ +(1+\left\Vert \varphi_{1}(t)\right\Vert
_{L^{\infty}}^{2}+\left\Vert \varphi_{2}(t)\right\Vert _{L^{\infty}}%
^{2})\left\Vert \boldsymbol{u}_{2}(t)\right\Vert _{L^{2}}\\
\ \ \ \ \ \ \ \ \ \ \ \ \ \ \ +(1+\left\Vert \varphi_{1}(t)\right\Vert
_{L^{\infty}}^{4}+\left\Vert \varphi_{2}(t)\right\Vert _{L^{\infty}}%
^{4})\left\Vert \boldsymbol{u}_{2}(t)\right\Vert _{L^{2}}^{\frac{4}{3}}],\\
\\
a_{3}(t,s)=C(\left\Vert \varphi_{1}(s)\right\Vert _{H^{2}}^{2}+\left\Vert
\varphi_{1}(t)\right\Vert _{H^{2}}^{2}\left\Vert \varphi_{1}(s)\right\Vert
_{H^{2}}^{2}+1),\\
\\
a_{4}(t)=C(1+\left\Vert \varphi_{1}(t)\right\Vert _{L^{\infty}}^{4}+\left\Vert
\varphi_{2}(t)\right\Vert _{L^{\infty}}^{4}),\\
\\
a_{5}(t)=C(\left\Vert \mu_{2}(t)\right\Vert _{H^{1}}^{4}+\left\Vert \mu
_{2}(t)\right\Vert _{H^{1}}^{2}),\\
\\
a_{6}(t,s)=C\left\Vert \varphi_{1}(t)\right\Vert _{H^{2}}^{2}\left\Vert
\mu_{2}(t)\right\Vert _{L^{2}}^{2}.
\end{array}
\]
Now, let $c_{0}=\max_{0\leq s,t\leq T}a_{3}(t,s)$. Then since $\varphi_{1}%
\in\mathcal{C}([0,T],H^{2}(\Omega))$, $c_{0}$ is well defined and is a
positive constant. This being so, we set
\begin{align*}
x(t)  & =\left\Vert \varphi(t)\right\Vert _{H^{1}}^{2}+\int_{0}^{t}\left\Vert
\mu(s)\right\Vert _{H^{1}}^{2}ds,\\
A_{1}(t)  & =a_{1}(t)+a_{2}(t)+c_{0},\\
A_{2}(t,s)  & =a_{4}(s)+a_{5}(s)+a_{6}(t,s).
\end{align*}
Then (\ref{5.40}) yields
\[
x(t)\leq\int_{0}^{t}\left(  A_{1}(s)x(s)+\int_{0}^{s}A_{2}(s,\tau)x(\tau
)d\tau\right)  ds.
\]
The functions $A_{1}$ and $A_{2}$ are integrable on $[0,T]$ and on
$[0,T]\times\lbrack0,T]$, respectively. Applying the Gronwall-type inequality
of Lemma \ref{l5.1}, we readily get $x(t)=0$ for all $t\in\lbrack0,T]$, that
is, $\varphi=0$ and $\mu=0$. This also yields $\boldsymbol{u}=0$. Coming back
to (\ref{5.19})$_{1}$, we see that $G\ast\nabla p=0$, or, applying the Laplace
transform, $\widehat{G}(\tau)\nabla\widehat{p}(\tau,x)=0$ $\forall\tau
\in\mathbb{C}$ with $\operatorname{Re}\tau>0$. Since $\widehat{G}(\tau)$ is
positive definite, $\nabla\widehat{p}(\tau,x)=0$ $\forall\tau\in\mathbb{C}$
with $\operatorname{Re}\tau>0$, that is, $\widehat{p}(\tau,\cdot)$ is a
constant depending on $\tau$. Because $\widehat{p}(\tau,\cdot)\in L_{0}%
^{2}(\Omega)$, this leads to $\widehat{p}(\tau,\cdot)=0$ for such $\tau$, or
equivalently, $p=0$.
\end{proof}
We are now able to prove the first main result of the work.
\subsubsection{\textbf{Proof of Theorem \ref{t1.1}}}

Given any ordinary sequence $E$ of positive real numbers converging to zero,
we have derived the existence of a subsequence $E^{\prime}$ from $E$ and of a
quadruple $(\boldsymbol{u}_{0},\varphi_{0},\mu_{0},p_{0})$ with
$\boldsymbol{u}_{0}\in L^{2}(Q;\mathcal{B}_{A}^{1,2}(\mathbb{R}^{d-1}%
;H_{0}^{1}(I))^{d})$, $\varphi_{0}\in L^{\infty}(0,T;H^{1}(\Omega))$, $\mu
_{0}\in L^{2}(0,T;H^{1}(\Omega))$ and $p_{0}\in L^{2}(0,T;L_{0}^{2}(\Omega))$
such that, as $E^{\prime}\ni\varepsilon\rightarrow0$,

\begin{equation*}
\begin{array}
[c]{l}%
\boldsymbol{u}_{\varepsilon}\rightarrow\boldsymbol{u}_{0}\text{ in }
L^{2}(Q_{\varepsilon})^{d}\text{-weak }\Sigma_{A}\\
\\
\varepsilon
\nabla\boldsymbol{u}_{\varepsilon}\rightarrow\overline{\nabla}_{y}%
\boldsymbol{u}_{0}\text{ in }L^{2}(Q_{\varepsilon})^{d\times d}\text{-weak
}\Sigma_{A}\\
\\
\varphi_{\varepsilon}\rightarrow\varphi_{0}\text{ in }L^{2}(Q_{\varepsilon
})\text{-strong }\Sigma_{A},\\
\\
\mu_{\varepsilon}\rightarrow\mu_{0}\text{ in }L^{2}(Q_{\varepsilon
})\text{-weak }\Sigma_{A},\\
\\
p_{\varepsilon}\rightarrow p_{0}\text{ in }L^{2}(Q_{\varepsilon})\text{-weak
}\Sigma_{A}.
\end{array}
\end{equation*}
Next, setting $\boldsymbol{u}(t,\overline{x})=\frac{1}{2}\int_{I}%
M(\boldsymbol{u}_{0}(t,\overline{x},\cdot,\zeta))d\zeta=(\overline
{\boldsymbol{u}}(t,\overline{x}),u_{d}(t,\overline{x}))$, we have shown that
$u_{d}=0$ and that the quadruple $(\overline{\boldsymbol{u}},\varphi_{0}%
,\mu_{0},p_{0})$ solves the system (\ref{1.8}). Furthermore we have that
$\overline{\boldsymbol{u}}\in\mathcal{C}([0,T];\mathbb{H})$, $\varphi_{0}%
\in\mathcal{C}([0,T];H^{1}(\Omega))\cap L^{2}(0,T;H^{3}(\Omega))$ and
$p_{0}\in L^{2}(0,T;H^{1}(\Omega)\cap L_{0}^{2}(\Omega))$. Next, assuming that
$\varphi^{0}\in H_{N}^{2}(\Omega)$, we get that $\varphi_{0}\in\mathcal{C}%
([0,T];H^{2}(\Omega))\cap L^{2}(0,T;H^{4}(\Omega))$, $\mu\in\mathcal{C}%
([0,T];H^{1}(\Omega))\subset L^{4}(0,T;H^{1}(\Omega))$ where the fact that
$\mu\in L^{4}(0,T;H^{1}(\Omega))$ has been used in the proof of Theorem
\ref{t5.1} in order to obtain the uniqueness of the solution of (\ref{5.1}).
Therefore, the convergence of the whole sequence stems from the uniqueness of
the solution to (\ref{1.8}) in that case. This completes the proof of Theorem
\ref{t1.1}.

\subsection{Proof of Theorem \ref{t1.2}}

The existence of $(\boldsymbol{u}_{0},\varphi_{0},\mu_{0},p_{0})$\ is obtained
as at the beginning of the proof of Theorem \ref{t1.1}. So we focus on system
(\ref{4.33}) which reads in the special case $d-1=1$ as follows:
\begin{equation}
\left\{
\begin{array}
[c]{l}%
\overline{\boldsymbol{u}}=G\ast(\boldsymbol{h}_{1}+\mu_{0}\dfrac
{\partial\varphi_{0}}{\partial x_{1}}-\dfrac{\partial p_{0}}{\partial x_{1}%
})\text{ in }(0,T)\times(a,b)=Q,\\
\\
\dfrac{\partial\overline{\boldsymbol{u}}}{\partial x_{1}}=0\text{ in }Q\text{
and }\overline{\boldsymbol{u}}(t,a)=\overline{\boldsymbol{u}}(t,b)=0\text{ in
}(0,T),\\
\\
\dfrac{\partial\varphi_{0}}{\partial t}+\overline{\boldsymbol{u}}\cdot
\dfrac{\partial\varphi_{0}}{\partial x_{1}}-\dfrac{\partial^{2}\mu_{0}%
}{\partial x_{1}^{2}}=0\text{ in }Q,\\
\\
\mu_{0}=-\beta\dfrac{\partial^{2}\varphi_{0}}{\partial x_{1}^{2}}+\lambda
f(\varphi_{0})\text{ in }Q,\\
\\
\varphi_{0}^{\prime}(t,a)=\varphi_{0}^{\prime}(t,b)=0,\ \mu_{0}^{\prime
}(t,a)=\mu_{0}^{\prime}(t,b)=0\text{ in }(0,T),\\
\\
\varphi_{0}(0)=\varphi^{0}\text{ in }(a,b).
\end{array}
\right. \label{5.30}%
\end{equation}
We note that we have assumed $\boldsymbol{u}^{0}=0$. From the equality
$\dfrac{\partial\overline{\boldsymbol{u}}}{\partial x_{1}}=0$ in $Q$, we
deduce that $\overline{\boldsymbol{u}}(t,x_{1})=\overline{\boldsymbol{u}}(t)$
for all $t\in(0,T)$. Now, since $\overline{\boldsymbol{u}}(t,a)=0$ in $(0,T)
$, we infer $\overline{\boldsymbol{u}}=0$ in $Q$. Therefore the first equation
in (\ref{5.30}) becomes
\begin{equation}
G\ast(\boldsymbol{h}_{1}+\mu_{0}\dfrac{\partial\varphi_{0}}{\partial x_{1}%
}-\dfrac{\partial p_{0}}{\partial x_{1}})=0\text{ \ in }Q,\label{5.31}%
\end{equation}
and the third one becomes $\dfrac{\partial\varphi_{0}}{\partial t}%
-\dfrac{\partial^{2}\mu_{0}}{\partial x_{1}^{2}}=0$ in $Q$. The last four
equations in (\ref{5.30}) amounts in the end to the Cahn-Hilliard equation in
one spatial dimension, which is known to possess a unique solution in the
underlying spaces. Now, applying the Laplace transform to (\ref{5.31}), we get
that $\boldsymbol{h}_{1}+\mu_{0}\dfrac{\partial\varphi_{0}}{\partial x_{1}%
}-\dfrac{\partial p_{0}}{\partial x_{1}}=0$. Taking into account the fact that
$p_{0}\in L_{0}^{2}(a,b)$, we deduce that $p_{0}$ solves (\ref{1.10}). The
proof of Theorem \ref{t1.2} is complete.

\section*{Acknowledgement}
\emph{The authors are grateful to the referees for their helpful comments and
suggestions.}

\emph{J.L. Woukeng acknowledges the support of the Alexander von Humboldt
Foundation. G. Cardone is member of GNAMPA (INDAM).}

\end{document}